\documentclass[10pt,twoside]{article}

\usepackage{geometry}
\usepackage{color}

\usepackage[numbers,sort&compress]{natbib}
\usepackage{graphicx,latexsym,euscript,makeidx,color,bm}
\usepackage{amsmath,amsfonts,amssymb,amsthm,thmtools,mathtools,mathrsfs,enumerate}
\usepackage[colorlinks,linkcolor=blue,anchorcolor=green,citecolor=red]{hyperref}

\usepackage[utf8]{inputenc}
\usepackage[T1]{fontenc}

\usepackage{tocloft}  % 加载目录排版宏包

\def\5n{\negthinspace \negthinspace \negthinspace \negthinspace \negthinspace }
\def\4n{\negthinspace \negthinspace \negthinspace \negthinspace }
\def\3n{\negthinspace \negthinspace \negthinspace }
\def\2n{\negthinspace \negthinspace }
\def\1n{\negthinspace }
\def\dbE{\mathbb{E}}     
\def\dbF{\mathbb{F}} \def\sF{\mathscr{F}}    
\def\dbG{\mathbb{G}}         
\def\dbH{\mathbb{H}}

\def\dbP{\mathbb{P}}     
     
\def\dbR{\mathbb{R}}

\def\ms{\medskip}

\def\no{\noindent}        \def\q{\quad}                      
    \def\qq{\qquad}                    
    \def\hb{\hbox}                     
                   
         \def\rf{\eqref}                    
  \def\deq{\triangleq}               
           
\def\les{\leqslant}     \def\le{\leqslant}                
\def\leq{\leqslant}       \def\geq{\geqslant}
\def\ges{\geqslant}       \def\esssup{\mathop{\rm esssup}}   \def\[{\Big[}
           \def\]{\Big]}

\def\nn{\nonumber}                              
\def\a{\alpha}

\def\e{\varepsilon}             
           \def\i{\infty}   

\theoremstyle{plain}

\newtheorem{exam}{\bf Example}[section]

\makeatletter

\@addtoreset{equation}{section}
\makeatother
\newtheorem{theorem}{Theorem}[section]
\newtheorem{definition}[theorem]{Definition}
\newtheorem{proposition}[theorem]{Proposition}
\newtheorem{corollary}[theorem]{Corollary}
\newtheorem{lemma}[theorem]{Lemma}
\newtheorem{remark}[theorem]{Remark}

\makeatletter

\@addtoreset{equation}{section}
\makeatother

\allowdisplaybreaks[4]
\begin{document}
	\title{
		\Large \bf  Backward doubly stochastic differential equations with  and without reflection under weak conditions\thanks{
	%					 This paper is supported by National Key R\&D Program of China 2022YFA1006102. And 
			YH is partially supported by Lebesgue Center of Mathematics ``Investissements d'avenir'' program-ANR-11-LABX-0020-01, by CAESARS-ANR-15-CE05-0024 and by MFG-ANR16-CE40-0015-01.
			JW is supported  by National Natural Science Foundation of China (Grant No. 12571478) and Guangdong Basic and Applied Basic Research Foundation (Grant No. 2025B151502009),
			and  Shenzhen Science and Technology Program (Grant
			No. JCYJ20230807093309021).
		}
	}
	
	\author{
		Shuxian Gao\thanks{
			Department of Mathematics,
			Southern University of Science and Technology, Shenzhen, 518055, China
			(Email: {\tt 12431007@mail.sustech.edu.cn}).}~,~~~~
		Ying Hu\thanks{Univ. Rennes, CNRS, IRMAR - UMR 6625, F-35000 Rennes, France
			(Email: {\tt ying.hu@univ-rennes.fr}).}~,~~~~
		%
		%		%
		%		Shige Peng\thanks{School of Mathematics, Frontiers Science Center for Nonlinear Expectations (Ministry of Education), Shandong University, Jinan, 250100, China
			%	(Email: {\tt peng@sdu.edu.cn}).}~,~~~~
		%%
		%
		Jiaqiang Wen\thanks{Department of Mathematics and SUSTech International Center for Mathematics,
			Southern University of Science and Technology, Shenzhen, 518055, China
			(Email: {\tt wenjq@sustech.edu.cn}).}
	}
	
	%	\author{Ying Hu, Feng Li, Jiaqiang Wen}
	\date{}
	\maketitle
	
	\no\bf Abstract. \rm
	In this paper, we study the  solvability  of backward doubly stochastic differential equations (BDSDEs, for short),  both with and without reflection, under weak conditions on the generator.
First, when the generator $f$ is of general growth in $y$ and linear growth in $z$, we establish the existence, uniqueness, comparison principle, and the existence of maximal solutions.
Second,  when   \(f\) is of linear growth in  \(y\) and quadratic growth in \(z\) with bounded terminal value, we prove the existence, uniqueness, and comparison principle.
Finally, when   $f$ is of general growth in $y$ and quadratic growth in $z$ with bounded terminal value, we prove the existence of maximal solutions.

	\ms
	
	\no\bf Keywords:  
	\rm  
	Backward doubly stochastic differential equations; reflection; general growth; quadratic growth
	
	\ms
	
	\no\bf AMS subject classifications: \rm  60H10;  60H20
	
	{
		\centering
		\tableofcontents
	}

	\section{Introduction}\label{section1}
	Throughout this paper, let $(\Omega, \mathscr{F}, \mathbb{P})$ be a complete probability space on which two standard independent Brownian motions $\{W_t ; 0 \leqslant t<\i\}$ and $\{B_t ; 0 \leqslant t<\i\}$, taking values in $\mathbb{R}^{d}$ and $\mathbb{R}^{l}$, respectively, are defined. Let $T>0$ be a fixed terminal time, and denote by $\mathscr{N}$ the class of $\dbP$-null sets of $\sF$, where
	\begin{equation*}
		\sF_{t} \deq   \sF_{t}^{W} \vee \sF_{t,T}^{B},\q\
		\mathscr{G}_t \deq \mathscr{F}^{W}_{t} \vee \mathscr{F}^{B}_{T} ,\q\ 
		\forall t\in[0,T].
	\end{equation*}
	Here, for any process $\{\eta_t;0\les t\les T\},$ we set $\sF_{s,t}^{\eta}=\sigma\{\eta_r-\eta_s;s\les r\les t\}\vee \mathscr{N}$ and $\sF_{t}^{\eta}=\sF_{0,t}^{\eta}$.
	Note that  the collection $\dbG\deq\{\mathscr{G}_t; t\in[0,T]\}$ is a filtration, whereas  $\dbF\deq\{\mathscr{F}_t; t\in[0,T]\}$  is not, since it is neither increasing nor decreasing. We consider the following reflected backward doubly stochastic differential equations (RBDSDEs, for short):
	\begin{equation}\label{r1}
		\left\{
		\begin{aligned}
			&	Y_t=\xi +\int_{t}^{T}f(s,Y_s,Z_s)ds+\int_{t}^{T}g(s,Y_s,Z_s)dB_s+K_T-K_t-\int_t^TZ_sdW_s,\q\ 0\leq t\leq T,\\
			&	Y_t\geq S_t\q\ \hbox{and} \q\ \int_0^T (Y_s-S_s)dK_s=0.
		\end{aligned}\right.
	\end{equation}
	In the above equations, \(K\) is a continuous non-decreasing  process starting from zero, the \(dB\)-integral is a backward It\^o integral,  and the \(dW\)-integral is a forward It\^o integral. The terminal value \(\xi\) is \(\mathscr{F}_T\)-measurable,  and \(f : \Omega \times [0,T] \times \mathbb{R}\times \mathbb{R}^d \longmapsto \mathbb{R}\) and \(g : \Omega \times [0,T] \times \mathbb{R}\times \mathbb{R}^d\longmapsto \mathbb{R}^l \) are jointly measurable.  
	The solution process $Y$ is constrained to stay above a given obstacle process $S$, which is  referred to as the obstacle.
	The	process $K$ acts as the minimal force that keeps \(Y\)   above the obstacle \(S\),  as characterized by the Skorokhod condition.

	By setting \(K\equiv 0\) and removing the obstacle constraint and the Skorokhod condition, we obtain
 the following non-reflected backward doubly stochastic differential equations (BDSDEs, for short):
	\begin{align}\label{1}
		Y_t=\xi+\int_t^T f(s,Y_s,Z_s)ds+\int_{t}^{T}g(s,Y_s,Z_s)dB_s-\int_t^T Z_sdW_s,\q\ 0\le t\le T,
	\end{align}
	which  were first introduced by Pardoux and Peng \cite{94}. Under  uniform Lipschitz assumptions on the coefficients $f$ and $g$, they established  that,  for any square-integrable terminal value 
	$\xi$,  BDSDE \eqref{1} admits a unique solution pair $(Y,Z)$.
	Since then, BDSDEs have found significant applications in areas such as stochastic control and stochastic partial differential equations (see \cite{10m,hmz,wy}). 
	 In addition, the existence and uniqueness of BDSDEs under weaker conditions on the generator \(f\) have been studied extensively. In particular, an important contribution to the theory of BDSDEs with quadratic growth was made by Hu, Wen, and Xiong \cite{wen1}. Other recent works on BDSDEs under weak conditions can be found in \cite{11,man16,z10}, among others.
	Furthermore, when \(g=0\) in RBDSDEs \eqref{r1}, the equations reduce to reflected backward stochastic differential equations (RBSDEs, for short), which were first introduced by El Karoui et al. \cite{97r} and have important applications in finance.
	Specifically,    \(S\) represents the payoff process,  while the increasing process  \(K\) can be interpreted as the cumulative effort required to keep the solution above the barrier, thereby providing a dynamic representation of the cost of enforcing the constraint. Some recent works on RBSDEs can be found, for instance, in \cite{ho16,q18,08x}.

	On the other hand, RBDSDEs \eqref{r1} originate from the classical nonlinear backward stochastic differential equations (BSDEs, for short), introduced by Pardoux and Peng \cite{90} under Lipschitz conditions.
	Since then, BSDEs have been widely applied in stochastic control, financial mathematics, and partial differential equations (see \cite{el_karoui_backward_1997,yong_stochastic_1999,zhang_backward_2017}).
	Moreover, to broaden their applicability, many researchers have studied BSDEs under non-Lipschitz conditions, which is in fact an open problem posed by Peng \cite{peng_open_1999}.
	For instance, concerning the non-Lipschitz condition on the generator \(f\) with respect to \(y\), an early result was obtained by Mao \cite{95}. Lepeltier and San Mart\'in \cite{97} proved the existence of solutions to BSDEs when the generator \(f\) is continuous in $y$.  Pardoux \cite{99} studied the case where \(f\) is of general growth and  monotone in \(y\). 
	Furthermore,  concerning the non-Lipschitz condition on the generator $f$ with respect to  \(z\), Kobylanski \cite{00} first established the existence and uniqueness of  BSDEs whose generator \(f\) is of  quadratic growth in $z$. Lepeltier and San Mart\'in \cite{98} considered the case where $f$ is of superlinear growth in $y$ and quadratic growth in $z$. Briand, Lepeltier, and San Mart\'in \cite{07o} established a general existence result when $f$ is of general growth and monotone in \(y\), and is of quadratic growth in \(z\).
	Moreover, for quadratic BSDEs, Briand and Hu \cite{06h,08h} studied the one-dimensional case with unbounded terminal value, while Hu and Tang \cite{ht16} and Fan, Hu, and Tang \cite{f23} studied the multi-dimensional case. Other recent results on quadratic BSDEs can be found in \cite{wen1,hao_mean-field_2025,delbaen_backward_2011}, among others.

	In this paper, we  systematically study the well-posedness of RBDSDE \eqref{r1}.
	Beyond its purely mathematical interest---particularly in connection with one of the open problems proposed by Peng \cite{peng_open_1999}---the study of RBDSDEs is also motivated by several promising potential applications. For instance,  this type of equation has wide-ranging applications in stochastic control problems, stochastic partial differential equations, and mathematical finance
	(see Hu, Liang, and Tang \cite{HuLiangTang2020,HuLiangTang2024}). 
	The first work in this direction was carried out by Bahlali et al. \cite{09}, followed by further studies in \cite{liluo13, ren10,be20}, among others. These works studied RBDSDEs under uniform Lipschitz continuity conditions.
	To the best of our knowledge, 
the combination of the one-barrier reflected setting and the general growth assumptions imposed in this paper has not been addressed in the existing literature cited above. 
	This gap restricts the further development and application of RBDSDEs. Therefore, the aim of this paper is to fill this gap by studying the well-posedness of RBDSDE \eqref{r1} under weaker conditions.	
	It should be pointed out that there are two main difficulties in this direction:
	\begin{itemize}
		\item [(i)]   Standard techniques for addressing the general growth conditions of \(f\) in \(y\) rely on conditional expectations (see Pardoux \cite{99}). However, these methods cannot be applied to  RBDSDE \eqref{r1}, because the presence of the backward It\^o's  integral implies that   \(\mathbb{F} = \{\mathscr{F}_t; t \in [0,T]\}\) is not a filtration.
		\item [(ii)]  The BMO method, a standard tool for dealing with BSDEs with quadratic growth, is not applicable to  RBDSDE \rf{r1}. This limitation arises from two factors:  first,  the solution process does not admit the usual martingale structure required by the classical BMO framework; 
		 second, a counterexample constructed by Hu et al. \cite{wen1} shows that a bounded terminal value does not guarantee a bounded solution when \(f\) is of quadratic growth in \(z\).
	\end{itemize}
	To establish the well-posedness of RBDSDE \eqref{r1} under weak conditions, we introduce several new techniques to overcome the above difficulties. The main contributions and innovations of this paper are summarized as follows: 
	\begin{itemize}
		\item [(i)]  
		When the generator $f$ is of general growth and  monotone in \(y\)  (see condition {\bf (F1)} below) and Lipschitz in \(z\),  we establish the existence, uniqueness, and comparison principle for solutions of RBDSDEs \rf{r1} (see \autoref{t3.8}).
		To the best of our knowledge, the corresponding result  under
		the present assumptions
		appears to be new even in the non-reflected case (see \autoref{t3.2}).
		The main idea of the proof is as follows. To derive an estimate for the solution \(Y\), we note that the conditional expectation cannot be applied here. Therefore, we first consider  the case where the generator $f$ does not depend on $z$ and the coefficient $g$ does not depend on $(y,z)$.
		In this simple case, we start with the auxiliary zero drift linear BDSDE \eqref{Y0}, apply
		  a truncation argument to its solution, and then construct a sequence of solutions to classical BSDEs.
		   Next, for the general case, we use a fixed point argument on \(L^2_{\mathbb{F}}([0,T];\mathbb{R})\times
		   L^2_{\mathbb{F}}([0,T];\mathbb{R}^d)\) equipped with the equivalent weighted norm introduced in \autoref{t3.2}.
		The corresponding mapping is a contraction with contraction factor \(\frac{1+\alpha}{2}<1\).
		\item [(ii)]   	When the generator $f$ is of general growth and monotone in \(y\)  (see condition {\bf (F1)} below) but is not Lipschitz in \(z\), we obtain the existence of maximal solutions for RBDSDEs \rf{r1}, both with and without reflection (see \autoref{t3.3} and \autoref{t3.9}, respectively). 
		We note that the classical approach for proving the existence of maximal solutions, as in  Lepeltier and San Mart\'in \cite{97},  relies on constructing a ``nice'' approximation of the generator $f$.  
		  This approach works well when $f$ is of linear growth in \(y\), but becomes difficult to apply when $f$ is of general growth in \(y\). This is because the standard inf-convolution approximation does not preserve the required growth structure. To overcome this difficulty, we use a truncation technique and derive a priori estimates for the solution $Y$. 
		\item [(iii)]  
		When the generator $f$ is of linear growth in $y$ and quadratic growth in $z$ (see condition \(\textbf{(F2)}\) below), we establish the existence, uniqueness, and comparison principle for solutions of RBDSDEs \rf{r1}, both with and without reflection (see \autoref{tb4.2} and \autoref{t5.4}, respectively). To prove existence and uniqueness, we impose condition \textbf{(G2)} (see below) on the coefficient \(g\). This condition is weaker than the corresponding condition introduced by Hu et al. \cite{wen1}, in the sense that it allows \(g\) to be non-homogeneous with respect to \(z\). 
		 Moreover, with the help of bounded a priori estimates for the solution $Y$ (see \autoref{l4.1} and \autoref{l4.9}), we establish existence by applying the monotone stability theorem together with a double approximation technique. The uniqueness result is then proved via a comparison principle. More precisely, we first prove uniqueness under a structural condition \(\textbf{(STR)}\) (see \autoref{t5.2}), and then complete the proof of uniqueness by a change-of-variables argument. 
		\item [(iv)]  
		When the generator $f$ is of general growth in $y$ and quadratic growth in $z$ (see conditions \(\textbf{(F3)}\) and \(\textbf{(F3')}\) below), we obtain the existence of maximal solutions to RBDSDEs \rf{r1}, both with and without reflection (see \autoref{t4.3} and \autoref{t4.12}, respectively). %
		For clarity of presentation, we first consider the case where $f$ is of linear growth in $y$. By applying a generalized comparison theorem, we estimate the bounds of the solution $Y$ through a pair of ordinary differential equations. We then use a truncation technique to establish the existence of maximal solutions to RBDSDEs \rf{r1}, both with and without reflection. %
		 Finally, for the general case where $f$ is of general growth in $y$, we apply a similar method to that used in the linear growth case to prove the desired results. 
	\end{itemize}

	The rest of this paper is organized as follows. In \autoref{section2}, we introduce some preliminary notions and assumptions.
	\autoref{section3} is devoted to the study of RBDSDEs when the generator \(f\) is of general growth and monotone in \(y\), and is of linear growth in $z$.
	In \autoref{section4} and \autoref{sectionnew5}, we consider two more general situations: first, when \(f\) is of linear growth in \(y\) and quadratic growth in \(z\); and second, when $f$ is of general growth in $y$ and quadratic growth in $z$.
	Finally, \autoref{conclusion} concludes the results.

	\section{Preliminaries}\label{section2}
	For any real $p\ges1$,  \(t\in[0,T]\), and Euclidean space $\dbH$,  we introduce the following spaces:
	\begin{align*}
		L_{\mathscr{F}_T}^{p}(\Omega;\mathbb{H}) &= \Big\{ \xi : \Omega\mapsto \mathbb{H}\ \Big| \ \xi \text{ is $\mathscr{F}_T$-measurable  and }  \|\xi\|_{L^p} \triangleq \left(\mathbb{E}\left[|\xi|^p\right]\right)^{\frac{1}{p}} < \infty \Big\}, \\
		L_{\mathscr{F}_T}^{\infty}(\Omega;\mathbb{H}) &= \Big\{ \xi : \Omega \mapsto \mathbb{H}\ \Big| \ \xi \text{ is $\mathscr{F}_T$-measurable   and }  \|\xi\|_{\infty} \triangleq  \esssup_{\omega \in \Omega} |\xi(\omega)| < \infty \Big\},  \\
		L_{\mathbb{F}}^{p}([t,T];\mathbb{H}) &= \Big\{ \varphi : [t,T] \times \Omega \mapsto \mathbb{H} \ \Big| \  \varphi  \text{ is a jointly measurable process, and}\text{ for a.e. }
		s\in [t,T], \\
		&\qq	\varphi_s \text{ is } \mathscr{F}_s \text{-measurable, and }  
		\|\varphi\|_{L_{\mathbb{F}}^{p}([t,T];\mathbb{H})} \triangleq \left[\mathbb{E} \left(\int_{t}^{T} |\varphi_{s}|^{2} ds\right)^{\frac{p}{2}}\right]^{\frac{1}{p}} < \infty \Big\},  \\
		S_{\mathbb{F}}^{p}([t,T];\mathbb{H}) &= \Big\{ \varphi : [t,T] \times \Omega \mapsto \mathbb{H} \ \Big| \ \varphi_s \text{ is } \mathscr{F}_s \text{-measurable, continuous, and } \\
		&\qquad\qquad\qquad\qquad\quad  \|\varphi\|_{S_{\mathbb{F}}^{p}([t,T];\mathbb{H})} \triangleq \left( \mathbb{E} \Big[ \sup_{s \in [t,T]} |\varphi_{s}|^{p} \Big] \right)^{\frac{1}{p}} < \infty \Big\}, \\
		S_{\mathbb{F}}^{\infty}([t,T];\mathbb{H}) &= \Big\{ \varphi : [t,T] \times \Omega \mapsto \mathbb{H} \ \Big| \ \varphi_s \text{ is } \mathscr{F}_s \text{-measurable, continuous, and }  \\
		&\qquad\qquad\qquad\qquad\quad	 \|\varphi\|_{S_{\mathbb{F}}^{\infty}([t,T];\mathbb{H})} \triangleq \esssup_{(s,\omega) \in [t,T] \times \Omega} |\varphi_{s}(\omega)| < \infty \Big\}, \\
		A_{\mathbb{F}}^{p}([t,T];\mathbb{R}_+) &= \Big\{ K : [t,T] \times \Omega \mapsto \mathbb{R}_+\  \Big| \ K\   \text{is continuous and non-decreasing on}\ [t,T],\ 
		\text{starts from} \\
	& \q \quad\text{zero at the left endpoint, i.e.}\ K_t=0,	\ \text{and}\ K_s  \text{ is } \mathscr{F}_s \text{-measurable,}
  \  \mathbb{E}\left[(K_T)^p\right] < \infty \Big\}.
	\end{align*}

	\begin{definition}\sl
		A triple of    processes $(Y,Z,K)\in S^2_{\mathbb{F}}([0,T];\mathbb{R})\times L^2_{\mathbb{F}}([0,T];\mathbb{R}^d)\times A^2_{\mathbb{F}}([0,T];\mathbb{R}_+)$  is called a solution of   RBDSDE  \rf{r1}  with obstacle $S$  if it satisfies \rf{r1},
		 $\dbP$-a.s.
	\end{definition}

Next, we introduce several assumptions that will be used in the following sections. These include the general growth of the generator \(f\) with respect to \(y\), the quadratic growth of \(f\) with respect to \(z\), and the boundedness of the terminal value \(\xi\), among others. We collect these conditions here in order to present their similarities and differences as clearly and comprehensively as possible.
	In the following, we always  let   \(C>0\), % \(\l>0\),
	\(\a\in(0,1)\),  and $\mu\in\dbR$ be some constants,  let $\lambda: [0,T]\longmapsto \mathbb{R}_+$ satisfy    \(\int_0^T |\lambda(t)|^2dt <\infty\),  and  let $\varphi:\mathbb{R}_+ \longmapsto \mathbb{R}_+$ be a non-decreasing continuous function.
	$$
	\textbf{(A1)}\q \xi\in L_{\mathscr{F}_T}^{2}(\Omega;\mathbb{R}); 
	\qq \textbf{(A2)}\q \xi\in  L_{\mathscr{F}_T}^{\infty}(\Omega;\mathbb{R}). \qq\qq\qq\qq \qq\qq \q \qq\	$$ 
	Assume that the coefficient $f(t,y,z):[0,T]\times\Omega\times\mathbb{R}\times\mathbb{R}^d\longmapsto \mathbb{R}$  is jointly measurable and continuous with respect to $(y,z)$,  and that,    for any
	$(t,\omega,y,y',z)\in [0,T]\times\Omega\times\mathbb{R}\times\mathbb{R}\times\mathbb{R}^d$, $f(\cdot,y,z)\in  L^2_{\mathbb{F}}([0,T];\mathbb{R})$,
	\begin{align*}
		\textbf{(F1)}\q&  |f(t,y,z)|\leq |f(t,0,0)|+\varphi(|y|)+ C |z| \hb{ with }  \varphi(0)=0, \hbox{ and } \label{F111}\\ 
		&\qq\qq\q	\left(y-y'\right) \big(f(t,y,z)-f(t,y',z)\big)\leq \mu \big|y-y'\big|^2. \nonumber \\
		\textbf{(F2)}\q&  |f(t,y,z)|\leq C(1+|y|+|z|^2).  \\ 
		\textbf{(F3)}\q& |f(t,y,z)|\leq \varphi(|y|)+ C |z|^2\q\hbox{and}\q
		\left(y-y'\right) \big(f(t,y,z)-f(t,y',z)\big)\leq \mu \big|y-y'\big|^2.
	\end{align*}
	Assume that the coefficient $g:[0,T]\times\Omega\times\mathbb{R}\times\mathbb{R}^d\longmapsto \mathbb{R}^l$  is jointly measurable, and   that,  for any
	$(t,\omega,y,y',z,z')\in [0,T]\times\Omega\times\mathbb{R}\times\mathbb{R}\times\mathbb{R}^d\times\mathbb{R}^d$, $g(\cdot,y,z)\in  L^2_{\mathbb{F}}([0,T];\mathbb{R}^l)$,
	\begin{align*}
		\textbf{(G1)}\q& |g(t,y,z)-g(t,y',z')|^2\leq C|y-y'|^2+\alpha |z-z'|^2.\qq\qq\qq\qq\qq\qq\qq
		\\
		\textbf{(G2)}\q&  		|g(t,y,z)|^2\leq |\lambda (t)|^2+ \alpha|z|^2.
		\\
		\textbf{(G3)}\q&   \Bigg\{ 	\varphi\Bigg(\Bigg|\mathbb{E}\left[\int_t^T g(s,0,0)dB_s ~\Big | ~ \mathscr{G}_t \right]\Bigg| \Bigg)\Bigg\}_{t\in [0,T]}
		\in L^2_{\mathbb{F}}([0,T];\mathbb{R}).
	\end{align*}
We point out that the	condition \(\textbf{(G3)}\) is imposed to handle the nonlinear growth in \(y\).  This condition is automatically satisfied under several commonly used exponential or polynomial integrability assumptions, as illustrated below.
	We  provide several examples in which \(g\) satisfies  condition \(\textbf{(G3)}\):
	\begin{exam}\label{eg}  \sl 
		For any \( p \geq 1 \) and \( k \in \mathbb{R}_+ \),  the following cases satisfy condition \hb{\textbf{(G3)}}:
		\begin{align*}
			(1)& \q\ 	\varphi(x)=|x|^p \q\  \hbox{and} \q\ g(\cdot,0,0)\in L^{2p}_{\mathbb{F}}([0,T];\mathbb{R}^l);\\
			(2)& \q\  \varphi(x)=e^{kx} \q\  \hbox{and} \q\ 
			\mathbb{E}\left[ 	e^{2k^2\int_0^T|g(t,0,0)|^2dt}\right]<\infty;\\
			(3)& \q\ \varphi(x)=e^{kx^2}\q\  \hbox{and} \q\ 
			\mathbb{E}\left[\frac{1}{\sqrt{1-4k\int_0^T |g(t,0,0)|^2dt}}  \right]<\infty,
		\end{align*}
	where (2) and (3) hold provided that \(g(\cdot,0,0)\) is deterministic, or more generally independent of the Brownian motion \(B\).
	To  derive (2), one uses estimates for exponential martingales together with Jensen's inequality.
	For (3),  one needs \(4k\int_0^T |g(t,0,0)|^2dt <1,\) a.s..  Then the conclusion follows from
	 the fact that the stochastic integral is conditionally Gaussian with respect to \(\hat{g}\triangleq \sigma\{g(t,0,0); 0\leq  t\leq T\}\).
	Indeed,
	let 
	\begin{align*}
	Y_t\triangleq \mathbb{E}\left[\int_t^T g(s,0,0)dB_s ~\Big | ~ \mathscr{G}_t \right],\q\ 0\leq t\leq T.
	\end{align*}
 Suppose that \(g(\cdot,0,0)\) is independent of \(B\). Conditionally on \(\hat{g}\), the random variable \(X_t\triangleq \int^T_t g(s,0,0)dB_s\) is centered Gaussian with conditional variance
 \begin{align*}
 	G_t=\int_t^T |g(s,0,0)|^2ds.
 \end{align*}
 Since \(\varphi\) is non-decreasing, conditional Jensen's inequality gives
 \begin{align*}
 	\mathbb{E}\left[\varphi(|Y_t|)^2\right]
 	\leq \mathbb{E}\left[\varphi\left( \Bigg|\int_t^T g(s,0,0)dB_s \Bigg|  \right)^2\right].
 \end{align*}
 Moreover, the following
conditional moment identity holds:
\begin{align*}
&\mathbb{E}\left[e^{2k|X_t|^2}\ |\  \hat{g} \right]=
\frac{1}{\sqrt{1-4kG_t}}, \q\ 4kG_t<1.
\end{align*}
This proves assertion (3).
	\end{exam}
	For the obstacle $S$,  we always assume that $S_T\leq \xi$, $ \dbP$-a.s.. In addition, we give the following assumptions:
	\begin{align*}
		\textbf{(S1)}\q& S^+\in S^2_{\dbF}([0,T];\dbR) \q \hbox{and} \q
		\mathbb{E}\left[\varphi^2(\sup\limits_{0\leq t\leq T}e^{\mu t}S_t^+)\right]<\infty. \qq\qq\qq\qq\qq\qq
		\\
		\textbf{(S2)}\q& 	S\in S_{\mathbb{F}}^{\infty}([0,T];\mathbb{R}).
	\end{align*}
	
	\section{\texorpdfstring{ 	General growth in  $y$ and linear growth in  $z$}{	General growth in  y and linear growth in  z}}\label{section3}

	In this section, we study the existence, uniqueness, and maximal solutions for BDSDEs and reflected BDSDEs when the generator \(f\) is of general growth in \(y\) and linear growth in \(z\). To better highlight the differences among these results, we divide this section into four subsections.
	Moreover, since a change of variables will be used frequently in the proofs, we first present the following remark for simplicity of exposition.

	\begin{remark}\label{r3.1}\sl 
		The triple $(Y,Z,K)$ is a solution of  reflected BDSDE \eqref{r1} if and only if  the following triple
		\begin{equation}\label{new solution}
			(\bar{Y},\bar{Z},\bar{K})
			\triangleq \left(e^{\mu t}Y_t,e^{\mu t}Z_t,\int_0^te^{\mu s}dK_s\right)
		\end{equation}
		is a solution of reflected BDSDE \eqref{r1}  with $({\xi},{f},{g},S)$  replaced by $(\bar{\xi} , \bar{f} , \bar{g}, \bar{S} )$, where
		for any \((t,y,z)\in [0,T]\times \mathbb{R}\times \mathbb{R}^d\),
		\begin{align*}
			\big(\bar{\xi},\bar{f}(t,y,z),\bar{g}(t,y,z),\bar{S}_t\big)
			=\big(e^{\mu T}\xi,\ e^{\mu t}f(t,e^{-\mu t}y,e^{-\mu t}z)-\mu y, \ e^{\mu t}g(t,e^{-\mu t}y,e^{-\mu t}z),\ e^{\mu t}S_t\big).
		\end{align*}
		Moreover,  the transformed coefficient 
		$\bar{f}$ satisfies the same monotonicity condition as in \(\textbf{(F1)}\) 
		with $\mu =0$, i.e., 
		\begin{align*}
			\left(y-y'\right)	\big(\bar{f}(t,y,z)-\bar{f}(t,y',z)\big)\leq 0, \q\
			\forall (t,\omega,y,y',z)\in [0,T]\times\Omega\times\mathbb{R}\times\mathbb{R}\times\mathbb{R}^d, 
		\end{align*}
		which implies that $\bar{f}$ is monotone non-increasing in $y$.
		Meanwhile, the transformed obstacle  $\bar{S}$ satisfies
		\begin{align*}
			\mathbb{E}\left[\sup\limits_{0\leq t\leq T}(\bar{S}_t^+)^2\right]<\infty\q \hbox{and}\q 
			\mathbb{E}\left[\varphi^2\big(\sup\limits_{0\leq t\leq T}(\bar{S}_t^+)\big)\right]<\infty.
		\end{align*}
	\end{remark}

	In view of \autoref{r3.1}, after applying the above change of variables, we shall work under assumptions \(\textbf{(F1)}\) and \(\textbf{(S1)}\) throughout the proofs in \autoref{section3}.

	\subsection{BDSDE: existence and uniqueness}\label{sec3.1}

	In this part, we study the existence and uniqueness of BDSDEs when the generator \(f\) is of general growth with respect to \(y\). This condition  is weaker than those imposed in Pardoux and Peng \cite{94} and 
	Wu and Zhang \cite{11}. 
	However, when the generator \(f\) is of general growth in \(y\), the classical methods for BSDEs are difficult to apply to BDSDEs. The reason is that the classical approach relies on conditional expectations, whereas such conditional expectations cannot be used for BDSDEs because \(\mathbb{F} = \{\mathscr{F}_t; t \in [0,T]\}\) is not a filtration. 
	Therefore, to overcome these difficulties, we use a truncation argument together with suitable construction techniques.

	\begin{theorem}\label{t3.2}\sl
	Assume that conditions \textbf{(A1)}, \textbf{(F1)}, \textbf{(G1)},  and \textbf{(G3)}  are satisfied.  Moreover, assume that $f$ is  Lipschitz continuous  in $z$, i.e.,
		\begin{align*}
			\big| f(t,y,z)-f(t,y,z')\big|
			\leq C |z-z'|.
		\end{align*}
		Then  BDSDE \eqref{1} admits a unique solution
		$(Y,Z)\in S^2_{\mathbb{F}}([0,T];\mathbb{R})\times L^2_{\mathbb{F}}([0,T];\mathbb{R}^d)$. 
	\end{theorem}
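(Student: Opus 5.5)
By \autoref{r3.1} we may assume $\mu=0$, so that $f$ is non-increasing in $y$. The plan has two stages. First we treat the special case in which $f=f(t,y)$ does not depend on $z$ and $g=g(t)$ does not depend on $(y,z)$. We then pass to the general case through a Picard-type contraction on $L^2_{\mathbb{F}}([0,T];\mathbb{R})\times L^2_{\mathbb{F}}([0,T];\mathbb{R}^d)$ equipped with a weighted norm.

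\emph{Step 1: removing the backward integral in the simple case.} Let $(U,V)$ be the solution of the zero-drift linear BDSDE
\[
U_t=\int_t^T g(s)\,dB_s-\int_t^T V_s\,dW_s ,
\]
which exists by Pardoux--Peng \cite{94}; one checks that $U_t=\mathbb{E}[\int_t^T g(s)dB_s\,|\,\mathscr{G}_t]$. Writing $Y=\bar Y+U$ and $Z=\bar Z+V$ reduces \eqref{1} to
\[
\bar Y_t=\xi+\int_t^T f(s,\bar Y_s+U_s)\,ds-\int_t^T \bar Z_s\,dW_s .
\]
This equation contains no $dB$-integral. Conditionally on $\mathscr{F}^B_T$, i.e. in the filtration $\mathbb{G}$, it is a classical BSDE whose generator $y\mapsto f(s,y+U_s)$ is monotone and of general growth. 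Condition \textbf{(G3)} is exactly what gives $\varphi(|U|)\in L^2$, so that $f(\cdot,U_\cdot)\in L^2$. The difficulty is that Pardoux's result \cite{99} needs $\varphi(|y|)$-integrability in a form we do not have directly.

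So we truncate. Set $\xi_n=(\xi\wedge n)\vee(-n)$, take $f_n$ to be $f$ with its $y$-argument projected onto a ball of radius $n$, and truncate $f(\cdot,U)$ similarly. Solve the resulting classical $\mathbb{G}$-BSDEs. We then show that the solutions $\bar Y^n$ are Cauchy in $S^2\times L^2$. The estimate comes from Itô's formula applied to $|\bar Y^n-\bar Y^m|^2$, together with monotonicity (which yields $\le 0$ cross terms) and uniform a priori bounds $\mathbb{E}\sup|\bar Y^n|^2\le C(\mathbb{E}|\xi|^2+\mathbb{E}\int|f(s,U_s)|^2ds)$, themselves obtained by Itô's formula and $\varphi(0)=0$ together with monotonicity. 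The truncation error is controlled by $\varphi$ evaluated along the (uniformly $L^2$-bounded) solutions. We need a localization here: we work on the sets where the relevant processes stay below $n$ and use the estimate $\mathbb{E}\int \varphi(|\bar Y^n_s+U_s|)^2\mathbf 1_{\{\cdot>n\}}\to0$.

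We expect this step to be the main obstacle. The cleanest route is probably to bound $|\bar Y^n|$ pathwise by the solution of a linear BSDE driven by $|\xi|+\int|f(s,U_s)|ds$. Monotonicity gives $|f(s,y+U)|$-comparison, and this yields a dominating process independent of $n$, so that the truncation becomes inactive on events of vanishing probability. Finally, we verify that the limit is $\mathscr{F}_t$-measurable, not merely $\mathscr{G}_t$-measurable; this follows as in \cite{94} from the independence of increments.

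\emph{Step 2: the general case by contraction.} Given $(y,z)$ in $L^2_{\mathbb{F}}\times L^2_{\mathbb{F}}$, let $(Y,Z)=\Phi(y,z)$ be the solution from Step 1 with generator $f(t,Y,z_t)$ and $g(t,y_t,z_t)$. This generator is still monotone, of general growth, and Lipschitz in the frozen $z$. Condition \textbf{(G3)} must be checked for $g(t,y_t,z_t)$; we handle it by absorbing $g(\cdot,y,z)-g(\cdot,0,0)$ into the Lipschitz part through a slight modification of Step 1, in which \textbf{(G3)} is needed only for $g(\cdot,0,0)$ and the remainder is square-integrable. For two inputs, Itô's formula applied to $e^{\beta t}|\Delta Y_t|^2$ gives
\[
\mathbb{E}\!\int_0^T\! e^{\beta s}\big(\beta|\Delta Y_s|^2+|\Delta Z_s|^2\big)ds\le \mathbb{E}\!\int_0^T\! e^{\beta s}\Big(2C|\Delta Y_s||\Delta z_s|+C|\Delta y_s|^2+\alpha|\Delta z_s|^2\Big)ds .
\]
Choosing $\varepsilon$ with $\varepsilon C\le\frac{1-\alpha}{2}$ and $\beta$ large, $\Phi$ becomes a contraction with factor $\frac{1+\alpha}{2}$ in the norm $\mathbb{E}\int e^{\beta s}(\bar c|y|^2+|z|^2)ds$. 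The fixed point lies in $S^2$ by a standard BDG argument. Uniqueness follows from the same Itô estimate applied to two solutions.
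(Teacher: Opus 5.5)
Your architecture matches the paper's. You shift by the solution $U$ ($Y^0$ in the paper) of the zero-drift linear BDSDE to get a $\mathbb{G}$-BSDE with monotone generator $f(s,y+U_s)$, solve it by approximation, and then run the same $\frac{1+\alpha}{2}$-contraction in a weighted norm. Your Tanaka bound $|\bar Y^n_t|\le D_t$, with e.g.\ $D_t=\mathbb{E}[\,|\xi|+\int_t^T(|f(s,0)|+\varphi(|U_s|))ds\,|\,\mathscr{G}_t]$, plays the role of the paper's $|\bar Y^n_t|\le X^0_t$. But the step you flag as the main obstacle is a genuine gap, and the domination does not close it. You cut off the argument $y+U_s$, and $U$ is unbounded, so the cut-off is really active and must be removed. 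After using monotonicity, It\^o's formula for $|\bar Y^n-\bar Y^m|^2$ leaves error terms controlled only by $4\,\mathbb{E}\int_0^T D_s\big(|f(s,0)|+\varphi(D_s+|U_s|)\big)\mathbf{1}_{\{D_s+|U_s|>n\}}ds$. This bound is finite, let alone vanishing, only if $D\,\varphi(D+|U|)$ is integrable. \textbf{(G3)} controls $\varphi(|U|)$ alone, and with $\xi$ merely square integrable this integrability need not hold already for $\varphi(x)=x^p$, $p>1$. Saying the truncation is ``inactive on events of vanishing probability'' does not substitute for it. $\bar Y^n_t$ is a conditional expectation, so $\bar Y^n$ and $\bar Y^m$ do not coincide on the event where the cut-off is inactive, and no $S^2\times L^2$ Cauchy estimate follows. (The paper passes over the same point by appealing to ``a standard localization argument'' and Vitali's theorem.) One way to close it is to have $\varphi(r+|U|)\in L^2_{\mathbb{F}}([0,T];\mathbb{R})$ for every fixed $r$, e.g.\ if $\varphi(r+x)\le c_r(1+\varphi(x))$ (polynomial or exponential $\varphi$). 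Then truncate only the data $\xi$ and $f(\cdot,U_\cdot)$. The solutions are bounded by deterministic constants, any cut-off in $\bar y$ alone is never active, and the Cauchy estimate involves only the data differences.

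In Step 2 you rightly notice something the paper's proof skips. The map needs \textbf{(G3)} for the frozen integrand $g(\cdot,y_\cdot,z_\cdot)$, and this does not follow from \textbf{(G3)} for $g(\cdot,0,0)$; the paper applies Proposition~\ref{p3.6} to the frozen coefficient for arbitrary square-integrable inputs without checking it. Your remedy does not work as described, however. $f$ has no Lipschitz part in $y$ that could absorb $h=g(\cdot,y,z)-g(\cdot,0,0)$, so $h$ must still be handled by the simple-case solver, and there are only two options. Either $\int h\,dB$ stays in the equation, which is then no longer a $\mathbb{G}$-BSDE, so the conditional-expectation arguments of Step 1 are unavailable. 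Or you shift by $U^0+U^h$, with $U^0_t=\mathbb{E}[\int_t^T g(s,0,0)dB_s\,|\,\mathscr{G}_t]$ and $U^h_t=\mathbb{E}[\int_t^T h_s\,dB_s\,|\,\mathscr{G}_t]$. Then you need $\varphi(|U^0+U^h|)\in L^2_{\mathbb{F}}([0,T];\mathbb{R})$, which again need not hold even for $\varphi(x)=|x|^p$, $p>1$: for inputs $(y,z)$ that are merely square integrable, $U^h$ is only in $S^2$. As it stands, the contraction map is not shown to be well defined. You need either a class of inputs that the map preserves and on which the frozen $g$ satisfies \textbf{(G3)}, or a hypothesis on $g$ stronger than \textbf{(G3)} for $g(\cdot,0,0)$ alone.
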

	
	Before proving the existence and uniqueness result, we recall some facts concerning the Yosida approximation of monotone and continuous functions (see Da Prato and Zabczyk \cite{92d} and Hu \cite{00h}).

	\begin{lemma}\label{l3.5} \sl
		Let $F:\mathbb{R}^n\longmapsto \mathbb{R}^n$ be a continuous function satisfying
		\begin{align*}
			\left(x^1-x^2,F(x^1)-F(x^2)\right)\leq 0,\qq 	 x^1,x^2 \in \mathbb{R}^n.
		\end{align*}
		Then,  for any $\varepsilon>0$ and  $y\in\mathbb{R}^n$, there exists a unique $x=J^{\varepsilon}(y)$ such that
		\begin{align*}
			x-\varepsilon F(x)=y.
		\end{align*}
		Moreover,  for any $\e,\varepsilon^1,\varepsilon^2 >0$, and 	$x,x^1,x^2\in\dbR^n$,  we have
		\begin{align*}
			&	\big|J^{\varepsilon}(x^1)-J^{\varepsilon}(x^2)\big|\leq |x^1-x^2| 
			\q\hbox{and}\q 	\lim\limits_{\varepsilon\rightarrow 0}J^{\varepsilon}(x)=x,\\
			&	 \left(x^1-x^2,F^{\varepsilon}(x^1)-F^{\varepsilon}(x^2)\right)\leq 0\q\hbox{and}\q
			\big	|F^{\varepsilon}(x^1)-F^{\varepsilon}(x^2)\big|\leq \frac{2}{\varepsilon}|x^1-x^2|,  \\
			&	 |F^{\varepsilon}(x)|\leq |F(x)| \q\hbox{and}\q 
			\big(x^1-x^2,F^{\varepsilon^1}(x^1)-F^{\varepsilon^2}(x^2)\big)
			\leq (\varepsilon^1+\varepsilon^2)\big(|F(x^1)|+|F(x^2)|\big)^2,
		\end{align*}
		where $F^{\varepsilon}$  is the Yosida approximation of the function $F$ defined by
		\begin{align*}
			F^{\varepsilon}(x)=F(J^{\varepsilon}(x))=\frac{1}{\varepsilon} [J^{\varepsilon}(x)-x].
		\end{align*}
		Additionally, 
		for any set $\{x^{\varepsilon}\}_{{\varepsilon>0}}\subset\mathbb{R}^n$ and $x\in\mathbb{R}^n$, if
		$\lim\limits_{\varepsilon\rightarrow 0}x^{\varepsilon}=x,$ then
		\begin{align*}
			\lim\limits_{\varepsilon\rightarrow 0}F^{\varepsilon}(x^{\varepsilon})=F(x).
		\end{align*}
		
	\end{lemma}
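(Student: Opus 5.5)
The plan is to derive every assertion from one elementary fact: the map $G_\varepsilon(x)\triangleq x-\varepsilon F(x)$ is continuous and strongly monotone. Indeed, by the dissipativity of $F$,
\begin{align*}
\big(x^1-x^2,\,G_\varepsilon(x^1)-G_\varepsilon(x^2)\big)\geq |x^1-x^2|^2 .
\end{align*}
Uniqueness of $J^\varepsilon(y)$ is immediate, since two solutions $x^1,x^2$ would give $|x^1-x^2|^2\leq 0$.

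Existence is the only non-algebraic step, and I expect it to be the main (though standard) obstacle. I would fix $y$ and set $H(x)=G_\varepsilon(x)-y$. Then
\begin{align*}
(H(x),x)\geq |x|^2-|x|\,|G_\varepsilon(0)-y|>0 \quad \text{for } |x|=R
\end{align*}
with $R$ large. By the finite-dimensional ``acute angle'' consequence of Brouwer's fixed point theorem, $H$ has a zero in the ball of radius $R$. If $H$ had no zero there, the map $x\mapsto -R\,H(x)/|H(x)|$ would send the ball to itself continuously. Its fixed point $x^*$ would satisfy $|x^*|=R$ and $(H(x^*),x^*)=-R|H(x^*)|<0$, a contradiction. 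Alternatively, one may simply cite the Minty–Browder theorem, as in Da Prato and Zabczyk \cite{92d}.

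The remaining properties are direct computations. Write $J_i=J^{\varepsilon}(x^i)$.

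\emph{Nonexpansiveness of $J^\varepsilon$.} Strong monotonicity gives $|J_1-J_2|^2\leq (J_1-J_2,x^1-x^2)$, hence $|J_1-J_2|\leq|x^1-x^2|$.

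\emph{Properties of $F^\varepsilon$.} The identity $J^\varepsilon(x)-\varepsilon F(J^\varepsilon(x))=x$ yields $F^\varepsilon(x)=\varepsilon^{-1}(J^\varepsilon(x)-x)$. Therefore
\begin{align*}
\big(x^1-x^2,F^\varepsilon(x^1)-F^\varepsilon(x^2)\big)=\varepsilon^{-1}\big[(x^1-x^2,J_1-J_2)-|x^1-x^2|^2\big]\leq 0,
\end{align*}
and the bound $|F^\varepsilon(x^1)-F^\varepsilon(x^2)|\leq \frac{2}{\varepsilon}|x^1-x^2|$ follows from the triangle inequality.

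\emph{The bound $|F^\varepsilon(x)|\leq|F(x)|$.} Apply dissipativity to the pair $J^\varepsilon(x),x$, using $J^\varepsilon(x)-x=\varepsilon F(J^\varepsilon(x))$. This gives $\varepsilon\,(F(J^\varepsilon(x)),F(J^\varepsilon(x))-F(x))\leq 0$, so $|F^\varepsilon(x)|^2\leq |F^\varepsilon(x)|\,|F(x)|$. Consequently $|J^\varepsilon(x)-x|=\varepsilon|F^\varepsilon(x)|\leq\varepsilon|F(x)|\to0$.

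\emph{The mixed inequality.} Set $F_i=F^{\varepsilon^i}(x^i)=F(J^{\varepsilon^i}(x^i))$ and write $x^i=J^{\varepsilon^i}(x^i)-\varepsilon^iF_i$. Then
\begin{align*}
(x^1-x^2,F_1-F_2)=\big(J^{\varepsilon^1}(x^1)-J^{\varepsilon^2}(x^2),F_1-F_2\big)-(\varepsilon^1F_1-\varepsilon^2F_2,F_1-F_2).
\end{align*}
The first term is $\leq 0$ by dissipativity of $F$. The second is bounded in absolute value by $(\varepsilon^1+\varepsilon^2)(|F_1|+|F_2|)^2\leq(\varepsilon^1+\varepsilon^2)(|F(x^1)|+|F(x^2)|)^2$, using the bound just proved.

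\emph{The final limit.} If $x^\varepsilon\to x$, then
\begin{align*}
|J^\varepsilon(x^\varepsilon)-x|\leq |x^\varepsilon-x|+|J^\varepsilon(x)-x|\to 0 .
\end{align*}
Continuity of $F$ then gives $F^\varepsilon(x^\varepsilon)=F(J^\varepsilon(x^\varepsilon))\to F(x)$.
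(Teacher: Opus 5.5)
Your argument is correct and complete. Every listed property follows, as you show, from the strong monotonicity of $x\mapsto x-\varepsilon F(x)$, and the Brouwer ``acute angle'' step correctly gives existence of $J^{\varepsilon}$ in finite dimension.

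The paper gives no proof of its own; it simply recalls these facts from Da Prato and Zabczyk \cite{92d} and Hu \cite{00h}. Those references use the same resolvent computations, obtaining existence from the surjectivity of $I-\varepsilon F$ for a continuous dissipative map (Minty--Browder), which your Brouwer argument proves directly.
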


Before proving Theorem \ref{t3.2}, we  present a result, which establishes the existence and uniqueness of solutions to BDSDE \eqref{1} in the case where the generator \(f\) is independent of \(z\) and the coefficient \(g\) is independent of \((y,z)\).
By \autoref{r3.1}, we may assume without loss of generality that \(\mu =0\).

	\begin{proposition}\label{p3.6} \sl
		Assume that the assumptions of \autoref{t3.2} hold,   the following BDSDE
		\begin{align}\label{3.2}
			Y_t=\xi+\int_t^Tf(s,Y_s)ds+\int_t^Tg(s)dB_s-\int_t^TZ_sdW_s, \q\ 0\leq t\leq T,
		\end{align}
		admits  a unique  solution  $(Y,Z)\in S^2_{\mathbb{F}}([0,T];\mathbb{R})\times L^2_{\mathbb{F}}([0,T];\mathbb{R}^d)$.
	\end{proposition}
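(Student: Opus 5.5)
Throughout, $\mu=0$ by \autoref{r3.1}, so $f$ is non-increasing in $y$. The plan has three steps: remove the backward integral by a change of unknown, solve the resulting random-coefficient BSDE by Yosida approximation, and prove uniqueness by an energy estimate.

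\emph{Step 1: reduction.} Let $(\hat Y,\hat Z)$ solve the zero-drift linear BDSDE
\begin{align*}
\hat Y_t=\int_t^Tg(s)dB_s-\int_t^T\hat Z_sdW_s .
\end{align*}
It exists by Pardoux and Peng \cite{94}, and $\hat Y_t=\mathbb{E}\big[\int_t^Tg(s)dB_s\,|\,\mathscr{G}_t\big]$. Condition \textbf{(G3)} says exactly that $\varphi(|\hat Y|)\in L^2_{\mathbb{F}}([0,T];\mathbb{R})$. Set $\tilde Y=Y-\hat Y$ and $\tilde Z=Z-\hat Z$. Then \eqref{3.2} is equivalent to
\begin{align*}
\tilde Y_t=\xi+\int_t^T\tilde f(s,\tilde Y_s)ds-\int_t^T\tilde Z_sdW_s,\qquad \tilde f(s,y)\triangleq f(s,y+\hat Y_s).
\end{align*}
This equation has no $dB$-term, so it is a classical BSDE with respect to the filtration $\mathbb{G}$, with $\xi$ being $\mathscr{G}_T$-measurable. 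The generator $\tilde f$ is continuous and non-increasing in $y$. Since $\varphi(0)=0$, \textbf{(F1)} gives $|\tilde f(s,0)|\le |f(s,0)|+\varphi(|\hat Y_s|)\in L^2$.

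\emph{Step 2: existence via Pardoux's argument on $\mathbb{G}$.} Pardoux's existence argument \cite{99} applies on $\mathbb{G}$, where conditional expectations are legitimate. If one prefers to avoid citing it, one can use the Yosida approximations $\tilde f^\varepsilon$ of \autoref{l3.5}. These are Lipschitz, so the BSDEs with generators $\tilde f^\varepsilon$ have solutions $(\tilde Y^\varepsilon,\tilde Z^\varepsilon)$. The main obstacle is the Cauchy estimate, because it needs a uniform bound on $\mathbb{E}\int|\tilde f(s,\tilde Y^\varepsilon_s)|^2ds$ while $\varphi$ has arbitrary growth.

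I would handle this in two stages, first for bounded data and then for general data:
\begin{itemize}
\item First truncate $\xi$ and $\tilde f(\cdot,0)$ at level $n$. Monotonicity then gives $|\tilde Y^\varepsilon|\le C_n$ via the conditional-expectation bound $|\tilde Y_t|\le \mathbb{E}[|\xi|+\int_t^T|\tilde f(s,0)|ds\,|\,\mathscr{G}_t]$. Hence $|\tilde f(s,\tilde Y^\varepsilon_s)|\le n+\varphi(C_n)$ is bounded.
\item With this bound, the last inequality in \autoref{l3.5} makes $\tilde Y^\varepsilon$ Cauchy in $S^2$ and $\tilde Z^\varepsilon$ Cauchy in $L^2$. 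Passing to the limit gives a solution for the truncated data.
\item Then remove the truncation. For the truncated solutions, the monotonicity estimate
\begin{align*}
\mathbb{E}\Big[\sup_t|\tilde Y^n-\tilde Y^m|^2+\int_0^T|\tilde Z^n-\tilde Z^m|^2\Big]\le C\,\mathbb{E}\Big[|\xi^n-\xi^m|^2+\int_0^T|\tilde f^n(s,0)-\tilde f^m(s,0)|^2ds\Big]
\end{align*}
holds and needs no growth control. It gives convergence as $n\to\infty$.
\item To pass to the limit in the drift, one needs some bound on $\tilde f(\cdot,\tilde Y^n)$ in $L^1$. I would derive it by applying Itô's formula to $\tilde Y^n\cdot\mathrm{sgn}$ (Tanaka), together with the fact that $-\tilde f(s,y)\mathrm{sgn}(y)\ge -|\tilde f(s,0)|$. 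This yields $\mathbb{E}\int|\tilde f(s,\tilde Y^n_s)|ds\le C$ and a.e.\ convergence. The missing ingredient is uniform integrability; I would try to get it by localizing with stopping times where $|\tilde Y^n|\le k$.
\end{itemize}

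\emph{Step 3: back to the BDSDE.} Set $Y=\tilde Y+\hat Y$ and $Z=\tilde Z+\hat Z$. These are $\mathscr{F}_t$-measurable: $Y$ is a priori only $\mathscr{G}_t$-measurable, and I would show it is independent of $\mathscr{F}^B_t$, i.e.\ adapted to $\mathscr{F}_t$, by the standard argument of \cite{94}. Namely, $Y_t$ is the limit of solutions built from data that are independent of $\mathscr{F}^B_t$, or equivalently one checks $Y_t=\mathbb{E}[Y_t\,|\,\mathscr{F}_t]$.

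Uniqueness follows from Itô's formula applied to $|Y-Y'|^2$ for two solutions. The $g$-terms cancel because $g$ does not depend on $(y,z)$, and the monotone drift term is non-positive. Hence $\mathbb{E}|Y_t-Y'_t|^2+\mathbb{E}\int_t^T|Z-Z'|^2\le0$.
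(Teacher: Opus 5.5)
Your reduction to a $\mathbb{G}$-BSDE via the zero-drift solution $\hat Y$, and your uniqueness argument, are the same as the paper's. The existence step for truncated data, however, does not go through.

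\textbf{Where it fails.} The bound $|\tilde Y^\varepsilon|\le C_n$ is fine. The next claim, $|\tilde f(s,\tilde Y^\varepsilon_s)|\le n+\varphi(C_n)$, is not. After the shift, $\tilde f(s,y)=f(s,y+\hat Y_s)$, so \textbf{(F1)} only gives $|\tilde f(s,y)|\le |f(s,0)|+\varphi(|y|+|\hat Y_s|)$. Truncating $\xi$ and $\tilde f(\cdot,0)$ does not touch the $y$-dependent part $\tilde f(s,y)-\tilde f(s,0)$. On $\{|y|\le C_n\}$ the best available bound is therefore $n+2|f(s,0)|+2\varphi(C_n+|\hat Y_s|)$.

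\textbf{Why this matters.} $\hat Y$ is unbounded, and \textbf{(G3)} only places $\varphi(|\hat Y|)$ in $L^2$. Nothing controls $\varphi(C_n+|\hat Y|)$ unless $\varphi$ satisfies a doubling bound $\varphi(x+c)\le K_c(1+\varphi(x))$. That bound holds for powers and $e^{kx}$, but fails for $e^{kx^2}$ and anything faster. So the right-hand side $2(\varepsilon^1+\varepsilon^2)\,\mathbb{E}\int_0^T(|\tilde f^{\varepsilon^1}|+|\tilde f^{\varepsilon^2}|)^2ds$ of your Yosida Cauchy estimate is not known to be finite, let alone bounded in $\varepsilon$. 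Your fallback citation of Pardoux on $\mathbb{G}$ fails for the same reason. His growth hypothesis needs a deterministic $\varphi$, or at least integrability of $\sup_{|y|\le r}|\tilde f(\cdot,y)-\tilde f(\cdot,0)|$, and the random shift destroys exactly that.

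\textbf{The missing idea.} The paper truncates the shift itself. With $\rho_n(x)=x\min\{1,n/|x|\}$, it works with $\bar f_n(t,y)=f(t,y+\rho_n(\hat Y_t))$. For fixed $n$ this generator is still continuous and non-increasing in $y$, and its growth $|f(t,0)|+\varphi(|y|+n)$ is deterministic. So for bounded $\xi$ and $f(\cdot,0)$:
\begin{itemize}
\item the $\mathscr{G}_t$-conditional estimate bounds $\bar Y^{n,\varepsilon}$ by a constant;
\item the Yosida drifts are then bounded;
\item the Cauchy argument in $\varepsilon$ closes.
\end{itemize}
Condition \textbf{(G3)} enters only when this truncation is removed. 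The bound $|\bar f_n(t,0)|\le |f(t,0)|+\varphi(|\hat Y_t|)$ gives a $\mathscr{G}_t$-conditional bound on $\bar Y^n$ and an $S^2$ bound, both uniform in $n$. Since $\bar f_n-\bar f$ is supported on $\{|\hat Y_s|>n\}$, the paper lets $n\to\infty$ using monotonicity, these bounds, a localization argument and Vitali's theorem. Only after that does it remove the truncation of $\xi$ and $f(\cdot,0)$, as in your third bullet.

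\textbf{What is already sound.} Your untruncation estimate needs no growth control. The uniform integrability you look for in your last bullet is also unnecessary. Along an a.s.\ uniformly convergent subsequence, $\sup_{n,s}|\tilde Y^n_s+\hat Y_s|\le R(\omega)<\infty$. Hence $|f(s,\tilde Y^n_s+\hat Y_s)|\le |f(s,0)|+\varphi(R(\omega))$, and dominated convergence in the drift works pathwise.
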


	\begin{proof} 
		{\bf Existence.}
		For each $n\in\mathbb{N}$, define
		\begin{align}\label{3.4}
			\rho_n(x)=\frac{\inf\{n,|x|\}}{|x|}x\q\
			\hbox{and} \q\ \rho_n(0)=0.
		\end{align}
			Since the Brownian motions \(W\) and \(B\) are independent, it follows that the increment \(W_t-W_s\) is independent of \(\mathscr{G}_s\), and \(W\) remains
		a Brownian motion with respect to the enlarged filtration
		\(\mathbb{G}=(\mathscr{G}_t)_{0\leq t\leq T}\). 
		Apply the  result of  Pardoux and Peng (see  \cite[Proposition 1.2]{94}) under the filtration \(\mathbb{G}\), 
			\begin{align}\label{Y0}
			Y_t^0=\int_t^Tg(s)dB_s-\int_t^TZ_s^0dW_s,\q\ 0\leq t\leq T.
		\end{align}
			Moreover,
		\begin{align*}
			Y_t^0=\mathbb{E}\left[\int_t^T g(s)dB_s ~\Big | ~ \mathscr{G}_t \right], \q\ 0\leq t\leq T. 
		\end{align*}
		  In particular, the construction in that proposition provides a jointly measurable version of 
		the integrand \(Z^0\) such that \(Z_t^0\) is \(\mathscr{F}_t\)-measurable for \(dt\otimes d\mathbb{P}\)-almost every \((t,\omega)\).
Thus the solution pair   of the auxiliary BDSDE \eqref{Y0} can be chosen so that \((Y^0,Z^0)\in  S^2_{\mathbb{F}}([0,T];\mathbb{R})\times L^2_{\mathbb{F}}([0,T];\mathbb{R}^d)\).

		\ms
		
		{\bf\it  Step 1}: Assume that there is a positive  constant $c$ such that
		\begin{align*}
		\|\xi\|_{\infty}+\esssup\limits_{(t,\omega)}|f(t,0)|
		\leq c.
		\end{align*}
		Denote 
		\begin{align*}
			Y_t^{0,n}= \rho_n(Y_t^0) \q\ \hbox{and} \q\
			\bar{f}_n(t,y)=f(t,y+Y_t^{0,n}).
		\end{align*}
		Let $\bar{f}_n^{\varepsilon}$ with $\varepsilon>0$ be the Yosida approximations of the generator $\bar{f}_n$.
		By the classical result of Pardoux and Peng \cite{94}, 	the following BSDE
		\begin{equation}\label{3.3}
			\bar{Y}_t^{n,\varepsilon}=\xi+\int_t^T\bar{f}_n^{\varepsilon}(s,\bar{Y}_s^{n,\varepsilon})ds  -\int_t^T\bar{Z}_s^{n,\varepsilon}dW_s,\q\ 0\leq t\leq T,
		\end{equation}
		admits  a unique 	solution
		$(\bar{Y}^{n,\varepsilon},\bar{Z}^{n,\varepsilon})\in S^2_{\mathbb{F}}([0,T];\mathbb{R})\times L^2_{\mathbb{F}}([0,T];\mathbb{R}^d)$. 
		For the approximating equation, we perform the Yosida
	construction in the closed subspace of \(\mathbb{F}\)-measurable processes. Each approximation is \(\mathscr{F}_t\)-measurable by the 
	preceding representation result, and the convergence in 
	\( S^2_{\mathbb{F}}([0,T];\mathbb{R})\times L^2_{\mathbb{F}}([0,T];\mathbb{R}^d)\) preserves this measurability. Consequently the limiting pair admits an
	\(\mathbb{F}\)-measurable version.
		Applying It\^o's formula to $e^{  t}|\bar{Y}_t^{n,\varepsilon}|^2$, we have
		\begin{align*}
			e^{  t}|\bar{Y}_t^{n,\varepsilon}|^2 +\int_t^T e^s\big(|\bar{Y}_s^{n,\varepsilon}|^2+|\bar{Z}_s^{n,\varepsilon}|^2\big)ds =&\
			e^{  T}|\xi|^2+2\int_t^T    e^{  s}\bar{Y}_s^{n,\varepsilon}\bar{f}_n^{\varepsilon}(s,\bar{Y}_s^{n,\varepsilon}) ds -2\int_t^T e^{  s} \bar{Y}_s^{n,\varepsilon}\bar{Z}_s^{n,\varepsilon}dW_s.
		\end{align*}
		Note that 
		\begin{align*}
			2y\bar{f}_n^{\varepsilon}(t,y)
			\leq |y|^2+\big(| f(t,0)|+\varphi(n)\big)^2.
		\end{align*}
		Taking the conditional expectation on both sides, since the measurability holds under the enlarged filtration \(\mathbb{G}\), 
		we  obtain 
		\begin{align*}
			e^{  t}|\bar{Y}_t^{n,\varepsilon}|^2
			\leq \mathbb{E}\left[e^{  T} |\xi|^2 +\int_0^T e^s \big(| f(s,0)|+\varphi(n)\big)^2 ds ~\Big | ~ \mathscr{G}_t \right].
		\end{align*}
		Thus
		for any \(t\in[0,T]\),  
		\begin{align*}
			|\bar{Y}_t^{n,\varepsilon}| \leq \Bigg\{  \mathbb{E}\left[e^{  T}\left( |\xi|^2 +\int_0^T   \big(| f(s,0)|+\varphi(n)\big)^2 ds \right)~\Big | ~ \mathscr{G}_t \right]  \Bigg\} ^{\frac{1}{2}}
			\triangleq 
			X_t^n.
		\end{align*}
		Next, we prove that  the sequence $ (\bar{Y}^{n,\varepsilon},\bar{Z}^{n,\varepsilon})_{\varepsilon >0}$   converges  in  $S^2_{\mathbb{F}}([0,T];\mathbb{R})\times L^2_{\mathbb{F}}([0,T];\mathbb{R}^d)$.
		For this, applying
		It\^o's  formula to $\big|\bar{Y}_t^{n,\varepsilon^1}-\bar{Y}_t^{n,\varepsilon^2}\big|^2$, we have
		\begin{align*}
			\big|\bar{Y}_t^{n,\varepsilon^1}-\bar{Y}_t^{n,\varepsilon^2}\big|^2+\int_t^T\big|\bar{Z}_s^{n,\varepsilon^1}-\bar{Z}_s^{n,\varepsilon^2}\big|^2ds
			=&\ \int_t^T2\left(\bar{Y}_s^{n,\varepsilon^1}-\bar{Y}_s^{n,\varepsilon^2}\right)\left(\bar{f}_n^{\varepsilon^1}(s,\bar{Y}_s^{n,\varepsilon^1})-\bar{f}_n^{\varepsilon^2}(s,\bar{Y}_s^{n,\varepsilon^2})\right)ds\\
			& 
			-\int_t^T2\left(\bar{Y}_s^{n,\varepsilon^1}-\bar{Y}_s^{n,\varepsilon^2}\right)\left(\bar{Z}_s^{n,\varepsilon^1}-\bar{Z}_s^{n,\varepsilon^2}\right)dW_s.
		\end{align*}
		From \autoref{l3.5}, we have that
		\begin{align*}
			\left(\bar{Y}_s^{n,\varepsilon^1}-\bar{Y}_s^{n,\varepsilon^2}\right)\left(\bar{f}_n^{\varepsilon^1}(s,\bar{Y}_s^{n,\varepsilon^1})-\bar{f}_n^{\varepsilon^2}(s,\bar{Y}_s^{n,\varepsilon^2})\right)
			\leq (\varepsilon^1 +\varepsilon^2)\left(|\bar{f}_n^{\varepsilon^1}(s,\bar{Y}_s^{n,\varepsilon^1})|+|\bar{f}_n^{\varepsilon^2}(s,\bar{Y}_s^{n,\varepsilon^2})|\right)^2.
		\end{align*}
		Therefore, we  get 
		\begin{align}\label{e1e2}
			\mathbb{E}\left[\big|\bar{Y}_t^{n,\varepsilon^1}-\bar{Y}_t^{n,\varepsilon^2}\big|^2 \right]+ \mathbb{E} \int_t^T\big|\bar{Z}_s^{n,\varepsilon^1}-\bar{Z}_s^{n,\varepsilon^2}\big|^2ds  
			\leq  
			2(\varepsilon^1 +\varepsilon^2)\mathbb{E} \int_t^T\left(|\bar{f}_n^{\varepsilon^1}(s,\bar{Y}_s^{n,\varepsilon^1})|+|\bar{f}_n^{\varepsilon^2}(s,\bar{Y}_s^{n,\varepsilon^2})|\right)^2ds.
		\end{align}
		For each fixed \(n\in \mathbb{N}\) and any $\varepsilon >0$, 
		\begin{align*}
			|\bar{f}_n^{\varepsilon}(t,\bar{Y}^{n,\varepsilon})|\leq |f(t,0)| +\varphi(X_t^n+n)  \  \in L^2_{\mathbb{F}}([0,T];\mathbb{R}),
		\end{align*}
		which can be verified by Doob's inequality. Therefore, 
		let \(\varepsilon^1,\varepsilon^2\longrightarrow 0 \) in \eqref{e1e2}, we get
		\begin{align*}
			\sup\limits_{0\leq t\leq T}	\mathbb{E}\left[\big|\bar{Y}_t^{n,\varepsilon^1}-\bar{Y}_t^{n,\varepsilon^2}\big|^2 \right]+ \mathbb{E} \int_t^T\big|\bar{Z}_s^{n,\varepsilon^1}-\bar{Z}_s^{n,\varepsilon^2}\big|^2ds\longrightarrow 0.
		\end{align*}
		Moreover, using the Burkholder--Davis--Gundy inequality, we  derive that the sequence \((\bar{Y}^{n,\varepsilon})_{\varepsilon >0}\) is also a Cauchy sequence  in \(S^2_{\mathbb{F}}([0,T];\mathbb{R})\).
		This implies that there exists a limiting process $(\bar{Y}^n,\bar{Z}^n)\in S^2_{\mathbb{F}}([0,T];\mathbb{R})\times L^2_{\mathbb{F}}([0,T];\mathbb{R}^d)$ such that  
		\begin{align*}
			\mathbb{E}\left[\sup\limits_{0\leq t\leq T}\big|\bar{Y}_t^{n,\varepsilon}-\bar{Y}^n_t
			\big|^2
			+\int_{0}^{T}\big|\bar{Z}_s^{n,\varepsilon}-\bar{Z}^n_s\big|^2ds\right]\longrightarrow 0, \q\ \hbox{as}~ \varepsilon \longrightarrow 0.
		\end{align*}
		By letting $\varepsilon \longrightarrow 0$ in \eqref{3.3} and applying the  dominated convergence theorem, we can conclude that the limiting process $(\bar{Y}^n,\bar{Z}^n)\in S^2_{\mathbb{F}}([0,T];\mathbb{R})\times L^2_{\mathbb{F}}([0,T];\mathbb{R}^d)$  satisfies
		\begin{equation}\label{3.3 new}
			\bar{Y}_t^{n}=\xi+\int_t^T\bar{f}_n (s,\bar{Y}_s^{n} )ds  -\int_t^T\bar{Z}_s^{n }dW_s,\q\ 0\leq t\leq T,
		\end{equation}
		where
		\begin{align*}
			\bar{f}_n(t,y)\longrightarrow \bar{f}(t,y) \q\ \hbox{with} \q\ 	\bar{f}(t,y)=f(t,y+Y_t^0).
		\end{align*}
		Applying the same technique from \eqref{3.3} to \eqref{3.3 new}, 
		we deduce that, for any \(t\in[0,T]\),
		\begin{align*}
			|\bar{Y}_t^{n }| \leq \Bigg\{  \mathbb{E}\left[e^{  T}\left( |\xi|^2 +\int_0^T   \big(| f(s,0)|+\varphi(|Y_s^0|)\big)^2 ds \right)~\Big | ~ \mathscr{G}_t \right]  \Bigg\} ^{\frac{1}{2}}
			\triangleq 
			X_t^0.
		\end{align*}
		Note that under the assumptions of \autoref{t3.2},  	$ \bar{f}(t,y)$  is  continuous in \(y\).  For any $(t,\omega,y)\in [0,T]\times\Omega\times\mathbb{R}$,
		\begin{align*}
			|\bar{f}(t,y)|\leq |\bar{f}(t,0)|+\bar{\varphi}_t(|y|) \q\ \hbox{and}\q\
			(y-y')\big(	\bar{f}(t,y)-\bar{f}(t,y')\big) \leq 0,
		\end{align*}
		where \( \bar{\varphi}_t(|y|) =\varphi(|y|+|Y_t^0|)\). 
		From It\^o's formula,  we   have
		\begin{align*}
	 \mathbb{E}\left[\big|\bar{Y}_t^n-\bar{Y}_t^m\big|^2+\int_t^T\big|\bar{Z}_s^n-\bar{Z}_s^m\big|^2ds\right]
			=2  \mathbb{E} \int_t^T \left(\bar{Y}_s^n-\bar{Y}_s^m \right)\left(\bar{f}_n(s,\bar{Y}_s^n)
			-\bar{f}_m(s,\bar{Y}_s^m) \right) ds,
		\end{align*}
		where
		\begin{align*}
			\bar{f}_n(s,\bar{Y}_s^n)
			-\bar{f}_m(s,\bar{Y}_s^m) 
			=\big(\bar{f}(s,\bar{Y}_s^n)
			-\bar{f}(s,\bar{Y}_s^m) \big)+
			\big(\bar{f}_n(s,\bar{Y}_s^n)
			-\bar{f}(s,\bar{Y}_s^n) \big) 
			+\big(\bar{f}(s,\bar{Y}_s^m)
			-\bar{f}_m(s,\bar{Y}_s^m) \big).
		\end{align*}
		The first term can be controlled by monotonicity. For the last two terms,   we find that \(|\bar{Y}_t^n |\leq X_t^0\), \(\forall t\in [0,T]\), and 
		   the sequence \((\bar{Y}^n)_{n\geq 0}\) is bounded  in \(S^2_{\mathbb{F}}([0,T];\mathbb{R})\),  then, by the growth condition of \(\bar{f}_n\) and a standard localization argument, we can deduce the uniform integrability of the following two products:
		\begin{align*}
			\left(\bar{Y}_s^n-\bar{Y}_s^m \right)\left(\bar{f}_n(s,\bar{Y}_s^n)
			-\bar{f}(s,\bar{Y}_s^n) \right)
			\q\
			\hbox{and}\q\
			\left(\bar{Y}_s^n-\bar{Y}_s^m \right)\left(\bar{f}(s,\bar{Y}_s^m)
			-\bar{f}_m(s,\bar{Y}_s^m) \right).
		\end{align*}
		Moreover, 
		since \( \bar{f}_n(\omega,t,y)\longrightarrow \bar{f}(\omega,t,y)\)   \(dt\otimes d\mathbb{P}\)-a.e.  and \(\varphi(|Y^0|)\in
		L^2_{\mathbb{F}}([0,T];\mathbb{R})\), which follows from condition  \(\textbf{(G3)}\).
		Applying  Vitali's convergence theorem,	we can take the limit as $n \longrightarrow \infty$ in \eqref{3.3 new}. This yields a limiting process  \( (\bar{Y},\bar{Z})\in S^2_{\mathbb{F}}([0,T];\mathbb{R})\times L^2_{\mathbb{F}}([0,T];\mathbb{R}^d) \)  satisfying the following BSDE:
		\begin{align}\label{Ynn}
			\bar{Y}_t=\xi+\int_t^T\bar{f}(s,\bar{Y}_s)ds -\int_t^T\bar{Z}_sdW_s,\q\ 0\leq t\leq T.
		\end{align}
		Define \(   Y_t =	\bar{Y}_t  + Y_t^{0}, Z_t=\bar{Z}_t+Z_t^0  \), then the process $(Y,Z)\in S^2_{\mathbb{F}}([0,T];\mathbb{R})\times L^2_{\mathbb{F}}([0,T];\mathbb{R}^d)$ solves BDSDE \eqref{3.2}.
		
		\ms
		
		{\it Step 2}: The general case where  $\xi\in L^2_{\mathscr{F}_T}(\Omega;\mathbb{R})$ and $f(\cdot,0)\in L^2_{\mathbb{F}}([0,T];\mathbb{R})$.
		Denote
		\begin{align*}
			\xi_n=\rho_n(\xi),\q\ \hbox{and} \q\   f_n(t,y)=f(t,y)-f(t,0)+\rho_n(f(t,0)).
		\end{align*}
	It follows from Step 1 that, for each \(n\in\mathbb{N}\), there exists a  solution $(Y^n,Z^n)\in S^2_{\mathbb{F}}([0,T];\mathbb{R})\times L^2_{\mathbb{F}}([0,T];\mathbb{R}^d)$ to the following  BDSDE: 
		\begin{align}\label{3.5}
			Y_t^n=\xi_n+\int_t^Tf_n(s,Y_s^n)ds+\int_t^Tg(s)dB_s-\int_t^TZ_s^ndW_s,\q\ 0\leq t\leq T.
		\end{align}
		From It\^o's formula, one has
		\begin{align*}
			\mathbb{E}\left[\big|Y_t^n-Y_t^m\big|^2+\int_t^T\big|Z_s^n-Z_s^m\big|^2ds\right]  
			=\mathbb{E}\left[|\xi_n-\xi_m|^2\right]
			+2\mathbb{E}\int_t^T\left(Y_s^n-Y_s^m\right)\big(f_n(s,Y_s^n)-f_m(s,Y_s^m)\big)ds,
		\end{align*}
		where
		\begin{align*}
			2 \left(Y_s^n-Y_s^m\right)\big(f_n(s,Y_s^n)-f_m(s,Y_s^m)\big)
			\leq \big|Y_s^n-Y_s^m\big|^2+\big|f_n(s,0)-f_m(s,0)\big|^2.
		\end{align*}
		Note that
		\begin{align*}
			\mathbb{E}\left[|\xi_n-\xi|^2\right]\longrightarrow 0\q~\hbox{and}\q~
			\mathbb{E}\int_{0}^{T} \big|f_n(t,0)-f(t,0)\big|^2 dt \longrightarrow 0
			\quad \hbox{as}\q n\longrightarrow \infty.
		\end{align*}
		Following Gronwall's inequality and the Burkholder--Davis--Gundy  inequality, we deduce that
		\begin{align*}
			\mathbb{E}\left[\sup\limits_{0\leq t\leq T}\big|Y_t^n-Y_t^m\big|^2+\int_0^T\big|Z_s^n-Z_s^m\big|^2ds\right]\longrightarrow 0
			\quad \hbox{as}\q  n,m\longrightarrow \infty.
		\end{align*}
		Passing to a subsequence if necessary, we may assume
that \((Y^n,Z^n)\longrightarrow (Y,Z)\), \(dt\otimes d\mathbb{P}\)-a.e. For almost every \((t,\omega)\), the convergent sequence \((Y^n,Z^n)_{n\geq 1}\) is contained in a compact subset of \(\mathbb{R}\times \mathbb{R}^d\). 
Since \(f_n(t,\cdot)\longrightarrow f(t,\cdot)\) locally uniformly in \(y\), we obtain \(f_n(t,Y^n_t)\longrightarrow f(t,Y_t)\), \(dt\otimes d\mathbb{P}\)-a.e.
		Finally, by letting $n\longrightarrow \infty$ in \eqref{3.5} and
		by localization together with Vitali's convergence theorem,
		we conclude that there exists a pair of processes $(Y, Z)\in S^2_{\mathbb{F}}([0,T];\mathbb{R})\times L^2_{\mathbb{F}}([0,T];\mathbb{R}^d)$ which is a solution to BDSDE \eqref{3.2}.
		
		\ms

		{\bf Uniqueness.} 
		If there exist two solutions $(Y^1,Z^1), (Y^2,Z^2) \in S^2_{\mathbb{F}}([0,T];\mathbb{R})\times L^2_{\mathbb{F}}([0,T];\mathbb{R}^d)$ which satisfy    BDSDE \eqref{3.2}. Applying It\^o's formula to $|Y_t^1-Y_t^2|^2$, this leads
		to the  following result:
		\begin{align*}
			\big|Y_t^1-Y_t^2\big|^2=  \int_t^T  \bigg(2 (Y_s^1-Y_s^2)\big(f(s,Y_s^1)-f(s,Y_s^2)\big)-\big|Z_s^1-Z_s^2\big|^2\bigg)ds
			-2\int_t^T(Y_s^1-Y_s^2)(Z_s^1-Z_s^2)dW_s.
		\end{align*}
		Since $f$ is monotone in $y$, we have
		\begin{align*}
			\mathbb{E}\left[\big|Y_t^1-Y_t^2\big|^2\right]+\mathbb{E}\int_t^T\big|Z_s^1-Z_s^2\big|^2ds
			\leq 0,
		\end{align*}
		which implies that \(Y^1=Y^2,\) \(\mathbb{P}\)-a.s. and \(Z^1=Z^2\), \(dt\otimes d\mathbb{P}\)-a.e.
	\end{proof}

	Now we give the proof of the main result in this subsection.

	\begin{proof}[Proof of  \autoref{t3.2}]
		The uniqueness can be established similarly to \autoref{p3.6}. Here, we will focus on proving the existence.
		By \autoref{p3.6}, for any given $(U,V)\in L^2_{\mathbb{F}}([0,T];\mathbb{R})\times L^2_{\mathbb{F}}([0,T];\mathbb{R}^d)$, there exists a unique solution  $(Y,Z)\in L^2_{\mathbb{F}}([0,T];\mathbb{R})\times L^2_{\mathbb{F}}([0,T];\mathbb{R}^d)$
		such that
		\begin{align}\label{uvyz}
			Y_t=\xi +\int_t^Tf(s,Y_s,V_s)ds +\int_t^T g(s,U_s,V_s)dB_s-\int_t^T Z_sdW_s,\q\ 0\leq t\leq T.
		\end{align}
		We can construct a mapping $\Theta$ as follows:  let $(Y,Z)=\Theta (U,V)$ be a solution of BDSDE \eqref{uvyz}.
		For given $(U,V), (U',V')\in L^2_{\mathbb{F}}([0,T];\mathbb{R})\times L^2_{\mathbb{F}}([0,T];\mathbb{R}^d)$,  denote $(Y,Z)=\Theta (U,V)$, $(Y',Z')=\Theta (U',V')$, and
		\begin{align*}
			\triangle U=U-U',\qquad \triangle V=V-V',\qq
			\triangle Y=Y-Y',\qquad \triangle Z=Z-Z'.
		\end{align*} 
		For any $\gamma\in \mathbb{R}$, applying It\^o's formula to $e^{\gamma t}|\triangle Y_t|^2$, we have
		\begin{align*}
			&\	e^{\gamma t}|\triangle Y_t|^2+\int_t^Te^{\gamma s}\big(\gamma |\triangle Y_s|^2+|\triangle Z_s|^2\big)ds\\
			& 		=2\int_t^T e^{\gamma s} \triangle Y_s\Big(f(s,Y_s,V_s)-f(s,Y'_s,V'_s)\Big)ds
			+\int_t^T e^{\gamma s} \Big(g(s,U_s,V_s)-g(s,U'_s,V'_s)\Big)^2 ds\\
			&+2\int_t^T e^{\gamma s} \triangle Y_s\Big(g(s,U_s,V_s)-g(s,U'_s,V'_s)\Big)dB_s
			-2\int_t^T  e^{\gamma s} \triangle Y_s \triangle Z_s dW_s. 
		\end{align*}
		In light of the monotonicity of  $f$  and the Lipschitz continuity of $g$, taking expectations on both sides, we deduce that 
		\begin{align*}
			\mathbb{E}\int_t^Te^{\gamma s}\big(\gamma |\triangle Y_s|^2+|\triangle Z_s|^2\big)ds 
			\leq \mathbb{E}\int_t^T \big( 2Ce^{\gamma s}\triangle Y_s\triangle V_s \big) ds
			+\dbE\int_t^Te^{\gamma s}\big(C|\triangle U_s|^2+ \alpha 
			|\triangle V_s|^2 \big)ds.
		\end{align*}
		By the elementary inequality, we have
		\begin{align*}
			2Ce^{\gamma s}\triangle Y_s \triangle V_s \leq 
			e^{\gamma s}\left(\frac{2C^2}{1-\alpha}|\triangle Y_s|^2 +\frac{1-\alpha}{2}|\triangle V_s|^2\right).
		\end{align*}
		Choosing $\gamma= \frac{2C^2}{1-\alpha}+\frac{2C}{1+\alpha}$,  we obtain
		\begin{align*}
			\mathbb{E}\int_t^Te^{\frac{2C^2}{1-\alpha}+\frac{2C}{1+\alpha} s}\left( \frac{2C}{1+\alpha}|\triangle Y_s|^2+|\triangle Z_s|^2\right)ds
			&\leq  \mathbb{E}\int_t^T e^{\frac{2C^2}{1-\alpha}+\frac{2C}{1+\alpha} s}\left(C|\triangle U_s|^2+\frac{1+\alpha}{2}|\triangle V_s|^2\right)ds\\
			&	=\frac{1+\alpha}{2}\mathbb{E} \int_t^T e^{\frac{2C^2}{1-\alpha}+\frac{2C}{1+\alpha} s}\left( \frac{2C}{1+\alpha}|\triangle U_s|^2+|\triangle V_s|^2\right)ds.
		\end{align*}
		Therefore, $\Theta$ is a strict contraction mapping
		under the equivalent weighted norm, and hence there exists a fixed point $(Y,Z)\in L^2_{\mathbb{F}}([0,T];\mathbb{R})\times L^2_{\mathbb{F}}([0,T];\mathbb{R}^d)$. By the Burkholder--Davis--Gundy inequality,
		we can verify that $Y\in S^2_{\mathbb{F}}([0,T];\mathbb{R})$ and the fixed point is a  solution of BDSDE \eqref{1}.
	\end{proof}
	
	As a  result of \autoref{t3.2}, we have the following comparison theorem.

	\begin{proposition}[Comparison]\label{c3.7} \sl
		For \(i=1,2\), assume that the parameters \((\xi^i,f^i,g^i)\) satisfy
		\textbf{(A1)}, \textbf{(F1)}, \textbf{(G1)}  and \textbf{(G3)}  with common structural constants, and that \(f^i\) is Lipschitz continuous in \(z\).  Let $(Y^i,Z^i)\in S^2_{\mathbb{F}}([0,T];\mathbb{R})\times L^2_{\mathbb{F}}([0,T];\mathbb{R}^d)$  be the unique solution of the following BDSDE:
		\begin{align*}
			Y^i_t=\xi^i+\int_{t}^{T}f^i(s,Y^i_s,Z^i_s)ds+\int_{t}^{T}g^i(s,Y^i_s,Z^i_s)dB_s-\int_t^TZ^i_sdW_s,\q\ 0\leq t\leq T.
		\end{align*}
		If,  for any \(t\in[0,T]\),
		\begin{align*}
			\xi^1\leq \xi^2,\q~ f^1(t,Y^2_t,Z_t^2)\leq f^2(t,Y^2_t,Z_t^2), \q~ g^1(t,Y^2_t,Z_t^2)= g^2(t,Y^2_t,Z_t^2), \q \hbox{a.s.}, 
		\end{align*}
		\begin{align*}
			\Big( \hbox{alternatively,}\q~ 
			f^1(t,Y^1_t,Z_t^1)\leq f^2(t,Y^1_t,Z_t^1), \q~ g^1(t,Y^1_t,Z_t^1)= g^2(t,Y^1_t,Z_t^1), \q \hbox{a.s.}, \Big)
		\end{align*}
		then we have $Y^1_t\leq Y^2_t$, \(\forall t\in [0,T]\), \(\mathbb{P}\)-a.s.
	\end{proposition}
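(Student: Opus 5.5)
We prove the comparison by applying It\^o's formula to $|(Y^1_t-Y^2_t)^+|^2$, as in Pardoux and Peng \cite{94}. By \autoref{r3.1}, applied to both equations with the common constant $\mu$, we may assume $\mu=0$, so that $f^1$ is non-increasing in $y$. Set
\begin{align*}
\hat Y=Y^1-Y^2,\q \hat Z=Z^1-Z^2,\q \hat g_s=g^1(s,Y^1_s,Z^1_s)-g^2(s,Y^2_s,Z^2_s).
\end{align*}
Then $\hat Y_T=\xi^1-\xi^2\le 0$. The function $x\mapsto (x^+)^2$ is $C^1$ with a piecewise constant second derivative, so the It\^o--Tanaka formula for BDSDEs (a smoothing of Lemma 1.3 in \cite{94}) gives
\begin{align*}
|\hat Y_t^+|^2+\int_t^T \mathbf 1_{\{\hat Y_s>0\}}|\hat Z_s|^2ds
=&\ |\hat Y_T^+|^2+2\int_t^T \hat Y_s^+\big(f^1(s,Y^1_s,Z^1_s)-f^2(s,Y^2_s,Z^2_s)\big)ds
+\int_t^T \mathbf 1_{\{\hat Y_s>0\}}|\hat g_s|^2ds\\
&+2\int_t^T\hat Y^+_s\hat g_s dB_s-2\int_t^T\hat Y^+_s\hat Z_s dW_s .
\end{align*}
The backward $dB$-integral enters with a plus sign, exactly as in the proof of \autoref{t3.2}.

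Under the first set of hypotheses, we split the drift difference as
\begin{align*}
\big[f^1(s,Y^1_s,Z^1_s)-f^1(s,Y^2_s,Z^1_s)\big]+\big[f^1(s,Y^2_s,Z^1_s)-f^1(s,Y^2_s,Z^2_s)\big]+\big[f^1(s,Y^2_s,Z^2_s)-f^2(s,Y^2_s,Z^2_s)\big].
\end{align*}
\begin{itemize}
\item[(i)] After multiplying by $\hat Y^+_s$, the first bracket is $\le 0$ by monotonicity with $\mu=0$.
\item[(ii)] The second bracket is bounded by $C|\hat Z_s|$ by the Lipschitz property in $z$, which gives
\begin{align*}
2C\hat Y^+_s|\hat Z_s|\le \frac{2C^2}{1-\alpha}|\hat Y_s^+|^2+\frac{1-\alpha}{2}\mathbf 1_{\{\hat Y_s>0\}}|\hat Z_s|^2 .
\end{align*}
\item[(iii)] The third bracket is $\le 0$ by assumption.
\end{itemize}
For the $g$ term we use $g^1(s,Y^2_s,Z^2_s)=g^2(s,Y^2_s,Z^2_s)$ to write $\hat g_s=g^1(s,Y^1_s,Z^1_s)-g^1(s,Y^2_s,Z^2_s)$. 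Then \textbf{(G1)} gives
\begin{align*}
\mathbf 1_{\{\hat Y_s>0\}}|\hat g_s|^2\le C|\hat Y_s^+|^2+\alpha \mathbf 1_{\{\hat Y_s>0\}}|\hat Z_s|^2 .
\end{align*}
The alternative hypotheses are handled in the same way, using the symmetric decomposition through $f^2$ and $g^2$ evaluated at $(Y^1,Z^1)$. In that case the monotonicity and Lipschitz properties of $f^2$ and the \textbf{(G1)} property of $g^2$ are used.

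Next we take expectations. Both stochastic integrals have zero mean: since $Y^i\in S^2_{\mathbb F}$, $Z^i\in L^2_{\mathbb F}$ and $g^i$ has linear growth by \textbf{(G1)}, the BDG inequality shows they are genuine martingales in the appropriate (backward or forward) sense. The $\hat Z$ terms carry total coefficient $\frac{1-\alpha}{2}+\alpha<1$, so they can be absorbed into the left-hand side. This yields
\begin{align*}
\mathbb E|\hat Y_t^+|^2\le \Big(\frac{2C^2}{1-\alpha}+C\Big)\int_t^T\mathbb E|\hat Y_s^+|^2ds .
\end{align*}
Gronwall's inequality gives $\hat Y_t^+=0$ a.s. for each $t$, and continuity of the paths yields $Y^1_t\le Y^2_t$ for all $t$, a.s.

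The main obstacle is the rigorous justification of the It\^o formula for $(x^+)^2$ in the doubly stochastic setting, where $\mathbb F$ is not a filtration, together with the vanishing expectation of the backward integral. For the former, I would approximate $(x^+)^2$ by $C^2$ convex functions $\phi_k$ with bounded second derivatives converging to $\mathbf 1_{\{x>0\}}$, and apply the It\^o formula of \cite{94} to each $\phi_k$. The general-growth term $\varphi(|y|)$ requires no special care: it disappears through monotonicity before any integrability is needed, and $f^i(\cdot,Y^i,Z^i)$ is integrable because $(Y^i,Z^i)$ is a solution.
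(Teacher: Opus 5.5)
Your proposal is correct and follows essentially the same route as the paper. Both arguments run as follows:
\begin{itemize}
\item obtain an It\^o--Tanaka identity for $((Y^1_t-Y^2_t)^+)^2$ via a $C^2$ convex smoothing of $(y^+)^2$;
\item bound the drift using monotonicity in $y$ together with the Lipschitz bound in $z$ and Young's inequality (weights $\frac{2C^2}{1-\alpha}$ and $\frac{1-\alpha}{2}$), and bound the $g$-term by \textbf{(G1)};
\item absorb the $|Z^1-Z^2|^2$ terms into the left-hand side;
\item apply Gronwall's inequality with constant $C+\frac{2C^2}{1-\alpha}$, then use path continuity to get the inequality for all $t$.
\end{itemize}
Your explicit reduction to $\mu=0$ via \autoref{r3.1} and your three-term splitting of the drift make precise two steps the paper leaves implicit; the splitting is exactly where the hypotheses $f^1\le f^2$ and $g^1=g^2$ at $(Y^2,Z^2)$ enter.
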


	\begin{proof}
		Since \(y\longmapsto (y^+)^2\) is not smooth enough, we use an approximation.
		For any $\varepsilon>0$, we  consider a $C^2(\mathbb{R})$ function $\phi_{\varepsilon}(y):\mathbb{R}\longmapsto \mathbb{R}$ as follows:
		\begin{equation}\label{itoy}
			\phi_{\varepsilon}(y)=\left\{
			\begin{aligned}
				-\varepsilon y-\frac{\varepsilon^2}{2},\qquad \qquad \q\  &\q y\leq-\varepsilon, \\
				\frac{y^5}{\varepsilon^3}+\frac{5y^4}{2\varepsilon^2}+\frac{2y^3}{\varepsilon}+y^2,\qquad &-\varepsilon <y <0,\\
				y^2,\qquad \qquad \q\  &\q y\geq 0.
			\end{aligned}
			\right.
		\end{equation}
		A direct computation shows that
		\begin{equation*}
			\phi'_{\varepsilon}(y)=\left\{
			\begin{aligned}
				-\varepsilon,\qquad \qquad \q\  &\q y\leq-\varepsilon, \\
				\frac{5y^4}{\varepsilon^3}+\frac{10y^3}{\varepsilon^2}+\frac{6y^2}{\varepsilon}+2y,\qquad &-\varepsilon <y <0,\\
				2y,\qquad \qquad \q\  &\q y\geq 0,
			\end{aligned}
			\right.
		\end{equation*}
		and
		\begin{equation*}
			\phi''_{\varepsilon}(y)=\left\{
			\begin{aligned}
				0,\qquad \qquad \q\  &\q y\leq-\varepsilon, \\
				\frac{20y^3}{\varepsilon^3}+\frac{30y^2}{\varepsilon^2}+\frac{12y}{\varepsilon}+2,\qquad &-\varepsilon <y <0,\\
				2,\qquad \qquad \q\  &\q y\geq 0.
			\end{aligned}
			\right.
		\end{equation*}
		We can confirm the continuity of \(\phi'_{\varepsilon}\)
		and  \(\phi''_{\varepsilon}\) at the joints.
		Moreover,
		for any $y\in \mathbb{R}$, when $\varepsilon \longrightarrow 0$, we have
		\begin{align*}
			\phi_{\varepsilon}(y)\longrightarrow (y^+)^2,\qquad
			\phi_{\varepsilon}'(y) \longrightarrow 2y^+,\qquad
			\phi_{\varepsilon}''(y) \longrightarrow 2\mathbf{1}_{\{y\geq 0\}}.
		\end{align*}
		Applying  It\^o's formula to $\phi_{\varepsilon}\left(Y_t^1-Y_t^2\right)$, we have
		\begin{align*}
			\phi_{\varepsilon}\left(Y_t^1-Y_t^2\right)=&\ \phi_{\varepsilon}\left(\xi^1-\xi^2\right)
			+\int_t^T\phi_{\varepsilon}'(Y_s^1-Y_s^2)\Big(f^1(s,Y^1_s,Z^1_s)-f^2(s,Y_s^2,Z_s^2) \Big)ds\\
			&	+\int_t^T\phi_{\varepsilon}'(Y_s^1-Y_s^2)\Big(g^1(s,Y^1_s,Z^1_s)-g^2(s,Y_s^2,Z_s^2)\Big)dB_s
			-\int_t^T\phi_{\varepsilon}'(Y_s^1-Y_s^2)(Z_s^1-Z_s^2)dW_s\\
			&	+\frac{1}{2}\int_t^T\phi_{\varepsilon}''(Y_s^1-Y_s^2)\left(\Big|g^1(s,Y^1_s,Z^1_s)-g^2(s,Y_s^2,Z_s^2)\Big|^2-\big|Z_s^1-Z_s^2\big|^2\right)ds.
		\end{align*}
		Letting $\varepsilon\longrightarrow 0$ yields
		\begin{align*}
			\left((Y_t^1-Y_t^2)^+\right)^2
			=&\ \left((\xi^1-\xi^2)^+\right)^2
			+2\int_t^T(Y^1_s-Y^2_s)^+\Big(f^1(s,Y^1_s,Z^1_s)-f^2(s,Y_s^2,Z_s^2)\Big)ds\\
			&	+2\int_t^T(Y^1_s-Y^2_s)^+\Big(g^1(s,Y^1_s,Z^1_s)-g^2(s,Y_s^2,Z_s^2)\Big)dB_s
			-2\int_t^T(Y^1_s-Y^2_s)^+(Z_s^1-Z_s^2)dW_s\\
			&	+\int_t^T\left(\Big|g^1(s,Y^1_s,Z^1_s)-g^2(s,Y_s^2,Z_s^2)\Big|^2-\big|Z_s^1-Z_s^2\big|^2\right)\mathbf{1}_{\{Y_s^1\geq Y_s^2\}}ds.
		\end{align*}
		Since $f$ is monotone in \(y\) and Lipschitz in $z$, we have
		\begin{align*}
			\ 2(Y_t^1-Y_t^2)^+\big(f^1(t,Y^1_t,Z^1_t)-f^2(t,Y_t^2,Z_t^2)\big)
			\leq \frac{2C^2}{1-\alpha} \left((Y_t^1-Y_t^2)^+\right)^2
			+\frac{1-\alpha}{2}\big|Z_t^1-Z_t^2\big|^2.
		\end{align*}
		Taking expectations on both sides, we obtain
		\begin{align*}
			\mathbb{E}\left[ \left((Y_t^1-Y_t^2)^+\right)^2\right]+\frac{1-\alpha}{2}
			\mathbb{E} \int_t^T\big|Z_s^1-Z_s^2\big|^2ds
			\leq \mathbb{E}\left((\xi^1-\xi^2)^+\right)^2
			+\left(C +\frac{2C^2}{1-\alpha}\right)\mathbb{E}
			\int_t^T \left((Y_s^1-Y_s^2)^+\right)^2 ds.
		\end{align*}
		In view of the condition that	$\xi^1\leq \xi^2$,  Gronwall's inequality then yields that
		\begin{align*}
			\mathbb{E}\left[ \left((Y_t^1-Y_t^2)^+\right)^2\right]=0.
		\end{align*}
		Therefore,  $Y_t^1\leq Y_t^2$ for any \(t\in [0,T]\), $ \dbP $-a.s. By the continuity of  \(Y^1\) and \(Y^2\), the inequality
		holds simultaneously for all \(t\in [0,T]\), $ \dbP $-a.s.
	\end{proof}

	\subsection{BDSDE: existence of maximal solutions}\label{sec3.2}
	In this part, we study the existence of maximal solutions for BDSDEs with continuous coefficients, which is an application of the comparison theorem (\autoref{c3.7}). Compared to the work of Shi, Gu, and Liu \cite{05},  we weaken the condition that  \(f\)  is of linear growth in \(y\) to general growth in \(y\). 
	The proof requires suitable
	approximations of continuous functions and
	truncation arguments.

	\begin{theorem}\label{t3.3} \sl
		Let the conditions \(\textbf{(A1)}\), \(\textbf{(F1)}\), \(\textbf{(G1)}\), \(\textbf{(G2)}\) and \(\textbf{(G3)}\) be satisfied. Then  BDSDE \eqref{1}
		has a maximal solution $(Y,Z)\in S^2_{\mathbb{F}}([0,T];\mathbb{R})\times L^2_{\mathbb{F}}([0,T];\mathbb{R}^d)$.
	\end{theorem}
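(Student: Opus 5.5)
We plan to prove \autoref{t3.3} by a monotone approximation of $f$ in the variable $z$, in the spirit of Lepeltier and San Mart\'in \cite{97}, combined with the comparison principle \autoref{c3.7} and the a priori structure obtained in \autoref{p3.6}. By \autoref{r3.1} we may take $\mu=0$, so $f$ is non-increasing in $y$. Since the standard inf-convolution in both variables destroys the general growth in $y$, we only regularize in $z$. For $n\ge C$ we set
\begin{align*}
f_n(t,y,z)=\sup_{q\in\mathbb{Q}^d}\big\{f(t,y,q)-n|z-q|\big\}.
\end{align*}
Using (F1), this supremum is finite, because $f(t,y,q)-n|z-q|\le |f(t,0,0)|+\varphi(|y|)+C|z|$. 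Each $f_n$ is $n$-Lipschitz in $z$, and the sequence decreases in $n$ with $f_n\ge f$. Each $f_n$ remains continuous in $y$ and non-increasing in $y$, since a supremum of non-increasing functions is non-increasing. It satisfies the growth bound $|f_n(t,y,z)|\le |f(t,0,0)|+\varphi(|y|)+C|z|$; the lower bound follows from $f_n\ge f$. Moreover, $f_n\downarrow f$ locally uniformly in $z$ by continuity. Hence each $f_n$ satisfies the hypotheses of \autoref{t3.2} with common structural constants, so the BDSDE with data $(\xi,f_n,g)$ has a unique solution $(Y^n,Z^n)$. By \autoref{c3.7}, $Y^n$ is non-increasing in $n$.

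To bound the sequence below, we build a minorant solution. Let $\underline f(t,y,z)=f(t,y,0)\wedge\ldots$; more concretely, we take $\underline f(t,y,z)=-|f(t,0,0)|-\varphi(|y|)\mathbf 1_{\{y<0\}}\cdots$. A cleaner choice is $h(t,y,z)=\inf_q\{f(t,y,q)+C|z-q|\}$, which satisfies $h\le f\le f_n$, has the same growth, is $C$-Lipschitz in $z$ and monotone in $y$. Let $\underline Y$ be the solution given by \autoref{t3.2} for $(\xi,h,g)$. Then \autoref{c3.7} yields $\underline Y\le Y^n\le Y^1$ for all $n$. In particular, $\sup_n\mathbb{E}[\sup_t|Y^n_t|^2]<\infty$. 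This domination is also what controls $\varphi(|Y^n|)$. Following the construction of \autoref{p3.6}, we write $Y^n=\bar Y^n+Y^0$, where $Y^0_t=\mathbb{E}[\int_t^Tg\,dB\,|\,\mathscr G_t]$ is controlled by (G3). Using this decomposition, we will aim to show that $\varphi(|Y^n|)\le \varphi(|\underline Y|+|Y^1|)$ is in $L^2_{\mathbb F}$. This step is the main obstacle. If it fails directly, we would first truncate $f$ in $y$ via $\rho_k$, as in Step 1 of \autoref{p3.6}, and remove the truncation only at the end.

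Next, we prove the convergence of $(Y^n,Z^n)$. It\^o's formula for $|Y^n-Y^m|^2$, together with (G1) (where $\alpha<1$ absorbs the $g$-term), gives
\begin{align*}
\mathbb{E}|Y^n_t-Y^m_t|^2+(1-\alpha)\mathbb{E}\int_t^T|Z^n-Z^m|^2ds\le C\,\mathbb{E}\int_t^T|Y^n-Y^m|^2ds+2\mathbb{E}\int_t^T|Y^n-Y^m||f_n(s,Y^n,Z^n)-f_m(s,Y^m,Z^m)|ds.
\end{align*}
The $f$-term is bounded using the uniform bounds on $Z^n$ in $L^2$, which follow from It\^o's formula with (G2), and on $\varphi(|Y^n|)$. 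Since $Y^n\downarrow Y$ monotonically, we have $Y^n\to Y$ in $S^2$ by dominated convergence, so this term tends to zero by Cauchy--Schwarz. Therefore $Z^n$ is Cauchy in $L^2_{\mathbb F}$, and BDG upgrades $Y^n$ to convergence in $S^2$. We then pass to the limit exactly as in Step 2 of \autoref{p3.6}: along a subsequence, $(Y^n,Z^n)\to(Y,Z)$ $dt\otimes d\mathbb P$-a.e. Local uniform convergence of $f_n$ gives $f_n(t,Y^n,Z^n)\to f(t,Y,Z)$, and Vitali's theorem, together with the $L^2$ convergence of the $dB$-integrals by (G1), shows that $(Y,Z)$ solves \eqref{1}.

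Finally, we prove maximality. Let $(Y',Z')$ be any solution of \eqref{1} in $S^2\times L^2$. Since $f\le f_n$ along $(Y',Z')$ and the $g$'s coincide, the alternative form of \autoref{c3.7} applies. Its proof only uses the Lipschitz property in $z$ and the monotonicity in $y$ of $f_n$, one of the two generators. This gives $Y'\le Y^n$ for each $n$, and hence $Y'\le Y$.
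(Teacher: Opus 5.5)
Your overall scheme matches the paper's: sup-convolution in $z$ only, Theorem~\ref{t3.2} for each $f_n$, monotonicity of $Y^n$ from Proposition~\ref{c3.7}, the uniform $L^2$ bound on $Z^n$ from monotonicity and \textbf{(G2)}, and maximality via the one-sided form of the comparison (which indeed only needs $f_n$ to be monotone in $y$ and Lipschitz in $z$). Those parts are fine. The genuine gap is the step you flag yourself. You never obtain a bound, uniform in $n$, for $f_n(\cdot,Y^n,Z^n)$ in $L^2(dt\otimes d\mathbb{P})$, that is, for $\varphi(|Y^n|)$. Everything after it depends on that bound: the Cauchy property of $Z^n$ via Cauchy--Schwarz, Vitali, the BDG upgrade and the passage to the limit.

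The route you sketch cannot supply this bound, for three reasons. First, domination $\underline Y\le Y^n\le Y^{n_0}$ by processes in $S^2_{\mathbb F}$ says nothing about square integrability of $\varphi(|\underline Y|+|Y^{n_0}|)$ when $\varphi$ is superlinear (e.g.\ exponential). Theorem~\ref{t3.2} gives no $\varphi$-integrability of its solutions either. Second, \textbf{(G3)} controls $\varphi$ of only one process, $\mathbb{E}[\int_t^T g(s,0,0)\,dB_s\,|\,\mathscr G_t]$. Third, the splitting $Y=\bar Y+Y^0$ of Proposition~\ref{p3.6} works only because there $g$ does not depend on $(y,z)$. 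Here the $dB$-integrand is $g(s,Y^n_s,Z^n_s)$. So either $Y^n-Y^0$ still carries a backward integral, or $Y^0$ depends on $n$ and is not covered by \textbf{(G3)}. In neither case do you get a conditional-expectation bound like $X^0_t$. Your fallback of truncating in $y$ only postpones the same estimate to the step where the truncation is removed.

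The paper handles this step differently. It first takes $\xi$ and $f(\cdot,0,0)$ bounded. It then uses $2yf_n(t,y,z)\le 2yf_n(t,0,z)\le 2|y|(|f(t,0,0)|+C|z|)$ with the convex test function $\Phi(e^{\gamma t}|Y^n_t|^2-\hat X_t)$ to derive a deterministic bound $|Y^n_t|\le H$, uniform in $n$. With that bound, $\eta_t=|f(t,0,0)|+\varphi(H)$ dominates and your convergence steps go through as written. Unbounded data are treated afterwards by truncating $\xi$ and $f(\cdot,0,0)$.

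Be aware, however, that such a deterministic bound can only hold when $\lambda\equiv0$ in \textbf{(G2)}. For $\xi=0$, $f\equiv0$, $g\equiv\lambda\neq0$, the unique solution is $Y_t=\lambda(B_T-B_t)$, which is unbounded. In It\^o's formula the backward integral contributes $+\tfrac12\Phi''|g|^2$, and at the kink of $\Phi$ this produces a positive local-time term that the smoothing does not remove. So for general $\lambda$ the estimate you are missing is a genuine difficulty, not something a bounded-data reduction removes automatically.

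Two minor points. Monotone convergence with an $S^2_{\mathbb F}$ dominating process gives $Y^n\to Y$ in $L^2_{\mathbb F}([0,T];\mathbb{R})$, not in $S^2_{\mathbb F}$; the latter needs the later BDG step. Also, for $h$ (and for $f_n$ with $n=C$) use a convolution constant strictly larger than $C$. Otherwise the optimizing $q$ can escape to infinity, and continuity in $y$, which \textbf{(F1)} requires, may fail.
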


	\begin{proof}
		For each	$n\in\mathbb{N}$  with $n\geq C$, we define 
		\begin{align*}
			f_n(t,y,z)=\sup\limits_{q\in\mathbb{Q}^d}\{f(t,y,q)-n|z-q|\}.
		\end{align*}
		From the above definition,  	it is easy to check that  
		for any \( (t,y,y',z,z')\in[0,T]\times \mathbb{R}\times\mathbb{R}\times \mathbb{R}^d \times \mathbb{R}^d\), 
		$$ \left\{	\begin{aligned}
			& (y-y')\big(f_n(t,y,z)-f_n(t,y',z)\big)\leq 0,\\
			&|f_n(t,y,z)-f_n(t,y,z')|\leq n|z-z'|,\\
			& |f_n(t,y,z)|\leq |f(t,0,0)|+\varphi(|y|)+C|z|.
		\end{aligned} \right.$$
		According to \autoref{t3.2}, 
		for each $n\geq C$,	  there exists a unique solution $(Y^n,Z^n)\in S^2_{\mathbb{F}}([0,T];\mathbb{R})\times L^2_{\mathbb{F}}([0,T];\mathbb{R}^d)$ satisfying
		\begin{align}\label{3.7}
			Y_t^n=\xi+\int_t^Tf_n(s,Y_s^n,Z_s^n)ds+\int_t^Tg(s,Y_s^n,Z_s^n)dB_s-\int_t^TZ_s^ndW_s,\q\ 0\leq t\leq T.
		\end{align}
		Next, we divide the proof into two steps.
		\ms
		
		{\bf\it Step 1}: Assume  that there is a positive  constant $c$ such that
		\begin{align*}
			\|\xi\|_{\infty}+\esssup\limits_{(t,\omega)}|f(t,0,0)|
		\leq c.
		\end{align*}
		
		Let \( \bar{\gamma}=\frac{\gamma -1-\frac{C^2}{1-\alpha}}{2e^{\gamma T}c^2} >0 \) with
		 \(\gamma\in \mathbb{R}\) that has yet to be chosen,
		  define
		\begin{align*}
			\hat{X}_t=e^{\gamma T}\|\xi\|_{\infty}^2+\int_t^Te^{\gamma s}\big(\|f(s,0,0)\|_{\infty}^2+|\lambda(s)|^2 \big)ds.
		\end{align*}
		Consider  a non-negative convex function $\Phi(u) : \mathbb{R} \longmapsto \mathbb{R}$ as follows:
		\begin{equation}\label{t3.6phi}
			\Phi(u)=\left\{
			\begin{aligned}
				\frac{1}{\bar{\gamma}}(e^{\bar{\gamma}u}-1),\qquad& u>0; \\
				\quad 0,\qquad  	&u\leq 0.
			\end{aligned}
			\right.
		\end{equation}
	Note that  \(\Phi(u)=0\) if and only if \(u\leq 0\). For any \(u>0\), \(\Phi'(u)\geq 0\),  \(\Phi''(u)\geq 0\),  and 
	\begin{align*}
	 2e^{\gamma T}c^2\Phi''(u)
	 =\Big( \gamma -1 - \frac{C^2}{1-\alpha} \Big)\Phi'(u).
	\end{align*}
		To make the following argument rigorous, let \((\Phi_\varepsilon)_{\varepsilon>0}\subset C^2(\mathbb{R})\) be a sequence of non-negative convex functions such that
		\begin{align*}
		 \Phi_\varepsilon(u)\longrightarrow \Phi(u),\q\
		 \Phi_\varepsilon'(u)\longrightarrow \Phi'(u),\q\
		 \Phi_\varepsilon''(u)\longrightarrow \Phi''(u),\q\ 
		  u\neq0,
		\end{align*}
		and such that the above differential inequality holds up to an error term which vanishes as \(\varepsilon\longrightarrow 0\) on compact subsets of \(\mathbb R\). Applying It\^o's formula to
		\( 
		\Phi_\varepsilon (e^{\gamma t}|Y_t^n|^2-\hat X_t),
		\)
		taking expectations, and then letting \(\varepsilon\longrightarrow 0\),   by the monotone convergence theorem and the dominated convergence theorem, we have
		\begin{align*}
		 	\mathbb{E}\left[\Phi(e^{\gamma t}|Y_t^{n}|^2-\hat{X}_t)\right]
		\leq  &\
		\mathbb{E}\int_t^T \Phi'(e^{\gamma s}|Y_s^{n}|^2-\hat{X}_s)e^{\gamma s}(1+\frac{C^2}{1-\alpha}-\gamma)|Y_s^{n}|^2ds\\
		&+\mathbb{E}\int_t^T \Big( 
		2\Phi''(e^{\gamma s}|Y_s^{n}|^2-\hat{X}_s) e^{2\gamma s}|Y_s^{n}|^2- \Phi'(e^{\gamma s}|Y_s^{n}|^2-\hat{X}_s)e^{\gamma s} \Big)
		|\lambda(s)|^2 ds,
		\end{align*}
		where we used the following estimate
		\begin{align*}
			2Y_t^{n}f_n(t,Y_t^{n},Z_t^n)
			&\leq 2Y_t^nf_n(t,0,Z_t^n)\leq 2Y_t^n\left(|f(t,0,0)|+C|Z_t^n|\right)\\
			&\leq (1+\frac{C^2}{1-\alpha})|Y_t^n|^2+|f(t,0,0)|^2+(1-\alpha)|Z_t^n|^2.
		\end{align*}
		Since \(\Phi'\) and \(\Phi''\) vanish on the set \((-\infty,0)\), 
		choosing \(\gamma > 1+ \frac{C^2}{1-\alpha}\),
		we obtain 
		\begin{align*}
			\mathbb{E}\left[\Phi(e^{\gamma t}|Y_t^{n}|^2-\hat{X}_t)\right]
			\leq 0.
		\end{align*}
		Since \(\Phi \geq 0\), we deduce that \(\Phi=0\), \(\mathbb{P}\)-a.s., and then for any \(t\in [0,T]\),
		\begin{align*}
			e^{\gamma t}|Y_t^{n}|^2
			\leq \hat{X}_t, \q\     \hbox{a.s.}
		\end{align*}
		Note that the sequence $(f_n)_{n\geq C}$ is non-increasing in $n$, it follows from the comparison theorem (\autoref{c3.7}) that,
		for any $t\in [0,T]$, \(Y_t^n\geq Y_t^{n+1}\).
	Moreover,
		for every fixed \((t,\omega)\in [0,T]\times \Omega\), 
		the sequence
		$f_n(t,\omega,\cdot,\cdot)$ is continuous and decreases pointwise to
		$f(t,\omega,\cdot,\cdot)$. Since \(f(t,\omega,\cdot,\cdot)\) is continuous,
		  Dini's theorem yields that    $f_n(t,\omega,y,z)$ converges  to $f(t,\omega,y,z)$ uniformly on the compact sets of $\mathbb{R}\times
		\mathbb{R}^d$.
		Let \(H\) denote the uniform bound for \((Y^n)_{n\geq C}\) obtained above, and set
		\begin{align*}
	 \eta_t\triangleq |f(t,0,0)|+\varphi(H).
		\end{align*}
		 Then 
		\begin{align*}
		 |f_n(t,Y_t^n,Z_t^n)|
		 \leq \eta_t+C|Z_t^n|.
		\end{align*}
		Applying It\^o's formula to \(|Y_t^n|^2\), using condition \textbf{(G2)} and   Young's inequality, yields that
		\begin{align*}
	 \sup\limits_{n\geq C}\mathbb{E}\int_0^T |Z_s^n|^2ds <\infty.
		\end{align*}
		Since  \(Y^n\) decreases pointwise to some process \(Y\) and \(|Y^n|\leq H\), the dominated convergence theorem gives
		\begin{align*}
\mathbb{E}\int_0^T |Y_s^n-Y_s|^2ds \longrightarrow 0.
		\end{align*}
		For \(n, m\geq C\),  applying It\^o's formula to \(|Y_t^n-Y_t^m|^2\),
		using condition \textbf{(G1)} together with the above growth estimate, we deduce that
		\begin{align*}
		 (1-\alpha)\mathbb{E}\int_0^T |Z_s^n-Z_s^m|^2ds 
		\longrightarrow 0.
		\end{align*}
		Consequently, \((Z^n)\) is a Cauchy sequence in \(L^2_{\mathbb{F}}([0,T];\mathbb{R}^d)\),  hence   $Z^n$ strongly converges to $Z$ in $L^2_{\mathbb{F}}([0,T];\mathbb{R}^d)$.
		Furthermore, since \(f_n\) locally uniformly converges to \(f\) and   \( 	 |f_n(t,Y_t^n,Z_t^n)|
		\leq \eta_t+C|Z_t^n|\). By the uniform integrability of \(|Z^n|^2\)  and
		 applying Vitali's convergence theorem, we have
		\begin{align*}
		 f_n(\cdot,Y^n,Z^n)\longrightarrow f(\cdot,Y,Z)\q\ 
		 \hbox{in} \ L^2_{\mathbb{F}}([0,T];\mathbb{R}).
		\end{align*}
		Passing to the limit in \eqref{3.7}, and then using the Burkholder--Davis--Gundy inequality, we obtain 
	 \(Y^n\) converges to  \(Y\) in \(S^2_{\mathbb{F}}([0,T];\mathbb{R})\).
		  Thus,  the pair $(Y,Z) \in S^2_{\mathbb{F}}([0,T];\mathbb{R})\times L^2_{\mathbb{F}}([0,T];\mathbb{R}^d)$ is a solution of BDSDE \eqref{1}. Next,
		we  need to  verify that $(Y,Z)$ is a maximal solution. Assume that $(Y',Z')\in S^2_{\mathbb{F}}([0,T];\mathbb{R})\times L^2_{\mathbb{F}}([0,T];\mathbb{R}^d)$ is another solution to BDSDE \eqref{1}. Since $f_n$ is decreasing in $n$ and Lipschitz in 
		$z$, and  we have $f\leq f_n$. It follows from the comparison theorem (\autoref{c3.7}) that $Y'\leq Y^n$.
		Taking the limit as $n\longrightarrow \infty$, we conclude that $Y'\leq Y$.
		
		\ms
		
		{\bf\it Step 2}: The general case where $\xi\in L^2_{\mathscr{F}_T}(\Omega;\mathbb{R})$ and $f(\cdot,0,0)\in L^2_{\mathbb{F}}([0,T];\mathbb{R})$.

		In order to transition from bounded data to square-integrable data, we adopt a two-sided localization,
		let
		\begin{align*}
			&	\xi_{n,m}=-m\vee( \xi\wedge n) \q\  \hbox{and} \q\  f_{n,m}(t,y,z)=f(t,y,z)-f(t,0,0)+f_t^{n,m}, 
		\end{align*}
		where \(f_t^{n,m}=-m\vee( f(t,0,0)\wedge n).  \)
		For fixed \(m\in \mathbb{N}\), we first pass to the limit as 
		\(n\longrightarrow\infty\). Indeed,
  it follows from Step 1 that,
   BDSDE \eqref{1} with parameters $(\xi_{n,m},f_{n,m},g)$ has a maximal solution
		$(Y^{n,m},Z^{n,m})\in S^2_{\mathbb{F}}([0,T];\mathbb{R})\times L^2_{\mathbb{F}}([0,T];\mathbb{R}^d)$ satisfying
		\begin{align} \label{3.8}
			Y_t^{n,m}=\xi_{n,m}+\int_t^Tf_{n,m}(s,Y_s^{n,m},Z_s^{n,m})ds+\int_t^Tg(s,Y_s^{n,m},Z_s^{n,m})dB_s-\int_t^TZ_s^{n,m}dW_s,\q\ 0\leq t\leq T,
		\end{align}
		where the sequence \((\xi_{n,m},f_{n,m})_n\) is non-decreasing in \(n\),
	and \(\xi_{n,m}\longrightarrow (-m\vee \xi)\) in \(L^2_{\mathscr{F}_T}(\Omega;\mathbb{R})\),
		\(f_t^{n,m}\longrightarrow (-m\vee f(t,0,0))\) in 
		\(L^2_{\mathbb{F}}([0,T];\mathbb{R})\), the same
		stability estimate as in Step 1 yields that
		\begin{align*}
		\mathbb{E}\left[ \sup\limits_{0\leq t\leq T}\big| 
		Y_t^{n,m}-Y_t^{k,m}\big|^2+\int_{0}^{T}\big|Z_t^{n,m}-Z_t^{k,m}\big|^2dt
		\right]\longrightarrow 0.
		\end{align*}
		Hence,  there exists  a limiting pair \((Y^m,Z^m) \in S^2_{\mathbb{F}}([0,T];\mathbb{R})\times L^2_{\mathbb{F}}([0,T];\mathbb{R}^d)\) such that
		\(Y^{n,m}\longrightarrow Y^m\) in \(  S^2_{\mathbb{F}}([0,T];\mathbb{R}) \) and
		\(Z^{n,m}\longrightarrow Z^m\) in \( L^2_{\mathbb{F}}([0,T];\mathbb{R}^d)\),
		which is a solution of BDSDE \eqref{1} with parameters \((\xi_m,f_m,g)\).
		Then,
		taking the limit as $m\longrightarrow \infty$, 
		since \(\xi_m\longrightarrow \xi\) in \(L^2_{\mathscr{F}_T}(\Omega;\mathbb{R})\), and \(f^m(\cdot,0,0)\longrightarrow f(\cdot,0,0)\) in 	\(L^2_{\mathbb{F}}([0,T];\mathbb{R})\), the stability estimate gives
			\(Y^{m}\longrightarrow Y\) in \(  S^2_{\mathbb{F}}([0,T];\mathbb{R}) \) and
		\(Z^{m}\longrightarrow Z\) in \( L^2_{\mathbb{F}}([0,T];\mathbb{R}^d) \).
		The solution
		\((Y,Z) \in S^2_{\mathbb{F}}([0,T];\mathbb{R})\times L^2_{\mathbb{F}}([0,T];\mathbb{R}^d)\) solves BDSDE \eqref{1}. 
		 To prove maximality,  let  \((Y', Z')\) be any solution of  BDSDE \eqref{1}. Since \(\xi_m\geq \xi\) and \(f_m\geq f\),
		the  comparison principle  gives \(Y_t'\leq Y_t^m\),
		 \(\forall t\in [0,T]\), a.s.  Passing to the limit \(m\longrightarrow\infty\),    we obtain \(Y'\leq Y\).
	\end{proof}

	\subsection{Reflected BDSDE: existence and uniqueness}\label{sec3.3}

	To establish the existence and uniqueness of reflected BDSDEs, we adopt the same conditions as in \autoref{t3.2}. In contrast to the work of Bahlali et al. \cite{09} and Li and Wei \cite{21}, our main result in this part (\autoref{t3.8}) extends their results  to
	generators that are
	   monotone and of  general growth in \(y\).
	Given that the collection \(\mathbb{F} = \{\mathscr{F}_t; t \in [0,T]\}\) is not a filtration, the usual techniques used in classical reflected BSDEs (see El Karoui et al. \cite{97r}) are not applicable. To overcome this difficulty, we first consider a simple case in which \(f\) is independent of \( z\) and \(g\) is independent of \((y,z)\). Then, using a penalization method and a truncation technique, we construct a sequence of  approximating solutions
  converging to the solution of reflected BDSDE.

	\begin{theorem}\label{t3.8} \sl
		Let the conditions \(\textbf{(A1)}\), \(\textbf{(F1)}\), \(\textbf{(G1)}\), \(\textbf{(G3)}
		\)  and \(\textbf{(S1)}\) be satisfied. Moreover, assume that $f$ is Lipschitz in $z$, i.e.,
		\begin{align*}
			\big| f(t,y,z)-f(t,y,z')\big|
			\leq C |z-z'|.
		\end{align*}
		Then  reflected BDSDE \eqref{r1} admits a unique solution $(Y,Z,K)\in S^2_{\mathbb{F}}([0,T];\mathbb{R})\times L^2_{\mathbb{F}}([0,T];\mathbb{R}^d)\times A^2_{\mathbb{F}}([0,T];\mathbb{R}_+)$.
	\end{theorem}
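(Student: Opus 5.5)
By \autoref{r3.1} we may assume $\mu=0$, so that $f$ is non-increasing in $y$. The proof will mirror the structure of \autoref{t3.2}. First we treat the simple case in which $f=f(t,y)$ does not depend on $z$ and $g=g(t)$ does not depend on $(y,z)$; the general case then follows by a Picard-type fixed point argument.

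\textbf{Uniqueness and stability.} Let $(Y^1,Z^1,K^1)$ and $(Y^2,Z^2,K^2)$ be two solutions. Apply It\^o's formula to $e^{\gamma t}|Y^1_t-Y^2_t|^2$. The reflection term
\[
2\int_t^T e^{\gamma s}(Y^1_s-Y^2_s)\,d(K^1_s-K^2_s)
\]
is non-positive: the Skorokhod condition gives $(Y^1_s-S_s)dK^1_s=0$ and $(Y^2_s-S_s)dK^1_s\ge0$, and symmetrically for $K^2$. The drift is handled by monotonicity in $y$ and the Lipschitz property in $z$, and the $dB$ term by \textbf{(G1)} with $\alpha<1$. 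This yields uniqueness exactly as in \autoref{t3.2}, and with the same weighted norm it also yields the estimate that makes the map
\[
\Theta:(U,V)\longmapsto(Y,Z)
\]
a contraction. Here $(Y,Z,K)$ solves the reflected equation with generator $f(s,y,V_s)$ and coefficient $g(s,U_s,V_s)$. The weighted-norm computation is identical to the one in the proof of \autoref{t3.2}, since the $dK$ contribution only helps. So everything reduces to existence for the simple case
\[
Y_t=\xi+\int_t^T f(s,Y_s)\,ds+\int_t^T g(s)\,dB_s+K_T-K_t-\int_t^T Z_s\,dW_s .
\]

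\textbf{Simple case by penalization.} As in \autoref{p3.6}, subtract the auxiliary process $Y^0_t=\mathbb{E}[\int_t^T g(s)\,dB_s\,|\,\mathscr{G}_t]$ from \eqref{Y0}. Setting $\bar Y=Y-Y^0$ turns the problem into a reflected BSDE with respect to the filtration $\mathbb{G}$:
\begin{itemize}
\item generator $\bar f(t,y)=f(t,y+Y^0_t)$, which is monotone with growth $\varphi(|y|+|Y^0_t|)$;
\item obstacle $\bar S=S-Y^0$.
\end{itemize}
Conditional expectations with respect to $\mathscr{G}_t$ are now available. For $n\in\mathbb{N}$, solve the penalized BSDE with generator
\[
\bar f(t,y)+n(y-\bar S_t)^- ,
\]
which is again monotone in $y$; solvability comes from Step 1 and Step 2 of \autoref{p3.6} (truncation plus Yosida approximation). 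By the comparison principle, \autoref{c3.7}, $\bar Y^n$ is non-decreasing in $n$. We then need a priori bounds uniform in $n$:
\begin{align*}
\mathbb{E}\Big[\sup_{t}|\bar Y^n_t|^2+\int_0^T|\bar Z^n_s|^2\,ds+|K^n_T|^2\Big]\le C,\qquad K^n_t=n\int_0^t(\bar Y^n_s-\bar S_s)^-\,ds .
\end{align*}
For these we use It\^o's formula together with the bound
\[
2\int_t^T \bar Y^n_s\,dK^n_s\le 2\int_t^T \bar S^+_s\,dK^n_s\le \tfrac{1}{\beta}\sup_s|\bar S^+_s|^2+\beta|K^n_T|^2 ,
\]
and we express $K^n_T$ through the equation. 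The dangerous quantity here is $\varphi(|\bar Y^n|)$. To control it, we bound $\bar Y^n$ from below by $\bar Y^{(0)}$, the solution with $n=0$ from \autoref{p3.6}. From above, we bound it by the solution $\tilde Y$ of the BSDE with terminal value $\xi\vee\sup_t \bar S^+_t$ and generator $\bar f$ plus reflection on $\bar S^+$. Concretely, $\tilde Y_t\le\mathbb{E}[\xi^++\sup_s\bar S^+_s+\int_t^T|\bar f(s,0)|\,ds\,|\,\mathscr{G}_t]$, using $y\bar f(t,y)\le |y||\bar f(t,0)|$. Hence $|\bar Y^n_t|\le X_t$ for some $\mathbb{G}$-martingale-type process $X$.

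\textbf{Main obstacle: convergence.} We need that $\varphi(X)$, or rather $\varphi(|\bar Y^n|)$, is square integrable, and that $(\bar Y^n-\bar S)^-\to0$ uniformly. For the first point we use \textbf{(S1)}, \textbf{(G3)}, and first a localization to bounded $\xi$ and $f(\cdot,0)$, then the Step 2 limiting procedure of \autoref{p3.6}. For the second point we follow El Karoui et al.: by the comparison principle, $\bar Y^n\ge\hat Y^n$, where $\hat Y^n$ is the penalized linear equation, whose convergence to a process above $\bar S$ can be read off explicitly from its conditional-expectation representation under $\mathbb{G}$, using continuity of $S$ and Dini's theorem. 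With these in hand, monotone convergence together with It\^o's formula applied to $|\bar Y^n-\bar Y^m|^2$ gives the Cauchy property of $(\bar Y^n,\bar Z^n,K^n)$ in $S^2\times L^2\times S^2$. This uses
\[
(\bar Y^n-\bar Y^m)\,d(K^n-K^m)\le(\bar Y^n-\bar S)^-\,dK^m+(\bar Y^m-\bar S)^-\,dK^n\to0 .
\]
The limit satisfies the Skorokhod condition because $\int(\bar Y^n-\bar S)\,dK^n\le0$ passes to the limit. Vitali's theorem then handles the term $\bar f(s,\bar Y^n_s)$. Finally, $\mathbb{F}$-measurability of the limit is inherited as in \autoref{p3.6}, and we set $Y=\bar Y+Y^0$ and $Z=\bar Z+Z^0$.
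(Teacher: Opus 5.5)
Your architecture is the paper's. The general case is a fixed point in the $e^{\gamma t}$-weighted norm with contraction factor $\frac{1+\alpha}{2}$; the paper simply runs the Picard iteration of your $\Theta$. The simple case goes by penalization, monotonicity in $n$ by comparison, comparison with the linear penalized equation through its $\mathscr{G}_t$-conditional representation plus Dini, and the Cauchy estimate via $(Y^n-Y^m)\,d(K^n-K^m)\le (Y^n-S)^-dK^m+(Y^m-S)^-dK^n$.

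The gap is precisely the step you call the main obstacle: the mechanism you propose does not deliver what is needed. After subtracting $Y^0$, the noise sits both in the obstacle $S-Y^0$ and in $\bar f(\cdot,0)=f(\cdot,Y^0)$. Your dominating process therefore has the form
\begin{align*}
X_t=\mathbb{E}\Big[\,|\xi|+\sup_{s}\big(S_s-Y^0_s\big)^+ +\int_0^T\big(|f(s,0)|+\varphi(|Y^0_s|)\big)ds\ \Big|\ \mathscr{G}_t\Big].
\end{align*}
Since $|\bar f(s,\bar Y^n_s)|\le |f(s,0)|+\varphi(X_s+|Y^0_s|)$, what you actually need is $\varphi(X+|Y^0|)\in L^2(dt\otimes d\mathbb{P})$. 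Truncating $\xi$ and $f(\cdot,0)$ leaves the $Y^0$-terms untouched, and the hypotheses you cite do not reach them:
\begin{itemize}
\item \textbf{(S1)} controls $\varphi(\sup_t S^+_t)$, not $\varphi(\sup_t (S_t-Y^0_t)^+)$.
\item \textbf{(G3)} only gives $\varphi(|Y^0|)\in L^2(dt\otimes d\mathbb{P})$. It controls neither $\varphi(\sup_t|Y^0_t|)$ nor $\varphi$ of a conditional expectation of $\int_0^T\varphi(|Y^0_s|)ds$. The latter is a $\varphi\circ\varphi$-type requirement: for $\varphi(x)=x^p$ it asks for moments of $Y^0$ of order about $2p^2$ rather than $2p$.
\end{itemize}
Your uniform bound on $(\bar Y^n,\bar Z^n)$ runs through $\beta|K^n_T|^2$, and $K^n_T$ runs through $\int_0^T|\bar f(s,\bar Y^n_s)|ds$. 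So the whole a priori estimate, and the Vitali step after it, are left unjustified.

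The paper organizes this step differently. It does not subtract $Y^0$ at the reflected level; it penalizes the BDSDE itself, solving each penalized equation by \autoref{p3.6}. It also localizes the obstacle, in three stages. First it takes $\xi$, $f(\cdot,0)$ and $S^+$ bounded and applies a constant shift that makes the obstacle $\le 0$. Then it passes to $S\wedge n\uparrow S$ using \autoref{c3.11}. Finally it treats general data.

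With a nonpositive obstacle, the penalty term $2Y^n\,n(Y^n-S)^-$ in It\^o's formula for $|Y^n|^2$ is $\le 0$. The uniform $L^2$ bound on $(Y^n,Z^n)$ then follows from the monotonicity bound $2yf(t,y)\le |y|^2+|f(t,0)|^2$ alone, with no control of $\varphi(|Y^n|)$. This advantage is lost once the obstacle becomes $S-Y^0$, which no constant shift makes nonpositive.

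Be aware that the paper then reads off $\sup_n\mathbb{E}[(K^n_T)^2]<\infty$ from the equation by BDG without comment. That step likewise presupposes an $L^2$ bound on $\int_0^T|f(s,Y^n_s)|ds$, so a genuine argument is needed there in either version.

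Finally, running the simple case inside $\Theta$ requires \textbf{(G3)} for the process $g(\cdot,U,V)$, not just for $g(\cdot,0,0)$. This fails in general when $\varphi$ is superlinear and $(U,V)$ is merely in $L^2$. The paper's iteration uses \autoref{p3.10} in exactly the same way.
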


	Before proving  \autoref{t3.8}, we first present a result that addresses the existence and uniqueness of solutions to RBDSDE \eqref{r1} when the generator \(f\) is independent of \(z\) and the generator \(g\) is independent of \((y,z)\).

	\begin{proposition}\label{p3.10} \sl
		Under the assumptions of \autoref{t3.8},    the following reflected BDSDE	 
		\begin{equation}\label{3.9}
			\left\{	\begin{aligned}
				Y_t&=\xi+\int_t^Tf(s,Y_s)ds+\int_t^Tg(s)dB_s+K_T-K_t-\int_t^TZ_sdW_s, \q\ 0\leq t\leq T,\\
				Y_t&\geq S_t\q\ \hbox{and} \q\ \int_0^T (Y_s-S_s)dK_s=0,
			\end{aligned}
			\right.\end{equation}
		admits a unique solution $(Y,Z,K)\in S^2_{\mathbb{F}}([0,T];\mathbb{R})\times L^2_{\mathbb{F}}([0,T];\mathbb{R}^d)\times A^2_{\mathbb{F}}([0,T];\mathbb{R}_+).$
	\end{proposition}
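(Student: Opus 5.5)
We first apply \autoref{r3.1}, which reduces the problem to $\mu=0$, so that $f$ is non-increasing in $y$. We then remove the backward integral exactly as in the proof of \autoref{p3.6}. Let $(Y^0,Z^0)$ be the solution of the auxiliary equation \eqref{Y0}, so that $Y^0_t=\mathbb{E}[\int_t^Tg(s)dB_s\,|\,\mathscr{G}_t]$. By \textbf{(G3)}, $\varphi(|Y^0|)\in L^2_{\mathbb{F}}([0,T];\mathbb{R})$. Set
\begin{align*}
\bar Y=Y-Y^0,\qquad \bar Z=Z-Z^0,\qquad \bar S=S-Y^0,\qquad \bar f(t,y)=f(t,y+Y^0_t).
\end{align*}
Since $Y^0_T=0$, the problem \eqref{3.9} is equivalent to a classical reflected BSDE, driven by $W$ only, with data $(\xi,\bar f,\bar S)$:
\begin{align*}
\bar Y_t=\xi+\int_t^T\bar f(s,\bar Y_s)ds+K_T-K_t-\int_t^T\bar Z_sdW_s,\qquad \bar Y\geq \bar S,\qquad \int_0^T(\bar Y_s-\bar S_s)dK_s=0.
\end{align*}
This equation can be treated in the filtration $\mathbb{G}$, in which conditional expectations are available. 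The generator $\bar f$ is monotone and has general growth, with $|\bar f(t,y)|\leq|f(t,0)|+\varphi(|y|+|Y^0_t|)$.

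\textbf{Existence.} We use penalization. For $n\in\mathbb{N}$, consider the BSDE with generator $\bar f(t,y)+n(y-\bar S_t)^-$. This generator is still continuous and monotone in $y$, and it has the growth bound $|f(t,0)|+\varphi(|y|+|Y^0_t|)+n|y|+n\bar S^+_t$. Following the two-step truncation and Yosida scheme of \autoref{p3.6}, this BSDE has a unique solution $(\bar Y^n,\bar Z^n)$. Setting $K^n_t=n\int_0^t(\bar Y^n_s-\bar S_s)^-ds$, the comparison argument of \autoref{c3.7} (with $g=0$) shows that $\bar Y^n$ is non-decreasing in $n$.

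The key step is a uniform a priori estimate
\begin{align*}
\sup_n\mathbb{E}\Big[\sup_t|\bar Y^n_t|^2+\int_0^T|\bar Z^n_s|^2ds+|K^n_T|^2\Big]<\infty .
\end{align*}
We obtain it from It\^o's formula applied to $|\bar Y^n|^2$, using $\bar Y^n\geq\bar Y^1$ and the bound $2\bar Y^n_s\,dK^n_s\leq 2\bar S^+_s\,dK^n_s$. Monotonicity lets us replace $\bar f(s,\bar Y^n_s)$ by $\bar f(s,0)$ in the $y\,f$ term. The term $\bar f$ appearing through $K^n_T$ is controlled on the set where the solution sits at the obstacle, and there \textbf{(S1)} gives $\varphi(\bar S^+)\in L^2$; together with \textbf{(G3)} this controls $\varphi(|Y^0|+\bar S^+)$. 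Next we show $\mathbb{E}\sup_t|(\bar Y^n_t-\bar S_t)^-|^2\to0$. For continuous obstacles this follows by the usual argument of El Karoui et al., comparing with the linear penalized equation through a representation in $\mathbb{G}$ and then applying Dini's theorem. Monotone convergence then gives a limit $\bar Y$. It\^o's formula for $|\bar Y^n-\bar Y^m|^2$ yields the Cauchy property in $S^2\times L^2$, using
\begin{align*}
(\bar Y^n-\bar Y^m)(dK^n-dK^m)\leq(\bar Y^n-\bar S)^-dK^m+(\bar Y^m-\bar S)^-dK^n .
\end{align*}
Finally, Vitali's theorem passes the limit through $\bar f(s,\bar Y^n_s)$, using the local uniform bound coming from $\bar Y^1\leq\bar Y^n\leq\bar Y$. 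The Skorokhod condition follows by the standard weak convergence of $dK^n$.

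The main obstacle is the uniform bound on $\mathbb{E}\int_0^T\varphi(|\bar Y^n_s|+|Y^0_s|)^2ds$, since $\varphi$ has general growth. We will handle it by the truncation of \autoref{p3.6}: replace $Y^0$ by $\rho_k(Y^0)$ and the data by bounded truncations, derive bounds on $\bar Y^n$ from above and below by the solutions with $K\equiv0$ and with obstacle $\bar S^+$, and then remove the truncations using \textbf{(G3)} and \textbf{(S1)}. Measurability with respect to $\mathbb{F}$ is preserved because each approximation is $\mathbb{F}$-measurable, as in \autoref{p3.6}.

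\textbf{Uniqueness.} Let $(Y^i,Z^i,K^i)$, $i=1,2$, be two solutions. Apply It\^o's formula to $|Y^1-Y^2|^2$; the backward integral cancels because $g$ does not depend on $(y,z)$. Monotonicity of $f$ controls the drift term, and the reflection terms satisfy
\begin{align*}
(Y^1-Y^2)(dK^1-dK^2)=-(Y^1-S)dK^2-(Y^2-S)dK^1+(Y^1-S)dK^1+(Y^2-S)dK^2\leq 0 ,
\end{align*}
by the Skorokhod conditions. Hence $\mathbb{E}|Y^1_t-Y^2_t|^2+\mathbb{E}\int_t^T|Z^1_s-Z^2_s|^2ds\leq0$, so $Y^1=Y^2$ and $Z^1=Z^2$, and then $K^1=K^2$ from the equation.
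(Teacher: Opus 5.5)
\paragraph{Verdict.} There is a genuine gap: the $n$-uniform bound on $\mathbb{E}[(K^n_T)^2]$ is asserted rather than proved, and the reasons you give for it are incorrect.

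\paragraph{What matches the paper.} Most of your skeleton is the paper's:
\begin{itemize}
\item penalization, with monotonicity in $n$ by comparison;
\item comparison with the linearly penalized equation through its $\mathscr{G}$-representation, followed by optional section and Dini;
\item the Cauchy estimate based on $(\bar Y^n-\bar Y^m)(dK^n-dK^m)\le(\bar Y^n-\bar S)^-dK^m+(\bar Y^m-\bar S)^-dK^n$;
\item passage to the Skorokhod condition, and the uniqueness argument.
\end{itemize}
Subtracting $Y^0$ to get a classical reflected BSDE in $\mathbb{G}$ with obstacle $S-Y^0$ is a legitimate reformulation.

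\paragraph{Where it fails.} The gap is the step you call key: the $n$-uniform bound on $\mathbb{E}[(K^n_T)^2]$, on which your Cauchy estimate depends.
\begin{itemize}
\item Since $K^n_T=\bar Y^n_0-\xi-\int_0^T\bar f(s,\bar Y^n_s)ds+\int_0^T\bar Z^n_sdW_s$, you need $\sup_n\mathbb{E}\big(\int_0^T|\bar f(s,\bar Y^n_s)|ds\big)^2<\infty$, with the integral over all of $[0,T]$. The heuristic that $\bar f$ only matters where the solution sits on the obstacle concerns a limiting $K$. It does not apply to $dK^n=n(\bar Y^n_s-\bar S_s)^-ds$.
\item After the reduction to $\mu=0$, \textbf{(S1)} controls $\varphi(\sup_tS^+_t)$. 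It does not control $\varphi(\bar S^+)$ for $\bar S=S-Y^0$.
\item Having $\varphi(|Y^0|)\in L^2$ and $\varphi(\sup_tS_t^+)\in L^2$ does not yield $\varphi(|Y^0|+\bar S^+)\in L^2$. That would need a quasi-subadditivity $\varphi(a+b)\le C(1+\varphi(a)+\varphi(b))$, which fails for the exponential growth functions $e^{kx}$, $e^{kx^2}$ of Example~\ref{eg}.
\end{itemize}

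\paragraph{Why the fallback does not close it.}
\begin{itemize}
\item An upper bound by ``the solution with obstacle $\bar S^+$'' is either $n$-dependent (the penalized version) or circular (the reflected version).
\item Because $Y^0$ now sits in the obstacle, $\bar S^+$ is unbounded even when $S^+$ is bounded, so no constant supersolution is available.
\item Truncating $Y^0$ by $\rho_k$ changes both generator and obstacle. Removing that truncation requires controlling $f(t,\bar Y^k_t+\rho_k(Y^0_t))-f(t,\bar Y^k_t+Y^0_t)$ along the solutions, which is exactly the integrability of $\varphi(|\bar Y^k|+|Y^0|)$ that is in question.
\end{itemize}

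\paragraph{How the paper organizes it.} The paper keeps $Y^0$ out of the obstacle and stages the argument.
\begin{itemize}
\item In Step 1 it assumes $\xi$, $f(\cdot,0)$ and $\sup_tS_t^+$ bounded, and shifts by the constant $c$. Then $S^c\le0$, and the penalized generators satisfy $f^c_n(t,0)=f(t,c)$, bounded independently of $n$.
\item Each penalized BDSDE is then solved by Step 1 of Proposition~\ref{p3.6} with $n$-independent data. The bounds \eqref{3.13} on $Z^{n,c}$ and $K^{n,c}$ are derived in this bounded setting.
\item Only afterwards are the bounds removed. $S^+$ is handled through the increasing limit $S\wedge n$, which is where $\mathbb{E}[\varphi^2(\sup_tS^+_t)]<\infty$ is used.
\item $(\xi,f(\cdot,0))$ is handled by truncation, where the difference estimates use only the monotonicity of $f$ and $\varphi$ does not enter.
\end{itemize}
Your proof needs an argument of this staged kind, or an explicit localization that controls the mixed term $\varphi(|\bar Y^n|+|Y^0|)$.
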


	\begin{proof}
		{\bf Existence.} 
		We denote by $c>0$ a constant whose value may vary. The following proof is divided into three steps.
		\ms
		
		{\bf\it Step 1}: Assume that, for some constant $c>0,$
		\begin{align*}
			\|\xi\|_{\infty}+\esssup\limits_{(t,\omega)}|f(t,0)|
			+\big\|\sup\limits_{0\leq t\leq T}S_t^+ \big\|_{\infty}
		\leq c.
		\end{align*} 
		
		We notice that
		$(Y,Z,K)\in S_\mathbb{F}^{2}([0,T];\mathbb{R})\times L_\mathbb{F}^2([0,T];\mathbb{R}^d)\times A_\mathbb{F}^{2}([0,T];\mathbb{R}_+)$ is a solution of  reflected BDSDE \eqref{3.9} with parameters $(\xi,f,g)$ and obstacle $S$ if and only if 
		$(Y^c,Z^c,K^c)\in S_\mathbb{F}^{2}([0,T];\mathbb{R})\times L_\mathbb{F}^2([0,T];\mathbb{R}^d)\times A_\mathbb{F}^{2}([0,T];\mathbb{R}_+)$ is a solution of  reflected BDSDE \eqref{3.9} with parameters $(\xi^c,f^c,g^c)$ and obstacle $S^c$, where
		\begin{equation*}
			\left\{
			\begin{aligned}
				&	(Y^c,Z^c,K^c)=(Y-c,Z,K),\\
				&	(\xi^c,f^c,g^c,S^c)=(\xi-c,f(\cdot,y+c),g,S-c).
			\end{aligned} \right.
		\end{equation*}
		For each \(n\in\mathbb{N}\), we consider 
		\begin{align*}
			f^c_n(t,y)=f^c(t,y)+n(y-S^c_t)^-.
		\end{align*}
		It is easy to check that 
		\begin{align*}
			\sup\limits_{0\leq t\leq T}S_t^c \leq 0
			\q\ \hbox{and} \q\ 
			\|\xi^c\|_{\infty}+\sup\limits_{n\geq 1, 0\leq t\leq T}|f_n^c(t,0)|& \leq \|\xi\|_{\infty}+c+\sup\limits_{0\leq t\leq T}|f(t,0)|
			+\varphi(c)\leq 3c+\varphi(c).
		\end{align*}
		Recall Step 1 of \autoref{p3.6},
		for each \(n\in\mathbb{N}\),  	 BDSDE \eqref{1} with parameters $(\xi^c,f^c_n,g^c)$ admits a   solution $(Y^{n,c},Z^{n,c})\in S^2_{\mathbb{F}}([0,T];\mathbb{R})\times L^2_{\mathbb{F}}([0,T];\mathbb{R}^d)$ satisfying
		\begin{align}\label{3.10}
			Y_t^{n,c}=\xi^c+\int_t^T f^c_n(s,Y^{n,c}_s)ds+\int_t^T g^c(s)dB_s-\int_t^T Z_s^{n,c}dW_s,\q\ 0\leq t\leq T.
		\end{align}
		Moreover, the  comparison theorem (\autoref{c3.7}) yields that 
		$  Y^{n,c}_t\leq Y^{n+1,c}_t$.
		Applying It\^o's formula to $|Y^{n,c}_t|^2$ and then taking  expectations on both sides, for any  $\varepsilon>0$, we  derive
		\begin{align}
			\mathbb{E}\left[|Y^{n,c}_t|^2+\int_t^T|Z^{n,c}_s|^2ds\right]&=  \mathbb{E}\left[|\xi^c|^2
			+2\int_t^TY_s^{n,c}f_n^c(s,Y_s^{n,c})ds +\int_t^T |g^c(s)|^2ds\right]
			\nn \\
			&\leq c\left\{1+ 2\mathbb{E}\left[\sup\limits_{0\leq t\leq T}|Y^{n,c}_t|\left(\int_0^T\big|f^c(s,Y_s^{n,c})\big| ds
			+n\int_0^T\left(Y^{n,c}_s-S^c_s\right)^-ds\right)
			\right] \right\}  \nn \\
			&\leq c\left(1+\varepsilon \mathbb{E}\left[\left(n\int_0^T\left(Y^{n,c}_s-S^c_s\right)^-ds\right)^2
			\right]\right), \label{3.13}
		\end{align}
		where Young's inequality has been used. Then,  in view of the following equality,
		\begin{align*}
			n\int_0^T\left(Y^{n,c}_t-S^c_t\right)^-dt
			=Y^{n,c}_0-\xi^c-\int_0^Tf^c(s,Y_s^{n,c})ds-\int_0^Tg^c(s)dB_s+\int_0^TZ^{n,c}_sdW_s.
		\end{align*}
		Using the Burkholder--Davis--Gundy inequality, we have
		\begin{align*}
			\mathbb{E}\left[\left(n\int_0^T\left(Y^{n,c}_s-S^c_s\right)^-ds\right)^2
			\right]
			\leq c\left(1+\mathbb{E}\int_0^T|Z^{n,c}_t|^2dt\right).
		\end{align*}
		Returning to  inequality \eqref{3.13},	and choosing $\varepsilon>0$ small enough, we obtain
		\begin{align*}
			\mathbb{E}\int_0^T|Z^{n,c}_s|^2ds\leq c.
		\end{align*}
		As a result, $\mathbb{E}\left[\big|K^{n,c}_T\big|^2\right]\leq c$,  
		where for any \(t\in [0,T]\),
		\begin{align*}
			K^{n,c}_t=n\int_0^t\left(Y^{n,c}_s-S^c_s\right)^-ds.
		\end{align*}
		Next, we will verify that the convergence of the sequence   \((Y^{n,c},Z^{n,c})\) in $S^2_{\mathbb{F}}([0,T];\mathbb{R})\times L^2_{\mathbb{F}}([0,T];\mathbb{R}^d)$.
		For each $n,m\in \mathbb{N}$, applying It\^o's formula to $|Y^{n,c}_t-Y^{m,c}_t|^2$, and taking expectations on both sides, we get
		\begin{align}\label{k}
			\mathbb{E}\left[\big|Y^{n,c}_t-Y^{m,c}_t\big|^2+\int_t^T\big|Z^{n,c}_s-Z^{m,c}_s\big|^2ds\right]
			&\leq 2\mathbb{E} \int_t^T\left(Y^{n,c}_s-Y^{m,c}_s\right)d\left(K^{n,c}_s-K^{m,c}_s\right)\nn \\
			&\leq 2\mathbb{E}\int_t^T\left(Y^{n,c}_s-S^c_s\right)^-dK^{m,c}_s
			+ 2\mathbb{E}\int_t^T\left(Y^{n,c}_s-S^c_s\right)^-dK^{n,c}_s,
		\end{align}
		where we have used the fact that $f$ is decreasing in $y$. 
		Following,
		we  consider
		\begin{align*}
			\hat{f}^c_n(t,y)=f^c(t,y)+n(S^c_t-y).
		\end{align*}
		For each $n\in \mathbb{N}$,  BDSDE \eqref{1} with parameters $(\xi^c,\hat{f}^c_n,g^c)$ admits a unique solution $(\hat{Y}^{n,c},\hat{Z}^{n,c})\in S^2_{\mathbb{F}}([0,T];\mathbb{R})\times L^2_{\mathbb{F}}([0,T];\mathbb{R}^d)$ satisfying
		\begin{align*}
			\hat{Y}^{n,c}_t=\xi^c+\int_t^T\hat{f}^c_n(s,\hat{Y}^{n,c}_s)ds+\int_t^Tg^c(s)dB_s-\int_t^T\hat{Z}^{n,c}_sdW_s,\q\ 0\leq t\leq T.
		\end{align*}
		For any \((t,y)\in [0,T]\times \mathbb{R}\),
		since \(f_n^c(t,y)\geq \hat{f}_n^c(t,y)\),
		from the  comparison theorem (\autoref{c3.7}),   we have $Y^{n,c}_t\geq \hat{Y}^{n,c}_t$,   \(\forall t\in [0,T]\). Let $v$ be a $(\mathscr{G}_t)_{t\geq 0}$-stopping time, then
		\begin{align*}
			\hat{Y}^{n,c}_v=
			\mathbb{E}\left[e^{-n(T-v)}\xi^c+\int_v^Te^{-n(s-v)}f^c(s,\hat{Y}^{n,c}_s)ds
			+n\int_v^Te^{-n(s-v)}S^c_sds  +\int_v^Te^{-n(s-v)}g^c(s)dB_s~ \Big|~ \mathscr{G}_v \right].
		\end{align*}
		Applying H\"older's inequality and the Burkholder--Davis--Gundy inequality, we deduce that
		\begin{align*}
			\mathbb{E}\left[\int_v^Te^{-n(s-v)}f^c(s,\hat{Y}^{n,c}_s)ds
			+\int_v^Te^{-n(s-v)}g^c(s)dB_s~ \Big|~ \mathscr{G}_v \right]\longrightarrow 0, \q\   as ~ n\longrightarrow \infty.
		\end{align*}
		Consequently, $\hat{Y}^{n,c}_v\longrightarrow \xi^c\mathbf{1}_{\{v=T\}}+S^c_v\mathbf{1}_{\{v<T\}}$ in  mean square. Let \(\bar{Y}_t^c\triangleq \sup\limits_{n\geq 1}Y_t^{n,c}\),
		since \(Y_v^{n,c}\geq \hat{Y}^{n,c}_v\) and \(\hat{Y}^{n,c}_v\longrightarrow S^c_v\) on the set \(\{v<T\}\).
		For any $\dbG$-stopping time \(v\), we get $\bar{Y}_v^c\geq S^c_v$, a.s. Since each process \(Y^{n,c}\) is continuous and \(\mathbb{G}\)-adapted, and is therefore \(\mathbb{G}\)-optional. Hence \(\bar{Y}^c\) is also optional. Since \(S^c\) is optional, the set \(\{(t,\omega);
		\bar{Y}^c_t<S^c_t\}\) is optional.  
		 From the optional section theorem of Dellacherie and Meyer \cite{1975}, this set is evanescent. Consequently,
		we obtain $\bar{Y}_t^c\geq S^c_t$,   \(\forall t\in [0,T]\). Then, 
		since \(Y^{n,c} \uparrow \bar{Y}^c\), \( \big( Y^{n,c} -S^c \big)^-   \downarrow 0\),
		 the sequence \(\left\{\big( Y^{n,c} -S^c \big)^- \right\}_{n\geq 1}\) is continuous, non-increasing  in \(n\), and the process
		\( \big(Y^{1,c}-S^c\big)^-\in S^2_{\mathbb{F}}([0,T];\mathbb{R}).\)
		By Dini's  theorem, we derive
		\begin{align*}
			\mathbb{E}\left[\sup\limits_{0\leq t\leq T}\Big|\big(Y^{n,c}_t-S^c_t\big)^-\Big|^2\right]\longrightarrow 0,\q
			\hbox{as} ~n\longrightarrow \infty.
		\end{align*}
		Returning to inequality \eqref{k}, and combining it with 
		the Burkholder--Davis--Gundy inequality, we conclude that 
		$(Y^{n,c},Z^{n,c})_{n\geq 1}$ is a Cauchy sequence in $S^2_{\mathbb{F}}([0,T];\mathbb{R})\times L^2_{\mathbb{F}}([0,T];\mathbb{R}^d)$.
		We denote the limit process as \((Y^c,Z^c)\).
		Using the Burkholder--Davis--Gundy inequality once more to the following inequality:
		\begin{align*}
			&\	\mathbb{E}\left[\sup\limits_{0\leq t\leq T}\big|K^{n,c}_t-K^{m,c}_t\big|^2\right]
			\leq  4\Bigg\{\mathbb{E}\left[\big|Y^{n,c}_0-Y^{m,c}_0\big|^2\right]  
			+\mathbb{E}\left[\sup\limits_{0\leq t\leq T}\big|Y^{n,c}_t-Y^{m,c}_t\big|^2\right] \\
			&\q\	+ \mathbb{E}\left[\left(\int_0^T\big| f^c(s,Y_s^{n,c})-f^c(s,Y_s^{m,c})\big| ds\right)^2\right]
			+\mathbb{E}\left[\sup\limits_{0\leq t\leq T}\Big|\int_0^t\left(Z^{n,c}_s-Z^{m,c}_s\right)dW_s \Big|^2\right]\Bigg\},
		\end{align*}
		which implies that   the sequence \((K^{n,c})_{n\geq 1}\) converges to a limit process \(K^c\)  in \( S^2_{\mathbb{F}}([0,T];\mathbb{R})\), i.e.,  
		\begin{align*}
			\mathbb{E}\left[\sup\limits_{0\leq t\leq T}\big|K^{n,c}_t-K^c_t\big|^2\right]\longrightarrow 0,\q \hbox{as} ~n\longrightarrow \infty.
		\end{align*}
		Let $n\longrightarrow\infty$ in \eqref{3.10},
		we get a triple  $(Y^c,Z^c,K^c)\in  S^2_{\mathbb{F}}([0,T];\mathbb{R})\times L^2_{\mathbb{F}}([0,T];\mathbb{R}^d)\times A^2_{\mathbb{F}}([0,T];\mathbb{R}_+)$ satisfies 
		\begin{align*}
			Y^c_t=\xi^c+\int_t^Tf^c(s,Y^c_s)ds+\int_t^Tg^c(s)dB_s+K^c_T-K^c_t-\int_t^T Z^c_s dW_s,\q\ 0\leq t\leq T,
		\end{align*}
		where \(Y_t^c\geq S_t^c\) for any \(t\in [0,T]\). Furthermore,
		since \(Y^{n,c}\longrightarrow Y^c\) in \(S^2_{\mathbb{F}}([0,T];\mathbb{R})\) and \(K^{n,c}\longrightarrow K^c\) uniformly in probability, we  have the following convergence:
		\begin{align*}
			\lim\limits_{n\rightarrow \infty}\int_0^T (Y_s^{n,c}-S^c_s)dK_s^{n,c}=
			\int_0^T(Y^c_s-S^c_s)dK^c_s,
		\end{align*}
		this yields
		$	\int_0^T (Y^c_s-S^c_s)dK^c_s=0,$ \(\mathbb{P}\)-a.s. Therefore,  the solution \((Y,Z,K)\in S^2_{\mathbb{F}}([0,T];\mathbb{R})\times L^2_{\mathbb{F}}([0,T];\mathbb{R}^d)\times A^2_{\mathbb{F}}([0,T];\mathbb{R}_+)\) solves reflected BDSDE \eqref{3.9}.

		\ms
		
		{\bf\it Step 2}: Assume that \(S^+\in S^2_{\mathbb{F}}([0,T];\mathbb{R})\),  $\mathbb{E}\left[\sup\limits_{0\leq t\leq T} \varphi^2(S_t^+)\right]<\infty$ and 
		\begin{align*}
			\|\xi\|_{\infty}+\esssup\limits_{(t,\omega)}|f(t,0)|
		\leq c.
		\end{align*}

		Let $S_t^n=S_t \wedge n$,	then \(S_t^n  \uparrow S\). Following Step 1,
		for each $n\in \mathbb{N}$,   there exists a solution $(Y^n,Z^n,K^n)\in S^2_{\mathbb{F}}([0,T];\mathbb{R})\times L^2_{\mathbb{F}}([0,T];\mathbb{R}^d)\times A^2_{\mathbb{F}}([0,T];\mathbb{R}_+)$ satisfying
		\begin{equation}\label{3.14}
			\left\{
			\begin{aligned}
				Y^n_t&=\xi+\int_t^T f(s,Y^n_s)ds+\int_t^Tg(s)dB_s+K^n_T-K^n_t-\int_t^T Z^n_sdW_s,\q\ 0\leq t\leq T,\\
				Y^n_t&\geq S^n_t\q\ \hbox{and}\q\ \int_0^T (Y^n_s-S^n_s)dK^n_s=0.
			\end{aligned}\right.
		\end{equation}
		Since for any \(t\in [0,T]\), $S_t^n\leq S_t^{n+1}$, it follows from \autoref{c3.11} that $ Y_t^n\leq Y_t^{n+1}$.
		As in Step 1, applying It\^o's formula, the dominated convergence theorem and the Burkholder--Davis--Gundy inequality,  we obtain a limiting process $(Y,Z,K)\in S^2_{\mathbb{F}}([0,T];\mathbb{R})\times L^2_{\mathbb{F}}([0,T];\mathbb{R}^d)\times A^2_{\mathbb{F}}([0,T];\mathbb{R}_+)$ satisfying \eqref{3.9}. Additionally, we can establish that for any \(t\in [0,T]\), \( Y_t \geq S_t \), which follows  from \( Y^n_t \geq S^n_t \).
		Furthermore, from dominated convergence theorem, \( S^n\) converges to \( S\) in   \( S^2_{\mathbb{F}}([0,T];\mathbb{R}) \) as \( n \longrightarrow \infty \). 
	 \( (Y^n_t, K^n_t) \) converges uniformly in probability to \( (Y_t, K_t) \)  on \( t\in[0,T] \), 
		the measure \( dK^n \) converges weakly in probability to \( dK \), moreover, \(\sup\limits_{n}\mathbb{E}\left[(K_T^n)^2 \right]<\infty\).  Hence,
		\begin{align*}
			\mathbb{E}\int_0^T(Y^n_s-S^n_s)dK^n_s\longrightarrow \mathbb{E} \int_0^T(Y_s-S_s)dK_s.
		\end{align*}
		Since   $Y_t\geq S_t$ for any \(t\in [0,T]\) and \(K\) is non-decreasing,
		$\int_0^T(Y_s-S_s)dK_s\geq 0$, a.s. The expectation of this non-negative random variable is zero. Therefore,
	 $\int_0^T(Y_s-S_s)dK_s=0$, $ \dbP $-a.s.
		
		\ms
		{\bf\it Step 3}: The general case where  $\xi\in L^2_{\mathscr{F}_T}(\Omega;\mathbb{R})$, $f(\cdot,0)\in L^2_{\mathbb{F}}([0,T];\mathbb{R})$, \(S^+\in S^2_{\mathbb{F}}([0,T];\mathbb{R})\)  and  $\mathbb{E}\left[\sup\limits_{0\leq t\leq T} \varphi^2(S_t^+)\right]<\infty$.

		Let us set
		\begin{align*}
			\xi_n=\rho_n(\xi),\q\ S_t^n=S_t \wedge n, \q\ f_n(t,y)=f(t,y)-f(t,0)+\rho_n(f(t,0)),
		\end{align*}
		where $\rho_n$ is defined in \eqref{3.4},
		\(S_T^n\leq \xi_n\) and  \((S^n)^+\leq n\).
		It follows from Step 2 that,
		for each $n\in\mathbb{N}$, reflected BDSDE \eqref{3.9} with parameter $(\xi_n,f_n,g)$ and obstacle $S^n$ admits a solution $(Y^n,Z^n,K^n)\in S^2_{\mathbb{F}}([0,T];\mathbb{R})\times L^2_{\mathbb{F}}([0,T];\mathbb{R}^d)\times A^2_{\mathbb{F}}([0,T];\mathbb{R}_+)$. 
		Since
		\begin{align*}
			\mathbb{E}\left[|\xi_n-\xi|^2\right]\longrightarrow 0\q\ \hbox{and}  \q\
			\mathbb{E}\int_{0}^{T}\big|f_n(t,0)-f(t,0)\big|^2dt\longrightarrow 0,\q\ \hbox{as} ~n\longrightarrow \infty.
		\end{align*}
		Using the same methodology as in Step 1, we can show that $(Y^n,Z^n,K^n)\longrightarrow (Y,Z,K)$ in $S^2_{\mathbb{F}}([0,T];\mathbb{R})\times L^2_{\mathbb{F}}([0,T];\mathbb{R}^d)\times S^2_{\mathbb{F}}([0,T];\mathbb{R}_+)$. And the limit process $(Y,Z,K)\in S^2_{\mathbb{F}}([0,T];\mathbb{R})\times L^2_{\mathbb{F}}([0,T];\mathbb{R}^d)\times A^2_{\mathbb{F}}([0,T];\mathbb{R}_+)$ is a solution of reflected BDSDE \eqref{3.9}.
		\ms
		
		{\bf Uniqueness.} If there exist two solutions $(Y^1,Z^1,K^1), (Y^2,Z^2,K^2)\in S^2_{\mathbb{F}}([0,T];\mathbb{R})\times L^2_{\mathbb{F}}([0,T];\mathbb{R}^d)\times A^2_{\mathbb{F}}([0,T];\mathbb{R}_+)$, which solve reflected BDSDE \eqref{3.9}, one can apply It\^o's formula
		to $|Y_t^1-Y_t^2|^2$,  this gives 
		\begin{align*}
			\big|Y_t^1-Y_t^2\big|^2=&\ 2\int_t^T(Y_s^1-Y_s^2)\big(f(s,Y_s^1)-f(s,Y_s^2)\big)ds
			-2\int_t^T(Y_s^1-Y_s^2)(Z_s^1-Z_s^2)dW_s\\
			&	+2\int_t^T(Y_s^1-Y_s^2)d(K_s^1-K_s^2)
			-\int_t^T\big|Z_s^1-Z_s^2\big|^2ds.
		\end{align*}
		Note that $f$ is monotone in \(y\), and
		\begin{align*}
			\int_t^T (Y_s^1-Y_s^2)d(K_s^1-K_s^2)=&\ \int_t^T (Y_s^1-S_s)dK_s^1+\int_t^T(Y_s^2-S_s)dK_s^2\\
			&-\int_t^T(Y_s^1-S_s)dK_s^2-\int_t^T(Y_s^2-S_s)dK_s^1\leq0.
		\end{align*} 
		Taking  expectations on both sides we obtain
		\begin{align*}
			\mathbb{E}\left[\big|Y_t^1-Y_t^2\big|^2\right]+\mathbb{E}\int_t^T\big|Z_s^1-Z_s^2\big|^2ds
			\leq 0.
		\end{align*}
		This completes the proof.
	\end{proof}

	Now we give the proof of the main result in this part.

	\begin{proof}[Proof of \autoref{t3.8}]
		The uniqueness can be established similarly to \autoref{p3.10}. Here, we will focus on proving the existence.
		Define	$(Y^0,Z^0)=(0,0)$,  then,
		from \autoref{p3.10},  for each \(n\in \mathbb{N}\), the sequence \((Y^n,Z^n,K^n)\in S^2_{\mathbb{F}}([0,T];\mathbb{R})\times L^2_{\mathbb{F}}([0,T];\mathbb{R}^d)\times A^2_{\mathbb{F}}([0,T];\mathbb{R}_+)\) exists and   satisfies the following reflected BDSDE:
		\begin{equation*}
			\left\{	\begin{aligned}
				Y^n_t&=\xi+\int_t^Tf(s,Y^n_s,Z^{n-1}_s)ds+\int_t^Tg(s,Y^{n-1}_s,Z_s^{n-1})dB_s+K^n_T-K^n_t-\int_t^TZ^n_sdW_s, \q\ 0\leq t\leq T,\\
				Y^n_t&\geq S_t\q\ \hbox{and} \q\ \int_0^T (Y^n_s-S_s)dK^n_s=0.
			\end{aligned}
			\right.\end{equation*}
		For convenience, let \(n\geq 2\), we set
		\begin{align*}
			\triangle Y^{(n)}=Y^n-Y^{n-1},\q\ \triangle Z^{(n)}=Z^n-Z^{n-1},\q\ 
			\triangle K^{(n)}=K^n-K^{n-1}.
		\end{align*} 
		For any $\gamma\in \mathbb{R}$, applying It\^o's formula to $e^{\gamma t}\big|\triangle Y^{(n)}_t\big|^2$, we have
		\begin{align*}
			&\	e^{\gamma t}\big|\triangle Y^{(n)}_t\big|^2+\int_t^Te^{\gamma s}\left(\gamma \big|\triangle Y^{(n)}_s\big|^2+\big|\triangle Z^{(n)}_s\big|^2\right)ds\\
			& =\int_t^T 2e^{\gamma s} \triangle Y^{(n)}_s\Big
			(f(s,Y^n_s,Z^{n-1}_s)-f(s,Y^{n-1}_s,Z^{n-2}_s)\Big)ds+2\int_t^T e^{\gamma s}\triangle Y^{(n)}_sd \triangle K^{(n)}_s\\
			&	+\int_t^T 2e^{\gamma s} \triangle Y^{(n)}_s\Big(g(s,Y^{n-1}_s,Z^{n-1}_s)-g(s,Y^{n-2}_s,Z^{n-2}_s)\Big)dB_s\\
			& -\int_t^T  2e^{\gamma s} \triangle Y^{(n)}_s \triangle Z^{(n)}_s dW_s
			+\int_t^T e^{\gamma s} \Big |g(s,Y^{n-1}_s,Z^{n-1}_s)-g(s,Y^{n-2}_s,Z^{n-2}_s)\Big|^2 ds.
		\end{align*}
		By the Skorokhod condition and the fact that 
		\(Y^n\geq S\), \(Y^{n-1}\geq S\),  we get
		 $\int_t^T e^{\gamma s}\triangle Y^{(n)}_s d\triangle K^{(n)}_s \leq 0$, and  then taking expectations on both sides, we obtain
		\begin{align*}
			&\	\mathbb{E}\int_t^Te^{\gamma s}\left(\gamma \big|\triangle Y^{(n)}_s\big|^2+\big|\triangle Z^{(n)}_s\big|^2\right)ds\\
			&	\leq	  \frac{2C^2}{1-\alpha}\mathbb{E}\int_t^T e^{\gamma s}\big|\triangle Y^{(n)}_s\big|^2ds
			+\mathbb{E}\int_t^Te^{\gamma s}\left(C\big|\triangle Y^{(n-1)}_s\big|^2+ \frac{1+\alpha}{2} 
			\big|\triangle Z^{(n-1)}_s\big|^2 \right)ds.
		\end{align*}
		Choosing  $\gamma=\frac{2C}{1+\alpha}+ \frac{2C^2}{1-\alpha}$.  By iteratively applying the above inequality,
		\begin{align*}
			\mathbb{E}\int_t^Te^{\gamma s}\left(\frac{2C}{1+\alpha} \big|\triangle Y^{(n)}_s\big|^2+\big|\triangle Z^{(n)}_s\big|^2\right)ds
			\leq \left(\frac{1+\alpha}{2}\right)^n
			\mathbb{E}\int_t^Te^{\gamma s}\left(\frac{2C}{1+\alpha} | Y^{1}_s|^2+|Z^{1}_s|^2\right)ds.
		\end{align*}
		Since \(\frac{1+\alpha}{2}<1\), the sequence \((Y^n,Z^n)\) converges in  \( L^2_{\mathbb{F}}([0,T];\mathbb{R})\times L^2_{\mathbb{F}}([0,T];\mathbb{R}^d)\). From the
		Burkholder--Davis--Gundy inequality,
		it is easy to derive the convergence of   \((Y^n)\) in $ S^2_{\mathbb{F}}([0,T];\mathbb{R})$.
		For the convergence of \((K^n)\), we note that
		\begin{align*}
			K^n_t=Y^n_0-Y^n_t-\int_0^tf(s,Y^n_s,Z^{n-1}_s)ds -\int_0^tg(s,Y^{n-1}_s,Z^{n-1}_s)dB_s +\int_0^tZ^{n}_sdW_s,\q\ 0\leq t\leq T.
		\end{align*}
		Now we  know  \((Y^n)\) converges in $ S^2_{\mathbb{F}}([0,T];\mathbb{R})$ and  \((Z^n)\)
		converges in \(L^2_{\mathbb{F}}([0,T];\mathbb{R}^d)\).
		By applying the Burkholder--Davis--Gundy inequality once more,  we obtain
		\((K^n)\) converges in \(S^2_{\mathbb{F}}([0,T];\mathbb{R})\), i.e.,
		\begin{align*}
			\mathbb{E}\left[
			\sup\limits_{0\leq t\leq T}\big|\triangle K^{(n)}_t\big|^2\right]\longrightarrow 0,
			\q\ \hbox{as}~ n\longrightarrow \infty.
		\end{align*}
		Since \(K\) is the uniform limit of continuous non-decreasing sequences \((K^n)\), it is therefore continuous and non-decreasing, and \(K_0=0\).
	We derive a limiting process
		\((Y,Z,K)\in S^2_{\mathbb{F}}([0,T];\mathbb{R})\times L^2_{\mathbb{F}}([0,T];\mathbb{R}^d)\times A^2_{\mathbb{F}}([0,T];\mathbb{R}_+)\). 
			Since   \(Y^n\longrightarrow Y\) and \(K^n\longrightarrow K\)   in $ S^2_{\mathbb{F}}([0,T];\mathbb{R})$.  After passing to a subsequence, we may assume that both convergences are uniform in \(t\), almost surely.
		Moreover,  \((K^n)\)
		is non-decreasing, the uniform convergence entails the weak convergence of the measures \(dK^n\).
	 Consequently,
	 \begin{align*}
	  \Big|  \int_0^T (Y^n_s-S_s)dK^n_s- \int_0^T (Y_s-S_s)dK_s\Big|
	  \leq \sup\limits_{0
	  	\leq s\leq T} |Y^n_s-Y_s|\ K_T^n+
	  	  \Big|  \int_0^T (Y_s-S_s)dK^n_s- \int_0^T (Y_s-S_s)dK_s\Big|.
	 \end{align*}
	 The first term on the right-hand side tends to zero; the second term tends to zero by the continuity of \(Y-S\) and the weak convergence 
	of \(dK^n\).
	 Hence the Skorokhod condition holds for the limit:
		\begin{align*}
			Y_t\geq S_t, \forall t\in [0,T]\q\  \hbox{and} \q\  \int_0^T (Y_s-S_s)dK_s=0.
		\end{align*}
		We  conclude that  \((Y,Z,K)\in S^2_{\mathbb{F}}([0,T];\mathbb{R})\times L^2_{\mathbb{F}}([0,T];\mathbb{R}^d)\times A^2_{\mathbb{F}}([0,T];\mathbb{R}_+)\)  is a solution of 
		reflected BDSDE \eqref{r1}.
	\end{proof}

	As a result of \autoref{t3.8}, we have the following comparison theorem.

	\begin{proposition}[Comparison]\label{c3.11}\sl
			For \(i=1,2\), assume that the parameters \((\xi^i,f^i,g^i)\) satisfy
		\textbf{(A1)}, \textbf{(F1)}, \textbf{(G1)}  and \textbf{(G3)}  with common structural constants, and that \(f^i\) is Lipschitz continuous in \(z\). Assume additionally that \(S^i\) satisfies 
		\textbf{(S1)} and \(S_T^i\leq \xi^i\), \(i=1,2\).
		 Let $(Y^i,Z^i,K^i)\in S^2_{\mathbb{F}}([0,T];\mathbb{R})\times L^2_{\mathbb{F}}([0,T];\mathbb{R}^d)\times A^2_{\mathbb{F}}([0,T];\mathbb{R}_+)$  be the unique solution of the following reflected BDSDE:
		\begin{equation*}
			\left\{
			\begin{aligned}
				&	Y^i_t=\xi^i+\int_{t}^{T}f^i(s,Y^i_s,Z^i_s)ds+\int_{t}^{T}g^i(s,Y^i_s,Z^i_s)dB_s+K^i_T-K^i_t-\int_t^TZ^i_sdW_s,\q\ 0\leq t\leq T,\\
				&	Y_t^i\geq S_t^i\q\ \hbox{and} \q\ \int_0^T (Y_s^i-S_s^i)dK_s^i=0.
			\end{aligned}\right.
		\end{equation*}
		If, for any \(t\in [0,T]\),
		\begin{align*}
			\xi^1\leq \xi^2,\q\ S^1_t\leq S_t^2,\q\ f^1(t,Y^2_t,Z_t^2)\leq f^2(t,Y^2_t,Z_t^2), \q\ g^1(t,Y^2_t,Z_t^2)= g^2(t,Y^2_t,Z_t^2),  \q \hbox{a.s.}, 
		\end{align*}
		\begin{align*}
			\Big( \hbox{alternatively,}\q~ 
			f^1(t,Y^1_t,Z_t^1)\leq f^2(t,Y^1_t,Z_t^1), \q~ g^1(t,Y^1_t,Z_t^1)= g^2(t,Y^1_t,Z_t^1), \q \hbox{a.s.}, \Big)
		\end{align*}
		then we have $Y_t^1\leq Y_t^2$, \(\forall t\in [0,T]\),  $ \dbP $-a.s.
	\end{proposition}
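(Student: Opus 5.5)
We follow the argument of \autoref{c3.7}, adding the reflection terms. Set $\Delta Y=Y^1-Y^2$, $\Delta Z=Z^1-Z^2$, $\Delta K=K^1-K^2$. Apply It\^o's formula to $\phi_{\varepsilon}(\Delta Y_t)$, with $\phi_\varepsilon$ the $C^2$ regularization of $(y^+)^2$ from \eqref{itoy}, and let $\varepsilon\to0$. This gives
\begin{align*}
\big((\Delta Y_t)^+\big)^2
=&\ \big((\xi^1-\xi^2)^+\big)^2+2\int_t^T(\Delta Y_s)^+\big(f^1(s,Y^1_s,Z^1_s)-f^2(s,Y^2_s,Z^2_s)\big)ds
+2\int_t^T(\Delta Y_s)^+d\Delta K_s\\
&+\int_t^T\mathbf{1}_{\{\Delta Y_s\geq 0\}}\Big(\big|g^1(s,Y^1_s,Z^1_s)-g^2(s,Y^2_s,Z^2_s)\big|^2-|\Delta Z_s|^2\Big)ds+M_T-M_t,
\end{align*}
where $M$ collects the backward $dB$-integral and the forward $dW$-integral. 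Both are true martingale-type terms with zero expectation, since $Y^i\in S^2_{\mathbb F}$, $Z^i\in L^2_{\mathbb F}$, and $g^i$ satisfies \textbf{(G1)}.

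The new ingredient is the sign of the reflection term. Since $dK^2\ge0$, we have $\int_t^T(\Delta Y_s)^+dK^2_s\ge0$, and this term enters with a minus sign, so it can be dropped. For $dK^1$, the measure only charges $\{Y^1_s=S^1_s\}$ by the Skorokhod condition. On that set, $Y^1_s=S^1_s\le S^2_s\le Y^2_s$, so $(\Delta Y_s)^+=0$. Hence $\int_t^T(\Delta Y_s)^+dK^1_s=0$, and the whole reflection contribution is $\le0$.

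For the remaining terms we split
$f^1(s,Y^1,Z^1)-f^2(s,Y^2,Z^2)=[f^1(s,Y^1,Z^1)-f^1(s,Y^2,Z^2)]+[f^1(s,Y^2,Z^2)-f^2(s,Y^2,Z^2)]$.
The second bracket is $\le0$ by hypothesis. In the alternative case we split through $(Y^1,Z^1)$ and use $f^2$'s regularity instead. The first bracket is handled by monotonicity in $y$ and Lipschitz continuity in $z$; here we may take $\mu=0$ via \autoref{r3.1}, or otherwise keep a $\mu|\Delta Y|^2$ term. Young's inequality then gives
$$2(\Delta Y)^+\big(f^1(\cdot,Y^1,Z^1)-f^1(\cdot,Y^2,Z^2)\big)\le \Big(2\mu^++\tfrac{2C^2}{1-\alpha}\Big)\big((\Delta Y)^+\big)^2+\tfrac{1-\alpha}{2}|\Delta Z|^2\mathbf 1_{\{\Delta Y>0\}}.$$
Similarly, since $g^1=g^2$ along $(Y^2,Z^2)$, assumption \textbf{(G1)} gives $|g^1(Y^1,Z^1)-g^2(Y^2,Z^2)|^2\le C|\Delta Y|^2+\alpha|\Delta Z|^2$. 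Together these terms leave a negative multiple $-\frac{1-\alpha}{2}|\Delta Z|^2\mathbf{1}_{\{\Delta Y\ge0\}}$, which absorbs the $Z$-contributions.

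Taking expectations yields
$$\mathbb E\big[((\Delta Y_t)^+)^2\big]\le \mathbb E\big[((\xi^1-\xi^2)^+)^2\big]+c\int_t^T\mathbb E\big[((\Delta Y_s)^+)^2\big]ds.$$
The first term on the right vanishes because $\xi^1\le\xi^2$, so Gronwall's inequality gives $(\Delta Y_t)^+=0$ a.s. for each $t$. Continuity of $Y^1$ and $Y^2$ then upgrades this to $Y^1_t\le Y^2_t$ for all $t\in[0,T]$ simultaneously, $\mathbb P$-a.s.

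The main obstacle is the limit $\varepsilon\to0$ in the reflection term $\int\phi'_\varepsilon(\Delta Y)\,d\Delta K$. To justify it, we use $\phi_\varepsilon'(y)\to2y^+$ with $|\phi'_\varepsilon(y)|\le 2|y|+\varepsilon$, together with $\mathbb E[\sup|\Delta Y|\,(K^1_T+K^2_T)]<\infty$, and apply dominated convergence. Alternatively, we can observe directly that $\phi'_\varepsilon\ge -\varepsilon$ makes the $dK^1$ term at most $\varepsilon K^1_T$, which vanishes in the limit.
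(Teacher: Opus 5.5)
Your proposal is correct and follows essentially the same route as the paper. Both apply It\^o's formula to $\phi_\varepsilon(Y^1_t-Y^2_t)$, use the Skorokhod conditions together with $S^1\le S^2$ to show that $2\int_t^T (Y^1_s-Y^2_s)^+\,d(K^1_s-K^2_s)\le 0$, and then close with the Young/Gronwall estimates of \autoref{c3.7}. Your observation that $(Y^1-Y^2)^+$ vanishes on the support of $dK^1$ handles the $dK^1$ part more cleanly than the paper's displayed chain of inequalities for that term.
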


	\begin{proof}
		Applying It\^o's formula to $\phi_{\varepsilon}(Y_t^1-Y_t^2)$, where the function $\phi_{\varepsilon}$ is defined in \eqref{itoy}.  Let $\varepsilon\longrightarrow 0$,  we have
		\begin{align*}
			&\left((Y_t^1-Y_t^2)^+\right)^2
			=\left((\xi^1-\xi^2)^+\right)^2
			+2\int_t^T(Y^1_s-Y^2_s)^+\left(f^1(s,Y^1_s,Z^1_s)-f^2(s,Y_s^2,Z_s^2)\right)ds\\
			&\q	+2\int_t^T(Y^1_s-Y^2_s)^+\Big(g^1(s,Y^1_s,Z^1_s)-g^2(s,Y_s^2,Z_s^2)\Big)dB_s
			-2\int_t^T(Y^1_s-Y^2_s)^+(Z_s^1-Z_s^2)dW_s\\
			&\q	+\int_t^T\left(\big|g^1(s,Y^1_s,Z^1_s)-g^2(s,Y_s^2,Z_s^2)\big|^2-\big|Z_s^1-Z_s^2\big|^2\right)\mathbf{1}_{\{Y_s^1\geq Y_s^2\}}ds+2\int_t^T(Y^1_s-Y^2_s)^+d(K^1_s-K^2_s).
		\end{align*}
		For any \(t\in [0,T]\),
		since $Y^1_t\geq Y^2_t\geq S^2_t\geq S^1_t$ on the set $\{Y^1\geq Y^2\}$,  and \( dK^i\) is supported on the set  $\{Y^i= S^i\}$,
		we deduce that 
		\begin{align*}
			\int_t^T(Y^1_s-Y^2_s)^+d(K^1_s-K^2_s)
			\leq&\ \int_t^T(Y^1_s-S^1_s)dK^1_s-\int_t^T(Y^2_s-S_s^1)dK^1_s-\int_t^T(Y^1_s-Y^2_s)^+dK_s^2\\
			\leq& -\int_t^T(Y^1_s-Y^2_s)^+dK_s^2\leq 0.
		\end{align*}
		The remaining proof is similar to that in \autoref{c3.7}. Combining this with Gronwall's inequality, we can conclude that
		\begin{align*}
			\mathbb{E}\left[ \left((Y_t^1-Y_t^2)^+\right)^2\right]=0.
		\end{align*}
		Therefore, for any \(t\in [0,T]\), $Y_t^1\leq Y_t^2$, $ \dbP$-a.s.
	\end{proof}

	\subsection{Reflected BDSDE: existence of maximal solutions}\label{sec3.4}
	In this part, using the comparison theorem (\autoref{c3.11}), we approximate \(f\) by a monotone sequence of Lipschitz functions \((f_n)\), thereby establishing the existence of the maximal solutions for reflected BDSDEs.  In contrast to the result of Bahlali et al. \cite[Theorem 3.3]{09}, we consider a weaker condition: the generator is continuous and is of general growth in \(y\).

	\begin{theorem}\label{t3.9} \sl
		Let the conditions \(\textbf{(A1)}\), \(\textbf{(F1)}\), \(\textbf{(G1)}\), \(\textbf{(G2)}\), \(\textbf{(G3)}\) and \(\textbf{(S1)}\) be satisfied.  Then  reflected BDSDE \eqref{r1}
		has a maximal solution $(Y,Z,K)\in S^2_{\mathbb{F}}([0,T];\mathbb{R})\times L^2_{\mathbb{F}}([0,T];\mathbb{R}^d)\times A^2_{\mathbb{F}}([0,T];\mathbb{R}_+)$.
	\end{theorem}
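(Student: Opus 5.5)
We follow the proof of \autoref{t3.3}, using the reflected results \autoref{t3.8} and \autoref{c3.11} in place of \autoref{t3.2} and \autoref{c3.7}. By \autoref{r3.1} we may take $\mu=0$. For $n\geq C$ we define the Lipschitz regularizations
\begin{align*}
f_n(t,y,z)=\sup_{q\in\mathbb{Q}^d}\{f(t,y,q)-n|z-q|\}.
\end{align*}
As in \autoref{t3.3}, each $f_n$ is monotone non-increasing in $y$, $n$-Lipschitz in $z$, satisfies $|f_n(t,y,z)|\leq |f(t,0,0)|+\varphi(|y|)+C|z|$, and decreases to $f$ locally uniformly by Dini's theorem. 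By \autoref{t3.8}, for each $n$ there is a unique solution $(Y^n,Z^n,K^n)$ of RBDSDE \eqref{r1} with data $(\xi,f_n,g,S)$. By \autoref{c3.11}, $Y^n\geq Y^{n+1}$. Moreover $Y^n$ dominates the solution $(\tilde Y,\tilde Z)$ of the non-reflected BDSDE driven by a Lipschitz minorant such as $-|f(t,0,0)|-\varphi(|y|)\,\mathrm{sgn}(y)-C|z|$; this has to be handled carefully because of the general growth in $y$. So $Y^n\downarrow Y$ with a lower bound.

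\textbf{Step 1 (bounded data).} Assume $\|\xi\|_\infty+\esssup|f(t,0,0)|+\|\sup_t S_t^+\|_\infty\leq c$. We derive a uniform bound $|Y^n_t|\leq H$, applying It\^o's formula to $\Phi(e^{\gamma t}|Y^n_t|^2-\hat X_t)$ with the convex function of \eqref{t3.6phi}. In the reflected case $\hat X$ must also dominate $e^{\gamma t}\|S^+\|_\infty^2$. The extra term $2\int \Phi'(\cdot)e^{\gamma s}Y^n_s\,dK^n_s$ vanishes or is non-positive: on the support of $dK^n$ we have $Y^n=S\leq \|S^+\|_\infty$, and there either $Y^n\leq 0$ or $e^{\gamma s}|Y^n_s|^2\leq \hat X_s$, so $\Phi'=0$. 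With $H$ in hand, we apply It\^o's formula to $|Y^n|^2$ using \textbf{(G2)}, the Skorokhod condition ($\int Y^n dK^n=\int S\,dK^n\leq c K^n_T$), and the identity expressing $K^n_T$ through the equation. Absorbing as in \eqref{3.13}, this gives $\sup_n \mathbb{E}[\int_0^T|Z^n|^2ds+(K^n_T)^2]<\infty$.

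Next we apply It\^o's formula to $|Y^n-Y^m|^2$. The reflection term $\int (Y^n-Y^m)d(K^n-K^m)\leq 0$ because both solutions share the obstacle $S$. Using \textbf{(G1)}, dominated convergence of $Y^n\to Y$ in $L^2$ (from $|Y^n|\leq H$), and the bound $|f_n(t,Y^n,Z^n)|\leq \eta_t+C|Z^n|$, we show $(Z^n)$ is Cauchy in $L^2_{\mathbb{F}}$. Vitali's theorem then gives $f_n(\cdot,Y^n,Z^n)\to f(\cdot,Y,Z)$ in $L^1$ (and in $L^2$ after the uniform integrability argument of \autoref{t3.3}). Writing $K^n$ from the equation and using BDG, $Y^n\to Y$ and $K^n\to K$ in $S^2$. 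The limit $K$ is continuous and non-decreasing, $Y\geq S$, and the Skorokhod condition passes to the limit through the weak convergence of $dK^n$, exactly as at the end of the proof of \autoref{t3.8}.

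\textbf{Step 2 (general data).} We truncate $\xi$, $f(\cdot,0,0)$ and $S$ from both sides ($\xi_{n,m}$, $f^{n,m}$, $S\wedge n$) and pass first $n\to\infty$, then $m\to\infty$. Monotonicity of the maximal solutions in the data follows from \autoref{c3.11} at the level of the $f_k$-approximations, and the $L^2$ stability estimates come from the It\^o computation for the difference of two reflected solutions with different data. When the obstacles differ, that computation produces the cross term $\int (S^1-S^2)d(K^1-K^2)$, which is controlled by $\|\sup|S^1-S^2|\|_{L^2}\|K^1_T+K^2_T\|_{L^2}$. Assumptions \textbf{(G3)} and \textbf{(S1)} keep $\varphi$ evaluated along $Y^0$ and $S^+$ square integrable, which is needed for the uniform integrability in the Vitali step.

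\textbf{Maximality.} If $(Y',Z',K')$ is any solution, then $f\leq f_n$, and the alternative hypothesis in \autoref{c3.11} gives $Y'\leq Y^n$ (respectively $Y'\leq Y^{m}$ in Step 2). Letting $n\to\infty$ yields $Y'\leq Y$. I expect the main obstacle to be the uniform a priori bound in Step 1 in the presence of the reflection, together with controlling $K^n$ uniformly; the key is the sign of $\Phi'(\cdot)Y^n\,dK^n$ on $\{Y^n=S\}$.
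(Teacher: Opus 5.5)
Your overall architecture matches the paper's proof: sup-convolutions $f_n$, Theorem~\ref{t3.8} for each $n$, Proposition~\ref{c3.11} for $Y^n\downarrow$, the $Z$/$K$ estimates of Proposition~\ref{p3.10}, It\^o on $|Y^n-Y^m|^2$ with the favourable reflection term, Vitali/BDG, the Skorokhod condition via weak convergence of $dK^n$, and maximality from $f\le f_n$.

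\textbf{Step 1.} You depart from the paper at the step you yourself single out, and that step fails. A deterministic bound $|Y^n_t|\le H$ is false as soon as $\lambda\not\equiv0$. Take $f\equiv0$, $g\equiv1$, $\xi=0$, $S\equiv-1$. All hypotheses hold with bounded data, $f_n=f$, and
\[
Y_t-(B_T-B_t)=\mathbb{E}[K_T-K_t\,|\,\mathscr{G}_t]\ge0,
\]
so $Y$ is unbounded. The reason is that $\Phi$ in \eqref{t3.6phi} has a kink at $0$. After smoothing, the second-order term $2\Phi_\varepsilon''(e^{\gamma s}|Y^n_s|^2-\hat X_s)\,e^{2\gamma s}|Y^n_s|^2\big(|g|^2-|Z^n_s|^2\big)$ concentrates on $\{e^{\gamma s}|Y^n_s|^2=\hat X_s\}$ and leaves a local-time-type term in the limit. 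Its sign is that of $|g|^2-|Z^n|^2$, because the backward integral enters with $+|g|^2$. This is $\le0$ when $\lambda\equiv0$, but it can be positive otherwise, e.g.\ where $Z^n=0$ (compare Remark~\ref{lam}). The same example without obstacle contradicts the bound in Step~1 of Theorem~\ref{t3.3}, so you cannot lean on that step.

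Your treatment of the $dK^n$ term is correct. For $\lambda\equiv0$ your Step~1 is valid and even cleaner than the paper's, since it makes $\eta_t=|f(t,0,0)|+\varphi(H)$ bounded. In general, though, you only get a random two-sided barrier, which is what the paper uses: $Y^*\le Y^n\le Y^{n_0}$, with $Y^*$ the maximal non-reflected solution.

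Your own lower barrier would also work and needs only Theorem~\ref{t3.2}, but it must be $-|f(t,0,0)|-\varphi(y^+)-C|z|$. With $\mathrm{sgn}(y)$ it is not a minorant for $y<0$, and it is monotone, not Lipschitz, in $y$. With a random barrier, $L^2$ convergence of $Y^n$ by dominated convergence survives. However, square integrability of $\varphi(|Y^n|)$ uniformly in $n$ is no longer automatic. That is where \textbf{(G3)} and \textbf{(S1)} must actually be used, and your sketch does not do it (the paper also leaves it implicit).

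\textbf{Step 2.} The ``It\^o computation for the difference of two reflected solutions with different data'' does not give $L^2$ stability here, because $f$ is only of linear growth in $z$. The term $(Y^1-Y^2)\big(f(s,Y^1_s,Z^1_s)-f(s,Y^2_s,Z^2_s)\big)$ cannot be bounded by $|Y^1-Y^2|\,|Z^1-Z^2|$. You must instead reuse the Step~1 mechanism: a monotone sequence, dominated convergence of $Y$, and Cauchy--Schwarz against a uniform $L^2$ bound on the generators.

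Moreover, $Y'\le Y^m$ does not follow from Proposition~\ref{c3.11}. $Y^m$ does not solve an equation that is Lipschitz in $z$, and the finite-$n$ data $\xi_{n,m}$, $S\wedge n$ do not dominate $(\xi,S)$, so $Y'\le Y^{n,m}$ is not available either. The simplest fix is to note that Theorem~\ref{t3.8} and Proposition~\ref{c3.11} already allow general square-integrable data. Run the scheme directly on $(\xi,f_n,g,S)$: then $Y'\le Y^n$ for every solution $Y'$, maximality is immediate, and all remaining work is the a priori control above. The paper's Step~2 uses the non-monotone $\rho_n$-truncation and does not verify maximality at all.
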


	\begin{proof}  For  each \(n\in\mathbb{N}\) with  $n\geq C$, let
		\begin{align*}
			f_n(t,y,z)=\sup\limits_{q\in \mathbb{Q}^d}\{f(t,y,q)-n|z-q|\}.
		\end{align*}
		It is straightforward to verify that, for each \(n\geq C\), $f_n$ satisfies the assumptions of \autoref{t3.8}. As a result, there exists a   triple
		$(Y^n,Z^n,K^n)\in S^2_{\mathbb{F}}([0,T];\mathbb{R})\times L^2_{\mathbb{F}}([0,T];\mathbb{R}^d)\times A^2_{\mathbb{F}}([0,T];\mathbb{R}_+)$ which is the unique solution of reflected BDSDE \eqref{r1} with parameters $(\xi,f_n,g)$ and obstacle $S$. Next, we divide the proof into two steps,
		and  we point out that 
		we always denote by $c>0$ a constant whose value may vary from line to line.
		\ms
		
		{\bf\it Step 1}: Assume that there exists  a positive constant $c$ such that
		\begin{align*}
			\|\xi\|_{\infty}+\esssup\limits_{(t,\omega)}|f(t,0,0)|
		+\big\|\sup\limits_{0\leq t\leq T}S_t^+ \big\|_{\infty}
		\leq c.
		\end{align*}

	Since the sequence $(f_n)_{n\geq C}$ is non-increasing in $n$, it follows from \autoref{c3.11} that  $Y^{n+1}_t\leq Y_t^n$. On one hand, for \(n\geq n_0\geq C\),
		recall the proof of Step 1 in \autoref{p3.10},  we have $|Y_t^{n_0}|\in S^2_{\mathbb{F}}([0,T];\mathbb{R})$. On the other hand, 
		$$f_n(t,y,z)+n(y-S_t)^-\geq f(t,y,z).$$
		It follows	from \autoref{t3.3},  BDSDE \eqref{1} with parameters $(\xi,f,g)$ admits a maximal solution $(Y^*,Z^*)\in S^2_{\mathbb{F}}([0,T];\mathbb{R})\times L^2_{\mathbb{F}}([0,T];\mathbb{R}^d)$.
		According to \autoref{c3.7}, we also have $Y_t^n\geq Y_t^*$.  
		Therefore,  for any \(t\in [0,T],\) \(Y_t^* \leq Y_t^n\leq Y_t^{n_0}\),   taking the supremum,  
		 we can conclude that
		\begin{align*}
			\sup\limits_{0\leq t\leq T} |Y_t^{n}|\leq \max\{\sup\limits_{0\leq t\leq T}|Y_t^{n_0}|,\sup\limits_{0\leq t\leq T}|Y^*_t|   \}
			\in S^2_{\mathbb{F}}([0,T];\mathbb{R}).
		\end{align*}
		Moreover, 
		for every fixed \((t,\omega)\in [0,T]\times \Omega\),
		the sequence \(f_n(t,\omega,\cdot,\cdot)\) is continuous and decreases pointwise to \(f(t,\omega,\cdot,\cdot)\).
		Applying Dini's theorem, $f_n(t,\omega,y,z)$ converges   to $f(t,\omega,y,z)$ uniformly on the compact sets of 
		$\mathbb{R}\times\mathbb{R}^d$. 
		Let 
		\begin{align*}
H \triangleq \max\{\sup\limits_{0\leq t\leq T}|Y_t^{n_0}|,\sup\limits_{0\leq t\leq T}|Y^*_t|\}\q\ 
\hbox{and} \q\ \eta_t\triangleq |f(t,0,0)|+\varphi(H).
		\end{align*}
		Then 
		\begin{align*}
		 |f_n(t,Y_t^n,Z_t^n)|\leq 
		 \eta_t+C|Z_t^n|.
		\end{align*}
		Repeating the a priori estimates used in  \autoref{p3.10}, with the additional term \(|Z_t^n|\) absorbed by  Young's inequality, we obtain
		\begin{align*}
	 \sup\limits_{n\geq n_0}\mathbb{E}\left[ \int_0^T
	   |Z_s^n|^2ds +(K_T^n)^2\right] <\infty.
		\end{align*}
		For \(n\geq m\),  we have \(Y^n\leq Y^m\). Moreover, by the Skorokhod condition, 
		\begin{align*}
 \int_0^T (Y_s^n-Y_s^m)d(K_s^n-K_s^m)
 =\int_0^T(S_s-Y_s^m)dK_s^n-\int_0^T(Y_s^n-S_s)dK_s^m\leq 0.
		\end{align*}
		Therefore, applying It\^o's formula to \(|Y_t^n-Y_t^m|^2\), 
		the reflection term has the favorable sign,  and the same estimate as in the non-reflected case gives $Z^n$ converges to $Z$ in $L^2_{\mathbb{F}}([0,T];\mathbb{R}^d)$.  
		Moreover, by the local uniform convergence of \(f_n\), the preceding uniform growth estimate, and Vitali's convergence theorem, 
		\begin{align*}
		 f_n(\cdot,Y^n,Z^n)\longrightarrow f(\cdot,Y,Z)
		 \q\ \hbox{in}\ L^2_{\mathbb{F}}([0,T];\mathbb{R}).
		\end{align*}
		The Burkholder--Davis--Gundy inequality then gives  $Y^n$ converges   to $Y$ in $S^2_{\mathbb{F}}([0,T];\mathbb{R})$.
		Finally,  from the following equation:
		\begin{align*}
			K_t^n=Y_0^n-Y_t^n-\int_0^t f_n(s,Y_s^n,Z_s^n)ds
			-\int_0^t g(s,Y_s^n,Z_s^n)dB_s +\int_0^t Z_s^ndW_s,\q\
			0\leq t\leq T,
		\end{align*}
	 we obtain 
		$K^n$ converges to \(K\)  in  $S^2_{\mathbb{F}}([0,T];\mathbb{R})$.
		Since for any \(t\in [0,T]\), $Y^n_t\geq S_t$ and $\int_0^T(Y_s^n-S_s)dK_s^n=0$,  the Skorokhod condition is preserved in the limit. Indeed, since
		\(Y^n\longrightarrow Y\) and \(K^n\longrightarrow K\)
		in \(   S^2_{\mathbb{F}}([0,T];\mathbb{R})\), up to a subsequence both convergences hold uniformly in \(t\),
		\(\mathbb{P}\)-a.s. Moreover, the measures \(dK^n\) converge weakly to \(dK\) on \([0,T]\). Therefore,
		\begin{align*}
			Y_t\geq S_t, \forall t\in [0,T] \q\hbox{and}\q  \int_0^T(Y_s-S_s)dK_s=0.
		\end{align*}
		The triple $(Y,Z,K)\in S^2_{\mathbb{F}}([0,T];\mathbb{R})\times L^2_{\mathbb{F}}([0,T];\mathbb{R}^d) \times A^2_{\mathbb{F}}([0,T];\mathbb{R}_+)$ is a solution of reflected BDSDE \eqref{r1}.
		Next, we prove the solution is maximal. Let $(Y',Z',K')\in S^2_{\mathbb{F}}([0,T];\mathbb{R})\times L^2_{\mathbb{F}}([0,T];\mathbb{R}^d) \times A^2_{\mathbb{F}}([0,T];\mathbb{R}_+)$ be another solution of reflected BDSDE \eqref{r1}, since $f_n$ is Lipschitz in 
		$z$ and decreasing in $y$ with $f\leq f_n$. It follows from \autoref{c3.11} that  $Y'\leq Y^n$. Let $n\longrightarrow \infty$, we get $Y'\leq Y$.
		\ms

		{\bf\it Step 2}: The general case where
		$\xi\in L^2_{\mathscr{F}_T}(\Omega;\mathbb{R})$,
		$f(\cdot,0,0)\in L^2_{\mathbb{F}}([0,T];\mathbb{R})$, \(S^+\in S^2_{\mathbb{F}}([0,T];\mathbb{R})\)  and  $\mathbb{E}\left[\sup\limits_{0\leq t\leq T} \varphi^2(S_t^+)\right]<\infty$.

		Let us set
		\begin{align*}
			&	\xi_n=\rho_n(\xi), \q\   S_t^n=S_t\wedge n,   \q\ 
			f_n(t,y,z)=f(t,y,z)-f(t,0,0)+\rho_n(f(t,0,0)), 
		\end{align*}
		where $\rho_n$ is defined in \eqref{3.4}. 
		The obstacle condition satisfies
	\(S_T^n\leq \xi_n\) and \((S^n)^+\leq n\).
		It follows from Step 1 that, reflected BDSDE \eqref{r1} with parameters $(\xi_n,f_n,g)$ and obstacle \(S^n\) has a maximal solution
		$(Y^n,Z^n,K^n)\in S^2_{\mathbb{F}}([0,T];\mathbb{R})\times L^2_{\mathbb{F}}([0,T];\mathbb{R}^d) \times A^2_{\mathbb{F}}([0,T];\mathbb{R}_+)$ satisfying
		\begin{align}\label{3.16}
			Y_t^n=\xi_n+\int_t^Tf_n(s,Y_s^n,Z_s^n)ds+\int_t^Tg(s,Y_s^n,Z_s^n)dB_s+K^n_T-K^n_t-\int_t^TZ_s^ndW_s,\q\ 0\leq t\leq T.
		\end{align}
		Since
		\begin{align*}
		 \sup\limits_{0\leq t\leq T}|S_t^n-S_t|
		 =\sup\limits_{0\leq t\leq T}\big(S_t-n \big)^+
		 \leq \sup\limits_{0\leq t\leq T}\big(S^+_t-n \big)^+.
		\end{align*}
		The right-hand side converges to zero almost surely and is dominated by \(\sup\limits_{0\leq t\leq T}S_t^+\).
		Hence, $S^n\longrightarrow S$ in $S^2_{\mathbb{F}}([0,T];\mathbb R)$. Moreover,
		$\xi_n\longrightarrow \xi$ in $L^2_{\mathscr{F}_T}(\Omega; \mathbb{R})$ and $f_n(\cdot,0,0)\longrightarrow f(\cdot,0,0)$ in
		$L^2_{\mathbb{F}}([0,T];\mathbb R)$,  the same estimate as in Step 1 yields that
		\begin{align*}
		 \mathbb E\Bigg[\sup_{0\leq t\leq T}\big|Y_t^n-Y_t^m\big|^2
		 +\int_0^T\big |Z_t^n-Z_t^m\big|^2dt
		 +\sup_{0\leq t\leq T}\big|K_t^n-K_t^m\big|^2\Bigg]\longrightarrow 0.
		\end{align*}
		Thus,  there exists a triple \((Y,Z,K)\) such that
		\(Y^n\longrightarrow Y\) in \( S^2_{\mathbb{F}}([0,T];\mathbb{R})\), \(Z^n\longrightarrow Z\) in \(L^2_{\mathbb{F}}([0,T];\mathbb{R}^d)\), and 
		\(K^n\longrightarrow K\) in \(S^2_{\mathbb{F}}([0,T];\mathbb{R})\).
		Passing to the limit in \eqref{3.16} gives the desired reflected BDSDE.
		Moreover, since $Y^n\geq S^n$ and $S^n\longrightarrow S$ in $S^2_{\mathbb{F}}([0,T];\mathbb R)$, we have $Y\geq S$.
		Finally, the Skorokhod condition follows from
		\begin{align*}
	&\	 \int_0^T(Y_s^n-S_s^n)dK_s^n-\int_0^T(Y_s-S_s)dK_s
		 =\int_0^T\big( (Y_s^n-S_s^n) - (Y_s-S_s) \big)dK_s^n
		+\int_0^T (Y_s-S_s) d(K_s^n-K_s),
		\end{align*}
		where the first term converges by the uniform convergence in probability of
		$(Y^n,S^n)$ and the uniform $L^2$-boundedness of $(K_T^n)$; the second term converges by the weak convergence of the measure \(dK^n\).
	\end{proof}

	\section{Linear growth in $y$ and
		quadratic growth in $z$}\label{section4}
	
	In this section, we study the existence and uniqueness for BDSDEs and reflected BDSDEs when the generator
	$f$ is of quadratic growth with respect to $z$, and the terminal value \(\xi\) is bounded. For the sake of clarity, we will divide our results into two subsections.

	\subsection{BDSDE: existence and uniqueness}\label{sec4.1}
	
	In this part, we study the existence and uniqueness of BDSDEs when the coefficient $f$ is of linear growth with respect to $y$ and quadratic growth with respect to $z$.
	Additionally, we consider the condition   \(\textbf{(G2)}\) on  the coefficient $g$, which is weaker than the condition     proposed by   Hu, Wen, and Xiong \cite{wen1}.
	We begin by generalizing the result of a priori estimate for the solution \((Y, Z)\) of BDSDE  \eqref{1} (see Hu, Wen, and Xiong \cite[Proposition 3.7]{wen1}). This step is crucial for proving the  existence of solutions.

	\begin{lemma}[A priori estimate]\label{l4.1}\sl
		Let the conditions \(\textbf{(A2)}\) and \(\textbf{(G2)}\) be satisfied. Moreover, let \( C >0 \) be a constant, \(a:[0,T] \longmapsto \mathbb{R}_+ \) and \( b: [0,T]\longmapsto \mathbb{R}_+\) be two non-negative functions, such that for all \((t,y,z)\in [0,T]\times\mathbb{R}\times\mathbb{R}^d\), 
		\begin{align*}
			|f(t,y,z)|\leq a_t+b_t|y|+C|z|^2.
		\end{align*}
		If \((Y,Z)\in S^\infty_{\mathbb{F}}([0,T];\mathbb{R})\times L^2_{\mathbb{F}}([0,T];\mathbb{R}^d)\) is a solution of BDSDE \eqref{1},
		then for any \(t\in[0,T]\),
		\begin{align}\label{y4.17}
			Y_t\leq  \|\xi\|_{\infty}\exp\left\{\int_t^T b_sds\right\}+\int_t^T a_s \exp\left\{\int_t^s b_rdr\right\}ds+\frac{4C}{1-\alpha}\int_t^T|\lambda(s)|^2\exp\left\{\int_t^s b_rdr\right\}ds,\q\ \hbox{a.s.}
		\end{align}
		respectively,
		\begin{align}\label{y4.18}
			\Big(	Y_t\geq   -\|\xi\|_{\infty}\exp\left\{\int_t^T b_sds\right\}-\int_t^T a_s \exp\left\{\int_t^s b_rdr\right\}ds-\frac{4C}{1-\alpha}\int_t^T|\lambda(s)|^2\exp\left\{\int_t^s b_rdr\right\}ds,\q\ \hbox{a.s.} \Big)
		\end{align}
		Furthermore, there exists a constant $\hat{K}>0$ which depends on $\alpha,C,T, \|a\|_{L^1([0,T];\mathbb{R}_+)}, \|b\|_{L^1([0,T];\mathbb{R}_+)}$ and $\|Y\|_{S_{\mathbb{F}}^\infty([0,T];\mathbb{R})}$, such that
		\begin{align}\label{z4.19}
			\mathbb{E}\int_0^T|Z_s|^2ds\leq\hat{ K}.
		\end{align}
	\end{lemma}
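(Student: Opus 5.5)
The idea is to compare $Y$ with a deterministic supersolution through an exponential transform that absorbs the quadratic term in $z$. Conditional expectations with respect to $\mathbb{F}$ are not available, so expectations are taken only after applying It\^o's formula, as in the proof of \autoref{t3.3}.

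Let $\bar y_t$ denote the right-hand side of \eqref{y4.17}. It is the solution of the linear ODE
\begin{align*}
-\dot{\bar y}_t=a_t+b_t\bar y_t+\tfrac{4C}{1-\alpha}|\lambda(t)|^2,\qquad \bar y_T=\|\xi\|_{\infty}.
\end{align*}
Since $\bar y\geq 0$, on the set $\{Y_t>\bar y_t\}$ we have $|Y_t|=Y_t$. There the drift obeys $f(t,Y_t,Z_t)\leq a_t+b_tY_t+C|Z_t|^2$, so $\Delta_t\triangleq Y_t-\bar y_t$ has drift bounded by $b_t\Delta_t+C|Z_t|^2-\frac{4C}{1-\alpha}|\lambda(t)|^2$.

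Take $\beta=\frac{4C}{1-\alpha}$ and apply It\^o's formula to $\Psi(\Delta_t)$ with $\Psi(x)=(e^{\beta x}-1)^+$. This must be smoothed as with $\phi_\varepsilon$ in \eqref{itoy}, or replaced by $\Psi(x)=\big((e^{\beta x}-1)^+\big)^2$ to gain $C^2$ regularity. The Itô correction from the $dW$ part contributes $-\frac12\Psi''|Z|^2$ on the right-hand side of the backward equation, i.e.\ $\frac{\beta^2}{2}e^{\beta\Delta}|Z|^2$ with the favorable sign. The correction from the backward $dB$ integral enters with the opposite sign and is bounded via \textbf{(G2)} by $\frac{\beta^2}{2}e^{\beta\Delta}(|\lambda|^2+\alpha|Z|^2)$.

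The $|Z|^2$ terms then balance as follows:
\begin{align*}
\beta C-\tfrac{\beta^2}{2}(1-\alpha)=\beta C-2\beta C<0,
\end{align*}
which absorbs the quadratic drift. The $|\lambda|^2$ term, $\frac{\beta^2}{2}|\lambda|^2$, must be compared with the $-\beta\cdot\frac{4C}{1-\alpha}|\lambda|^2$ drift contribution. The constants do not close exactly as written, so I will tune the choice of $\beta$ within $[\frac{2C}{1-\alpha},\frac{4C}{1-\alpha}]$, and if necessary absorb the remainder through $b$ or a Gronwall step. This constant bookkeeping is where I expect the main friction.

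The term $b_t\Delta_t\beta e^{\beta\Delta}$ is handled by multiplying by $e^{-\int_0^t b}$ or by Gronwall's inequality, since $\Delta e^{\beta\Delta}\mathbf 1_{\Delta>0}$ is controlled by a multiple of $\Psi'$. Both stochastic integrals are true martingales (resp.\ backward martingales) because $Y$ is bounded and $Z\in L^2$. Taking expectations then gives $\mathbb{E}[\Psi(\Delta_t)]\leq c\int_t^T\mathbb{E}[\Psi(\Delta_s)]ds$, and Gronwall's inequality yields $\Delta_t\leq 0$ a.s. The lower bound \eqref{y4.18} follows by applying the same argument to $(-Y,-Z)$, which solves a BDSDE with generator $-f(t,-y,-z)$ and coefficient $-g(t,-y,-z)$. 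These satisfy the same growth condition and \textbf{(G2)}.

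For \eqref{z4.19}, set $M=\|Y\|_{S^\infty}$ and apply It\^o's formula to $e^{\beta Y_t}$, or to $\phi(Y_t)=\frac{1}{\beta^2}(e^{\beta Y_t}-1-\beta Y_t)$, where $\beta$ is chosen so that $\frac{\beta^2}{2}(1-\alpha)-\beta C\geq\frac{\beta^2(1-\alpha)}{4}$. Using \textbf{(G2)} and $|Y|\leq M$, taking expectations gives
\begin{align*}
c\,\mathbb{E}\int_0^T e^{\beta Y_s}|Z_s|^2ds\leq e^{\beta M}+\int_0^T(a_s+b_sM+|\lambda(s)|^2)ds\, e^{\beta M}.
\end{align*}
Since $e^{\beta Y}\geq e^{-\beta M}$, this yields $\hat K$ depending only on $\alpha,C,T,\|a\|_{L^1},\|b\|_{L^1},M$ (and $\lambda$).
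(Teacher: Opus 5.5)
There is a genuine gap, and it is not the constant bookkeeping you flagged. On $\{\Delta>0\}$ the constants close exactly with $\beta=\frac{4C}{1-\alpha}$. Since $\Psi''=\beta\Psi'$ there, the $|Z|^2$-terms give $(C-2C)\Psi'|Z|^2$, and the $|\lambda|^2$-terms give $\big(\frac{\beta}{2}-\frac{4C}{1-\alpha}\big)\Psi'|\lambda|^2=-\frac{\beta}{2}\Psi'|\lambda|^2$.

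The step that fails is the one you treat as routine regularization: the kink of $\Psi$ at $0$. The correction from the backward $dB$-integral is $+\frac12\Psi''|g|^2$. Under \textbf{(G2)} it contains the additive piece $\frac12\Psi''|\lambda|^2$, which carries no factor $|Z|^2$ and so cannot be absorbed by $-\frac12\Psi''|Z|^2$.
\begin{itemize}
\item If you smooth $(e^{\beta x}-1)^+$, then $\Psi_\varepsilon''$ is of order $\beta/\varepsilon$ on a window of width $\varepsilon$. The term $\frac12\mathbb{E}\int_t^T\Psi_\varepsilon''(\Delta_s)|\lambda(s)|^2ds$ becomes a local-time-type term, which in general does not vanish as $\varepsilon\to0$.
\item If you use $((e^{\beta x}-1)^+)^2$ (only $C^{1,1}$, not $C^2$), the $|\lambda|^2$-terms sum to $+\beta^2e^{\beta\Delta}|\lambda|^2\mathbf{1}_{\{\Delta>0\}}$. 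This stays near $\beta^2|\lambda|^2$ as $\Delta\downarrow0$, while $\Psi(\Delta)\sim\beta^2\Delta^2$, so no Gronwall step closes.
\end{itemize}
Tuning $\beta$ or the test function cannot help. Suppose $\Psi\geq0$ is $C^2$, non-decreasing, vanishes exactly on $(-\infty,0]$, and satisfies $\frac12\Psi''\leq\beta\Psi'+K\Psi$ on $(0,\infty)$. Applying Gronwall to $\Psi+\Psi'$, which vanishes at $0$, shows $\Psi\equiv0$.

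The gap cannot be repaired, because \eqref{y4.17}--\eqref{y4.18} are false once $\lambda\not\equiv0$. Take $d=l=1$, $\xi=0$, $f(t,y,z)=-\frac12y$ and $g(t,y,z)=\cos(B_T-B_t)$. By the backward It\^o formula,
$\sin(B_T-B_t)=-\frac12\int_t^T\sin(B_T-B_s)ds+\int_t^T\cos(B_T-B_s)dB_s$,
so $(Y,Z)=(\sin(B_T-B_\cdot),0)$ is a bounded solution of \eqref{1}. Condition \textbf{(G2)} holds with $\lambda\equiv1$ and any $\alpha\in(0,1)$. The growth bound holds with $a=0$, $b=\frac12$ and every $C>0$. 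Then \eqref{y4.17} would give $\sin(B_T-B_t)\leq\frac{8C}{1-\alpha}\big(e^{(T-t)/2}-1\big)$ a.s. for every $C>0$, hence $\sin(B_T-B_t)\leq0$ a.s., which is false.

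The paper's proof breaks at the same place. It requires $\frac12\Phi_\varepsilon''-\frac{4C}{1-\alpha}\Phi_\varepsilon'\leq r_\varepsilon$ with $r_\varepsilon\to0$ uniformly, together with $\Phi_\varepsilon'\to\Phi'$ off $0$. This is impossible, since $\Phi'$ jumps from $0$ to $1$ at $0$.

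Both your argument and the paper's go through when $\lambda\equiv0$, the setting of Hu, Wen, and Xiong. Then $\frac12\Psi_\varepsilon''(|g|^2-|Z|^2)+C\Psi_\varepsilon'|Z|^2\leq\big(C\Psi_\varepsilon'-\frac{1-\alpha}{2}\Psi_\varepsilon''\big)|Z|^2\leq0$, provided the smoothing keeps $\Psi_\varepsilon''\geq\frac{2C}{1-\alpha}\Psi_\varepsilon'$; a large $\Psi_\varepsilon''$ at the kink then only helps. Your $Z$-estimate \eqref{z4.19} is unaffected, since it uses only expectations.
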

	\begin{proof}
		Consider a linear ordinary differential equation, 
		\begin{align}\label{x}
			X_t=\|\xi\|_{\infty}+\int_t^T\left(b_s X_s+a_s+\frac{4C}{1-\alpha}|\lambda(s)|^2\right)ds.
		\end{align}
		For $M=\|Y\|_{S_{\mathbb{F}}^\infty([0,T];\mathbb{R})}+\|X\|_{S_{\mathbb{F}}^\infty([0,T];\mathbb{R})}$, define the function $\Phi$ on the interval $[-M,M]$ by
		\begin{equation}\label{Phi}
			\Phi(u)=\left\{
			\begin{aligned}
				\frac{1-\alpha}{4C}(e^{\frac{4C}{1-\alpha}u}-1),\qquad& u\in[0,M], \\
				\quad 0,\qquad  	&u\in\left[-M,0\right).
			\end{aligned}
			\right.
		\end{equation}
		Let \((\Phi_\varepsilon)_{\varepsilon>0}\subset C^2(\mathbb R)\) be a sequence of non-negative convex functions, such that 
		\begin{align*}
			\Phi_\varepsilon(u)\longrightarrow \Phi(u),\q\
			\Phi_\varepsilon'(u)\longrightarrow \Phi'(u),\q\
			\Phi_\varepsilon''(u)\longrightarrow \Phi''(u), \q\ u\neq 0.
		\end{align*}
		Moreover,   
		\begin{align*}
			u\Phi_\varepsilon'(u)\leq \Phi_\varepsilon(u)\left(1+\frac{4C M}{1-\alpha}\right)+r_\varepsilon,\q\
			\Phi_\varepsilon'(u)C-\frac{1-\alpha}{2}\Phi_\varepsilon''(u)\leq r_\varepsilon,\q\
			\frac{1}{2}\Phi_\varepsilon''(u)-\frac{4C}{1-\alpha}\Phi_\varepsilon'(u)\leq r_\varepsilon,
		\end{align*}
		where \(r_\varepsilon\longrightarrow 0\) uniformly on \([-M,M]\).
		Applying  It\^o's formula to $\Phi_\varepsilon(Y_t-X_t)$,  taking expectations,
and then letting \(\varepsilon\longrightarrow 0\), using monotone convergence theorem and the dominated convergence theorem, we obtain
		\begin{align*}
		\mathbb{E}\left[ 	\Phi(Y_t-X_t) \right]\leq &\ \mathbb{E}\left[ \Phi(Y_T-X_T) \right]+\mathbb{E}\int_{t}^{T}\Phi'(Y_s-X_s)\left(f(s,Y_s,Z_s)-b_s X_s-a_s-\frac{4C}{1-\alpha}|\lambda(s)|^2\right)ds\\
			&+\frac{1}{2}\mathbb{E}\int_t^T\Phi''(Y_s-X_s)\Big(|g(s,Y_s,Z_s)|^2-|Z_s|^2\Big)ds.
		\end{align*}
	By the assumptions on $f$ and $g$, we get
		\begin{equation}\label{lemma Phi}
			\begin{aligned}
				\mathbb{E}\left[ 	\Phi(Y_t-X_t) \right]\leq&\ 	\mathbb{E}\left[ \Phi(Y_T-X_T)\right]+\mathbb{E}\int_{t}^{T}b_s\big|\Phi'(Y_s-X_s)\left(Y_s-X_s\right)\big|ds\\
				&+\mathbb{E}\int_t^T\left(\Phi'C-\frac{1-\alpha}{2}\Phi''\right)(Y_s-X_s)|Z_s|^2ds.
			\end{aligned}
		\end{equation}
	From the estimates for  \(\Phi, \Phi'\) and \(\Phi''\), we obtain
		\begin{align*}
			0\leq \mathbb{E}\left[\Phi(Y_t-X_t) \right]
			\leq \int_t^T\widetilde{b}_s \mathbb{E}\left[\Phi(Y_s-X_s)\right]ds,
		\end{align*}
		where $\widetilde{b}_s=b_s \big(1+\frac{4C M}{1-\alpha}\big)$ is a positive, deterministic function.   Gronwall's inequality then yields that
		$\mathbb{E}\left[\Phi(Y_t-X_t)\right]=0$, which  implies that, for all $t\in[0,T]$, $\Phi(Y_t-X_t)=0$, $\mathbb{P}$-a.s. Therefore, we get
		$Y_t\leq X_t,$ for any \(t\in [0,T]\).   The inequality \eqref{y4.18} can be proved similarly.\\
		Now we prove the inequality \eqref{z4.19}. For any $u\in[-M,M]$,  define
		\begin{align}\label{Phi1}
			\widetilde{\Phi}(u)=\frac{1-\alpha}{4C}\big(e^{\frac{4C}{1-\alpha}(u+M+1)}-1\big).
		\end{align} 
Using the same  convex smoothing technique   as above, we    approximate \(\widetilde{\Phi}\) by a smooth convex function such that 
		\begin{align*}
			\widetilde{\Phi}'(u)C-\frac{1-\alpha}{2}\widetilde{\Phi}''(u)=-C\widetilde{\Phi}'(u)\leq -C\q\
			\hbox{and} \q\ 
			\frac{1}{2}\widetilde{\Phi}''(u)-\frac{4C}{1-\alpha}\widetilde{\Phi}'(u)\leq 0.
		\end{align*}
		Replacing the function $\Phi$ with $\widetilde{\Phi}$ in inequality \eqref{lemma Phi}, and letting $t=0$, we obtain
		\begin{align*}
			C\mathbb{E}\left[\int_0^T|Z_s|^2ds\right]
			\leq \widetilde{\Phi}(M)+ e^{\frac{4C}{1-\alpha}(1+2M)}\|b\|_{L^1([0,T];\mathbb{R}_+)}.
		\end{align*}
		This completes the proof.
	\end{proof}

	To ensure uniqueness, we adopt an assumption similar to that of Hu, Wen, and Xiong \cite{wen1}.
	These strengthened assumptions state that the coefficient $f$ is locally Lipschitz continuous and is of  quadratic growth in $z$ in a strong sense. Additionally, the coefficient \(g\) has  bounded partial derivatives with respect to both \(y\) and \(z\).
	Specifically, we will strengthen assumptions \(\textbf{(F2)}\) and \(\textbf{(G1)}\) to the following conditions:

	\begin{description}
		\item[(F4)] There exist two positive constants \(\epsilon\), \(C\),	and
		three functions $k_1, k_2, k_{\epsilon} :[0,T] \longmapsto \mathbb{R}$, such that for all $t\in[0,T], y\in[-M,M]$ and $z\in\mathbb{R}^d$, \(\mathbb{P}\)-a.s., $f$ satisfies the following:\\
		(i) $|f(t,y,z)|\leq k_1(t)+C|z|^2,$\\
		(ii) $\frac{\partial f}{\partial y}(t,y,z)\leq k_{\epsilon}(t)+\epsilon |z|^2,$\\
		(iii) $\Big|\frac{\partial f}{\partial z}(t,y,z)\Big|\leq k_2(t)+C|z|.$
		
		\item [(G4)] There exist two constants \(C>0, \alpha \in (0,1)\) such that,  \(\mathbb{P}\)-a.s.,
		g satisfies: 
		\begin{align*}
			\Big|\frac{\partial g}{\partial y}\Big|\leq C\q\ \hbox{and} \q\ \Big|\frac{\partial g}{\partial z}\Big|\leq \alpha.
		\end{align*}
	\end{description}

 The following lemma slightly generalizes the comparison result of Hu, Wen, and Xiong \cite[Theorem 4.4]{wen1} and directly yields uniqueness for BDSDE. The proof is identical to that of the reflected case, so we omit it here.
 
\begin{lemma}\label{qbdsdecom}\sl 
		Let the conditions \(\textbf{(A2)}\), \(\textbf{(G2)}\) and \(\textbf{(G4)}\)  be satisfied, let $(Y^1,Z^1)\in S^\infty_{\mathbb{F}}([0,T];\mathbb{R})\times L^2_{\mathbb{F}}([0,T];\mathbb{R}^d)$ be a solution of   BDSDE \eqref{1} with parameters $(\xi^1,f^1,g)$, $(Y^2,Z^2)\in S^\infty_{\mathbb{F}}([0,T];\mathbb{R})\times L^2_{\mathbb{F}}([0,T];\mathbb{R}^d)$ be a solution of   BDSDE \eqref{1} with parameters
	$(\xi^2,f^2,g)$.  Assume that
	\(	\xi^1\leq \xi^2,\) \(\mathbb{P}\)-a.s.
	  For all \(t\in[0,T]\), if $f^1(t,Y_t^1,Z_t^1)\leq f^2(t,Y_t^1,Z_t^1)$, $ \dbP$-a.s. and $f^2$ satisfies \(\textbf{(F4)}\) (or if  $f^1(t,Y_t^2,Z_t^2)\leq f^2(t,Y_t^2,Z_t^2)$, $ \dbP $-a.s.
	and  $f^1$ satisfies \(\textbf{(F4)}\)). 
	Then  $Y_t^1\leq Y_t^2$ for all \( t\in [0,T]\),  $  \dbP $-a.s. 
\end{lemma}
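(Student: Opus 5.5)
We follow the strategy of Hu, Wen, and Xiong \cite[Theorem 4.4]{wen1}: linearize the difference of the two equations and then use a convex test function, so that the quadratic terms in $z$ are absorbed by the second-order term coming from $|Z|^2$. We treat the case where $f^2$ satisfies \textbf{(F4)} and $f^1(t,Y^1_t,Z^1_t)\le f^2(t,Y^1_t,Z^1_t)$; the other case is symmetric.

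\textbf{Step 1: linearization.} Let $M$ be a common bound for $\|Y^1\|_{S^\infty_{\mathbb F}}$ and $\|Y^2\|_{S^\infty_{\mathbb F}}$, and set $\delta Y=Y^1-Y^2$ and $\delta Z=Z^1-Z^2$. We write
\begin{align*}
f^1(s,Y^1_s,Z^1_s)-f^2(s,Y^2_s,Z^2_s)\le f^2(s,Y^1_s,Z^1_s)-f^2(s,Y^2_s,Z^2_s)=\beta_s\,\delta Y_s+\gamma_s\cdot\delta Z_s,
\end{align*}
where $\beta_s=\int_0^1\partial_yf^2(s,\theta Y^1_s+(1-\theta)Y^2_s,Z^1_s)\,d\theta$ and $\gamma_s=\int_0^1\partial_zf^2(s,Y^2_s,\theta Z^1_s+(1-\theta)Z^2_s)\,d\theta$. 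By \textbf{(F4)}, $\beta_s\le k_\epsilon(s)+\epsilon|Z^1_s|^2$ and $|\gamma_s|\le k_2(s)+C(|Z^1_s|+|Z^2_s|)$. In the same way \textbf{(G4)} gives $g(s,Y^1,Z^1)-g(s,Y^2,Z^2)=\bar\beta_s\delta Y_s+\bar\gamma_s\delta Z_s$ with $|\bar\beta|\le C$ and $|\bar\gamma|\le\alpha$. Consequently
\begin{align*}
|\delta g_s|^2\le \Big(1+\tfrac1\eta\Big)C^2|\delta Y_s|^2+(1+\eta)\alpha^2|\delta Z_s|^2,
\end{align*}
and we choose $\eta$ so that $(1+\eta)\alpha^2<1$.

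\textbf{Step 2: the main obstacle.} The difficulty is the unbounded coefficients, namely $\gamma\cdot\delta Z$ and the term $\epsilon|Z^1|^2$ in $\beta$. Since $Z^i$ is not BMO here, no Girsanov argument is available. As in \cite{wen1}, we therefore apply Itô's formula to $\Psi(\delta Y_t^+)$ with the weight $e^{A_t}$, where $A_t=\int_0^t\big(\lambda k_\epsilon(s)+\lambda\epsilon|Z^1_s|^2+\dots\big)ds$, and where $\Psi(u)=u^2$, or more generally an exponential-type convex function.

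The drift then contains $2\delta Y^+\gamma\cdot\delta Z$. Bounding $|\gamma|$ by $k_2+C(|Z^1|+|Z^2|)$, we split this via Young's inequality into $\tfrac{1-(1+\eta)\alpha^2}{2}|\delta Z|^2$ plus $c\,(k_2^2+|Z^1|^2+|Z^2|^2)|\delta Y^+|^2$. The resulting $|Z^i|^2|\delta Y^+|^2$ terms cannot be absorbed by Gronwall, because $\int|Z^i|^2$ is only integrable and not bounded. To get around this, we replace $\delta Y$ by an exponential change of variables: set $\tilde Y^i=e^{\kappa Y^i}$, or work directly with $\Phi(\delta Y)$, where $\Phi$ is built so that $\tfrac12\Phi''\,(1-(1+\eta)\alpha^2)|\delta Z|^2$ dominates the quadratic cross terms. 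This is possible because the bad terms are of the form $|Z^i|^2$ times a function of $\delta Y^+$ vanishing at $0$. The trouble is that these involve $|Z^1|^2$ and $|Z^2|^2$ rather than $|\delta Z|^2$. We handle this by writing $|Z^2|^2\le 2|Z^1|^2+2|\delta Z|^2$ and using the factor $e^{A_t}$, with $A_t$ containing $\lambda\int_0^t|Z^1_s|^2ds$, to cancel the $|Z^1|^2$ contributions. This is exactly where the smallness of $\epsilon$ and the $\mathcal O(|z|)$ growth of $\partial_zf$ enter, and it is the delicate step.

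\textbf{Step 3: conclusion.} After these absorptions, the $dB$ and $dW$ integrals are true martingales, because $e^{A}$ enters only through bounded localizations: we stop at $\tau_N=\inf\{t:A_t\ge N\}$ and use that $Y^i$ are bounded and $Z^i\in L^2$. Since $\delta Y_T^+=0$, taking expectations gives
\begin{align*}
\mathbb E\big[e^{A_{t\wedge\tau_N}}\Phi(\delta Y^+_{t\wedge\tau_N})\big]\le c\,\mathbb E\int_t^T e^{A_s}\Phi(\delta Y^+_s)\,k(s)\,ds
\end{align*}
with a deterministic integrable $k$. Gronwall's inequality yields $\Phi(\delta Y^+_t)=0$. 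Letting $N\to\infty$ and using the continuity of the paths, we conclude $Y^1_t\le Y^2_t$ for all $t$, $\mathbb P$-a.s.
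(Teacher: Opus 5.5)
Your Step~1 is a correct computation. Steps~2--3, however, rest on a device that is not available for BDSDEs: the random weight $e^{A_t}$ with $A_t$ containing $\lambda\int_0^t|Z^1_s|^2ds$, combined with localization at $\tau_N=\inf\{t:A_t\ge N\}$.

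Because $\mathbb{F}$ is not a filtration, $Z^1_r$ is only $\mathscr{F}^W_r\vee\mathscr{F}^B_{r,T}$-measurable. So $A_s$ depends on the increments of $B$ on $[0,s]$ and is not $\mathscr{F}_s$-measurable; likewise $\tau_N$ is only a $\mathbb{G}$-stopping time. Hence the integrand $e^{A_s}\Phi'(\delta Y^+_s)\,\delta g_s\mathbf{1}_{\{s\le \tau_N\}}$ of the backward $dB$-integral is not measurable with respect to $\mathscr{F}^W_T\vee\mathscr{F}^B_{s,T}$. The It\^o formula of Pardoux--Peng therefore does not apply to $e^{A_t}\Phi(\delta Y^+_t)$, and you cannot assert that this term has zero expectation. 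Only deterministic weights, such as the $k_\epsilon$-part, are harmless.

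Independently of this, the stopped identity carries the terminal term $e^{N}\Phi(\delta Y^+_{\tau_N})\mathbf{1}_{\{\tau_N<T\}}$, which you dropped when invoking $\delta Y^+_T=0$. Removing it as $N\to\infty$ requires $\mathbb{E}\big[\exp\big(\lambda\int_0^T|Z^1_s|^2ds\big)\big]<\infty$ for a $\lambda$ fixed by the structure constants. That is a BMO/John--Nirenberg type estimate, which is not available in the doubly stochastic setting (and can fail for large $\lambda$ even for BSDEs). Finally, your last inequality has a stopped left-hand side and an unstopped right-hand side, so Gronwall does not apply to it as written.

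The missing idea, which is the paper's route (the proof is that of \autoref{t5.2} with $K\equiv 0$), is to make every coefficient that reaches Gronwall deterministic, so that no random weight or stopping time is needed. First assume the structural condition \textbf{(STR)}, $\partial_yf^2+a|\partial_zf^2|^2\le b(t)$ with $b$ deterministic. Linearize along the segment $(\theta Y^1+(1-\theta)Y^2,\theta Z^1+(1-\theta)Z^2)$, so that both derivatives are evaluated at the same point, and apply It\^o's formula to $((\delta Y_t)^+)^p$. Young's inequality gives $(\delta Y^+)^{p-1}\big(\partial_yf^2\,\delta Y+\partial_zf^2\cdot\delta Z\big)\le\big(\partial_yf^2+a|\partial_zf^2|^2\big)(\delta Y^+)^p+\frac{1}{4a}(\delta Y^+)^{p-2}|\delta Z|^2$. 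The unbounded $|\partial_zf^2|^2$ is thus compensated pointwise and only $b(t)(\delta Y^+)^p$ survives. The term $\frac{p}{4a}(\delta Y^+)^{p-2}|\delta Z|^2$ is absorbed by $\frac{p(p-1)}{2}\cdot\frac{1-\alpha^2}{2}(\delta Y^+)^{p-2}|\delta Z|^2$ from the second-order term once $p$ is large. Then \textbf{(F4)} is reduced to \textbf{(STR)} by the deterministic change of variables $y=\phi(\tilde y)=\frac{1}{B}\ln\big(\frac{e^{AB\tilde y}+1}{A}\big)-M$. This uses only the boundedness of $Y^1,Y^2$ and, being deterministic, keeps the transformed solutions $\mathbb{F}$-measurable.
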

	
	Now we give the main result of this part.

	\begin{theorem}\label{tb4.2}\sl
		Let the conditions \(\textbf{(A2)}\), \(\textbf{(F2)}\), \(\textbf{(G1)}\) and \(\textbf{(G2)}\) be satisfied. 
		Then the BDSDE \eqref{1} admits at least one solution $(Y,Z)\in S_\mathbb{F}^{\infty}([0,T];\mathbb{R})\times L_\mathbb{F}^2([0,T];\mathbb{R}^d)$. Moreover, if the conditions \(\textbf{(F4)}\) and \(\textbf{(G4)}\) are satisfied, then the solution is unique.
	\end{theorem}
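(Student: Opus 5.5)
Uniqueness is the easy half. If \textbf{(F4)} and \textbf{(G4)} hold and $(Y^1,Z^1)$, $(Y^2,Z^2)$ are two bounded solutions, we apply \autoref{qbdsdecom} twice with $f^1=f^2=f$ and $\xi^1=\xi^2=\xi$. This gives $Y^1=Y^2$. It\^o's formula for $|Y^1-Y^2|^2$ then gives $Z^1=Z^2$, because the $B$-term is controlled by $C|\triangle Y|^2+\alpha|\triangle Z|^2$ with $\alpha<1$.

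Existence is the main work. I would use a double approximation combined with the monotone stability idea of Kobylanski, with all bounds supplied by \autoref{l4.1}. The first approximation makes the generator Lipschitz in $z$ and keeps the linear growth in $y$. For $n\ge C$ and $m\ge C$ set
\begin{align*}
f_{n}(t,y,z)=\sup_{q\in\mathbb{Q}^d}\{f(t,y,q)-n|z-q|\},\qquad f_{n,m}(t,y,z)=\inf_{q\in\mathbb{Q}^d}\{f_n(t,y,q)+m|z-q|\}.
\end{align*}
For $m\ge n$, each $f_{n,m}$ is Lipschitz in $z$. It is continuous with linear growth in $y$, and its growth is bounded by $C(1+|y|)+c_n|z|$. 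The family is non-increasing in $n$ and non-decreasing in $m$. Since $f$ grows at most linearly in $y$, this $y$-behaviour fits the Lepeltier--San Mart\'in framework. For each $(n,m)$, \autoref{t3.3} gives a maximal solution $(Y^{n,m},Z^{n,m})$, or a unique one after a further Lipschitz regularisation in $y$. The comparison of \autoref{c3.7} applied to the Lipschitz approximants makes $Y^{n,m}$ monotone in both indices.

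The crucial point is that the bounds do not depend on $(n,m)$. Each $f_{n,m}$ satisfies $|f_{n,m}(t,y,z)|\le C(1+|y|)+C'|z|^2$ for a constant $C'$ independent of $(n,m)$, which needs an elementary check on the sup/inf convolutions. So \autoref{l4.1} applies with $a\equiv C$, $b\equiv C$ and $\lambda$ from \textbf{(G2)}. It gives $\|Y^{n,m}\|_{S^\infty}\le M$, with $M$ uniform, and $\sup_{n,m}\mathbb{E}\int_0^T|Z^{n,m}|^2ds\le\hat K$. To apply the lemma I first need to know that each $Y^{n,m}$ is bounded. That comes from \autoref{l4.1} with $C$ replaced by the linear-in-$z$ bound, or from the $\Phi$-argument used in Step 1 of \autoref{t3.3}.

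Next I pass to the limit: first $m\to\infty$, with $Y^{n,m}\uparrow Y^n$, and then $n\to\infty$, with $Y^n\downarrow Y$. For this I need a monotone stability result for BDSDEs, namely strong $L^2$ convergence of $Z^{n,m}$. I expect this to be the main obstacle: the BMO machinery is unavailable, and the $dB$-integral contributes the extra term $|g(Y^k,Z^k)-g(Y^j,Z^j)|^2\le C|\triangle Y|^2+\alpha|\triangle Z|^2$. I would mimic Kobylanski's argument. Apply It\^o's formula to $\Phi(Y^k_t-Y^j_t)$ with $\Phi(u)=\frac{1}{2\beta^2}(e^{2\beta u}-1-2\beta u)$ and $\beta$ large compared with $C'/(1-\alpha)$, using $Y^k-Y^j\ge0$ in the monotone direction. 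Bound $|f(Y^k,Z^k)-f(Y^j,Z^j)|$ by $C(1+|Z^k-Z|^2+|Z^j-Z|^2+|Z|^2)$, where $Z$ is a weak $L^2$ limit. Use $\frac12\Phi''-C'\Phi'\ge c>0$, together with the factor $(1-\alpha)$ coming from \textbf{(G2)}, to absorb the quadratic terms. Then let $j\to\infty$ by weak lower semicontinuity and $k\to\infty$ by dominated convergence, using the uniform bound $M$.

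Strong convergence of $Z$ follows from this. With it, the local uniform convergence of the generators (by Dini, as in \autoref{t3.3}) and a.e. convergence along a subsequence, Vitali's theorem lets us pass to the limit in $f_{n,m}(\cdot,Y^{n,m},Z^{n,m})$ in $L^1$ and in the $g\,dB$ and $Z\,dW$ integrals in $L^2$. BDG then gives continuity of $Y$, so $(Y,Z)\in S^\infty_{\mathbb{F}}\times L^2_{\mathbb{F}}$ solves \eqref{1}. Measurability with respect to $\mathscr{F}_t$ is preserved under these limits, as in \autoref{p3.6}.
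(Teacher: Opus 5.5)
Your uniqueness argument is fine. Your Kobylanski-type stability step (exponential $\Phi$, weak limit of $Z$, lower semicontinuity, dominated convergence) is essentially the Hu--Wen--Xiong monotone stability theorem that the paper invokes.

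The gap is in the approximating generators. Under \textbf{(F2)} the generator may grow like $+C|z|^2$, for instance $f(t,y,z)=C|z|^2$. In that case
\begin{align*}
f_n(t,y,z)=\sup_{q\in\mathbb{Q}^d}\{f(t,y,q)-n|z-q|\}=+\infty,
\end{align*}
because a linear penalty cannot compensate quadratic growth. Hence $f_{n,m}\equiv+\infty$ as well, and neither your bound $C(1+|y|)+c_n|z|$ nor the uniform quadratic bound needed for \autoref{l4.1} holds. Even where $f_n$ is finite, it is already $n$-Lipschitz in $z$, so $f_{n,m}=f_n$ for $m\ge n$ and your second index does nothing.

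A sup-convolution only approximates generators with at most linear growth from above; this is why Section~\ref{sectionnew5} imposes the one-sided condition \textbf{(F2')} when approximating from above. Likewise, an inf-convolution only works for generators with linear growth from below. The missing idea is to split $f=f^+-f^-$ and approximate each nonnegative part from below by inf-convolution, jointly in $(y,z)$:
\begin{align*}
f_{n,m}(t,y,z)=\inf_{(p,q)\in\mathbb{Q}\times\mathbb{Q}^d}\{f^+(t,p,q)+n|y-p|+n|z-q|\}-\inf_{(p,q)\in\mathbb{Q}\times\mathbb{Q}^d}\{f^-(t,p,q)+m|y-p|+m|z-q|\}.
\end{align*}
These generators have the properties you need:
\begin{itemize}
\item they are finite and globally Lipschitz in $(y,z)$;
\item they satisfy $|f_{n,m}|\le|f|\le C(1+|y|+|z|^2)$ uniformly in $(n,m)$;
\item they increase in $n$ and decrease in $m$, and converge locally uniformly.
\end{itemize}
One lets $n\to\infty$ with $m$ fixed, and then $m\to\infty$.

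Regularising in $y$ as well repairs a second problem. \autoref{t3.3} and \autoref{c3.7} both require the one-sided monotonicity in $y$ from \textbf{(F1)}, which \textbf{(F2)} does not provide. So they do not apply to generators that are merely continuous with linear growth in $y$. With the Lipschitz $f_{n,m}$ above, each approximating BDSDE has a unique solution by Pardoux--Peng, and the Lipschitz comparison theorem orders the $Y^{n,m}$.

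Finally, \autoref{l4.1} presupposes $Y\in S^\infty_{\mathbb{F}}$: its $\Phi$ lives on $[-M,M]$ with $M$ depending on $\|Y\|_\infty$. It therefore cannot by itself give boundedness of the approximants. The paper takes that boundedness from the Hu--Wen--Xiong result for Lipschitz BDSDEs, and only then uses \autoref{l4.1} to get bounds uniform in $(n,m)$.
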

	
	\begin{proof}
		The uniqueness is a straightforward result of \autoref{qbdsdecom}.
		We provide a brief proof for the existence, which
		will be divided into two steps.
		\ms
		
		{\bf\it Step 1} : $f$ is non-negative.

		For each integer \(n\geq C\), we define the sequence of functions as follows:
		\begin{align*}
			f_n(t,y,z)=\inf\limits_{(p,q)\in\mathbb{Q}\times\mathbb{Q}^d}\{f(t,p,q)+n|y-p|+n|z-q|\},
		\end{align*}
		which is well defined for each $n\geq C$. It follows from the classical existence and uniqueness result for BDSDE (see Pardoux and Peng \cite{94}) that  BDSDE \eqref{1} with parameters $(\xi,f_n,g)$ admits a unique solution $(Y^n,Z^n)$ such that  the sequence $(Y^n)$ is non-decreasing.
		For the Lipschitz approximating BDSDE, the solution is bounded by Hu, Wen, and Xiong's boundedness result, hence  \autoref{l4.1} is applicable, \((Y^n)\) is uniformly bounded.
	Furthermore,   applying the monotone stability theorem of Hu, Wen, and Xiong \cite[Proposition 3.9]{wen1},  there exists a pair of processes $(Y,Z)\in S_\mathbb{F}^{\infty}([0,T];\mathbb{R})\times L_\mathbb{F}^2([0,T];\mathbb{R}^d) $ such that the sequence $(Y^n)_{n\geq C}$ converges uniformly to $Y$ on \([0,T]\) and
		the sequence $(Z^n)_{n\geq C}$ converges to $Z$ in $L_\mathbb{F}^2([0,T];\mathbb{R}^d)$. Passing to the limit in approximating BDSDE, we conclude that  $(Y,Z)\in  S_\mathbb{F}^{\infty}([0,T];\mathbb{R})\times L_\mathbb{F}^2([0,T];\mathbb{R}^d)$ is a solution of  BDSDE \eqref{1}.
		
		\ms
		
		{\bf\it Step 2}: the general case.

		Define
		\begin{align*}
			f_{n,m}(t,y,z)=&\inf\limits_{(p,q)\in\mathbb{Q}\times\mathbb{Q}^d}\{f^+(t,p,q)+n|y-p|+n|z-q|\}\\
			&-\inf\limits_{(p,q)\in\mathbb{Q}\times\mathbb{Q}^d}\{f^-(t,p,q)+m|y-p|+m|z-q|\}.
		\end{align*}
		The generators \(f_{n,m}\) satisfy the same quadratic growth bound as \(f\), uniformly in \(n,m\). Hence,
		the a priori estimate in  \autoref{l4.1} is uniform in \(n,m\).  Since the sequence $(f_{n,m})_{n\geq C}$ is monotone
		and locally uniformly converges to
		\( f_{\infty,m}\triangleq
		f^+(t,y,z)
		-\inf\limits_{(p,q)\in\mathbb Q\times\mathbb Q^d}
		\{f^-(t,p,q)+m|y-p|+m|z-q|\}.
		\) 
		 Applying the monotone stability theorem
		first with $m$ fixed and \(n\longrightarrow \infty\), \(f_{n,m} \uparrow f_{\infty,m}\).
		Then, as $m\longrightarrow \infty$,
			\( f_{\infty,m}  \downarrow f
		\), 
		the same stability argument as Step 1 gives a solution of BDSDE \eqref{1}. We complete the proof.
	\end{proof}

	\subsection{Reflected BDSDE: existence and uniqueness}\label{sec4.3}

	In this part, we study the existence and uniqueness of quadratic BDSDEs with reflection, where the generators meet the same assumptions as stated in \autoref{tb4.2}. To establish the uniqueness, we first demonstrate a comparison theorem (\autoref{t5.2}) based on the structural condition, and then we complete the proof using a change of variables. For  the existence, we can still apply the monotone stability method along with double approximation techniques to reflected BDSDEs.
	
	\begin{theorem}[Comparison]\label{t5.2}\sl
		Let the conditions \(\textbf{(A2)}\), \(\textbf{(G2)}\), \(\textbf{(G4)}\) and \(\textbf{(S2)}\) be satisfied, let $(Y^1,Z^1,K^1)\in S^\infty_{\mathbb{F}}([0,T];\mathbb{R})\times L^2_{\mathbb{F}}([0,T];\mathbb{R}^d)\times A^2_{\mathbb{F}}([0,T];\mathbb{R}_+)$ be a solution of reflected BDSDE \eqref{r1} with parameters $(\xi^1,f^1,g)$ and obstacle process $S^1$, $(Y^2,Z^2,K^2)\in S^\infty_{\mathbb{F}}([0,T];\mathbb{R})\times L^2_{\mathbb{F}}([0,T];\mathbb{R}^d)\times A^2_{\mathbb{F}}([0,T];\mathbb{R}_+)$ be a solution of reflected BDSDE \eqref{r1} with parameters
		$(\xi^2,f^2,g)$ and obstacle process $S^2$. For all \(t\in[0,T]\), if $f^1(t,Y_t^1,Z_t^1)\leq f^2(t,Y_t^1,Z_t^1)$, $ \dbP$-a.s. and $f^2$ satisfies \(\textbf{(F4)}\) (or if  $f^1(t,Y_t^2,Z_t^2)\leq f^2(t,Y_t^2,Z_t^2)$, $ \dbP $-a.s.
		and  $f^1$ satisfies \(\textbf{(F4)}\)). Moreover, assume that,
		for any \(t\in [0,T]\),
		\begin{align*}
			S_t^1\leq S_t^2 \q \hbox{and} \q 
			\xi^1\leq \xi^2, \q\
			\hbox{a.s.}
		\end{align*}
		Then  $Y_t^1\leq Y_t^2,$  \(\forall t\in [0,T]\),  $  \dbP $-a.s. 
	\end{theorem}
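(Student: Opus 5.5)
We will prove the comparison in \autoref{t5.2} by following the proof of \cite[Theorem 4.4]{wen1} for quadratic BDSDEs. The only new ingredient is the reflection terms, which we will show have a favorable sign. We treat the case where $f^2$ satisfies \textbf{(F4)} and $f^1(t,Y^1_t,Z^1_t)\le f^2(t,Y^1_t,Z^1_t)$; the other case is symmetric. Set $M$ as a common bound of $\|Y^1\|_\infty,\|Y^2\|_\infty$, so that \textbf{(F4)} applies on $[-M,M]$.

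\textbf{Step 1 (linearization).} Put $\delta Y=Y^1-Y^2$, $\delta Z=Z^1-Z^2$, $\delta K=K^1-K^2$. Write
\[
f^1(s,Y^1_s,Z^1_s)-f^2(s,Y^2_s,Z^2_s)\le f^2(s,Y^1_s,Z^1_s)-f^2(s,Y^2_s,Z^2_s)=a_s\delta Y_s+b_s\delta Z_s ,
\]
with $a_s=\int_0^1\partial_y f^2(\cdot)\,d\theta$ and $b_s=\int_0^1\partial_z f^2(\cdot)\,d\theta$. Do the same for $g$: $g(s,Y^1,Z^1)-g(s,Y^2,Z^2)=c_s\delta Y_s+d_s\delta Z_s$, where $|c|\le C$ and $|d|\le\alpha$ by \textbf{(G4)}. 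By \textbf{(F4)}, $a_s\le k_\epsilon(s)+\epsilon(|Z^1_s|^2+|Z^2_s|^2)$ and $|b_s|\le k_2(s)+C(|Z^1_s|+|Z^2_s|)$.

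\textbf{Step 2 (Itô with a weight).} Apply Itô's formula to $\Gamma_t\,\phi_\varepsilon(\delta Y_t)$, where $\phi_\varepsilon$ is the smoothing of $(y^+)^2$ from \eqref{itoy}, and then let $\varepsilon\to0$. For the weight we take
\[
\Gamma_t=\exp\Big\{\int_0^t\big(\beta k_\epsilon(s)+\beta\epsilon(|Z^1_s|^2+|Z^2_s|^2)+\tfrac{2}{1-\alpha}|b_s|^2+\beta C^2\big)ds\Big\}.
\]
This weight absorbs the terms $2\delta Y^+ a\,\delta Y$ and $2\delta Y^+ b\,\delta Z\le \frac{2}{1-\alpha}|b|^2(\delta Y^+)^2+\frac{1-\alpha}{2}|\delta Z|^2\mathbf 1_{\{\delta Y>0\}}$. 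For the $g$-term, $\|c\,\delta Y+d\,\delta Z\|^2\le (1+\eta^{-1})C^2|\delta Y|^2+(1+\eta)\alpha^2|\delta Z|^2$ is absorbed because $\alpha<1$.

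The reflection term is $2\int_t^T\Gamma_s\delta Y^+_s\,d(K^1_s-K^2_s)$. On $\{\delta Y_s>0\}$ we have $Y^1_s>Y^2_s\ge S^2_s\ge S^1_s$, so $dK^1$ vanishes there by the Skorokhod condition. Hence $\int\Gamma\delta Y^+dK^1=0$, and the term $-\int\Gamma\delta Y^+dK^2\le0$, exactly as in \autoref{c3.11}. The terminal term vanishes since $\xi^1\le\xi^2$.

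\textbf{Step 3 (main obstacle: integrability of $\Gamma$).} Because of the quadratic terms $\epsilon|Z^i|^2$ and $|b|^2\lesssim |Z^i|^2$, $\Gamma$ is an exponential of $\int|Z^i|^2$. In the absence of a BMO structure (see the introduction), we cannot use the energy inequality directly. We will localize with stopping times $\tau_N=\inf\{t:\int_0^t|Z^1|^2+|Z^2|^2\ge N\}\wedge T$. This requires care because $\mathbb F$ is not a filtration. We therefore work with the enlarged filtration $\mathbb G$, with respect to which $W$ is a Brownian motion and the $dB$-integral terms have zero conditional expectation given $\mathscr G_t$ as in Step 1 of \autoref{p3.6}. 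On $[t,\tau_N]$ the weight is bounded, and taking expectations gives $\mathbb E[\Gamma_{t}(\delta Y_t^+)^2]\le\mathbb E[\Gamma_{\tau_N}(\delta Y_{\tau_N}^+)^2]$.

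To pass $N\to\infty$, we follow \cite{wen1} and exploit the smallness of $\epsilon$ together with the exponential moment bound on $\int|Z^i|^2$. That bound is obtained from the exponential transform $\widetilde\Phi$ of \autoref{l4.1}. The needed estimate is $\mathbb E[e^{p\int_0^T|Z^i|^2ds}]<\infty$ for small $p$, and it still holds in the reflected case because $\widetilde\Phi'\ge0$ makes the $dK^i$ term nonpositive when the reflected equation is run backward. Alternatively, if this fails, we use the change of variables announced in the paper: assume the structural condition, replace $(\delta Y^+)^2$ by $\Phi(\delta Y)$ with an exponential $\Phi$ to kill the $|z|^2$ drift, and remove the structural condition afterward. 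Granting this, we obtain $\delta Y_t^+=0$ a.s. for each $t$, and continuity of the paths gives the claim for all $t$ simultaneously.
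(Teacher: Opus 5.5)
Your handling of the reflection term is correct and matches the paper: on $\{\delta Y_s>0\}$ you have $Y^1_s>S^1_s$, so the $dK^1$-part vanishes and the $dK^2$-part has the favorable sign. The gap is in Steps 2--3, which do not work for BDSDEs.

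\textbf{Measurability of the weight.} Your $\Gamma_t=\exp\{\int_0^t(\cdots)\,ds\}$ is built from $Z^i_r$, $r\le t$. Each $Z^i_r$ is $\mathscr{F}^W_r\vee\mathscr{F}^B_{r,T}$-measurable, so $\Gamma_s$ depends on the increments of $B$ on $[0,s]$ and is not $\mathscr{F}_s$-measurable. After It\^o's formula, the integrand of the $dB$-term, $\Gamma_s\phi_\varepsilon'(\delta Y_s)(c_s\delta Y_s+d_s\delta Z_s)$, is therefore not backward-adapted. That term is not a backward It\^o integral, and its expectation need not vanish. Building the weight backward, as $\exp\{-\int_t^T\cdots\}$, only moves the defect to the $dW$-integral.

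Your localization has the same flaw. $\tau_N$ is a $\mathbb{G}$-stopping time and may depend on the whole path of $B$, so the backward integral stopped at $\tau_N$ loses its zero mean. Your appeal to Step 1 of \autoref{p3.6} is mistaken: the $\mathscr{G}_t$-conditional expectation is taken there only in the BSDE \eqref{3.3}, which has no $dB$-term. By contrast, $\mathbb{E}[\int_t^Tg(s)dB_s\,|\,\mathscr{G}_t]=Y^0_t$ in \eqref{Y0} is in general nonzero.

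\textbf{Integrability of the weight.} Even ignoring measurability, letting $N\to\infty$ needs $\mathbb{E}[\Gamma_T]<\infty$, i.e.\ $\mathbb{E}\exp\{\kappa\int_0^T(|Z^1_s|^2+|Z^2_s|^2)ds\}<\infty$. Here $\kappa$ is of order $C^2/(1-\alpha)$, coming from $\frac{2}{1-\alpha}|b_s|^2$, and is not small. The smallness of $\epsilon$ in \textbf{(F4)}(ii) does nothing for the $\partial_zf$ part. The function $\widetilde\Phi$ in \autoref{l4.1} gives only the first-moment bound $\mathbb{E}\int_0^T|Z_s|^2ds\le\hat K$. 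Exponential moments would need BMO/John--Nirenberg-type conditional estimates, which the paper stresses are unavailable here. Even for BSDEs those give only small exponents, and the $\partial_z f$ term is handled by Girsanov instead.

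The argument that actually proves the theorem is the one you leave as ``granting this'', and its mechanism is missing from your proposal.

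\textbf{Step 1 of the paper.} It assumes \textbf{(STR)}, $\partial_yf^2+a|\partial_zf^2|^2\le b(t)$ with $b$ \emph{deterministic}, and applies It\^o's formula to $((\delta Y)^+)^p$, a large power rather than an exponential. Young's inequality splits the cross term as $p(\delta Y^+)^{p-1}|\partial_zf^2||\delta Z|\le pa|\partial_zf^2|^2(\delta Y^+)^p+\frac{p}{4a}(\delta Y^+)^{p-2}|\delta Z|^2$.
\begin{itemize}
\item The first piece combines with $\partial_yf^2$ into $p\,b(s)(\delta Y^+)^p$ by \textbf{(STR)}.
\item The second is absorbed by the It\^o correction $\frac{p(p-1)}{2}(\delta Y^+)^{p-2}(|\delta g|^2-|\delta Z|^2)$ together with \textbf{(G4)}, once $(p-1)(1-\alpha^2)\ge 1/a$.
\end{itemize}
The homogeneity of $y^p$ is what makes this fit. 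An exponential $\Phi(\delta Y)$ absorbs terms like $C|\delta Z|^2$, but not a cross term whose coefficient $|\partial_zf^2|\lesssim k_2+C(|Z^1|+|Z^2|)$ is unbounded. All weights are deterministic and the $Y^i$ are bounded, so the $dW$- and $dB$-integrals are true forward and backward martingales. Plain expectations then suffice, and a deterministic Gronwall closes the argument.

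\textbf{Step 2 of the paper.} It chooses $A,B$ in $\phi(\tilde y)=\frac1B\ln\frac{e^{AB\tilde y}+1}{A}-M$ so that the transformed generator satisfies \textbf{(STR)}. Since $\phi$ is increasing, the orderings of $\xi^i$ and $S^i$ and the Skorokhod condition are preserved.

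Without showing that \textbf{(F4)} becomes \textbf{(STR)} under this change of variables, and without carrying out the $p$-th power estimate, your proposal does not establish the theorem.
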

	
	\begin{proof}
		Following the approach used by Barles and Murat \cite{95B} for PDEs, we divide the proof into two steps. First, we establish the comparison principle under a structural condition denoted as \(\textbf{(STR)}\). Next,  we complete the proof by applying a change of variable that transforms a
		reflected BDSDE with the coefficient $f$ satisfying \(\textbf{(F4)}\) into a reflected BDSDE with the coefficient $f$ that meets the structural condition \(\textbf{(STR)}\).
		\ms
		
		{\bf\it Step 1}. Assume that $f^2$ satisfies the structural condition \(\textbf{(STR)}\) with a positive constant $a$ and a function $b: [0,T]\longmapsto\mathbb{R}$ such that
		\begin{align}\label{str}
			\frac{\partial f}{\partial y}(t,y,z)+a\Big|\frac{\partial f}{\partial z}
			\Big|^2(t,y,z)
			\leq b(t).
		\end{align}
		
		Applying It\^o's formula to $\left((Y_t^1-Y_t^2)^+\right)^p$ with $p\geq 2$, we get
		\begin{equation}
			\begin{aligned}\label{5.2}
				&	\left((Y_t^1-Y_t^2)^+\right)^p\leq p\int_t^T \left((Y_s^1-Y_s^2)^+\right)^{p-1}\Big(f^1(s,Y_s^1,Z_s^1)-f^2(s,Y_s^2,Z_s^2)\Big)ds\\
				&\q +p\int_t^T \left((Y_s^1-Y_s^2)^+\right)^{p-1}\Big(g(s,Y_s^1,Z_s^1)-g(s,Y_s^2,Z_s^2)\Big)dB_s-p\int_t^T\left((Y_s^1-Y_s^2)^+\right)^{p-1}\left(Z_s^1-Z_s^2\right)dW_s\\
				&\q +\frac{p(p-1)}{2}\int_t^T\left((Y_s^1-Y_s^2)^+\right)^{p-2}\left(\big|g(s,Y_s^1,Z_s^1)-g(s,Y_s^2,Z_s^2)\big|^2-\big|Z_s^1-Z_s^2\big|^2\right)ds,
			\end{aligned}
		\end{equation}
		where we have used the fact that $K^2$ is a non-decreasing process and
		\begin{align*}
			\int_t^T\left((Y_s^1-Y_s^2)^+\right)^{p-1}d\left(K_s^1-K_s^2\right)\leq 0.
		\end{align*}
		In view of the condition on the coefficient $f^2$, we have
		\begin{align*}
			&\ \left((Y_s^1-Y_s^2)^+\right)^{p-1}\Big(f^1(s,Y_s^1,Z_s^1)-f^2(s,Y_s^2,Z_s^2)\Big)\\
			&\leq
			\left((Y_s^1-Y_s^2)^+\right)^{p}\int_0^1\left(\frac{\partial f^2}{\partial y}+a\Big|\frac{\partial f^2}{\partial z}\Big |^2\right)(*)d\lambda+\frac{1}{4a}\left((Y_s^1-Y_s^2)^+\right)^{p-2}\big| Z_s^1-Z_s^2\big|^2\mathbf{1}_{\{Y_s^1\geq Y_s^2\}},
		\end{align*}
		where
		\begin{align*}
			(*)=\big(s,\lambda Y_s^1+(1-\lambda)Y_s^2,\lambda Z_s^1+(1-\lambda)Z_s^2\big).
		\end{align*}
		Since \( g \) has bounded partial derivatives with respect to \( y \) and \( z \), for any \(\varepsilon>0\), we have
		\begin{align*}
			\big| g(s,Y_s^1,Z_s^1)-g(s,Y_s^2,Z_s^2)\big|^2
			\leq (1+\varepsilon ^{-1})C^2 \big|Y_s^1-Y_s^2\big|^2+(1+\varepsilon)\alpha^2 \big|Z_s^1-Z_s^2\big|^2.
		\end{align*}
		Let $\varepsilon =\frac{1-\alpha^2}{2\alpha^2}$ and 
		returning to   inequality  \eqref{5.2},  we obtain
		\begin{align*}
			&\ \mathbb{E}\left[\left((Y_t^1-Y_t^2)^+\right)^p +\frac{p}{4}\left((p-1)(1-\alpha^2)-\frac{1}{a}\right)\int_t^T \left((Y_s^1-Y_s^2)^+\right)^{p-2}\big|Z_s^1-Z_s^2\big|^2ds\right]\\	&\leq
			p\mathbb{E}\int_t^T\left(b(s)+\frac{(p-1)(1+\alpha^2)}{2(1-\alpha^2)}C^2\right)\left((Y_s^1-Y_s^2)^+\right)^pds.
		\end{align*}
		Now  we choose $p$ large enough such that 
		\begin{align*}
			(p-1)(1-\alpha^2)-\frac{1}{a}\geq 0.
		\end{align*}
		Using	Gronwall's inequality, we get for any \(t\in [0,T]\),
		$
		Y_t^1\leq Y_t^2,
		$ \(\mathbb{P}\)-a.s.
		\ms
		
		{\bf\it Step 2}. Assume that $f^2$ satisfies \(\textbf{(F4)}\).

		Let $M\in\mathbb{R}$ such that $M\geq \|Y^1\|_{\infty}\vee \|Y^2\|_{\infty}$. For two constants \(A, B > 0\) that are yet to be chosen, consider a change of variable \(y = \phi(\widetilde{y})\), where \(\phi : \mathbb{R} \longrightarrow \mathbb{R}\) is defined as follows:
		\begin{align*}
			\phi(\widetilde{y})=\frac{1}{B}ln\left(\frac{e^{AB\widetilde{y}}+1}{A}\right)-M.
		\end{align*}
		For $i=1,2$, we set
		\begin{align*}
			Y_t^i=\phi(\widetilde{Y}_t^i),\q\ Z_t^i=\phi'(\widetilde{Y}_t^i)\widetilde{Z}_t^i,\q\
			K_t^i=\int_0^t\phi'(\widetilde{Y}_s^i)d\widetilde{K}_s^i,
		\end{align*}
		where $(\widetilde{Y}^i,\widetilde{Z}^i,\widetilde{K}^i)$ is a solution of the following reflected BDSDE with parameters $(\widetilde{\xi}^i,\widetilde{f}^i,\widetilde{g})$ and obstacle $\widetilde{S}^i$, such that
		\begin{equation*}
			\left\{
			\begin{aligned}
				\widetilde{Y}_t^i=&\widetilde{\xi}^i+\int_t^T\widetilde{f}^i(s,\widetilde{Y}_s^i,\widetilde{Z}_s^i)ds
				+\int_t^T\widetilde{g}(s,\widetilde{Y}_s^i,\widetilde{Z}_s^i)dB_s+\widetilde{K}_T^i-\widetilde{K}_t^i-\int_t^T\widetilde{Z}_s^idW_s,\q\ 0\leq t\leq T,\\
				\widetilde{Y}_t^i\geq &\widetilde{S}_t^i\q\ \hbox{and} \q\ \int_0^T \left(\widetilde{Y}_s^i-\widetilde{S}_s^i\right)d\widetilde{K}_s^i=0.
			\end{aligned}
			\right.
		\end{equation*}
		We note that	
		\begin{equation*}
			\left\{
			\begin{aligned}
				&	\widetilde{\xi}^i=\phi^{-1}(\xi^i),\q\ \widetilde{S}_t^i=\phi^{-1}(S_t^i), \q\
				\widetilde{g}(t,\widetilde{y},\widetilde{z})=\frac{g(t,\phi(\widetilde{y}),\phi'(\widetilde{y})\widetilde{z})}{\phi'(\widetilde{y})},\\
				&	\widetilde{f}(t,\widetilde{y},\widetilde{z})=
				\frac{1}{\phi'(\widetilde{y})}\left(f(t,\phi(\widetilde{y}),\phi'(\widetilde{y})\widetilde{z})
				+\frac{1}{2}\phi''(\widetilde{y})\left(|\widetilde{z}|^2-\big|\widetilde{g}(t,\widetilde{y},\widetilde{z})\big|^2\right)\right).
			\end{aligned}\right.
		\end{equation*}
		Since \(\phi^{-1}\) is strictly increasing, \(S_T^{i}\leq \xi^i\) implies
		\(\widetilde{S}_T^i \leq \widetilde{\xi}^i\).
		From Hu, Wen, and Xiong \cite[Theorem 4.4]{wen1}, 
		the transformed terminal values, obstacles, and solution processes
		remain in the same integrability classes, and the transformed coefficient \(\widetilde{g}\) satisfies the corresponding assumptions.
	Choose suitable constants $A, B$ such that $\widetilde{f}^2$ satisfies the structural
		condition \(\textbf{(STR)}\).  Since for any \(t\in[0,T]\),  
		\begin{align*}
			\widetilde{S}_t^1\leq \widetilde{S}_t^2,  \q\ 
			\widetilde{\xi}^1\leq \widetilde{\xi}^2, \q\ 
			\widetilde{f}^1(t,\widetilde{Y}_t^1,\widetilde{Z}_t^1)\leq 	\widetilde{f}^2(t,\widetilde{Y}_t^1,\widetilde{Z}_t^1),
			\q\ \hbox{a.s.}
		\end{align*}
		In view of step 1, we obtain $\widetilde{Y}_t^1 \leq \widetilde{Y}_t^2$.  
		Moreover,  since
		\(d\widetilde{K}_t=\frac{1}{\phi' (\widetilde{Y}_t)}dK_t\),
		where	 \(dK\) is supported on \(\{Y=S\}\), and since
		  $\phi$ is a  strictly increasing function  with \(\phi' >0\),  the obstacle condition is preserved. Therefore, 
 \(d\widetilde{K}\) is supported on \(\{\widetilde{Y}=\widetilde{S}\}\) and
		  the Skorokhod condition still holds. We conclude that for any \(t \in [0, T]\), \(Y_t^1 \leq Y_t^2\) almost surely.
	\end{proof}
	
	\begin{lemma}[A priori estimate]\label{l4.9}\sl
		Let the conditions \(\textbf{(A2)}\),  \(\textbf{(G2)}\) and \(\textbf{(S2)}\) be satisfied. Moreover, let   \(\hat{M}\triangleq \|\xi\|_{\infty}\vee \|S\|_{\infty} \),
		\( C >0 \) be a constant, \(a:[0,T] \longmapsto \mathbb{R}_+ \) and \( b: [0,T]\longmapsto \mathbb{R}_+\) be two non-negative functions, such that for all \((t,y,z)\in [0,T]\times\mathbb{R}\times\mathbb{R}^d\), 
		\begin{align*}
			|f(t,y,z)|\leq a_t+b_t|y|+C|z|^2,\q\ \hbox{a.s.}
		\end{align*}
		If \((Y,Z,K)\in S^\infty_{\mathbb{F}}([0,T];\mathbb{R})\times L^2_{\mathbb{F}}([0,T];\mathbb{R}^d)\times A^2_{\mathbb{F}}([0,T];\mathbb{R}_+)\) is a solution of RBDSDE \eqref{r1},
		then for any \(t\in[0,T]\),
		\begin{align}\label{ly4.17}
			Y_t\geq  -\|\xi\|_{\infty}\exp\left\{\int_t^T b_sds\right\}-\int_t^T a_s \exp\left\{\int_t^s b_rdr\right\}ds-\frac{4C}{1-\alpha}\int_t^T|\lambda(s)|^2\exp\left\{\int_t^s b_rdr\right\}ds,\q\ \hbox{a.s.}
		\end{align}
		respectively,
		\begin{align}\label{ly4.18}
			\Big(		Y_t\leq \hat{M}\exp\left\{\int_t^T b_sds\right\}+\int_t^T a_s \exp\left\{\int_t^s b_rdr\right\}ds+\frac{4C}{1-\alpha}\int_t^T|\lambda(s)|^2\exp\left\{\int_t^s b_rdr\right\}ds,\q\ \hbox{a.s.} \Big)
		\end{align}
		
		Furthermore, there exists a constant $\hat{K}>0$ which depends on $\alpha,C,T, \|a\|_{L^1([0,T];\mathbb{R}_+)}, \|b\|_{L^1([0,T];\mathbb{R}_+)}$, \(\|S\|_{\infty}\) and $\|Y\|_{S_{\mathbb{F}}^\infty([0,T];\mathbb{R})}$, such that
		\begin{align}\label{lz4.19}
			\mathbb{E}\int_0^T|Z_s|^2ds\leq \hat{K}.
		\end{align}
	\end{lemma}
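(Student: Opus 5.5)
The plan is to adapt the proof of \autoref{l4.1} to the reflected equation, tracking the sign of the $dK$ term in each direction. For the lower bound \eqref{ly4.17}, let $X$ be the solution of the linear ODE \eqref{x} and apply It\^o's formula to $\Phi_\varepsilon(-Y_t-X_t)$, where $\Phi$ is defined in \eqref{Phi} with $M=\|Y\|_{S^\infty_{\mathbb{F}}}+\|X\|_{S^\infty_{\mathbb{F}}}$. Writing $U=-Y$, the equation for $U$ has drift $-f(s,-U_s,-Z_s)$, martingale parts $-g\,dB$ and $+Z\,dW$, and the reflection term enters as $-(K_T-K_t)$. Since $\Phi'_\varepsilon\geq0$ and $dK\geq0$, the term $-\int_t^T\Phi'_\varepsilon(U_s-X_s)dK_s$ is non-positive and can be dropped. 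The remaining terms are handled exactly as in \autoref{l4.1}:
\begin{itemize}
\item the growth bound $|f|\leq a+b|y|+C|z|^2$ gives the drift contribution $\Phi'(\cdot)\bigl(b_s|Y_s|-b_sX_s+C|Z_s|^2-\tfrac{4C}{1-\alpha}|\lambda|^2\bigr)$;
\item \textbf{(G2)} gives $|g|^2-|Z|^2\leq|\lambda|^2-(1-\alpha)|Z|^2$;
\item the relations $\Phi''=\tfrac{4C}{1-\alpha}\Phi'$ on $(0,M]$ make the $|Z|^2$ terms non-positive and absorb the $|\lambda|^2$ terms, as in \eqref{lemma Phi}.
\end{itemize}
Gronwall's inequality then yields $\mathbb{E}[\Phi(-Y_t-X_t)]=0$, that is, $Y_t\geq -X_t$, which is \eqref{ly4.17}.

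For the upper bound \eqref{ly4.18}, replace $\|\xi\|_\infty$ by $\hat M$ in \eqref{x}; call the solution $\hat X$, so that $\hat X_t\geq\hat M\geq S_t$. Apply It\^o's formula to $\Phi_\varepsilon(Y_t-\hat X_t)$. Now the reflection contributes $+\int_t^T\Phi'_\varepsilon(Y_s-\hat X_s)dK_s$, which has the wrong sign, and handling it is the key point. By the Skorokhod condition, $dK$ is supported on $\{Y_s=S_s\}$, where $Y_s-\hat X_s=S_s-\hat X_s\leq0$. Since $\Phi'=0$ on $(-\infty,0]$, this term vanishes in the limit $\varepsilon\to0$.

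To make this rigorous, choose $\Phi_\varepsilon$ with $\Phi'_\varepsilon=0$ on $(-\infty,0]$, for instance by smoothing only on $(0,\varepsilon)$; then the $dK$ integral is identically zero even before the limit. This is the step requiring the most care, since the mollification used in \autoref{l4.1} must not create mass at $u\leq0$. The terminal term vanishes because $\xi\leq\hat M=\hat X_T$, and the rest of the argument is identical to the lower bound, giving $Y_t\leq\hat X_t$.

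For \eqref{lz4.19}, apply It\^o's formula to $\widetilde\Phi(Y_t)$ with $\widetilde\Phi$ from \eqref{Phi1}, where $M$ now bounds $\|Y\|_{S^\infty_{\mathbb{F}}}$ and $\|S\|_\infty$. The $dK$ term is $\int_0^T\widetilde\Phi'(Y_s)dK_s=\int_0^T\widetilde\Phi'(S_s)dK_s\geq0$, which again has the wrong sign. To control it, first bound $\mathbb{E}[K_T]$. From the equation, $K_T=Y_0-\xi-\int_0^Tf\,ds-\int_0^Tg\,dB+\int_0^TZ\,dW$, so
\begin{align*}
\mathbb{E}[K_T]\leq 2M+\|a\|_{L^1}+M\|b\|_{L^1}+C\,\mathbb{E}\int_0^T|Z_s|^2ds .
\end{align*}
This still contains $\mathbb{E}\int_0^T|Z_s|^2ds$. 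To break the circularity, we will use a second test function with the opposite monotonicity for the reflection term. Specifically, apply It\^o's formula to $\Psi(Y)$ with
\begin{align*}
\Psi(u)=\frac{1-\alpha}{4C}\Bigl(e^{-\frac{4C}{1-\alpha}(u-M-1)}-1\Bigr),
\end{align*}
which is decreasing and convex. With this choice the $dK$ term becomes $\int_0^T\Psi'(Y_s)dK_s\leq0$ and can be dropped, while the drift and $g$-terms are handled exactly as before: $\Psi'$ is now negative, but convexity together with $\Psi''=\tfrac{4C}{1-\alpha}|\Psi'|$ produces the same cancellation. This yields a bound $c\,\mathbb{E}\int_0^T|Z_s|^2ds\leq\hat K$, with $\hat K$ depending on the quantities listed in the lemma, through the boundedness of $Y$ and $S$.
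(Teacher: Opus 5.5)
Structurally your argument is the paper's. For \eqref{ly4.18} you raise the terminal value of \eqref{x} to $\hat M$, so that $\Phi'$ vanishes on the support of $dK$; your choice $\Phi_\varepsilon'=0$ on $(-\infty,0]$ makes this exact, a point the paper glosses over. For \eqref{ly4.17} you use the mirrored test function, for which the reflection term has the good sign, and comparing with $-X$ gives exactly the right-hand side of \eqref{ly4.17} (the paper's ODE \eqref{xx} has the $b$-term with the wrong sign). Your $\Psi$ is the paper's $\bar\Phi$, and it alone closes \eqref{lz4.19}, so the detour through $\widetilde\Phi$ and $\mathbb{E}[K_T]$ can be dropped. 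The $Z$-estimate is correct as written because $\Psi$ is smooth; its constant also depends on $\int_0^T|\lambda(s)|^2ds$.

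The genuine gap, inherited from \autoref{l4.1}, is your bullet claiming that $\Phi''=\frac{4C}{1-\alpha}\Phi'$ absorbs the $|\lambda|^2$ terms. This identity holds on $(0,M]$, but $\Phi'$ jumps from $0$ to $1$ at $0$. With your $\Phi_\varepsilon$ you have $\Phi_\varepsilon''\approx 1/\varepsilon$ on $(0,\varepsilon)$, while $\Phi_\varepsilon'$ stays bounded there. On $\{0<V_s<\varepsilon\}$ the It\^o term $\frac12\Phi_\varepsilon''(V_s)\bigl(|g_s|^2-|Z_s|^2\bigr)$ is only bounded by $\frac12\Phi_\varepsilon''(V_s)\bigl(|\lambda(s)|^2-(1-\alpha)|Z_s|^2\bigr)$. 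Its $|\lambda|^2$-part is not compensated by $-\frac{4C}{1-\alpha}\Phi_\varepsilon'(V_s)|\lambda(s)|^2$ and does not vanish as $\varepsilon\to0$. In the limit it becomes a local-time term at $0$, and it is unfavourable because the backward quadratic variation $|g|^2$ enters It\^o's formula with a plus sign. The paper's requirement $\frac12\Phi_\varepsilon''-\frac{4C}{1-\alpha}\Phi_\varepsilon'\le r_\varepsilon\to0$ is impossible for convex $C^2$ approximations of such a $\Phi$, for the same reason.

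No other test function can repair this, because \eqref{ly4.17} is false when $\lambda\not\equiv0$. Take $\xi=0$, $f\equiv0$ (so $a=b=0$), $S\equiv-2$, and $g(t,y,z)=\lambda h(y)$ with a constant $\lambda>0$ and $h(y)=(1-|y|)^+$.
\begin{itemize}
\item In reversed time, $Y_t=\int_t^T\lambda h(Y_s)\,dB_s$ is a forward Lipschitz SDE driven by $B_T-B_{T-\cdot}$, so $|Y|\le1$.
\item Hence $Y\ge-1>S$, $(Y,0,0)$ solves \eqref{r1}, and all hypotheses hold.
\item However, $\mathbb{E}|Y_t-\lambda(B_T-B_t)|^2\le\lambda^4(T-t)^2$, so $\mathbb{P}\bigl(Y_t<-\frac{4C}{1-\alpha}\lambda^2(T-t)\bigr)\to\frac12$ as $\lambda\to0$.
\end{itemize}
Your proof, like the paper's, is valid only when $\lambda\equiv0$. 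In that case $|g|^2-|Z|^2\le-(1-\alpha)|Z|^2\le0$, so the kink contributes with the good sign.
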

	\begin{proof}
	Replace the terminal value \(\|\xi\|_{\infty}\) of ODE  \eqref{x} 
	by \(X_T=\hat{M}\). Since \(\hat{M}\geq \|S\|_{\infty}\), we have \(X_t\geq S_t\) for all \(t\in [0,T]\).  Hence, by the
	Skorokhod condition, \(dK_t\) is supported by the set \(\{Y_t=S_t\}\), and \(Y_t-X_t=S_t-X_t\leq 0\) on this set.
	Since \(\Phi'(u)=0\) for \(u\leq 0\), it follows that 
	\begin{align*}
	\int_t^T \Phi'(Y_s-X_s)dK_s=0.
	\end{align*}
	The remainder of the proof of the upper bound 
		 is similar to that of \autoref{l4.1}.  We now prove the lower bound. 
		Let us consider a linear ordinary differential equation,
		\begin{align}\label{xx}
			\hat{X}_t=-\|\xi\|_{\infty}-\int_t^T \left( b_s\hat{X}_s+a_s+\frac{4C}{1-\alpha}|\lambda(s)|^2\right)ds.
		\end{align}
		For \(M=\|Y\|_{S^{\infty}_{\mathbb{F}}([0,T];\mathbb{R})}+\|\hat{X}\|_{S^{\infty}_{\mathbb{F}}([0,T];\mathbb{R})}\), replace the function 
		\(\Phi\) in \eqref{Phi} with the following:
		\begin{equation*}
			\hat{\Phi}(u)=\left\{
			\begin{aligned}
				0&,\qquad u\in\left(0,M\right], \\
				\frac{1-\alpha}{4C}(e^{-\frac{4C}{1-\alpha}u}-1)&,\qquad  	u\in[-M,0].
			\end{aligned}
			\right.
		\end{equation*}
			Let \((\hat{\Phi}_\varepsilon)_{\varepsilon>0}\subset C^2(\mathbb R)\) be a sequence of smooth convex approximations of  \(\hat{\Phi}\), such that 
		\begin{align*}
			\hat{\Phi}_\varepsilon(u)\longrightarrow\hat{\Phi}(u),\q\
			\hat{\Phi}_\varepsilon'(u)\longrightarrow \hat{\Phi}'(u),\q\
			\hat{\Phi}_\varepsilon''(u)\longrightarrow \hat{\Phi}''(u), \q\ u\neq 0.
		\end{align*}
		Moreover,
			\begin{align*}
			u\hat{\Phi}_\varepsilon'(u)\leq \hat{\Phi}_\varepsilon(u)\left(1+\frac{4C M}{1-\alpha}\right)+r_\varepsilon,\q\
			-\hat{\Phi}_\varepsilon'(u)C-\frac{1-\alpha}{2}\hat{\Phi}_\varepsilon''(u)\leq r_\varepsilon,\q\
			\frac{1}{2}\hat{\Phi}_\varepsilon''(u)+\frac{4C}{1-\alpha}\hat{\Phi}_\varepsilon'(u)\leq r_\varepsilon,
		\end{align*}
	and \(\hat{\Phi}'_\varepsilon\leq r_\varepsilon\), 	where \(r_\varepsilon \longrightarrow 0\) uniformly on \([-M,M]\).
		Applying It\^o's formula to $\hat{\Phi}_\varepsilon(Y_t-\hat{X}_t)$,  we have
		\begin{equation}\label{hatphi}
			\begin{aligned}
				\hat{\Phi}_\varepsilon(Y_t-\hat{X}_t)=&\ \hat{\Phi}_\varepsilon(Y_T-\hat{X}_T)+\int_{t}^{T}\hat{\Phi}_\varepsilon'(Y_s-\hat{X}_s)\left(f(s,Y_s,Z_s)+b_s \hat{X}_s+a_s+\frac{4C}{1-\alpha}|\lambda(s)|^2\right)ds\\
				&+\frac{1}{2}\int_t^T\hat{\Phi}_\varepsilon''(Y_s-\hat{X}_s)\Big(\big|g(s,Y_s,Z_s)\big|^2-|Z_s|^2\Big)ds
				+\int_t^T\hat{\Phi}_\varepsilon'(Y_s-\hat{X}_s)dK_s     \\
				&+\int_t^T\hat{\Phi}_\varepsilon'(Y_s-\hat{X}_s)g(s,Y_s,Z_s)dB_s-\int_t^T\hat{\Phi}_\varepsilon'(Y_s-\hat{X}_s)Z_sdW_s.
		\end{aligned}	\end{equation}
	Using the assumptions  on the coefficients, taking  expectations on both sides, and letting \(\varepsilon\longrightarrow 0\), we obtain
		\begin{align*}
			0\leq \mathbb{E}\left[\hat{\Phi}(Y_t-\hat{X}_t) \right]
			\leq \int_t^T\hat{b}_s \mathbb{E}\left[\hat{\Phi}(Y_s-\hat{X}_s)\right]ds,
		\end{align*}
		where $\hat{b}_s=b_s \big(1+\frac{4C M}{1-\alpha}\big)$ is a positive, deterministic function.  Gronwall's inequality then yields that
		$\mathbb{E}\left[\hat{\Phi}(Y_t-\hat{X}_t)\right]=0$, which implies that
	  for all $t\in[0,T]$, $\hat{\Phi}(Y_t-\hat{X}_t)=0$, $\mathbb{P}$-a.s.  	Since \(\hat{\Phi}=0\) if and only if \(u\geq 0\),   we obtain
		$Y_t\geq \hat{X}_t$. \\
		To derive the inequality \eqref{lz4.19}, we will replace the function \(\hat{\Phi}\) in \eqref{hatphi} with the following:
		\begin{align*}
			\bar{\Phi}(u)=\frac{1-\alpha}{4C}\big(e^{-\frac{4C}{1-\alpha}(u-M-1)}-1\big),\q\ u\in [-M,M].
		\end{align*} 
		The remaining proof is the same as in \autoref{l4.1}.
	\end{proof}
	
	Next,  we present  a monotone stability theorem 
	for reflected BDSDEs, which serves as a  crucial tool in proving the existence of  solutions to   reflected BDSDEs.

	\begin{proposition}[Monotone Stability]\label{p4.9}\sl
		Let the conditions \(\textbf{(G1)}\), \(\textbf{(G2)}\) and \(\textbf{(S2)}\) be satisfied, and let \((\xi,f,g)\) denote a set of parameters
		for reflected BDSDE \eqref{r1} with obstacle \(S\). Assume that
		 \(f^n\) satisfies \(\textbf{(F2)}\) with some positive constant \(C\) which is independent of \(n\), 
		  and for every fixed \((t,\omega), \) the sequence
		\((f^n)\) converges to \(f\) locally uniformly in \((y,z)\) on compact sets.
		Additionally,   \(\xi^n\) converges to \(\xi\) as \(n\longrightarrow \infty\),
		 \(\mathbb{P}\)-a.s.
		If, for every \(n\in \mathbb{N}\), reflected BDSDE \eqref{r1} with parameters \((\xi^n,f^n,g)\) and obstacle \(S\) admits a solution \((Y^n,Z^n,K^n)\in
		S_\mathbb{F}^{\infty}([0,T];\mathbb{R})\times L_\mathbb{F}^2([0,T];\mathbb{R}^d)\times A^2_{\mathbb{F}}([0,T];\mathbb{R}_+)\), where
the sequence \((K^n)\) satisfies	\(\sup\limits_{n\geq 1}\mathbb{E}\left[ (K_T^n)^2\right]<\infty\), 	the sequence \((Y^n)\) is monotone, and  for some  positive constant \(\hat{K}\),  \(\sup\limits_{n\geq 1}\|Y^n\|_{\infty}\leq \hat{ K}\).
		Then there exists a triple \((Y,Z,K)\in
		S_\mathbb{F}^{\infty}([0,T];\mathbb{R})\times L_\mathbb{F}^2([0,T];\mathbb{R}^d)\times A^2_{\mathbb{F}}([0,T];\mathbb{R}_+)\) such that \((Y^n,K^n)\) converges uniformly in probability to \((Y,K)\)  on  \( [0,T]\), \((Z^n)\) converges to \(Z\) in \(L_\mathbb{F}^2([0,T];\mathbb{R}^d)\),
		where \((Y,Z,K)\in
		S_\mathbb{F}^{\infty}([0,T];\mathbb{R})\times L_\mathbb{F}^2([0,T];\mathbb{R}^d)\times A^2_{\mathbb{F}}([0,T];\mathbb{R}_+)\)
		is a solution of reflected BDSDE \eqref{r1} with parameters \((\xi,f,g)\) and obstacle \(S\).
	\end{proposition}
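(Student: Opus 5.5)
The plan follows the Kobylanski monotone stability scheme, adapted to the doubly stochastic setting as in Hu, Wen, and Xiong \cite[Proposition 3.9]{wen1}, with an extra treatment of the increasing processes $K^n$. Since $(Y^n)$ is monotone and $\sup_n\|Y^n\|_\infty\le\hat K$, define $Y_t\triangleq\lim_n Y^n_t$ pointwise. Then $|Y|\le\hat K$, and dominated convergence gives $\mathbb{E}\int_0^T|Y^n_s-Y_s|^2ds\to0$. Next I obtain a uniform bound $\sup_n\mathbb{E}\int_0^T|Z^n_s|^2ds<\infty$ from \autoref{l4.9}: the constants there depend only on $C$, $\alpha$, $T$, $\|S\|_\infty$ and $\hat K$, all uniform in $n$. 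Together with $\sup_n\mathbb{E}[(K^n_T)^2]<\infty$, this yields a subsequence with $Z^n\rightharpoonup Z$ weakly in $L^2_{\mathbb{F}}$.

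The key step is strong convergence of $Z^n$. For $n,m$ I apply It\^o's formula to $\Psi(Y^n_t-Y^m_t)$ with $\Psi(u)=\frac{1}{\beta^2}(e^{\beta|u|}-1-\beta|u|)$, smoothed as in \autoref{l4.1}, where $\beta$ is large enough that $\frac12\Psi''-c\,|\Psi'|\ge$ a positive multiple on bounded sets. The quadratic term $|f^n-f^m|\le 2C(1+\hat K+|Z^n|^2+|Z^m|^2)$ is handled by writing $|Z^n|^2\le 3|Z^n-Z^m|^2+3|Z^m-Z|^2+3|Z|^2$. Because $|g(\cdot,Y^n,Z^n)-g(\cdot,Y^m,Z^m)|^2\le C|\Delta Y|^2+\alpha|\Delta Z|^2$ with $\alpha<1$ by \textbf{(G1)}, the $dB$-correction only eats a fraction $\alpha$ of the favourable $\frac12\Psi''|\Delta Z|^2$ term. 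I then follow Kobylanski's argument: first fix $m$ and let $n\to\infty$, using weak lower semicontinuity of $\mathbb{E}\int\Psi''(\cdot)|Z^n-Z|^2$, to conclude $\mathbb{E}\int_0^T|Z^n-Z|^2\to0$.

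The reflection term is $\int\Psi'(Y^n-Y^m)\,d(K^n-K^m)$. With a common obstacle $S$ and the Skorokhod conditions, $(Y^n-Y^m)\,d(K^n-K^m)\le0$, and since $\Psi'(u)$ has the sign of $u$, this term is $\le0$ and can be dropped. I expect the handling of the $|Z|^2$ cross terms to be the main obstacle: both the choice of $\beta$ relative to $C/(1-\alpha)$ and the weak-limit argument, which has to be repeated with the backward $dB$-integral, whose expectation vanishes since its integrand is bounded times $L^2$.

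Once $Z^n\to Z$ strongly, a subsequence gives $|Z^n|^2$ uniformly integrable and $Z^n\to Z$ a.e. Local uniform convergence of $f^n$ and Vitali's theorem then give $f^n(\cdot,Y^n,Z^n)\to f(\cdot,Y,Z)$ in $L^1$. Also $g(\cdot,Y^n,Z^n)\to g(\cdot,Y,Z)$ in $L^2$ by \textbf{(G1)}. Writing $K^n_t=Y^n_0-Y^n_t-\int_0^t f^n\,ds-\int_0^t g\,dB+\int_0^t Z^n\,dW$ and using BDG and $\xi^n\to\xi$, I would show that $Y^n$ (and hence $K^n$) converge uniformly in probability. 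Dini's theorem does not apply directly since continuity of the limit is not yet known, so I would bound $\sup_t|Y^n_t-Y^m_t|$ through the equation itself. The limit $K$ is then continuous and non-decreasing, and $\mathbb{E}[K_T^2]<\infty$ by Fatou. Finally $Y\ge S$ is inherited, and the Skorokhod condition passes to the limit by the same weak convergence of $dK^n$ used in the proof of \autoref{t3.8}.
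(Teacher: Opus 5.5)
The part of your proposal dealing with $(Z^n)$ is sound and follows essentially the paper's route: the It\^o estimates of \cite{wen1}, with the reflection term discarded because of its sign. Using the symmetric $\Psi$ with $|\Psi'|$, instead of the paper's asymmetric $\Phi$ together with the ordering $Y^n\geq Y^m$, is a harmless variant. Your sign argument is also correct: $\Psi'(u)=r(u)\,u$ with $r\geq 0$, so $\Psi'(Y^n-Y^m)\,d(K^n-K^m)$ is a nonnegative multiple of the nonpositive measure $(Y^n-Y^m)\,d(K^n-K^m)$.

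\textbf{The gap: uniform convergence of $(Y^n)$.} You propose to bound $\sup_t|Y^n_t-Y^m_t|$ through the equation, as in Step 3 of the proof of \autoref{t4.2}. In the reflected case this does not close.
\begin{itemize}
\item The equation for $Y^n-Y^m$ contains $(K^n_T-K^n_t)-(K^m_T-K^m_t)$. On this you only know $\sup_n\mathbb{E}[(K^n_T)^2]<\infty$, and monotonicity of $(Y^n)$ alone gives no pathwise control of $d(K^n-K^m)$.
\item Your formula for $K^n_t$ does not help. It yields convergence of $K^n$ only once uniform convergence of $Y^n$ is known, so the argument is circular.
\end{itemize}
Without this step you have neither continuity of $Y$ nor convergence of $K^n$. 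So membership of the limit triple in $S^\infty_{\mathbb{F}}\times L^2_{\mathbb{F}}\times A^2_{\mathbb{F}}$ is not established.

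\textbf{The fix.} Take the supremum inside the It\^o estimate, where the reflection term can be dropped. This is what the paper relies on: it runs the It\^o estimates of \cite{wen1} for $(Y^n,Z^n)$ with the extra term $\int\Phi'(Y^n-Y^m)\,d(K^n-K^m)\leq 0$, and only afterwards defines $K$ from the equation. Set $\Delta Y=Y^n-Y^m$, $\Delta Z=Z^n-Z^m$, $\Delta f=f^n(\cdot,Y^n,Z^n)-f^m(\cdot,Y^m,Z^m)$ and $\Delta g=g(\cdot,Y^n,Z^n)-g(\cdot,Y^m,Z^m)$. It\^o's formula for $|\Delta Y_t|^2$, together with $\int_t^T\Delta Y_s\,d(K^n_s-K^m_s)\leq 0$ for every $t$, gives
\begin{align*}
\sup_{0\leq t\leq T}|\Delta Y_t|^2\leq&\ |\xi^n-\xi^m|^2+2\int_0^T|\Delta Y_s||\Delta f_s|ds+\int_0^T|\Delta g_s|^2ds\\
&+2\sup_{0\leq t\leq T}\Big|\int_t^T\Delta Y_s\Delta g_s\,dB_s\Big|+2\sup_{0\leq t\leq T}\Big|\int_t^T\Delta Y_s\Delta Z_s\,dW_s\Big|.
\end{align*}
Now take expectations term by term.
\begin{itemize}
\item The first term tends to $0$ by bounded convergence.
\item The second term tends to $0$ because $|\Delta Y|\leq 2\hat K$, $\Delta Y\to 0$ pointwise, and $|\Delta f|\leq 2C(1+\hat K)+C|Z^n|^2+C|Z^m|^2$ with $(|Z^n|^2)$ uniformly integrable once $Z^n\to Z$ strongly.
\item The third term is at most $\mathbb{E}\int_0^T(C|\Delta Y_s|^2+\alpha|\Delta Z_s|^2)ds\to 0$ by \textbf{(G1)}.
\item By the Burkholder--Davis--Gundy inequality, the last two terms are bounded by a constant times $\hat K\big(\mathbb{E}\int_0^T(|\Delta g_s|^2+|\Delta Z_s|^2)ds\big)^{1/2}\to 0$. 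For the backward integral, apply it with respect to the backward filtration $\mathscr{F}^W_T\vee\mathscr{F}^B_{t,T}$.
\end{itemize}
Hence $\mathbb{E}[\sup_t|Y^n_t-Y^m_t|^2]\to 0$ and $Y$ is continuous. Only then does your expression for $K^n$ give $K^n\to K$ uniformly in probability. Fatou's lemma for $K_T$ and the weak convergence of $dK^n$ for the Skorokhod condition then finish the proof as you describe.
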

	\begin{proof} 
		In the It\^o estimates used to prove the convergence of \((Y^n,Z^n)\), after interchanging \(n\) and \(m\), if necessary, we may assume that \(Y^n\geq Y^m\).
		By the monotonicity of \(Y^n\)
		 and the Skorokhod condition,
		 the additional reflection term satisfies
		\begin{align*}
		 \int_0^T \Phi'(Y_s^n-Y_s^m)d(K_s^n-K_s^m)\leq 0.
		\end{align*}
		Hence the   argument  of Hu, Wen, and Xiong \cite[Proposition 3.9]{wen1} remains valid.
	For completeness, we indicate
	the only additional point caused by the reflection.
	Once \(Y^n\longrightarrow Y\) uniformly on \([0,T]\), a.s.,
	and \(Z^n\longrightarrow Z\) strongly in \(L_\mathbb{F}^2([0,T];\mathbb{R}^d)\)
	have been obtained as in Hu, Wen, and Xiong \cite[Proposition 3.9]{wen1}, we define
	\begin{align*}
 K_t=Y_0-Y_t-\int_0^t f(s,Y_s,Z_s)ds
 -\int_0^tg(s,Y_s,Z_s)dB_s +\int_0^t Z_s dW_s, \q\ 
 0\leq t\leq T.
	\end{align*}
	For the convergence of the generator,  note that the sequence 	\((f^n)\) converges locally
	uniformly to  \(f\). Using the uniform boundedness of \(Y^n\) and the uniform integrability of  \(|Z^n|^2\) in \(L^1_{\mathbb{F}}([0,T];\mathbb{R}^d)\), 
	  Vitali's convergence theorem gives
	\begin{align*}
		\mathbb{E}\int_0^T \big|f^n(s,Y_s^n,Z_s^n)-
		 f(s,Y_s,Z_s)\big| ds \longrightarrow 0.
	\end{align*}
	Consequently,
 the equation satisfied by \(K^n\), 
	 the convergence of \((Y^n,Z^n)\) and \(f^n(\cdot,Y^n,Z^n)\), together  with  the	Burkholder--Davis--Gundy  inequality, we obtain
	\(K^n\longrightarrow K\) uniformly in probability  on \([0,T]\).
	Passing to a subsequence if necessary, we may assume that 
	\(K_T^n\longrightarrow K_T\), \(\mathbb{P}\)-a.s.
	Hence, by Fatou's lemma, 
	\begin{align*}
	\mathbb{E}\left[
	(K_T)^2\right]\leq \liminf\limits_{n\rightarrow\infty}\mathbb{E}\left[ (K_T^n)^2\right]<\infty.
	\end{align*}
	Since each \(K^n\) is continuous and non-decreasing,
	and \(K^n\longrightarrow K\) uniformly on \([0,T]\), \(\mathbb{P}\)-a.s.,
	 along
	 a subsequence, 
	 the limiting process
	\(K\) is also continuous and non-decreasing. Moreover,
	since \(\int_0^T \big(Y_t^n-S_t \big)dK_t^n=0\), we write
	\begin{align*}
	\Big| \int_0^T \big( Y_t^n-S_t \big)dK_t^n-\int_0^T\big( Y_t-S_t \big)dK_t\Big|
	\leq \sup\limits_{0\leq t\leq T}|Y_t^n-Y_t|\ K_T^n+
\Big|	\int_0^T\big(Y_t-S_t \big)\big(dK_t^n-dK_t \big)\Big|,
	\end{align*}
		Since  \(\sup\limits_{n}\mathbb{E}\left[ (K_T^n)^2\right]<\infty\),
	the first term on the right-hand side converges to zero in probability. For the second term, 
	since \(Y-S\) is continuous, passing to a subsequence, we may assume that 
	   \(K^n\longrightarrow K \) uniformly on \([0,T]\), \(\mathbb{P}\)-a.s.  Hence the measure \(dK^n\) weakly converges to 
	  \(dK\) and the second term converges to zero.
This yields the desired Skorokhod condition and completes the proof.
	\end{proof}

	Now, we give the main result in this part.

	\begin{theorem}\label{t5.4} \sl
		Let the conditions \(\textbf{(A2)}\), \(\textbf{(F2)}\), \(\textbf{(G1)}\),  \(\textbf{(G2)}\) and \(\textbf{(S2)}\) be satisfied. 
		Then reflected BDSDE \eqref{r1} has at least one solution $(Y,Z,K)\in S_\mathbb{F}^{\infty}([0,T];\mathbb{R})\times L_\mathbb{F}^2([0,T];\mathbb{R}^d)\times A^2_{\mathbb{F}}([0,T];\mathbb{R}_+) $. Moreover, if the conditions \(\textbf{(F4)}\) and \(\textbf{(G4)}\) are satisfied, then the solution is unique.
	\end{theorem}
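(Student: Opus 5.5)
Uniqueness follows directly from the comparison theorem \autoref{t5.2}. If $(Y^1,Z^1,K^1)$ and $(Y^2,Z^2,K^2)$ are two bounded solutions with the same data $(\xi,f,g,S)$ and $f$ satisfies \textbf{(F4)}, we apply \autoref{t5.2} twice, swapping the roles of the two solutions. This gives $Y^1=Y^2$. Applying It\^o's formula to $|Y^1-Y^2|^2$, together with \textbf{(G1)}, then gives $Z^1=Z^2$. Finally, $K^1=K^2$ follows by reading $K$ off the equation. The real work is existence. We copy the double approximation scheme of \autoref{tb4.2}, with the monotone stability result \autoref{p4.9} replacing the one of Hu, Wen and Xiong.

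\emph{Step 1 ($f\ge 0$).} For $n\ge C$ set $f_n(t,y,z)=\inf_{(p,q)\in\mathbb{Q}\times\mathbb{Q}^d}\{f(t,p,q)+n|y-p|+n|z-q|\}$. Each $f_n$ is Lipschitz in $(y,z)$ with constant $n$, satisfies $0\le f_n\le f$, is non-decreasing in $n$, and converges to $f$ locally uniformly because $f$ is continuous. Since $f_n$ is Lipschitz and $f_n(\cdot,0,0)$ is bounded, \autoref{t3.8} (with $\mu=n$, $\varphi$ linear, and \textbf{(G3)} trivially true) produces a unique solution $(Y^n,Z^n,K^n)$ of \eqref{r1} with data $(\xi,f_n,g,S)$. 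By \autoref{c3.11}, $Y^n\le Y^{n+1}$.

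We need a bound on $Y^n$ that is uniform in $n$. The first idea is to use \autoref{l4.9} with $a_t=C$, $b_t=C$ and the quadratic constant $C$, since $|f_n|\le C(1+|y|+|z|^2)$ uniformly in $n$. That lemma, however, assumes $Y^n\in S^\infty_{\mathbb F}$. Boundedness of $Y^n$ for a Lipschitz generator does not follow automatically, since \cite{wen1} warns that bounded data need not give bounded solutions. Here the situation is easier: $f_n\ge 0$ and $f_n\le f\le C(1+|y|+|z|^2)$. So we first show $Y^n$ is bounded by comparison. From below, $Y^n\ge \hat Y^n$, where $\hat Y^n$ solves the non-reflected BDSDE with data $(\xi,f_n,g)$; that solution is bounded by the boundedness result of \cite{wen1}, as in the proof of \autoref{tb4.2}, and then \autoref{l4.1} gives a lower bound that does not depend on $n$. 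From above, we compare with the ODE solution $X$ of \autoref{l4.9} started from $\hat M$. The argument of \autoref{l4.9} only needs $Y^n\in S^2_{\mathbb F}$ once the $\Phi$ used there is replaced by a truncated convex function. If this turns out to be delicate, we instead localize with stopping times on $\{|Y^n|\le N\}$. This yields $\sup_n\|Y^n\|_\infty\le \hat K$, and \autoref{l4.9} then gives $\sup_n\mathbb E\int_0^T|Z^n|^2ds<\infty$.

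Next we need a uniform bound on $\mathbb E[(K^n_T)^2]$. Writing
\[
K^n_T=Y^n_0-\xi-\int_0^Tf_n(s,Y^n_s,Z^n_s)ds-\int_0^Tg\,dB_s+\int_0^TZ^n_sdW_s,
\]
the difficulty is the term $\int_0^T|Z^n_s|^2ds$: we need its second moment, not just its first. We plan to get this from an exponential transform: apply It\^o's formula to $e^{\beta Y^n}$ with $\beta$ large, use \textbf{(G2)} and the fact that $Y^n$ is bounded, and derive a bound on $\mathbb E\big[(\int_0^T|Z^n|^2ds)^2\big]$ in the spirit of the energy inequality. With the bounds on $Y^n$ and $K^n$ in hand, \autoref{p4.9} applies and gives a bounded solution for data $(\xi,f,g,S)$.

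\emph{Step 2 (general $f$).} Use $f_{n,m}$ as in the proof of \autoref{tb4.2}; these satisfy \textbf{(F2)} uniformly in $(n,m)$. First let $n\to\infty$ with $m$ fixed, so $f_{n,m}\uparrow f_{\infty,m}$; then let $m\to\infty$, so $f_{\infty,m}\downarrow f$. At each level \autoref{c3.11} gives monotonicity (and at the second level the comparison needs the Lipschitz structure in the $f^-$ part), \autoref{l4.9} gives uniform $L^\infty$ bounds, and the same $K$-estimate as in Step 1 holds. Applying \autoref{p4.9} twice then produces the solution.

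The main obstacle is the uniform second-moment bound on $K^n_T$, equivalently a uniform $L^2$ bound on $\int_0^T|Z^n|^2ds$, which \autoref{p4.9} requires. Our first attempt will be the exponential It\^o estimate combined with the BDG inequality, absorbing the $\alpha|z|^2$ contribution from $g$ because $\alpha<1$.
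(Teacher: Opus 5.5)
\textbf{Where the proposal breaks.} Your plan follows the paper's proof nearly step for step. Uniqueness comes from \autoref{t5.2}. For $f\geq0$ you take Lipschitz inf-convolutions $f_n\uparrow f$, get monotonicity by comparison, bound $Y^n$ and $K^n_T$ uniformly, and apply \autoref{p4.9}; the double approximation $f_{n,m}$ then handles general $f$. Using \autoref{t3.8} and \autoref{c3.11} instead of Bahlali et al.\ is fine.

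The genuine gap is the step you already marked as delicate: the uniform bound $\sup_n\|Y^n\|_\infty<\infty$. It cannot be repaired when $\lambda\not\equiv0$, because then the existence claim itself is false. Take $l=1$, $\xi=0$, $f\equiv0$, $g(t,y,z)=\lambda(t)$ with $\int_0^T|\lambda(t)|^2dt>0$, and $S\equiv-1$. Then \textbf{(A2)}, \textbf{(F2)}, \textbf{(G1)}, \textbf{(G2)}, \textbf{(S2)}, and even \textbf{(F4)} and \textbf{(G4)}, all hold. For any solution, $Y_t-\int_t^T\lambda(s)dB_s=K_T-K_t-\int_t^TZ_sdW_s$. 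The left side is $\mathscr{G}_t$-measurable and $W$ is a $\mathbb{G}$-Brownian motion, so conditioning on $\mathscr{G}_t$ gives $Y_t-\int_t^T\lambda(s)dB_s=\mathbb{E}[K_T-K_t\,|\,\mathscr{G}_t]\geq0$. Hence $Y_0$ dominates the non-degenerate Gaussian variable $\int_0^T\lambda(s)dB_s$, and no solution lies in $S^\infty_{\mathbb{F}}([0,T];\mathbb{R})$.

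The same example has two further consequences for your argument.
\begin{itemize}
\item Dropping $S$, it shows that the boundedness of $\hat Y^n$ you take from \cite{wen1} fails even for the zero generator once $\lambda\not\equiv0$.
\item It explains why the argument of \autoref{l4.9} cannot give your upper bound. There $\Phi'(0^-)=0$ and $\Phi'(0^+)=1$, so any convex $C^2$ smoothing has $\int_{-\delta}^{\delta}\Phi''_\varepsilon\,du\to e^{4C\delta/(1-\alpha)}$. The condition $\frac12\Phi_\varepsilon''-\frac{4C}{1-\alpha}\Phi_\varepsilon'\leq r_\varepsilon$, which is what absorbs $\frac12\Phi''|\lambda|^2$, therefore fails near $0$. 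This is the local time at $0$ generated by the $B$-noise.
\end{itemize}
The paper's proof asserts the same uniform boundedness in one line, so it has the same defect. The argument is valid only for $\lambda\equiv0$, i.e.\ $|g|^2\leq\alpha|z|^2$, as in \cite{wen1} and \autoref{sectionnew5}.

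\textbf{The repair under $\lambda\equiv0$.} Here your proof goes through, and both steps you worried about become easy.

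For the $L^\infty$ bound you need neither \cite{wen1} nor the a priori $S^\infty$ hypothesis of \autoref{l4.9}. Every approximating generator is Lipschitz in $(y,z)$, say with constant $L$. By \textbf{(F2)} at $z=0$ it satisfies $-C(1+|y|)-L|z|\leq f_{n,m}(t,y,z)\leq C(1+|y|)+L|z|$. Exactly as in the proof of \autoref{t4.11}:
\begin{itemize}
\item $(V,0,0)$ with $V_t=\hat M+\int_t^TC(1+V_s)ds$ solves the reflected equation with generator $C(1+|y|)+L|z|$, since $V\geq\hat M\geq S$;
\item $(U,0)$ with $U_t=-\|\xi\|_\infty-\int_t^TC(1-U_s)ds$ solves the non-reflected equation with generator $-C(1+|y|)-L|z|$.
\end{itemize}
\autoref{c3.11} then gives $U\leq Y^{n,m}\leq V$ uniformly in $(n,m)$. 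For the lower bound, use It\^o's formula for $((U-Y^{n,m})^+)^2$, where the $dK$ term has the favourable sign. After that, \autoref{l4.9} supplies the $Z$-bound.

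For $K$ in Step 1, $f_n\geq0$ yields $0\leq K^n_T\leq 2\sup_n\|Y^n\|_\infty+|\int_0^Tg(s,Y^n_s,Z^n_s)dB_s|+|\int_0^TZ^n_sdW_s|$. So $\sup_n\mathbb{E}[(K^n_T)^2]<\infty$ already follows from the first-moment bound on $Z^n$.

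In Step 2 you do need $\sup\mathbb{E}[(\int_0^T|Z_s|^2ds)^2]<\infty$, but the transform must be $\phi(y)=e^{-\beta y}$, not $e^{\beta y}$.
\begin{itemize}
\item For $\phi(y)=e^{\beta y}$ with $\beta>0$, the reflection term $\int\phi'(Y)dK$ has the wrong sign. It cannot be absorbed, because $K_T$ itself contains $C\int_0^T|Z_s|^2ds$.
\item For $\phi(y)=e^{-\beta y}$ with $\beta=\frac{4C}{1-\alpha}$, that term is non-positive. You get the pathwise inequality $\int_0^T|Z_s|^2ds\leq c\big(1+|\int_0^T\phi'(Y_s)g\,dB_s|+|\int_0^T\phi'(Y_s)Z_sdW_s|\big)$. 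Squaring it and using the (backward) It\^o isometry bounds the second moment by $c\big(1+\mathbb{E}\int_0^T|Z_s|^2ds\big)$.
\end{itemize}
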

	
	\begin{proof}
		Uniqueness follows immediately from 
		\autoref{t5.2}. Therefore,  it remains to establish the existence. We begin by assuming that the coefficient $f$ is 
		non-negative.  Using the continuous  approximation lemma of
		Lepeltier and San Mart\'in \cite[Lemma 1]{97},
		for each \(n\in \mathbb{N}\) with  \(n\geq C\), let $(f_n)$ be a sequence of Lipschitz functions that approximates $f$ from below. Specifically, we define:
		\begin{align*}
			f_n(t,y,z)=\inf\limits_{(y',z')\in\mathbb{Q}^{1+d}}\{f(t,y',z')+n|y-y'|+n|z-z'|\}.
		\end{align*}
		By  the existence and uniqueness result  of  reflected BDSDE with Lipschitz coefficients (see Bahlali et al. \cite[Theorem 3.1]{09}), for each \(n\in \mathbb{N}\), 
		there exists a unique solution $(Y^n,Z^n,K^n)$ which solves
		 reflected BDSDE \eqref{r1} with parameters $(\xi,f_n,g)$ and obstacle $S$.   	Since \(f_n\) increases pointwise to \(f\) and the reflected
		 comparison theorem in the  Lipschitz case applies 
		 (see Bahlali et al. \cite[Theorem 3.2]{09}), the corresponding sequence  \((Y^n)\) is non-decreasing. 
		 Moreover, \((Y^n)_{n\geq C}\) is uniformly bounded.
		By \autoref{l4.9}, the uniform boundedness of  \((Y^n)\) is independent of \(n\).
The uniform quadratic growth estimate 
		for \((f_n)\) and the Burkholder--Davis--Gundy inequality give
		 \(\sup\limits_{n} \mathbb{E}[(K_T^n)^2]<\infty\).
		All constants are independent of the Lipschitz constants of \((f_n)\)
	and 	 the assumptions of
		 \autoref{p4.9} are satisfied.  We can first establish the convergence of the 
		pair of sequences
		\((Y^n,Z^n)\), the convergence of the sequence \((K^n)\) follows by
		\begin{align*}
		K_t^n=Y_0^n-Y_t^n-\int_0^tf_n(s,Y_s^n,Z_s^n)ds
		-\int_0^tg(s,Y_s^n,Z_s^n)dB_s+\int_0^tZ_s^ndW_s,\q\ 0\leq t\leq T.
		\end{align*}
		The obstacle condition and the Skorokhod condition 
		follow from \autoref{p4.9}; in particular, 
		\(Y\geq S\) and \(\int_0^T(Y_s-S_s)dK_s=0\).
		We conclude that the  triple 
		$(Y,Z,K)\in S_\mathbb{F}^{\infty}([0,T];\mathbb{R})\times L_\mathbb{F}^2([0,T];\mathbb{R}^d)\times A^2_{\mathbb{F}}([0,T];\mathbb{R}_+)$ is a solution of reflected BDSDE \eqref{r1}.\\
		Finally, for the general case, we consider
		\begin{align*}
			f_{n,m}(t,y,z)=&\inf\limits_{(y',z')\in\mathbb{Q}^{1+d}}\{f^+(t,y',z')+n|y-y'|+n|z-z'|\}\\
			&	-\inf\limits_{(y',z')\in\mathbb{Q}^{1+d}}\{f^-(t,y',z')+m|y-y'|+m|z-z'|\}.
		\end{align*}
		For each fixed \(m\in \mathbb{N}\),  we 
		 first pass to the limit as $n\longrightarrow \infty$.
		 Since \(f_{n,m} \uparrow f_{\infty,m}\triangleq
		 f^+(t,y,z)
		 -\inf\limits_{(y',z')\in  \mathbb{Q}^{1+d}}
		 \{f^-(t,y',z')+m|y-y'|+m|z-z'|\}\) locally uniformly as \(n\longrightarrow\infty\).
		 The reflected comparison theorem implies that
		 the corresponding sequence \((Y^{n,m})_n\) is non-decreasing. By \autoref{l4.9}, it is uniformly bounded
		 by a constant   independent of \(n\) and  \(m\).  This is because 
		 \(|f_{n,m}(t,y,z)| \leq
		 C(1+|y|+|z|^2)\). The uniform estimate of \(K^{n,m}\) follows as above.
		 Thus  \autoref{p4.9} 
		 applies and yields a reflected solution.
	 Next,   since \(f_{\infty,m}\downarrow f\) locally uniformly as  $m\longrightarrow\infty$,  repeating the above argument   yields the desired solution of  reflected BDSDE \eqref{r1}. The convergence
	 of \((K^{n,m})\) and the preservation of the Skorokhod condition are included in \autoref{p4.9}.
	\end{proof}

	\section{General growth in  $y$ and quadratic growth in  $z$} \label{sectionnew5}
	
		In this section, we study the maximal solutions for BDSDEs and reflected BDSDEs when the generator
	$f$ has one-sided quadratic growth with respect to $z$, and the terminal value \(\xi\) is bounded. For the sake of clarity, we will divide our results into two subsections.

\subsection{BDSDE: existence of maximal solutions}\label{sec4.2}

The main results presented in this part are the following two theorems (\autoref{t4.2} and \autoref{t4.3}), which are obtained as applications of the comparison theorem (\autoref{l4.6}). In contrast to the work of Shi, Gu, and Liu \cite{05}, we obtain the existence of maximal solutions for quadratic BDSDEs.   Our approach relies on truncation arguments and a monotone convergence procedure.
Using the same method, we can also derive the comparison theorem (\autoref{c4.10}) for quadratic BDSDEs, where the generator \(f\) is of general growth in \(y\). This further extends the work of Hu, Wen, and Xiong \cite{wen1}.

To construct a monotone approximation for a quadratic generator,
we suppose the following one-sided quadratic condition. Notice that no normalization \(\varphi(0)=0\) is required under \(\textbf{(F3')}\).
\begin{description}
	\item[(F2')]  
	\( -C(1+|y|+|z|^2)\leq f(t,y,z)\leq C(1+|y|).
	\)
	\item [(F3')] \( -\varphi(|y|)- C |z|^2\leq   f(t,y,z) \leq \varphi(|y|) \q\hbox{and} \q\
	\left(y-y'\right) \big(f(t,y,z)-f(t,y',z)\big)\leq \mu \big|y-y'\big|^2.\)
\end{description}

\begin{remark}\label{rm_qua}\sl 
	Conditions \textbf{(F2)} and \textbf{(F3)} are two-sided quadratic growth assumptions, whereas \textbf{(F2')} and \textbf{(F3')} are one-sided quadratic assumptions introduced for the construction of maximal solutions.
	In general, neither \textbf{(F2)} nor \textbf{(F3)} directly implies its one-sided counterpart.
	In the special case where \(g\) is positively homogeneous
	in \(z\), an increasing exponential change of variables
	transforms  
	the two-sided quadratic problem into a one-sided one. 
	Moreover,  since the transformation is increasing, maximality is preserved under the inverse transformation.
	Therefore, under the additional assumption that \(g\) is positively homogeneous in \(z\),
	\autoref{t4.2} and \autoref{t4.3} 
	hold under the condition \(\textbf{(F2)}\) and \(\textbf{(F3)}\), respectively.
\end{remark}

\begin{theorem}\label{t4.2} \sl
	Assume that the conditions \(\textbf{(A2)}\), \(\textbf{(F2')}\), \(\textbf{(G1)}\), and \(\textbf{(G2)}\) hold with \(\lambda(t) = 0\) for all \(t \in [0, T]\).
	Then  BDSDE \eqref{1} has a maximal bounded solution $(Y,Z)\in S_\mathbb{F}^{\infty}([0,T];\mathbb{R})\times L_\mathbb{F}^2([0,T];\mathbb{R}^d)$, and for  any $t\in[0,T]$, $U_0\leq U_t\leq Y_t\leq V_t\leq V_0$, $ \dbP $-a.s., where
	$(U,V)\in S_\mathbb{F}^{2}([0,T];\mathbb{R})\times S_\mathbb{F}^2([0,T];\mathbb{R})$ are the unique solutions of the following pair of ODEs:
	\begin{align}
		U_t=-\|\xi\|_{\infty}-\int_t^T C(1-U_s)ds,\q\ 0\leq t\leq T, \label{4.1}
	\end{align}
	and 	\begin{align}
		V_t=\|\xi\|_{\infty}+\int_t^T C(1+V_s)ds,\q\ 0\leq t\leq T.\label{4.2}
	\end{align}
\end{theorem}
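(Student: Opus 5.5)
We construct the maximal solution as the decreasing limit of solutions driven by Lipschitz generators that approximate $f$ from above. Since $\lambda\equiv0$, condition \textbf{(G2)} reads $|g(t,y,z)|^2\le\alpha|z|^2$; in particular $g(t,y,0)=0$, and this is what makes deterministic processes serve as barriers. For $n\ge C$ set
\begin{align*}
f_n(t,y,z)=\sup_{(p,q)\in\mathbb{Q}\times\mathbb{Q}^d}\{f(t,p,q)-n|y-p|-n|z-q|\}.
\end{align*}
The upper bound $f\le C(1+|y|)$ in \textbf{(F2')} makes $f_n$ finite and gives $f_n(t,y,z)\le C(1+|y|)$. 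Moreover, $f_n$ is $n$-Lipschitz in $(y,z)$, $f_n\ge f$, and $f_n\downarrow f$ locally uniformly by continuity and Dini's theorem. Pardoux--Peng \cite{94} then yields a unique solution $(Y^n,Z^n)$ of BDSDE \eqref{1} with data $(\xi,f_n,g)$. The comparison theorem \autoref{c3.7} applies, since in the Lipschitz case \textbf{(F1)} holds with $\mu=n$ and \textbf{(G3)} is trivial because $g(\cdot,0,0)=0$. It gives $Y^n\ge Y^{n+1}$.

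\emph{Two-sided bounds.} The solution $V$ of ODE \eqref{4.2} is the deterministic pair $(V,0)$ solving the BDSDE with data $(\|\xi\|_\infty, C(1+|y|),g)$; this uses $g(t,V_t,0)=0$ and $V\ge0$. Comparing through \autoref{c3.7} at the point $(V,0)$ gives $Y^n\le V\le V_0$, where $V$ is decreasing in $t$. For the lower bound, observe that $f_n(t,y,z)\ge f(t,y,z)\ge -C(1+|y|+|z|^2)$. I would apply the $\hat\Phi$-argument of \autoref{l4.1} (the lower estimate \eqref{y4.18}) to $Y^n-U$, where $U$ solves \eqref{4.1}, i.e.\ $dU=C(1-U)dt$. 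Since $U\le0$, we have $C(1-U_s)=C(1+|U_s|)$, and with $\lambda=0$ the $\frac{4C}{1-\alpha}|\lambda|^2$ term disappears. This yields $Y^n\ge U\ge U_0$ and therefore $\sup_n\|Y^n\|_\infty<\infty$. To make Itô's formula rigorous one needs $Y^n\in S^\infty$ beforehand. For that I would first truncate $f_n$ in $y$ (replace $y$ by $\rho_N(y)$), run the same barrier argument, whose constants do not depend on $N$, and then let $N\to\infty$.

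\emph{Passage to the limit and maximality.} Set $Y=\lim_n Y^n$. Then \autoref{l4.1} gives $\sup_n\mathbb{E}\int_0^T|Z^n|^2ds<\infty$, with constants independent of $n$. The monotone stability result of Hu--Wen--Xiong \cite[Proposition 3.9]{wen1}, in its decreasing version (equivalently, the non-reflected case of \autoref{p4.9}), then gives $Z^n\to Z$ in $L^2$, $Y^n\to Y$ uniformly, and shows that $(Y,Z)$ solves \eqref{1}; the bounds $U_0\le U_t\le Y_t\le V_t\le V_0$ pass to the limit. For maximality, let $(Y',Z')$ be any bounded solution. Apply \autoref{c3.7}, in its alternative form evaluated at $(Y',Z')$, to the pair $(f,f_n)$: since $f\le f_n$ and $f_n$ is Lipschitz, this gives $Y'\le Y^n$, and letting $n\to\infty$ gives $Y'\le Y$. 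Here \autoref{c3.7} has to be used with only the dominating generator Lipschitz. Its proof goes through for that case because $Y'$ is bounded and $Z'\in L^2$, so the linearization $f(\cdot,Y',Z')-f_n(\cdot,Y^n,Z^n)\le f_n(\cdot,Y',Z')-f_n(\cdot,Y^n,Z^n)$ involves only the Lipschitz function $f_n$.

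\emph{Main obstacle.} The hardest step is the monotone stability passage for a quadratic generator approximated from above. Hu--Wen--Xiong establish their stability result for increasing sequences, so I would first rerun their exponential test function estimate for $Y^n-Y^m$ with $n\ge m$, so that $Y^m\ge Y^n$. The one-sided form of \textbf{(F2')} helps here: the quadratic part of $f$ is a lower bound only, which controls the sign of the dangerous $|Z^n-Z^m|^2$ term when the differences are taken in this direction.
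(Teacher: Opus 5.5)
Your proof is correct, but it builds the approximating sequence differently from the paper. The paper only cuts off the quadratic part in $z$, setting $f_n=C(1+|\rho^A(y)|)(1-\kappa_n(z))+\kappa_n(z)f(t,\rho^A(y),z)$. These generators are merely continuous with linear growth. The paper therefore has to import Shi--Gu--Liu's existence of \emph{maximal} solutions for such BDSDEs. It then uses \autoref{l4.6} (comparison against a maximal solution, itself proved by a further Lipschitz approximation from above) to get monotonicity, the barriers $U\le Y^n\le V$, and maximality.

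You go straight to Lipschitz sup-convolutions. The one-sided linear upper bound in \textbf{(F2')} is exactly what makes them finite, preserves $f_n\le C(1+|y|)$, and gives $f_n\downarrow f$. The quadratic lower bound supplies the uniform estimate $|f_n|\le C(1+|y|+|z|^2)$ needed in the limit step. This buys you unique Pardoux--Peng solutions, reliance only on the Lipschitz comparison \autoref{c3.7}, and the global inequality $f\le f_n$, so maximality among bounded solutions is immediate. With the paper's $\rho^A$-truncated $f_n$, the inequality $f\le f_n$ is only guaranteed for $|y|\le A$, so a competitor must first be confined to $[-A,A]$ via \autoref{l4.1}. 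The passage to the limit is the same in both proofs: a uniform $L^2$ bound on $Z^n$, strong convergence via the exponential test function for the decreasing sequence, and uniform convergence of $Y^n$. The paper writes this out in Steps 1--3.

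Two minor points. First, your detour through the $\hat\Phi$-estimate and a $y$-truncation for the lower bound is unnecessary. The pair $(U,0)$ solves the BDSDE with the Lipschitz generator $-C(1+|y|)$, and $-C(1+|U_t|)\le f(t,U_t,0)\le f_n(t,U_t,0)$. So the alternative form of \autoref{c3.7} gives $U\le Y^n$ directly in $S^2$, exactly as for $V$. Boundedness of $Y^n$ then follows, rather than being needed beforehand. Second, the one-sidedness of \textbf{(F2')} does not really control the sign of the bad term in the stability estimate. For the nonnegative difference $Y^n-Y^m$ ($n<m$), the drift difference is still bounded by a constant plus $C|Z^m|^2$. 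Its $|Z^n-Z^m|^2$ part must be absorbed by the convexity of the test function exactly as in the two-sided case. That works, so the remark is a harmless overstatement rather than a gap.
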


\begin{theorem}\label{t4.3} \sl
	Assume that the
	conditions \(\textbf{(A2)}\), \(\textbf{(F3')}\),  \(\textbf{(G1)}\) and \(\textbf{(G2)}\)  hold  with \(\lambda(t)=0\) for all \(t\in [0,T]\). Then  BDSDE \eqref{1} has a maximal bounded solution $(Y,Z)\in S_\mathbb{F}^{\infty}([0,T];\mathbb{R})\times L_\mathbb{F}^2([0,T];\mathbb{R}^d)$.
\end{theorem}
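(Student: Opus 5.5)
The plan is to reduce Theorem~\ref{t4.3} to Theorem~\ref{t4.2} by truncating $f$ in $y$ at a level given by a priori ODE bounds, exactly as the introduction announces. By Remark~\ref{r3.1} we may take $\mu=0$, so $f$ is non-increasing in $y$. Note that this change of variables preserves \textbf{(F3')} with a modified $\varphi$ on bounded sets, and it preserves \textbf{(G1)}, \textbf{(G2)} with $\lambda\equiv0$, since $g$ is only rescaled.

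The first step is to find deterministic bounds. Monotonicity gives
\[
f(t,y,z)\le f(t,0,z)\le \varphi(0)\quad\text{for }y\ge0,
\qquad
f(t,y,z)\ge f(t,0,z)\ge -\varphi(0)-C|z|^2\quad\text{for }y\le0.
\]
Hence any bounded solution should satisfy
\[
-\|\xi\|_\infty-\varphi(0)(T-t)\le Y_t\le \|\xi\|_\infty+\varphi(0)(T-t).
\]
Set $M\triangleq\|\xi\|_\infty+\varphi(0)T+1$.

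The second step is the truncation. Let $\pi_M(y)\triangleq(-M)\vee(y\wedge M)$ and $f_M(t,y,z)\triangleq f(t,\pi_M(y),z)$. Then
\[
-\varphi(M)-C|z|^2\le f_M\le\varphi(M),
\]
so $f_M$ satisfies \textbf{(F2')} with constant $C\vee\varphi(M)$; it is also continuous and non-increasing in $y$. Theorem~\ref{t4.2} then provides a maximal bounded solution $(Y^M,Z^M)$ of BDSDE~\eqref{1} with generator $f_M$.

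The third step is to show $|Y^M|\le M-1$, so that $f_M(t,Y^M,Z^M)=f(t,Y^M,Z^M)$ and $(Y^M,Z^M)$ solves the original equation. I would redo the $\Phi$-argument of Lemma~\ref{l4.1}:
\begin{itemize}
\item For the upper bound, compare $Y^M$ with $X_t=\|\xi\|_\infty+\varphi(0)(T-t)$ using the exponential convex $\Phi$ vanishing on $(-\infty,0]$. On $\{Y^M>X\}\subset\{Y^M>0\}$ monotonicity gives $f_M\le\varphi(0)$. Since $\lambda=0$, the $g$-term is controlled by $\frac{\alpha}{2}\Phi''|Z|^2$ and is absorbed by $-\frac12\Phi''|Z|^2$; no quadratic term arises because $f\le\varphi(0)$ is one-sided.
\item For the lower bound, use $\hat\Phi$ as in Lemma~\ref{l4.9}, with exponent $\frac{4C}{1-\alpha}$ chosen to absorb $C|Z|^2$.
\end{itemize}
Both bounds give $|Y^M|\le M-1$.

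The last step is maximality. Let $(Y',Z')$ be any bounded solution of~\eqref{1}. By the same a priori estimate $|Y'|\le M-1$, so $(Y',Z')$ also solves the equation with generator $f_{M}$. Since $(Y^M,Z^M)$ is maximal among bounded solutions for $f_M$, we get $Y'\le Y^M$. For this to work, the level $M$ must be chosen uniformly for all bounded solutions, which it is: $M$ depends only on $\xi$, $\varphi(0)$ and $T$.

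The main obstacle is making the a priori bounds rigorous without uniform Lipschitz structure. In particular, the smoothing of $\Phi$ near $0$ must be handled, and we need $\Phi'(Y-X)f\le\Phi'(Y-X)\varphi(0)$, which relies on monotonicity only on the set where $Y>X\ge0$. If the $\Phi$-argument fails, I would fall back on the comparison Lemma~\ref{qbdsdecom}. However, that lemma needs \textbf{(F4)}, which is not available here, so I expect the direct convex-function argument to be the route that works.
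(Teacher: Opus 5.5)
Your argument is correct, and its architecture matches the paper's. You truncate $f$ in $y$, invoke \autoref{t4.2} for the truncated generator, and prove an a priori bound independent of the truncation level, so the truncation is inactive. Maximality then follows by applying the same bound to an arbitrary bounded solution.

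The difference lies in how the bound is produced. The paper keeps $\mu$ and uses the multiplicative cutoff $\rho^{\bar C}(y)f(t,y,z)$. This cutoff is not monotone in $y$, so the paper has to invoke the monotonicity of the original $f$ inside the estimate. It then applies It\^o's formula to $\Phi(e^{at}(Y_t^{\bar C})^n-X_t^n)$ with an even power $n-1\ge 4C\bar C/(1-\alpha)$. This choice makes $\frac{n(n-1)}{2}(1-\alpha)y^{n-2}|z|^2$ absorb the term $2C\bar C|z|^2y^{n-2}$, which yields a bound independent of $n$ and $\bar C$.

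You instead remove $\mu$ via \autoref{r3.1} and clip $y$, which preserves monotonicity. This gives $f\le\varphi(0)$ on $\{y\ge 0\}$ and $f\ge-\varphi(0)-C|z|^2$ on $\{y\le 0\}$. Comparison with explicit linear ODEs then suffices: trivially from above, and via the exponential transform of \autoref{l4.1} and \autoref{l4.9} from below.

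Your route gives an explicit bound that visibly holds for every bounded solution. The paper's power argument, in the maximality step for an arbitrary bounded solution of the untruncated equation, must be rerun with $n$ depending on $\|Y'\|_\infty$; this works, but only because the final bound is $n$-independent. The paper's route, in turn, avoids the change of variables.

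Two small additions would complete your write-up:
\begin{enumerate}
\item State that $Y\mapsto e^{\mu t}Y$ preserves boundedness and order, hence maximality. This is what justifies the reduction to $\mu=0$.
\item The smoothing issue you flag disappears if you use $(u^+)^2$ for the upper bound and $e^{\beta u^-}-1-\beta u^-$ with $\beta\ge 2C/(1-\alpha)$ for the lower bound. Their derivatives vanish exactly where the one-sided bounds on $f$ are unavailable. Moreover, since $\lambda=0$, the term $\frac12\Phi''(|g|^2-|Z|^2)$ is nonpositive for any convex $\Phi$.
\end{enumerate}
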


\begin{remark}\label{lam}\sl 
	The restriction \(\lambda(t)=0\) in the maximal solution results implies \(g(t,y,0)=0\). It allows the upper and lower
	comparison equations to be chosen as deterministic ordinary
	differential equations.  When \(\lambda(t)\neq 0\), an additional non-homogeneous backward stochastic term remains, and the deterministic ODE comparison used below is no longer directly available.
\end{remark}

The lemma presented below will play a crucial role in  our analysis.

\begin{lemma}[Comparison]\label{l4.6} \sl
	Assume that the conditions \(\textbf{(A1)}\) and \(\textbf{(G1)}\)  hold.  For any \((t,y,z)\in [0,T]\times\mathbb{R}\times\mathbb{R}^d\), we have
	\begin{align*}
		\xi^1\leq \xi^2	\q\   \hbox{and}\q\ f^1(t,y,z)\leq f^2(t,y,z)	 \q\ \hbox{a.s.}
	\end{align*}
	Moreover,	assume that $f^2(t,y,z)$ is continuous and is of linear growth in \(y\) and \(z\), i.e., there exists a finite constant $C>0$ such that
	\begin{align*}
		|f^2(t,y,z)|\leq C(1+|y|+|z|),\q\ \hbox{ a.s.}
	\end{align*}
	Then for any \(t\in [0,T]\), $Y_t^1\leq Y_t^2,  \dbP $-a.s., where $(Y^1,Z^1)\in S_\mathbb{F}^{2}([0,T];\mathbb{R})\times L_\mathbb{F}^2([0,T];\mathbb{R}^d)$ is a solution of BDSDE \eqref{1} with parameters $(\xi^1,f^1,g)$, $(Y^2,Z^2)\in S_\mathbb{F}^{2}([0,T];\mathbb{R})\times L_\mathbb{F}^2([0,T];\mathbb{R}^d)$ is the maximal solution of BDSDE \eqref{1} with parameters $(\xi^2,f^2,g)$.
\end{lemma}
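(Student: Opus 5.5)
The plan is to approximate $f^2$ from above by Lipschitz generators, compare $Y^1$ with each approximating solution, and pass to the limit. This is the same device used in the proof of \autoref{t3.3}.

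\textbf{Step 1 (Lipschitz approximation from above).} For $n\geq C$ define
\begin{align*}
f^2_n(t,y,z)=\sup\limits_{(p,q)\in\mathbb{Q}\times\mathbb{Q}^d}\{f^2(t,p,q)-n|y-p|-n|z-q|\}.
\end{align*}
By the linear growth of $f^2$, this is finite for $n\geq C$. It is $n$-Lipschitz in $(y,z)$, satisfies $f^2_n\geq f^2$, is non-increasing in $n$, obeys $|f^2_n(t,y,z)|\leq C(1+|y|+|z|)$ uniformly in $n$, and, by continuity of $f^2$ and Dini's theorem, converges to $f^2$ locally uniformly. By the classical Lipschitz theory of Pardoux and Peng \cite{94}, the BDSDE \eqref{1} with parameters $(\xi^2,f^2_n,g)$ has a unique solution $(Y^{2,n},Z^{2,n})$.

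\textbf{Step 2 (comparison with $Y^1$).} Since $f^2_n$ is Lipschitz, I use the alternative hypothesis of \autoref{c3.7}, evaluated along $(Y^1,Z^1)$: we have $f^1(t,Y^1_t,Z^1_t)\leq f^2(t,Y^1_t,Z^1_t)\leq f^2_n(t,Y^1_t,Z^1_t)$, $\xi^1\leq\xi^2$, and the same $g$. Concretely, I apply It\^o's formula to $\phi_\varepsilon(Y^1_t-Y^{2,n}_t)$ and let $\varepsilon\to0$. Only the Lipschitz property of $f^2_n$ and \textbf{(G1)} enter; no regularity of $f^1$ is used, because I split
\begin{align*}
f^1(s,Y^1_s,Z^1_s)-f^2_n(s,Y^{2,n}_s,Z^{2,n}_s)\leq f^2_n(s,Y^1_s,Z^1_s)-f^2_n(s,Y^{2,n}_s,Z^{2,n}_s).
\end{align*}
Young's inequality with weight $\frac{1-\alpha}{2}$ on $|Z^1-Z^{2,n}|^2$ then absorbs the $g$-term, since $\alpha<1$. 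Gronwall's inequality gives $Y^1_t\leq Y^{2,n}_t$ for all $t$, a.s. By the same argument, $Y^{2,n}\geq Y^{2,n+1}$, and $Y^{2,n}\geq Y'$ for every solution $Y'$ of the equation with $f^2$.

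\textbf{Step 3 (limit and identification with the maximal solution).} The sequence $(Y^{2,n})$ is decreasing. To get a uniform $S^2$ bound, I apply It\^o's formula to $|Y^{2,n}|^2$ and use the uniform linear growth together with \textbf{(G1)}; this yields bounds on $Y^{2,n}$ in $S^2$ and on $Z^{2,n}$ in $L^2$, uniform in $n$. As in Step 1 of the proof of \autoref{t3.3}, I then apply It\^o's formula to $|Y^{2,n}-Y^{2,m}|^2$ and use Vitali's theorem with the local uniform convergence of $f^2_n$. This shows that $(Y^{2,n},Z^{2,n})$ converges in $S^2\times L^2$ to a solution $(\bar Y,\bar Z)$ of the equation with $(\xi^2,f^2,g)$. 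Since $\bar Y\geq Y'$ for every solution $Y'$, $\bar Y$ is the maximal solution, and by uniqueness of the maximal element $\bar Y=Y^2$. Letting $n\to\infty$ in Step 2 gives $Y^1_t\leq Y^2_t$.

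The main obstacle is Step 3, the convergence without monotonicity in $y$ or a bound on $Y$. The terms $2(Y^{2,n}-Y^{2,m})(f^2_n(\cdot)-f^2_m(\cdot))$ must tend to zero in $L^1$. I would control them through the uniform linear growth, which dominates them by $|Y^{2,n}-Y^{2,m}|(1+|Y^{2,n}|+|Y^{2,m}|+|Z^{2,n}|+|Z^{2,m}|)$, combined with Cauchy--Schwarz and the fact that $Y^{2,n}-Y^{2,m}\to0$ in $L^2(dt\otimes d\mathbb{P})$ by monotone convergence. The latter is dominated by $|Y^{2,C}|+|Y^1|$, because $Y^1\leq Y^{2,n}\leq Y^{2,C}$.
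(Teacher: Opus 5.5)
Your proposal is correct and follows essentially the same route as the paper: approximate $f^2$ from above by the Lipschitz sup-convolutions $f^2_n$, compare $Y^1$ with each $Y^{2,n}$, and identify the decreasing limit of $Y^{2,n}$ with the maximal solution $Y^2$. The difference is that the paper delegates the Lipschitz comparison and the monotone limit to Shi, Gu, and Liu \cite{05}, whereas you carry out both directly. Your It\^o argument uses only the Lipschitz property of $f^2_n$ along $(Y^1,Z^1)$, which has the small advantage of requiring no regularity of $f^1$.
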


\begin{proof}
	Since $f^2$ is continuous and is of linear growth in \(y\) and \(z\), it follows from Shi, Gu, and Liu \cite[Theorem 4.1]{05} that BDSDE \eqref{1} with parameters $(\xi^2,f^2,g)$  has a maximal solution $(Y^2,Z^2)\in S_\mathbb{F}^{2}([0,T];\mathbb{R})\times L_\mathbb{F}^2([0,T];\mathbb{R}^d)$.
	For each \(n\in\mathbb{N}\) with $n\geq C$, let 
	\begin{align*}
		f^2_n(t,y,z)=\sup\limits_{y',z'\in\mathbb{Q}^{1+d}}\left\{f^2(t,y',z')-n|y-y'|-n|z-z'|\right\}.
	\end{align*}
	It is easy to verify  that $f^2_n\downarrow f^2$ and  \((f_n^2)\) is a sequence of Lipschitz continuous functions. Let $(Y_n^2,Z_n^2)$ 
	denote the unique solution of  BDSDE \eqref{1} with parameters $(\xi^2,f^2_n,g)$. By the comparison theorem for  BDSDEs in the Lipschitz case (refer to Shi, Gu, and Liu \cite[Theorem 3.1]{05}), we have  $Y^1\leq Y_n^2$ for all $n\geq C$.  Following the decreasing Lipschitz approximation and the monotone approximation argument in Shi, Gu, and Liu \cite[Theorem 4.1]{05},
	$Y_n^2\downarrow Y^2$, where $Y^2$ is the maximal solution of BDSDE \eqref{1} with parameters $(\xi^2,f^2,g)$. Therefore, we obtain $Y_t^1\leq Y_t^2,  \forall t\in [0,T], \dbP $-a.s.
\end{proof}

Now we give the proof of the main results in this part.

\begin{proof}[Proof of \autoref{t4.2}]
	Let
	$A=C(1+\|U_0\|_{\infty}+\|V_0\|_{\infty})$,  enlarging \(C\)
	if necessary, we may assume \(C\geq 1\).
	Define 
	a continuous truncation function \(\rho^A: \mathbb{R} \longmapsto [-A,A]\) with \(\rho^A(y)=y\) if \(|y|\leq A\).
	For each \(n\in\mathbb{N}\), let $\kappa_n :\mathbb{R}^d\longmapsto [0,1]$ be a sequence of smooth functions such that \(0\leq \kappa_n\leq \kappa_{n+1}\leq  1\), \(\kappa_n(z) \uparrow 1\) and
	\begin{equation*}
		\kappa_n(z)=
		\left\{\begin{aligned}
			1 & \qquad |z| \leq n, \\
			0 &\qquad  |z| \geq n+1.
		\end{aligned}\right.
	\end{equation*}
	Let
	\begin{align*}
		f_n(t,y,z)=C(1+|\rho^A(y)|)(1-\kappa_n(z))+\kappa_n(z)\widetilde{f}(t,y,z),
	\end{align*}
	where \(\widetilde{f}(t,y,z)=f(t,\rho^A(y),z)\). 
	Since \(\kappa_n \uparrow 1\) and 
	\(
	\widetilde{f}(t,y,z)\leq C(1+|\rho^A(y)|),
	\) then
	\(f_n \downarrow \widetilde{f}\). Moreover, in view of the condition on \(f\), we have
	\begin{align*}
		-h_n(y,z)\leq f_n(t,y,z) \leq h_n(y,0),
	\end{align*}
	where \(h_n(y,z)=C(1+|\rho^A(y)|+\kappa_n(z)|z|^2)\). 
	From the result of Shi, Gu, and Liu  \cite[Theorem 4.1]{05},
	for each \(n\in\mathbb{N}\),  there exists
	a  maximal solution \((Y^n,Z^n)\in S^2_{\mathbb{F}}([0,T];\mathbb{R})\times L^2_{\mathbb{F}}([0,T];\mathbb{R}^d)\) which solves the following BDSDE:
	\begin{align*}
		Y_t^n=\xi+\int_t^T f_n(s,Y_s^n,Z_s^n)ds +\int_t^T g(s,Y_s^n,Z_s^n)dB_s-\int_t^TZ_s^ndW_s, \q\ 0\leq t\leq T.
	\end{align*}
	Since for any \(t\in [0,T]\), \(U_t\leq 0\leq V_t\), 
	\(-h_n(U_t,0)=-C(1-U_t)\) and \(h_n(V_t,0)=C(1+V_t)\),
	where \((U,V)\in S^2_{\mathbb{F}}([0,T];\mathbb{R})\times S^2_{\mathbb{F}}([0,T];\mathbb{R})\) are the solutions of  ODEs \eqref{4.1} and \eqref{4.2} respectively. 
	Condition \((\textbf{G2})\) with \(\lambda=0\) implies that
	\(g(t,\cdot,0)=0\).
	Hence,  \((U,0)\) can be regarded as the following BDSDE:
	\begin{align*}
		U_t=-\|\xi\|_{\infty}-\int_t^Th_n(U_s,Z_s)ds+\int_t^Tg(s,U_s,Z_s)dB_s-\int_t^TZ_sdW_s, \q\ 0\leq t\leq T,
	\end{align*}
	and	\((V,0)\) can be regarded as the following BDSDE:
	\begin{align*}
		V_t=\|\xi\|_{\infty}+\int_t^Th_n(V_s,Z_s)ds+\int_t^Tg(s,V_s,Z_s)dB_s-\int_t^TZ_sdW_s, \q\ 0\leq t\leq T.
	\end{align*}
	From \autoref{l4.6}, we derive that for any \(t\in [0,T]\), 
	\begin{align*}
		U_t\leq Y_t^n\leq V_t,\q\ \hbox{a.s.}
	\end{align*}
	For the decreasing bounded sequence $(Y^n)$, there exists a process $Y$ such that $Y_t^n\downarrow Y_t$, \(\forall t\in [0,T]\), a.s. and \(Y^n\longrightarrow Y\) in $L^2_{\mathbb{F}}([0,T];\mathbb{R})$, and the following inequalities hold:   $U_0\leq U_t\leq Y_t\leq V_t\leq V_0$, for any \(t\in [0,T]\). 
	We claim that $(Z^n)$ admits a convergent  subsequence in
	$L^2_{\mathbb{F}}([0,T];\mathbb{R}^d)$. Therefore, we can verify that the limit process $(Y,Z)\in S^\infty_{\mathbb{F}}([0,T];\mathbb{R})\times L^2_{\mathbb{F}}([0,T];\mathbb{R}^d)$ is a solution of  BDSDE \eqref{1}.\\
	Next, we will prove this claim, dividing the proof into three steps.
	\ms
	
	{\bf\it Step 1}. The sequence $(Z^n)$ is bounded in $L^2_{\mathbb{F}}([0,T];\mathbb{R}^d)$.

	Applying It\^o's formula to $\phi(Y^n_t)$, where \(\phi : \mathbb{R}\longmapsto \mathbb{R}\) is defined by $\phi(x)=e^{\frac{4C}{1-\alpha}x}$.  Taking expectations, we get
	\begin{align*}
		\mathbb{E}\left[\phi(Y_0^n)\right]+\frac{1}{2}\mathbb{E}\int_0^T\phi''(Y_s^n)|Z_s^n|^2ds
		=&\ \mathbb{E}\left[\phi(\xi)\right]+\mathbb{E}\int_0^T\phi'(Y_s^n)f_n(s,Y_s^n,Z_s^n)ds\\
		&	+\frac{1}{2}\mathbb{E}\int_0^T\phi''(Y_s^n)\big|g(s,Y_s^n,Z_s^n)\big|^2ds.
	\end{align*}
	In view of the conditions on \(f\) and \(g\), we have
	\begin{align*}
		|f_n(t,y,z)|\leq A+C|z|^2\q\ \hbox{and} \q\ |g(t,y,z)|^2\leq \alpha |z|^2,
	\end{align*}
	where $A=C(1+\|U_0\|_{\infty}+\|V_0\|_{\infty})$.  Then, by a straightforward calculation,
	\begin{align*}
		\frac{1-\alpha}{2}\phi''-C\phi'=\frac{4C^2}{1-\alpha}\phi.
	\end{align*}
	We obtain
	\begin{align*}
		\frac{4C^2}{1-\alpha}\mathbb{E}\int_0^T\phi(Y_s^n)|Z_s^n|^2ds
		\leq \mathbb{E}\left[\phi(\xi)\right]+\frac{4C}{1-\alpha}e^{\frac{4CA}{1-\alpha}}AT.
	\end{align*}
	Consequently,	 there exists a constant $\hat{K}>0$ which is  independent of $n$, such that $\|Z^n\|_{L^2_{\mathbb{F}}([0,T];\mathbb{R}^d)}\leq \hat{K}<\infty$.  As a result, 
	there exists a 
	subsequence, still denoted by $(Z^n)$, such that $Z^n \longrightarrow Z$ weakly in $L^2_{\mathbb{F}}([0,T];\mathbb{R}^d)$.
	\ms
	
	{\bf\it Step 2}. The sequence $(Z^n)$ strongly converges to $Z$ in $L^2_{\mathbb{F}}([0,T];\mathbb{R}^d)$.

	For each \(n,m\in\mathbb{N}\) with $n<m$, apply It\^o's formula to $\Phi(Y_t^n-Y_t^m)$, where \(\Phi : \mathbb{R} \longmapsto \mathbb{R}\) is defined as $\Phi(x)=\frac{1-\alpha}{200C^2}\left(e^{\frac{20C}{1-\alpha}x}-\frac{20C}{1-\alpha}x-1\right)$.   Then, taking expectations on both sides, we get
	\begin{align*}
		\mathbb{E}\left[\Phi(Y^n_0-Y^m_0)\right]
		=&\ \mathbb{E}\int_0^T\Phi'(Y_s^n-Y_s^m)\Big(f_n(s,Y_s^n,Z_s^n)-f_m(s,Y_s^m,Z_s^m)\Big)ds\\
		&+\frac{1}{2}\mathbb{E}\int_0^T\Phi''(Y_s^n-Y_s^m)\left(\big|g(s,Y_s^n,Z_s^n)-g(s,Y_s^m,Z_s^m)\big|^2-\big|Z_s^n-Z_s^m\big|^2\right)ds.
	\end{align*}
	Note that
	\begin{align*}
		f_n(s,Y_s^n,Z_s^n)-f_m(s,Y_s^m,Z_s^m)\leq&\
		2A+C(|Z_s^n|^2+|Z_s^m|^2)\\
		\leq &\ 2A+5C\left(\big|Z_s^n-Z_s^m\big|^2+\big|Z_s^n-Z_s\big|^2+|Z_s|^2\right),
	\end{align*}
	and	consider the  condition of  Lipschitz on \(g\),  
	we have
	\begin{align*}
		&\	\mathbb{E}\int_0^T\left(\frac{1-\alpha}{2}\Phi''(Y_s^n-Y_s^m)-5C\Phi'(Y_s^n-Y_s^m)\right)\big|Z_s^n-Z_s^m\big|^2ds\\
		&	\leq
		\mathbb{E}\int_0^T\Phi'(Y_s^n-Y_s^m)\Big(2A+5C\left(\big|Z_s^n-Z_s\big|^2+|Z_s|^2\right)\Big)ds
		+\frac{C}{2}\mathbb{E}\int_0^T\Phi''(Y_s^n-Y_s^m)\big|Y_s^n-Y_s^m\big|^2ds.
	\end{align*}
	By a simple calculation, 
	\begin{align*}
		\frac{1-\alpha}{2}\Phi''-5C\Phi'=\frac{1}{2}e^{\frac{20C}{1-\alpha}x}+\frac{1}{2}.
	\end{align*} 
	From \autoref{l4.1},
	$\Big(\left(\frac{1-\alpha}{2}\Phi''-5C\Phi'\right)\left(Y_s^n-Y_s^m\right)\Big)^{\frac{1}{2}}$ has uniform bound, 
	we can use the  dominated convergence theorem to get
	\begin{align*}
		\left(\big(\frac{1-\alpha}{2}\Phi''-5C\Phi'\big)\left(Y^n-Y^m\right)\right)^{\frac{1}{2}}
		\longrightarrow \left(\big(\frac{1-\alpha}{2}\Phi''-5C\Phi'\big)\left(Y^n-Y\right)\right)^{\frac{1}{2}}\q
		\hbox{in}~ L^2_{\mathbb{F}}([0,T];\mathbb{R}) ~\hbox{as}~m\longrightarrow\infty.
	\end{align*}
	Let
	\begin{align*}
		Q_{n,m}\triangleq	\left(\big(\frac{1-\alpha}{2}\Phi''-5C\Phi'\big)\left(Y^n-Y^m\right)\right)^{\frac{1}{2}}.
	\end{align*}
	Since \(Q_{n,m}\longrightarrow Q_n\) strongly in \(L^2_{\mathbb{F}}([0,T];\mathbb{R})\), and the sequence \((Q_{n,m})_m\) is uniformly bounded. Moreover,
	by Step 1, up to a subsequence,   $Z^m \rightharpoonup Z$ weakly   in $L^2_{\mathbb{F}}([0,T];\mathbb{R}^d)$.  Hence we have
	\begin{align*}
		Q_{n,m}\big(Z^n-Z^m\big)
		\rightharpoonup Q_n\big(Z^n-Z\big) \q
		\hbox{weakly in}~ L^2_{\mathbb{F}}([0,T];\mathbb{R}^d)~\hbox{as}~m\longrightarrow\infty.
	\end{align*}
	Therefore,
	by the weak lower semicontinuity of the norm in the Hilbert space \(L^2_{\mathbb{F}}([0,T];\mathbb{R}^d)\) (or by the generalized  Fatou lemma in Zhang  \cite[Problem 1.4.11]{zhang_backward_2017}), we deduce that
	\begin{align*}
		&\ \mathbb{E}\int_0^T\left(\frac{1-\alpha}{2}\Phi''-5C\Phi'\right)\left(Y_s^n-Y_s\right)\big|Z_s^n-Z_s\big|^2ds\\
		&\leq \liminf\limits_{m\rightarrow\infty}
		\mathbb{E}\int_0^T\left(\frac{1-\alpha}{2}\Phi''-5C\Phi'\right)\left(Y_s^n-Y_s^m\right)\big|Z_s^n-Z_s^m\big|^2ds.
	\end{align*}
	Similarly, we have
	\begin{align*}
		&\ \limsup\limits_{m\rightarrow\infty}\mathbb{E}\int_0^T\Phi'(Y_s^n-Y_s^m)\Big(2A+5C\left(\big|Z_s^n-Z_s\big|^2+|Z_s|^2\right)\Big)ds
		\\
		&\leq \mathbb{E}\int_0^T\Phi'(Y_s^n-Y_s)\Big(2A+5C\big(\big|Z_s^n-Z_s\big|^2+|Z_s|^2\big)\Big)ds
	\end{align*}
	and
	\begin{align*}
		&\ \limsup\limits_{m\rightarrow\infty}\mathbb{E}\int_0^T\Phi''(Y_s^n-Y_s^m)\big|Y_s^n-Y_s^m\big|^2ds\\
		&\leq \mathbb{E}\int_0^T \Phi''(Y_s^n-Y_s)\big|Y_s^n-Y_s\big|^2ds.
	\end{align*}
	Therefore, we get
	\begin{align*}
		&\ \mathbb{E}\int_0^T\left(\frac{1-\alpha}{2}\Phi''-10C\Phi'\right)\left(Y_s^n-Y_s\right)\big|Z_s^n-Z_s\big|^2ds\\
		&	\leq\mathbb{E}\int_0^T\Phi'(Y_s^n-Y_s)\left(2A+|Z_s|^2\right)ds
		+\frac{C}{2}\mathbb{E}\int_0^T\Phi''(Y_s^n-Y_s)\big|Y_s^n-Y_s\big|^2ds,
	\end{align*}
	where
	$\frac{1-\alpha}{2}\Phi''-10C\Phi'=1.$
	Letting $n\longrightarrow\infty$ in  the above inequality,  the dominated convergence theorem yields that
	\begin{align*}
		\lim\limits_{n\rightarrow\infty}\mathbb{E}\int_0^T\big|Z_s^n-Z_s\big|^2ds=0.
	\end{align*}
	\ms
	
	{\bf\it Step 3}. The sequence  $(Y^n)$ converges to $Y$ uniformly in $t$.

	Note that for each \(n \in \mathbb{N}\), $f_n$ is continuous and the sequence $f_n\downarrow f$. By Dini's theorem, for every fixed \((t,\omega),\)  \(f_n\)  converges  locally uniformly to \(f\) with respect to \((y,z)\).
	Moreover, in view of Step 2,  since \(Z^n\longrightarrow Z\) strongly in \(L^2_{\mathbb{F}}([0,T];\mathbb R^d)\),
	and
	from the quadratic growth condition, we have
	\begin{align*}
		\Big|f_n(s,Y_s^n,Z_s^n)-f_m(s,Y_s^m,Z_s^m)\Big|\leq 2A+C\big(|Z_s^n|^2+|Z_s^m|^2\big),
	\end{align*}
	and the right-hand side can be controlled by the uniform integrability of 
	the family
	\(\{|Z^n|^2;n\geq 1\}\) in \(L^1_{\mathbb{F}}([0,T];\mathbb{R}^d)\).
	Applying Vitali's convergence  theorem, we obtain
	\begin{align*}
		\lim\limits_{n,m\rightarrow\infty}\mathbb{E}\int_0^T	\Big|f_n(s,Y_s^n,Z_s^n)-f_m(s,Y_s^m,Z_s^m)\Big|ds=0.
	\end{align*}
	By the Burkholder--Davis--Gundy inequality, we have
	\begin{align*}
		\lim\limits_{n,m\rightarrow\infty}\mathbb{E}\left[\sup\limits_{0\leq t\leq T}\Big| \int_t^T\left( Z_s^n-Z_s^m\right)dW_s\Big|^2\right]=0,
	\end{align*}
	and
	\begin{align*}
		\lim\limits_{n,m\rightarrow\infty}\mathbb{E}\left[\sup\limits_{0\leq t\leq T}\Bigg|\int_t^T\Big(g(s,Y_s^n,Z_s^n)-g(s,Y_s^m,Z_s^m)\Big)dB_s\Bigg|^2\right]=0.
	\end{align*}
	Using the following inequality,
	\begin{align*}
		\mathbb{E}\left[\sup\limits_{0\leq t\leq T}\big|Y_t^n-Y_t^m\big|\right]
		\leq&\ \mathbb{E}\int_0^T	\Big|f_n(s,Y_s^n,Z_s^n)-f_m(s,Y_s^m,Z_s^m)\Big|ds
		+\mathbb{E}\left[\sup\limits_{0\leq t\leq T}\Big|\int_t^T\left(Z_s^n-Z_s^m\right)dW_s\Big|\right]\\
		&	+\mathbb{E}\left[\sup\limits_{0\leq t\leq T}\Bigg|\int_t^T\Big(g(s,Y_s^n,Z_s^n)-g(s,Y_s^m,Z_s^m)\Big)dB_s\Bigg|\right].
	\end{align*}
	We conclude that $Y^n\longrightarrow Y$ uniformly in $t$, $ \dbP $-a.s., and $Y$ is a continuous process.

	Passing  to the limit in the BDSDE satisfied by \((Y^n,Z^n)\), we obtain that \((Y,Z)\) solves the BDSDE
	\eqref{1} with generator \(\widetilde{f}\). Since 
	\(U_t\leq Y_t\leq V_t\) for all \(t\in [0,T]\), by the choice of
	\(A\), we have \(|Y_t|\leq A\), \(\forall t\in[0,T]\). Hence \(\rho^A(Y_t)=Y_t\)
	and \(\widetilde{f}(t,Y_t,Z_t)=f(t,Y_t,Z_t). \) Therefore,
	$(Y,Z)\in S^\infty_{\mathbb{F}}([0,T];\mathbb{R})\times L^2_{\mathbb{F}}([0,T];\mathbb{R}^d)$ is a solution of  BDSDE \eqref{1}. Finally,  the maximality follows from applying 
	\autoref{l4.6}.
\end{proof}

\begin{proof}[Proof of \autoref{t4.3}]
	Let  $\bar{C}>0$ be fixed,  define a continuous function $\rho^{\bar{C}} :\mathbb{R}\longmapsto [0,1]$ such that
	\begin{align}\label{4.12}
		\rho^{\bar{C}}(y)=\begin{cases}
			1 & ~\text{if}~|y|\leq \bar{C}, \\
			0 &~\text{if}~ |y|\geq 2\bar{C}.
		\end{cases}
	\end{align}
	Let 
	\begin{align}\label{4.13}
		f^{\bar{C}}(t,y,z)=\rho^{\bar{C}}(y)f(t,y,z).
	\end{align}
	Then,  in view of the condition on \(f\), we have
	\begin{align*}
		|f^{\bar{C}}(t,y,z)| \leq \rho^{\bar{C}}(y)\left(\varphi(|y|)+C|z|^2\right)
		\leq \varphi(2\bar{C})+C|z|^2.
	\end{align*}
	It follows from \autoref{t4.2} that there exists a maximal solution $(Y^{\bar{C}},Z^{\bar{C}})\in S_\mathbb{F}^{\infty}([0,T];\mathbb{R})\times L_\mathbb{F}^2([0,T];\mathbb{R}^d)$ which solves BDSDE \eqref{1} with parameters
	$(\xi,f^{\bar{C}},g)$, such that 
	\begin{align}\label{4.14}
		Y_t^{\bar{C}}=\xi+\int_t^Tf^{\bar{C}}(s,Y_s^{\bar{C}},Z_s^{\bar{C}})ds+\int_t^Tg(s,Y_s^{\bar{C}},Z_s^{\bar{C}})dB_s
		-\int_t^TZ_s^{\bar{C}}dW_s,\q\ 0\leq t\leq T.
	\end{align}
	For any even $n\geq 2$ and $a\in \mathbb{R}$, apply It\^o's formula to $e^{at}(Y_t^{\bar{C}})^n$, we have
	\begin{align*}
		e^{at}(Y_t^{\bar{C}})^n
		=&\ e^{aT}\xi^n-a\int_t^Te^{as}(Y_s^{\bar{C}})^nds +n\int_t^Te^{as}(Y_s^{\bar{C}})^{n-1}f^{\bar{C}}(s,Y_s^{\bar{C}},Z_s^{\bar{C}})ds\\
		&+n\int_t^Te^{as}(Y_s^{\bar{C}})^{n-1}g(s,Y_s^{\bar{C}},Z_s^{\bar{C}})dB_s
		-n\int_t^Te^{as}(Y_s^{\bar{C}})^{n-1}Z_s^{\bar{C}}dW_s\\
		&+\frac{n(n-1)}{2}\int_t^T e^{as}(Y_s^{\bar{C}})^{n-2}\left( \big|g(s,Y_s^{\bar{C}},Z_s^{\bar{C}})\big|^2-\big|Z_s^{\bar{C}}\big|^2\right)ds.
	\end{align*}
	To estimate the bound of the solution \(Y^{\bar{C}}\),	 we define
	\begin{align}\label{4.15}
		X_t^n=\|\xi\|_{\infty}^ne^{aT}+n\int_t^Te^{as}\varphi(0)ds.
	\end{align}
	For any $x\in\mathbb{R}$, consider a non-negative $C^2(\mathbb{R})$ function $\Phi$ with $\Phi'(x)\geq 0, \Phi''(x)\geq 0$, and $\Phi(x)=0$ if and only if $x\leq 0$.
	Applying It\^o's formula to $\Phi(e^{at}(Y_t^{\bar{C}})^n-X_t^n)$, we get
	\begin{align*}
		\Phi(e^{at}(Y_t^{\bar{C}})^n-X_t^n)=&\
		\Phi(0)+\int_t^T\Phi'(e^{as}(Y_s^{\bar{C}})^n-X_s^n)
		\Big(-ae^{as}(Y_s^{\bar{C}})^n+ne^{as}(Y_s^{\bar{C}})^{n-1}f^{\bar{C}}(s,Y_s^{\bar{C}},Z_s^{\bar{C}})\\
		&-ne^{as}\varphi(0)
		+\frac{n(n-1)}{2}e^{as}(Y_s^{\bar{C}})^{n-2}\big(\big|g(s,Y_s^{\bar{C}},Z_s^{\bar{C}})\big|^2-|Z_s^{\bar{C}}|^2\big)\Big)ds\\
		&+\int_t^T \Phi'(e^{as}(Y_s^{\bar{C}})^n-X_s^n) ne^{as}(Y_s^{\bar{C}})^{n-1}g(s,Y_s^{\bar{C}},Z_s^{\bar{C}})dB_s\\
		&-\int_t^T \Phi'(e^{as}(Y_s^{\bar{C}})^n-X_s^n) ne^{as}(Y_s^{\bar{C}})^{n-1}Z_s^{\bar{C}}dW_s\\
		&+\frac{1}{2}\int_t^T \Phi''(e^{as}(Y_s^{\bar{C}})^n-X_s^n) \Big( n^2e^{2as}(Y_s^{\bar{C}})^{2n-2}\big(\big|g(s,Y_s^{\bar{C}},Z_s^{\bar{C}})\big|^2-|Z_s^{\bar{C}}|^2\big)\Big)ds.
	\end{align*}
	Note that
	\begin{align*}
		yf(t,y,z)\leq yf(t,0,z)+\mu y^2\q\ \hbox{and} \q\ y^{n-2}\geq 0,
	\end{align*}
	it follows that
	\begin{align*}
		y^{n-1}f^{\bar{C}}(t,y,z)=&\ y^{n-1}\rho^{\bar{C}}(y)f(t,y,z)\\
		\leq&\  |y|^{n-1}|f(t,0,z)|\rho^{\bar{C}}(y)+\mu y^n\\
		\leq&\  (1+y^n)\varphi(0)+2C\bar{C}|z|^2y^{n-2}+\mu y^n.
	\end{align*}
	Choose $a=n(\varphi(0)+\mu)$ with  $n-1\geq \frac{4C\bar{C}}{1-\alpha}$, and recall the condition on \(g\),
	\begin{align*}
		\frac{1}{2}\int_t^T \Phi''(e^{as}(Y_s^{\bar{C}})^n-X_s^n) \Big( n^2e^{2as}(Y_s^{\bar{C}})^{2n-2}\big(\big|g(s,Y_s^{\bar{C}},Z_s^{\bar{C}})\big|^2-|Z_s^{\bar{C}}|^2\big)\Big)ds\leq 0.
	\end{align*}
	Therefore, taking expectations on both sides,
	we obtain 
	\begin{align*}
		\mathbb{E}\left[\Phi(e^{at}(Y_t^{\bar{C}})^n-X_t^n)\right]\leq \mathbb{E}\left[\Phi(0)\right]=0,
	\end{align*}
	which yields that
	\begin{align*}
		\Phi(e^{at}(Y_t^{\bar{C}})^n-X_t^n)=0\q\  \hbox{and} \q\
		e^{at}(Y_t^{\bar{C}})^n\leq X_t^n,\q\ \hbox{a.s.},
	\end{align*}
	hence,
	\begin{align}\label{4.16}
		|Y_t^{\bar{C}}|\leq 
		\left(e^{\left(\varphi(0)+\mu\right)T}\vee 1\right)\left(\|\xi\|_{\infty}+1\right).
	\end{align}
	If $\bar{C}$ is chosen as follows
	\begin{align*}
		\bar{C}\geq \left(e^{\left(\varphi(0)+\mu\right)T}\vee 1\right)\left(\|\xi\|_{\infty}+1\right).
	\end{align*}
	We get $|Y^{\bar{C}}|\leq \bar{C}$ and $f^{\bar{C}}(t,Y_t^{\bar{C}},Z_t^{\bar{C}})=f(t,Y_t^{\bar{C}},Z_t^{\bar{C}})$.
	For any bounded solution \((Y,Z)\), a priori estimate yields that 
	\(|Y| \leq \bar{C}\), which implies that  \((Y,Z)\) is also
	a solution of   the truncated BDSDE \eqref{4.14}. By the maximality of  truncated BDSDE, we conclude 
	that $(Y^{\bar{C}},Z^{\bar{C}})\in S_\mathbb{F}^{\infty}([0,T];\mathbb{R})\times L_\mathbb{F}^2([0,T];\mathbb{R}^d)$ is a maximal solution of BDSDE \eqref{1}.
\end{proof}

As a consequence of the method used in the proof of \autoref{t4.3}, we obtain the following comparison result.

\begin{corollary}[Comparison]\label{c4.10} \sl
	Assume that 
	the conditions \(\textbf{(A2)}\), \(\textbf{(G1)}\) and \(\textbf{(G2)}\) hold with \(\lambda(t)=0\) for all \(t\in[0,T]\).    For any \((t,y,z)\in[0,T]\times\mathbb{R}\times\mathbb{R}^d\), we have
	\begin{align*}
		f^1(t,y,z)\leq f^2(t,y,z) \q\  \hbox{and} \q\ \xi^1\leq \xi^2, \q\ \hbox{a.s.}
	\end{align*}
	Moreover, assume that $f^2$ satisfies \(\textbf{(F3')}\).
	Then for any \(t\in [0,T]\), $Y_t^1\leq Y_t^2,  \dbP $-a.s., where $(Y^1,Z^1)\in S^\infty_{\mathbb{F}}([0,T];\mathbb{R})\times L^2_{\mathbb{F}}([0,T];\mathbb{R}^d)$ is a bounded solution of  BDSDE \eqref{1} with parameters $(\xi^1,f^1,g)$,
	$(Y^2,Z^2)\in S^\infty_{\mathbb{F}}([0,T];\mathbb{R})\times L^2_{\mathbb{F}}([0,T];\mathbb{R}^d)$ is a maximal solution of the BDSDE \eqref{1} with parameters $(\xi^2,f^2,g)$.
\end{corollary}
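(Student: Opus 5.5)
We plan to reduce Corollary \ref{c4.10} to the truncation construction used in the proof of \autoref{t4.3}, together with the monotone approximation behind \autoref{t4.2} and the comparison \autoref{l4.6}.

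\emph{Step 1: truncation.} Since $Y^1$ and $Y^2$ are bounded, we fix
\begin{align*}
\bar C\geq \|Y^1\|_\infty\vee\|Y^2\|_\infty\vee \left(e^{(\varphi(0)+\mu)T}\vee 1\right)\left(\|\xi^2\|_\infty+1\right),
\end{align*}
and we introduce the truncated generators $f^{i,\bar C}=\rho^{\bar C}(y)f^i$ as in \eqref{4.13}. Then $(Y^1,Z^1)$ also solves BDSDE \eqref{1} with generator $f^{1,\bar C}$, because $\rho^{\bar C}(Y^1)=1$. As in the proof of \autoref{t4.3}, the maximal solution $Y^{\bar C}$ of the truncated equation with data $(\xi^2,f^{2,\bar C},g)$ coincides with the maximal solution $Y^2$. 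Indeed, the a priori bound \eqref{4.16} gives $|Y^{\bar C}|\leq\bar C$, so $Y^{\bar C}$ solves the untruncated equation. Conversely, $Y^2$ solves the truncated equation, and maximality in both problems yields $Y^{\bar C}=Y^2$. It therefore suffices to compare $Y^1$ with the maximal solution of the truncated problem, whose generator $f^{2,\bar C}$ satisfies the one-sided bounds $-\varphi(2\bar C)-C|z|^2\leq f^{2,\bar C}\leq \varphi(2\bar C)$.

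\emph{Step 2: monotone Lipschitz-in-$z$ approximation.} Following the proof of \autoref{t4.2}, we write $B:=\varphi(2\bar C)$ for this bound and set
\begin{align*}
f^2_n(t,y,z)=B(1-\kappa_n(z))+\kappa_n(z)f^{2,\bar C}(t,y,z).
\end{align*}
These generators are continuous, of linear growth in $(y,z)$, and decrease to $f^{2,\bar C}$. Moreover $f^2_n\geq f^{2,\bar C}\geq f^{1,\bar C}$, since $\rho^{\bar C}\geq0$ and $f^1\leq f^2$. Let $Y^{2,n}$ denote the maximal solution for $(\xi^2,f^2_n,g)$. \autoref{l4.6}, applied with $f^{1,\bar C}$ in place of $f^1$ and $f^2_n$ in place of $f^2$, gives $Y^1\leq Y^{2,n}$ for every $n$. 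Only the linear growth of the larger generator is needed there, and the solution $Y^1$ with the smaller generator is arbitrary.

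\emph{Step 3: passing to the limit.} The proof of \autoref{t4.2} (Steps 1--3, which use only the uniform quadratic bounds and \textbf{(G2)} with $\lambda=0$) shows that $Y^{2,n}\downarrow \bar Y$ uniformly in $t$, where $\bar Y$ solves the truncated equation with data $(\xi^2,f^{2,\bar C},g)$. Moreover $\bar Y$ is its maximal solution: for any other solution, \autoref{l4.6} bounds it by every $Y^{2,n}$, hence by $\bar Y$. Thus $\bar Y=Y^{\bar C}=Y^2$, and $Y^1_t\leq Y^{2,n}_t$ yields $Y^1_t\leq Y^2_t$ for all $t$, a.s.

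The main obstacle is Step 3: identifying the limit of the approximations with the \emph{given} maximal solution $Y^2$ rather than some other solution. This is handled by the uniqueness of maximal solutions, together with the fact that truncation at level $\bar C$ does not alter any solution bounded by $\bar C$. This is why $\bar C$ must dominate both $\|Y^2\|_\infty$ and the a priori bound \eqref{4.16}.
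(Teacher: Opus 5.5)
Your proposal is correct and follows the argument the paper intends when it says the corollary is a consequence of the method of \autoref{t4.3}: truncate with $\rho^{\bar C}$, use the bound \eqref{4.16} to identify the maximal solution of the truncated problem with $Y^2$, and obtain the comparison from the decreasing approximation of \autoref{t4.2} together with \autoref{l4.6}. You leave one point implicit, namely the uniform bound on $Y^{2,n}$ needed to rerun Steps 1--3 of \autoref{t4.2}; it follows from $Y^1_t\le Y^{2,n}_t\le \|\xi^2\|_\infty+B(T-t)$, where the upper bound comes from \autoref{l4.6} applied against the constant generator $B$, using $g(t,y,0)=0$ (also, your $f^2_n$ is not Lipschitz in $z$, but only continuity and linear growth are used).
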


	\subsection{Reflected BDSDE: existence of maximal solutions}\label{sec4.4}
	
	In the final part, we prove the existence of maximal solutions to quadratic BDSDEs with reflection. Unlike the result obtained by Bahlali et al. \cite[Theorem 3.3]{09},  we consider generators with quadratic growth in \(z \). Consequently, 
	the standard monotone approximation argument used in the
	linear growth setting is no longer directly applicable.
	Therefore, the truncation technique and a comparison theorem play a crucial role in our proof.
	
	\begin{theorem}\label{t4.11} \sl
		Assume that the
		conditions \(\textbf{(A2)}\), \(\textbf{(F2')}\),  \(\textbf{(G1)}\),
		\(\textbf{(S2)}\)  and \(\textbf{(G2)}\) hold with \(\lambda(t)=0\) for all \(t\in [0,T]\). 
			Let \(\hat{M}\triangleq \|\xi\|_{\infty}\vee \|S\|_{\infty} \). Then reflected BDSDE \eqref{r1} has a maximal bounded solution $(Y,Z,K)\in S^{\infty}_{\mathbb{F}}([0,T];\mathbb{R})\times L^{2}_{\mathbb{F}}([0,T];\mathbb{R}^d)\times A^{2}_{\mathbb{F}}([0,T];\mathbb{R}_+)$, and for any $t\in[0,T]$, $\hat{U}_0\leq \hat{U}_t\leq Y_t\leq \hat{V}_t\leq \hat{V}_0$, $ \dbP $-a.s., where
		$(\hat{U},\hat{V})\in S_\mathbb{F}^{2}([0,T];\mathbb{R})\times S_\mathbb{F}^2([0,T];\mathbb{R})$ are the unique solutions of the following pair of ODEs:
		\begin{align}
			\hat{U}_t=-\hat{M}-\int_t^T C(1-\hat{U}_s)ds,\q\ 0\leq t\leq T, \label{4.1u}
		\end{align}
		and 	\begin{align}
			\hat{V}_t=\hat{M} +\int_t^T C(1+\hat{V}_s)ds,\q\ 0\leq t\leq T.\label{4.2v}
		\end{align}
	\end{theorem}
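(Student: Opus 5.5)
We follow the proof of \autoref{t4.2}, with the reflected tools of \autoref{section3} replacing the non-reflected ones. Put $A=C(1+\|\hat U_0\|_\infty+\|\hat V_0\|_\infty)$ (with $C\geq1$). Using the truncation $\rho^A$ and the cut-offs $\kappa_n$ from the proof of \autoref{t4.2}, define
\begin{align*}
f_n(t,y,z)=C\big(1+|\rho^A(y)|\big)\big(1-\kappa_n(z)\big)+\kappa_n(z)f\big(t,\rho^A(y),z\big).
\end{align*}
Then $f_n$ is continuous, of linear growth in $(y,z)$ for fixed $n$, satisfies $-h_n(y,z)\leq f_n\leq h_n(y,0)$, and $f_n\downarrow\widetilde f=f(\cdot,\rho^A(\cdot),\cdot)$. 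For each $n$, the reflected BDSDE with data $(\xi,f_n,g)$ and obstacle $S$ has a maximal solution $(Y^n,Z^n,K^n)$. This follows from the linear-growth version of \autoref{t3.9}: for fixed $n$, $f_n$ is bounded in $y$, so \textbf{(F1)} holds, and we approximate from above by $\sup_{q}\{f_n(t,y,q)-m|z-q|\}$ and use \autoref{c3.11}. Alternatively, cite Bahlali et al.\ \cite[Theorem 3.3]{09}.

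The key step is a reflected analogue of \autoref{l4.6}: a solution of a reflected BDSDE with generator $f^1\leq f^2$, terminal value $\xi^1\leq\xi^2$ and obstacle $S^1\leq S^2$ stays below the maximal reflected solution for $f^2$. We would prove it by approximating $f^2$ from above with the Lipschitz sup-convolutions $f^2_m\downarrow f^2$, applying the Lipschitz reflected comparison \autoref{c3.11} (or \cite[Theorem 3.2]{09}), and passing to the limit.

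With this lemma we obtain the a priori bounds. $(\hat V,0,0)$ solves the reflected BDSDE with generator $h_n(y,0)$, terminal value $\hat M\geq\xi$ and obstacle $S$. It is admissible because $\hat V\geq\hat M\geq S$, and $g(\cdot,\cdot,0)=0$ since $\lambda=0$. Comparison then gives $Y^n\leq\hat V$. For the lower bound, $(\hat U,0)$ solves the non-reflected BDSDE with generator $-h_n$ and terminal value $-\hat M\leq\xi$. Any reflected solution dominates it, either via the penalization representation or because $dK\geq0$ only pushes $Y$ upward: one checks this with the $\Phi$-argument of \autoref{l4.9}, where the $dK$ term has the favourable sign. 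Hence $\hat U\leq Y^n\leq\hat V$ and $|Y^n|\leq A$. Monotonicity $Y^{n+1}\leq Y^n$ follows from the reflected comparison lemma, since $f_{n+1}\leq f_n$. A uniform bound on $\sup_n\mathbb E[(K^n_T)^2]$ comes from writing $K^n_T$ through the equation and using BDG together with the estimate on $Z^n$ below.

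The main obstacle is the strong convergence of $Z^n$ in the presence of quadratic growth and reflection. We would repeat Steps 1--2 of the proof of \autoref{t4.2}. For the $L^2$ bound, apply It\^o's formula to $e^{\frac{4C}{1-\alpha}Y^n}$; the extra term $\int\phi'(Y^n)\,dK^n$ is nonnegative but bounded in $L^1$ by $c\,\mathbb E K^n_T$, and we close the estimate with Young's inequality combined with the $K$-bound, using $|Y^n|\leq A$. For the strong convergence, use $\Phi(Y^n-Y^m)$ with $n<m$, so $Y^n\geq Y^m$ and $\Phi'\geq0$. The reflection term is
\begin{align*}
\int\Phi'(Y^n-Y^m)\,d(K^n-K^m)\leq0,
\end{align*}
because $dK^n$ charges $\{Y^n=S\}$, where $Y^n-Y^m\leq0$ and therefore $\Phi'\leq\Phi'(0)=0$. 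The weak lower semicontinuity argument then goes through unchanged.

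Finally, Step 3 of the proof of \autoref{t4.2}, together with the $K$-convergence and Skorokhod arguments in the proof of \autoref{p4.9}, gives uniform convergence of $(Y^n,K^n)$ and a limiting reflected solution for $\widetilde f$. Since $|Y|\leq A$, this is a solution for $f$. Maximality follows from the reflected comparison lemma, because any bounded solution for $f$ is a subsolution for each $f_n\geq\widetilde f$.
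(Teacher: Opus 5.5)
There is a genuine gap in your uniform $L^2$ bound on $Z^n$. With the increasing exponential $\phi(x)=e^{kx}$, $k=\frac{4C}{1-\alpha}$, the reflection enters It\^o's formula as $\mathbb{E}\int_0^T\phi'(Y^n_s)\,dK^n_s\geq0$, which is on the wrong side. You propose to control it by $\mathbb{E}[K^n_T]$, but you obtain that from the equation together with the very $Z$-estimate you are proving. Written out, $\mathbb{E}[K^n_T]=\mathbb{E}\big[Y^n_0-\xi-\int_0^Tf_n(s,Y^n_s,Z^n_s)ds\big]\leq c+C\,\mathbb{E}\int_0^T|Z^n_s|^2ds$. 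On the support of $dK^n$ one has $\phi'(Y^n)=k\phi(S)$, so the coupled inequality reads
\begin{align*}
\frac{4C^2}{1-\alpha}e^{-kA}\,\mathbb{E}\int_0^T|Z^n_s|^2ds\leq c+\frac{4C^2}{1-\alpha}e^{k\|S\|_\infty}\,\mathbb{E}\int_0^T|Z^n_s|^2ds .
\end{align*}
The right-hand coefficient exceeds the left-hand one, so nothing can be absorbed. Young's inequality supplies no small parameter, because both sides are linear in $\mathbb{E}\int_0^T|Z^n_s|^2ds$. Passing to $\mathbb{E}[(K^n_T)^2]$ only brings in the higher moment $\mathbb{E}[(\int_0^T|Z^n_s|^2ds)^2]$. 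Tuning the exponent $k$ would require $C(A+\|S\|_\infty)<\frac{1-\alpha}{2}$, which is impossible since $A\geq C\geq1$. Your strong-convergence argument (weak compactness, then the $\Phi(Y^n-Y^m)$ estimate) rests on this bound, so the gap propagates.

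The missing idea, used in the paper, is to take the \emph{decreasing} exponential $\phi(x)=e^{-\frac{4C}{1-\alpha}x}$. Then $\phi'\leq0$, so $\mathbb{E}\int_0^T\phi'(Y^n_s)\,dK^n_s\leq0$ can simply be dropped. Because \textbf{(F2')} places the quadratic growth only on the lower side, $f_n\geq-\hat A-C|z|^2$ gives $\phi'(Y^n)f_n\leq k\phi(Y^n)(\hat A+C|Z^n|^2)$. The identity $\frac{1-\alpha}{2}\phi''+C\phi'=\frac{4C^2}{1-\alpha}\phi$ then yields
\begin{align*}
\frac{4C^2}{1-\alpha}\,\mathbb{E}\int_0^T\phi(Y^n_s)|Z^n_s|^2ds\leq\mathbb{E}[\phi(\xi)]+\frac{4C}{1-\alpha}e^{\frac{4C\hat A}{1-\alpha}}\hat AT
\end{align*}
uniformly in $n$, with no reference to $K^n$. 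Keeping the $dK^n$ term instead of dropping it even gives $\sup_n\mathbb{E}[K^n_T]<\infty$ as a by-product. The lower obstacle and the lower-sided quadratic bound are both favourable for the decreasing exponential, which is why this step works under \textbf{(F2')}.

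A smaller point: your first route to $(Y^n,Z^n,K^n)$ through \autoref{t3.9} fails. \textbf{(F1)} requires one-sided monotonicity in $y$, which $f_n$ does not inherit from \textbf{(F2')}, and boundedness in $y$ does not supply it. You need \cite[Theorem 3.3]{09}, as the paper uses. The rest of your plan matches the paper: the barriers $\hat U,\hat V$, the reflected comparison lemma, the sign of the reflection term in the $\Phi(Y^n-Y^m)$ estimate, and the passage to the limit via \autoref{p4.9}.
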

	
	\begin{theorem}\label{t4.12} \sl
		Assume that the
		conditions \(\textbf{(A2)}\), \(\textbf{(F3')}\),  \(\textbf{(G1)}\),
		\(\textbf{(S2)}\) and   \(\textbf{(G2)}\) hold with \(\lambda(t)=0\)  for all \(t\in [0,T]\).  Then  reflected BDSDE \eqref{r1} has a maximal bounded solution $(Y,Z,K)\in S^{\infty}_{\mathbb{F}}([0,T];\mathbb{R})\times L^{2}_{\mathbb{F}}([0,T];\mathbb{R}^d)\times A^{2}_{\mathbb{F}}([0,T];\mathbb{R}_+)$.
	\end{theorem}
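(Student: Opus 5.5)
The statement to prove is the last one displayed, \autoref{t4.12}. I will reduce it to \autoref{t4.11} by the truncation used in the proof of \autoref{t4.3}, with the reflection term added. By \autoref{r3.1} we may first pass to $\mu$-shifted variables; but the truncation argument of \autoref{t4.3} already absorbs $\mu$ through the weight $e^{at}$, so I will work directly. Fix $\bar C>0$ and let $\rho^{\bar C}$ and $f^{\bar C}=\rho^{\bar C}(y)f(t,y,z)$ be as in \eqref{4.12}--\eqref{4.13}. Then
\[
-\varphi(2\bar C)-C|z|^2\leq f^{\bar C}(t,y,z)\leq \varphi(2\bar C).
\]
So $f^{\bar C}$ satisfies \textbf{(F2')} with constant $\max\{C,\varphi(2\bar C)\}$. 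By \autoref{t4.11}, reflected BDSDE \eqref{r1} with parameters $(\xi,f^{\bar C},g)$ and obstacle $S$ has a maximal bounded solution $(Y^{\bar C},Z^{\bar C},K^{\bar C})$.

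The key step is an a priori bound on $|Y^{\bar C}|$ independent of $\bar C$.

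\emph{Upper bound.} I apply It\^o's formula to $\Phi(e^{at}(Y^{\bar C}_t)^n-X^n_t)$, with $n$ even, $X^n$ as in \eqref{4.15} but with $\|\xi\|_\infty$ replaced by $\hat M=\|\xi\|_\infty\vee\|S\|_\infty$, and $a=n(\varphi(0)+\mu)$ (enlarged if needed so that $X^n_t\geq e^{at}\hat M^n$). The only new term is
\[
\int_t^T\Phi'(\cdot)\,n e^{as}(Y_s^{\bar C})^{n-1}\,dK_s^{\bar C}.
\]
On the support of $dK^{\bar C}$ we have $Y^{\bar C}=S$, so $e^{as}(Y^{\bar C}_s)^n\leq e^{as}\hat M^n\leq X^n_s$. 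Since $\Phi'$ vanishes on $(-\infty,0]$, this term is zero. The remaining terms are handled exactly as in \autoref{t4.3}: monotonicity gives $y^{n-1}f^{\bar C}\leq(1+y^n)\varphi(0)+2C\bar C|z|^2y^{n-2}+\mu y^n$, and we take $n-1\geq 4C\bar C/(1-\alpha)$ and use \textbf{(G2)} with $\lambda=0$. This bounds $(Y^{\bar C})^n$ and hence gives $|Y^{\bar C}|\leq(e^{(\varphi(0)+\mu)T}\vee1)(\hat M+1)$ for the positive part.

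\emph{Lower bound.} This is easier: $Y^{\bar C}\geq S\geq-\hat M$, and the reflection only pushes upward. Alternatively, the even-power estimate itself controls $|Y|$, since $dK\geq0$ enters with sign $(Y^{\bar C})^{n-1}\Phi'$, which is harmless when $Y^{\bar C}<0$ only if $Y^{\bar C}=S$. I would simply use the obstacle bound.

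Choosing $\bar C\geq(e^{(\varphi(0)+\mu)T}\vee1)(\hat M+1)$ gives $\rho^{\bar C}(Y^{\bar C})=1$, so $(Y^{\bar C},Z^{\bar C},K^{\bar C})$ solves \eqref{r1} with the original $f$.

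\emph{Maximality.} Let $(Y',Z',K')$ be any bounded solution of \eqref{r1}. The same a priori estimate applies to $(Y',Z',K')$, since it used only \textbf{(F3')}, \textbf{(G2)}, \textbf{(S2)} and the Skorokhod condition, and not the truncation. Here $\bar C$ enters only through the choice of $n$, so I must pick $n$ depending on $\|Y'\|_\infty$; this is fine because the final bound is independent of $n$. Hence $|Y'|\leq\bar C$, so $Y'$ solves the truncated reflected equation, and maximality in \autoref{t4.11} gives $Y'\leq Y^{\bar C}$.

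\emph{Main obstacle.} I expect the main difficulty to be justifying the It\^o/$\Phi$ argument with the reflection term: checking that the $dK$ contribution vanishes, which needs $X^n$ to dominate $e^{at}S^n$, and that the $|z|^2$ term is controlled with $n$ chosen depending on $\bar C$ (respectively on $\|Y'\|_\infty$). I would carefully verify the inequality $\frac{n(n-1)}{2}(|g|^2-|z|^2)+2nC\bar C|z|^2\leq0$ on the set where $|Y|\leq 2\bar C$.
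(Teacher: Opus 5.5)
Your proposal is correct and follows the paper's proof. You truncate with $\rho^{\bar C}$, invoke \autoref{t4.11}, bound $|Y^{\bar C}|$ independently of $\bar C$ by the even-power It\^o estimate on $\Phi(e^{at}(Y^{\bar C}_t)^n-X^n_t)$, choose $\bar C$ above that bound, and obtain maximality by applying the same estimate to an arbitrary bounded solution; your remark that $n$ must then be chosen according to $\|Y'\|_\infty$ makes explicit a point the paper leaves implicit.

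The only local difference is how the $dK$ term is handled. The paper first shifts by $b=\|S\|_\infty$ so that the obstacle is non-positive, and uses that $(Y^{\bar C})^{n-1}\le 0$ on $\{Y^{\bar C}=S\}$ because $n-1$ is odd. You instead raise the comparison level to $\hat M=\|\xi\|_\infty\vee\|S\|_\infty$ (as in \autoref{l4.9}), so that $\Phi'$ vanishes on the support of $dK^{\bar C}$. This avoids replacing $\varphi(0)$ by $\varphi(\|S\|_\infty)$. Since $n$ is even, it already controls $|Y^{\bar C}|$ from both sides, so your separate lower-bound paragraph is unnecessary. It also needs no enlargement of $a$ if you take $\mu\ge 0$ without loss of generality.
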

	
		\begin{remark}\label{rm_quaref}\sl 
		In the special case where \(g\) is positively homogeneous
		in \(z\), the two-sided quadratic case can be reduced to the one-sided quadratic case by an exponential change of variables. Moreover,  since the transformation is increasing, maximality is preserved under the inverse transformation.
		 Therefore, under the additional assumption that \(g\) is positively homogeneous in \(z\),
		 \autoref{t4.11} and \autoref{t4.12} 
		hold under the condition \(\textbf{(F2)}\) and \(\textbf{(F3)}\), respectively.
	\end{remark}

	We first establish the following comparison result, which extends
	\autoref{l4.6} to the reflected setting.

	\begin{lemma}[Comparison]\label{l4.13}\sl
		Let the conditions \(\textbf{(A1)}\), \(\textbf{(G1)}\) and \(\textbf{(S2)}\) be satisfied. 	For any $(t,\omega,y,z)\in [0,T]\times\Omega\times\mathbb{R}\times\mathbb{R}^d$, we have
		\begin{align*}
			S^1_t\leq S^2_t,\q\
			\xi^1\leq \xi^2, \q\ f^1(t,y,z)\leq f^2(t,y,z) \q\ \hbox{a.s.}
		\end{align*}
		Moreover, assume that,
		$f^2(t,y,z)$ is continuous and is of linear growth in \(y\) and \(z\), i.e., there exists a finite constant $C>0$ such that
		\begin{align*}
			|f^2(t,y,z)|\leq C(1+|y|+|z|),\q\ \hbox{a.s.}
		\end{align*}
		Then for any \(t\in [0,T]\), $Y_t^1\leq Y_t^2,  \dbP $-a.s., where $(Y^1,Z^1,K^1)\in S_\mathbb{F}^{2}([0,T];\mathbb{R})\times L_\mathbb{F}^2([0,T];\mathbb{R}^d)\times A_\mathbb{F}^{2}([0,T];\mathbb{R}_+)$ is a solution of reflected BDSDE \eqref{r1} with parameters $(\xi^1,f^1,g)$ and obstacle $S^1$, $(Y^2,Z^2,K^2)\in S_\mathbb{F}^{2}([0,T];\mathbb{R})\times L_\mathbb{F}^2([0,T];\mathbb{R}^d)\times A_\mathbb{F}^{2}([0,T];\mathbb{R}_+)$ is the maximal  solution of reflected BDSDE \eqref{r1} with parameters $(\xi^2,f^2,g)$ and obstacle $S^2$.
	\end{lemma}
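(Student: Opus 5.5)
The plan is to mimic the proof of \autoref{l4.6}, replacing the non-reflected ingredients by their reflected counterparts. First I will fix what the maximal solution $(Y^2,Z^2,K^2)$ is and how it is constructed. For each $n\geq C$ I set
\begin{align*}
f^2_n(t,y,z)=\sup\limits_{(y',z')\in\mathbb{Q}^{1+d}}\left\{f^2(t,y',z')-n|y-y'|-n|z-z'|\right\}.
\end{align*}
Because $f^2$ is continuous and of linear growth, each $f^2_n$ is Lipschitz in $(y,z)$ with constant $n$, satisfies $|f^2_n|\leq C(1+|y|+|z|)$ uniformly in $n$, and decreases pointwise to $f^2$. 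By Dini's theorem the convergence is locally uniform. The result of Bahlali et al.\ \cite[Theorem 3.1]{09} then gives a unique solution $(Y^{2,n},Z^{2,n},K^{2,n})$ of reflected BDSDE \eqref{r1} with parameters $(\xi^2,f^2_n,g)$ and obstacle $S^2$. The Lipschitz reflected comparison theorem \cite[Theorem 3.2]{09} (or \autoref{c3.11} with $\varphi$ linear) shows that $(Y^{2,n})_n$ is non-increasing.

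Next I will show that $Y^1\leq Y^{2,n}$ for every $n$. Since $f^1\leq f^2\leq f^2_n$ everywhere, we have $f^1(t,Y^1_t,Z^1_t)\leq f^2_n(t,Y^1_t,Z^1_t)$. In addition $\xi^1\leq\xi^2$, $S^1\leq S^2$, and $g$ is common to both equations. This is exactly the "alternatively" form of \autoref{c3.11}, and only the Lipschitz generator $f^2_n$ needs structural conditions. Concretely, I will apply It\^o's formula to $\phi_\varepsilon(Y^1-Y^{2,n})$ and let $\varepsilon\to0$. In the resulting expansion:
\begin{itemize}
\item the drift difference is split as $f^1(Y^1,Z^1)-f^2_n(Y^1,Z^1)\leq 0$ plus $f^2_n(Y^1,Z^1)-f^2_n(Y^{2,n},Z^{2,n})$, and the latter is Lipschitz-controlled;
\item the reflection term $\int (Y^1-Y^{2,n})^+d(K^1-K^{2,n})\leq 0$ by the Skorokhod condition, because on $\{Y^1>Y^{2,n}\}$ we have $Y^1>Y^{2,n}\geq S^2\geq S^1$, so $dK^1$ vanishes there;
\item the $g$-term is absorbed using \textbf{(G1)} with $\alpha<1$.
\end{itemize}
Gronwall's inequality then yields $Y^1_t\leq Y^{2,n}_t$. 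The only integrability needed is that $Y^1\in S^2$, $Z^1\in L^2$ and $K^1\in A^2$, so that the stochastic integrals are true martingales after localization. No growth condition on $f^1$ is required.

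The third step passes to the limit $n\to\infty$ and identifies the limit with the maximal solution $Y^2$. This is the step I expect to be the main obstacle, since the statement only says "the maximal solution" and the paper has no prior existence result for reflected BDSDEs with merely continuous, linear-growth $f$. I would follow the Step 1 argument of \autoref{t3.9}.
\begin{itemize}
\item Lower bound: $Y^{2,n}$ is bounded below by the solution of the non-reflected BDSDE with generator $f^2$ and obstacle ignored, via \autoref{l4.6} combined with the penalization comparison.
\item Upper bound: $Y^{2,n}$ is bounded above by $Y^{2,n_0}$.
\item Uniform estimates: this gives a uniform $S^2$ bound on $Y^{2,n}$. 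It\^o's formula and the Burkholder--Davis--Gundy inequality then give uniform bounds on $\mathbb{E}\int_0^T|Z^{2,n}|^2ds+\mathbb{E}[(K^{2,n}_T)^2]$.
\item Convergence: the It\^o estimate for $|Y^{2,n}-Y^{2,m}|^2$, where the reflection cross term has the favorable sign, together with Vitali's theorem, gives convergence of $(Y^{2,n},Z^{2,n},K^{2,n})$. The Skorokhod condition passes to the limit as in \autoref{t3.9}.
\end{itemize}
The limit is a solution, and any other solution lies below each $Y^{2,n}$ by the same comparison as in step two, so the limit is the maximal solution $Y^2$. Since $Y^1\leq Y^{2,n}$ for every $n$, letting $n\to\infty$ gives $Y^1_t\leq Y^2_t$ for all $t$, $\mathbb{P}$-a.s. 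By continuity of the paths, the null set can be taken independent of $t$.
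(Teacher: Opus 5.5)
Your proposal is correct and follows essentially the same route as the paper: approximate $f^2$ from above by the Lipschitz inf-convolutions $f^2_n$, show $Y^1\leq Y^{2,n}$ for each $n$, and identify the decreasing limit of $Y^{2,n}$ with the maximal solution $Y^2$. The difference is that the paper simply cites Bahlali et al.\ \cite[Theorems 3.2 and 3.3]{09} for the Lipschitz reflected comparison and for $Y^{2,n}\downarrow Y^2$. You instead prove the comparison directly, correctly requiring regularity only of $f^2_n$, which matters because $f^1$ has none, and rebuild the limit argument as in \autoref{t3.9}; there the lower bound is actually immediate from $Y^{2,n}\geq S^2$ and \textbf{(S2)}.
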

	\begin{proof}
		For each \(n\in \mathbb{N}\) with $n\geq C$, let $(f^{2,n})$ be a sequence of Lipschitz functions that approximate $f^2$ from above (as discussed in \autoref{l4.6}).  There
		exists a unique  solution $(Y^{2,n},Z^{2,n},K^{2,n})\in S_\mathbb{F}^{2}([0,T];\mathbb{R})\times L_\mathbb{F}^2([0,T];\mathbb{R}^d)\times A_\mathbb{F}^{2}([0,T];\mathbb{R}_+)$ to reflected BDSDE \eqref{r1} with parameters $(\xi^2,f^{2,n},g)$ and obstacle $S^2$.
		By   the comparison theorem for  reflected BDSDE in the Lipschitz case (refer to Bahlali et al. \cite[Theorem 3.2]{09}),  we have $Y^1\leq Y^{2,n}$, for all
		$n\geq C$. Furthermore, by the existence theorem for  
		maximal solutions in the linear-growth case (see Bahlali et al. \cite[Theorem 3.3]{09}),
		we note that $Y^{2,n}\downarrow Y^2$, where $Y^2$ is the maximal  solution of reflected BDSDE \eqref{r1} with parameters $(\xi^2,f^2,g)$ and obstacle $S^2$. Therefore,
		we conclude that $Y_t^1\leq Y_t^2$, \(\forall t\in [0,T]\), $ \dbP$-a.s.
	\end{proof}

	We will now provide the proofs of   the main results in this part. Since the proofs are similar to those of \autoref{t4.2} and \autoref{t4.3}, we will only highlight the differences.

	\begin{proof}[Proof of \autoref{t4.11}]
		For each \(n\in\mathbb{N}\), $\kappa_n$, $f_n$ and
		$h_n$ are defined as in \autoref{t4.2}. 
			Let
		$\hat{A}=C(1+\|\hat{U}_0\|_{\infty}+\|\hat{V}_0\|_{\infty})$,  enlarging \(C\)
		if necessary, we may assume \(C\geq 1\).
		From the result of Bahlali et al.  \cite[Theorem 3.3]{09}, there exists a maximal solution $(Y^n,Z^n,K^n)\in S_\mathbb{F}^{2}([0,T];\mathbb{R})\times L_\mathbb{F}^2([0,T];\mathbb{R}^d)\times A_\mathbb{F}^{2}([0,T];\mathbb{R}_+)$ which solves the following reflected BDSDE:
		\begin{align*}
			Y_t^n=\xi+\int_t^T f_n(s,Y_s^n,Z_s^n)ds +\int_t^T g(s,Y_s^n,Z_s^n)dB_s+K^n_T-K^n_t-\int_t^TZ_s^ndW_s,\q\ 0\leq t\leq T,
		\end{align*}
		with $\int_0^T\left(Y_t^n-S_t\right)dK_t^n=0.$
		On one hand,  by the choice of \(\hat{M}\), one has \(S_t\leq \hat{V}_t\) for all \(t\in [0,T]\). Therefore
		 \((\hat{V},0,0)\) is a solution of the following  reflected BDSDE with obstacle \(S\):
		\begin{align*}
			\hat{V}_t= \hat{M}+\int_t^Th_n(\hat{V}_s,Z_s)ds+\int_t^Tg(s,\hat{V}_s,Z_s)dB_s+K_T-K_t-\int_t^TZ_sdW_s,\q\ 0\leq t\leq T,
		\end{align*}
		where, in this comparison equation,  \(Z\equiv 0\) and \(K\equiv 0\).
		Since \(\lambda(t)=0\) for all \(t\in [0,T]\), condition \textbf{(G2)}
implies that \(g(t,y,0)=0\). Hence this reflected BDSDE reduces exactly
to the ODE \eqref{4.2v}.
		On the other hand,
		\((\hat{U},0)\) is used  as a lower comparison solution
		for   the following non-reflected BDSDE:
		\begin{align*}
			\hat{U}_t=-\hat{M} -\int_t^Th_n(\hat{U}_s,Z_s)ds+\int_t^Tg(s,\hat{U}_s,Z_s)dB_s-\int_t^TZ_sdW_s,\q\ 0\leq t\leq T,
		\end{align*}
		where \(Z\equiv 0\). Hence this non-reflected BDSDE reduces exactly to the ODE \eqref{4.1u}.
		The comparison with the lower non-reflected equation is valid since  an additional increasing process \(K^n\) in   reflected BDSDE has the favorable sign in It\^o's formula for \(\big( \hat{U}_t-Y_t^n\big)^+\).
		 By \autoref{l4.13}, for any \(t\in[0,T]\), we have 
		 \begin{align*}
		 	\hat{U}_t\leq Y_t^n\leq \hat{V}_t,\q\ \hbox{a.s.}
		 \end{align*}
		For the decreasing bounded sequence $(Y^n)$, there exists a process $Y$ such that $Y^n\downarrow Y$ in $L^2_{\mathbb{F}}([0,T];\mathbb{R})$ and for any \(t\in [0,T]\),  $\hat{U}_0\leq \hat{U}_t\leq Y_t\leq \hat{V}_t\leq \hat{V}_0$.
		We next prove the strong convergence of
	 $(Z^n)$. After that,  	\autoref{p4.9} will identify the limiting reflected solution, and maximality will be verified at the end. \\
	 We divide the following proof into two steps.

		\ms
		
		{\bf\it Step 1}. The sequence \((Z^n)\) is bounded in $L^2_{\mathbb{F}}([0,T];\mathbb{R}^d)$.

		Applying It\^o's formula to $\phi(Y_t^n)$, where \(\phi : \mathbb{R} \longmapsto \mathbb{R}\) is defined by $\phi(x)=e^{-\frac{4C}{1-\alpha}x}$. Since \(\phi' \leq 0\),   taking expectations gives
		\begin{align*}
			\mathbb{E}\left[\phi(Y_0^n)\right]+\frac{1}{2}\mathbb{E}\int_0^T\phi''(Y_s^n)|Z_s^n|^2ds
			\leq &\ \mathbb{E}\left[\phi(\xi)\right]+\mathbb{E}\int_0^T\phi'(Y_s^n)f_n(s,Y_s^n,Z_s^n)ds\\
			& 	+\frac{1}{2}\mathbb{E}\int_0^T\phi''(Y_s^n)\big|g(s,Y_s^n,Z_s^n)\big|^2ds,
		\end{align*}
		where
		\begin{align*}
			f_n(t,y,z)\geq -\hat{A}-C|z|^2\q\
			\hbox{and} \q\  |g(t,y,z)|^2\leq \alpha |z|^2,
		\end{align*}
		with  $\hat{A}=C(1+\|\hat{U}_0\|_{\infty}+\|\hat{V}_0\|_{\infty})$.  By a straightforward calculation:
		\begin{align*}
			\frac{1-\alpha}{2}\phi''+C\phi'=\frac{4C^2}{1-\alpha}\phi,
		\end{align*}
		we get
		\begin{align*}
			\frac{4C^2}{1-\alpha}\mathbb{E}\int_0^T\phi(Y_s^n)|Z_s^n|^2ds
			\leq \mathbb{E}\left[\phi(\xi)\right]+\frac{4C}{1-\alpha}e^{\frac{4C\hat{A}}{1-\alpha} }\hat{A}T.
		\end{align*}
		Therefore, there exists a constant $\hat{K}>0$ which is   independent of $n$, such that $\|Z^n\|_{L^2_{\mathbb{F}}}\leq \hat{K}<\infty$. 
		By the weak compactness of bounded sets in 
		\(L^2_{\mathbb{F}}([0,T];\mathbb{R}^d)\), there
		exists a subsequence, 
		still denoted by \((Z^n)\), and a process
		$Z\in L^2_{\mathbb{F}}([0,T];\mathbb{R}^d)$ such that \(Z^n\longrightarrow Z\) weakly in 
	 $L^2_{\mathbb{F}}([0,T];\mathbb{R}^d)$.
	 
		\ms
		
		{\bf\it Step 2}. The sequence $(Z^n)$ strongly converges to $Z$ in $L^2_{\mathbb{F}}([0,T];\mathbb{R}^d)$.
		
		For each \(n,m \in \mathbb{N}\) with $n<m$, apply It\^o's formula to $\Phi(Y_t^n-Y_t^m)$, where \(\Phi : \mathbb{R} \longmapsto \mathbb{R}\) is defined by $\Phi(x)=\frac{1-\alpha}{200C^2}\left(e^{\frac{20C}{1-\alpha}x}-\frac{20C}{1-\alpha}x-1\right)$. 
		Note that \(\Phi'\geq 0\), \(dK^n\) is supported on the set 
		\(\{Y^n=S\}\) and \(dK^m\) is supported on the set \(\{Y^m=S\}\),
		 we get
		\begin{align*}
			\mathbb{E}\int_0^T\Phi'(Y_s^n-Y_s^m)d\left(K_s^n-K_s^m\right)\leq 0.
		\end{align*}
		Thus taking expectations on both sides, we derive 
		\begin{align*}
			\mathbb{E}\left[\Phi(Y^n_0-Y^m_0)\right] 
			\leq&\ \mathbb{E}\int_0^T\Phi'(Y_s^n-Y_s^m)\Big(f_n(s,Y_s^n,Z_s^n)-f_m(s,Y_s^m,Z_s^m)\Big)ds\\
			&+\frac{1}{2}\mathbb{E}\int_0^T\Phi''(Y_s^n-Y_s^m)\left(\big|g(s,Y_s^n,Z_s^n)-g(s,Y_s^m,Z_s^m)\big|^2-\big|Z_s^n-Z_s^m\big|^2\right)ds.
		\end{align*}
		The only additional term compared with the non-reflected case 
		is the \(dK^n-dK^m\) term, which is non-positive by the Skorokhod
		condition as shown above.
			Repeating the estimates from the proof of
		\autoref{t4.2}, and using the uniform boundedness of \((Y^n)\),   the uniform integrability of \(|Z^n|^2\) in \(L^1_{\mathbb{F}}([0,T];\mathbb{R}^d)\), and the weighted lower semicontinuity argument,
	we obtain the same strong convergence result:
		\begin{align*}
			\lim\limits_{n\rightarrow\infty}\mathbb{E}\int_0^T\big|Z_s^n-Z_s\big|^2ds=0.
		\end{align*}
			Then, in view of the following equality,
		\begin{align*}
			K_T^n=Y_0^n-\xi-\int_0^Tf_n(s,Y_s^n,Z_s^n)ds-\int_{0}^Tg(s,Y_s^n,Z_s^n)dB_s+\int_{0}^TZ_s^ndW_s,
		\end{align*}
		where
		\begin{align*}
			|f_n(s,Y_s^n,Z_s^n)|\leq 2\hat{A}+2C |Z_s^n|^2.
		\end{align*}
		The uniform boundedness of \((Y^n)\), the preceding estimate,
		 and the identity for \((K_T^n)\) imply that  \(\sup\limits_{n}\mathbb{E}\left[ (K_T^n)^2\right]<\infty\).
		Moreover, \(f_n\longrightarrow \widetilde{f}\) locally uniformly,  the sequence \((Y^n)\) is non-increasing, and the generators \((f_n)\) satisfy a uniform quadratic growth condition.  Therefore, all the assumptions  of
		\autoref{p4.9}
		 are satisfied. There exists  a triple 
		 $(Y,Z,K)\in S_\mathbb{F}^{\infty}([0,T];\mathbb{R})\times L_\mathbb{F}^2([0,T];\mathbb{R}^d)\times A_\mathbb{F}^{2}([0,T];\mathbb{R}_+)$ such that 
		\((Y^n,K^n)\longrightarrow (Y,K) \) uniformly in probability on  \( [0,T]\), and \(Z^n \longrightarrow Z\) in \( L_\mathbb{F}^2([0,T];\mathbb{R}^d)\).
		The limiting triple solves the reflected BDSDE \eqref{r1} with generator \(\widetilde{f}\).
		Since  \( \forall t\in [0,T]\), \( \hat{U}_t\leq Y_t\leq \hat{V}_t\),   
	 and \(\hat{A}\) was chosen so that  \(|Y_t|\leq \hat{A}\),
	 we have \(\rho^{\hat{A}}(Y_t)=Y_t\) and  \(\widetilde{f}(t,Y_t,Z_t)=f(t,Y_t,Z_t)\).
	 Hence \((Y,Z,K)\) solves the original reflected BDSDE \eqref{r1}.
	 By \autoref{p4.9} and its pathwise Stieltjes convergence
	 argument, \(Y\geq S\) and \(\int_0^T (Y_s-S_s)dK_s=0\).
Let \((Y',Z',K')\) be any other bounded solution of the original reflected BDSDE \eqref{r1}. Since \(f\leq f_n\), \autoref{l4.13} yields \(Y_t'\leq Y_t^n\), 
\(0\leq t\leq T\). Letting \(n\longrightarrow \infty\), we obtain 
\(Y'_t\leq Y_t\), for any \(t\in [0,T]\). Hence  \((Y,Z,K)\) is the maximal solution.
	\end{proof}

	\begin{proof}[Proof of \autoref{t4.12}]
		In view of the condition \(\textbf{(S2)}\), let $b\triangleq \| S \|_{S^{\infty}_{\mathbb{F}}([0,T];\mathbb{R})}<\infty,$ which is a deterministic constant. We notice that
		$(Y,Z,K)\in S_\mathbb{F}^{\infty}([0,T];\mathbb{R})\times L_\mathbb{F}^2([0,T];\mathbb{R}^d)\times A_\mathbb{F}^{2}([0,T];\mathbb{R}_+)$ is a solution of  reflected BDSDE \eqref{r1} with parameters $(\xi,f,g)$ and obstacle $S$ if and only if 
		$(Y^b,Z^b,K^b)\in S_\mathbb{F}^{\infty}([0,T];\mathbb{R})\times L_\mathbb{F}^2([0,T];\mathbb{R}^d)\times A_\mathbb{F}^{2}([0,T];\mathbb{R}_+)$ is a solution of  reflected BDSDE \eqref{r1} with parameters $(\xi^b,f^b,g^b)$ and obstacle $S^b$, where
		\begin{equation*}
			\left\{
			\begin{aligned}
				&	(Y^b,Z^b,K^b)=(Y-b,Z,K),\\
				&	(\xi^b,f^b,g^b,S^b)=(\xi-b,f(\cdot,y+b,z),g(\cdot,y+b,z),S-b).
			\end{aligned} \right.
		\end{equation*}
		It is straightforward to verify that the triple $(\xi^b,f^b,g^b)$ satisfies the conditions \(\textbf{(A2)}\), \(\textbf{(F3')}\) with \(\varphi_b(x)=
		\varphi(b+x)\) and \(\varphi_b(0)=\varphi(b)\),
		 \(\textbf{(G1)}\) and \(\textbf{(G2)}\) with \(\lambda(t)=0\) for all 
		\(t\in [0,T]\).
		Therefore, without loss of generality, we may assume that
		the obstacle process is bounded and non-positive.
		Using techniques similar to those in the
		proof of  \autoref{t4.3}, we define $\rho^{\bar{C}}$ and $f^{\bar{C}}$ as shown in equations \eqref{4.12} and \eqref{4.13}, respectively. It follows from \autoref{t4.11} that there exists a maximal solution $(Y^{\bar{C}},Z^{\bar{C}},K^{\bar{C}})\in S_\mathbb{F}^{\infty}([0,T];\mathbb{R})\times L_\mathbb{F}^2([0,T];\mathbb{R}^d)\times A_\mathbb{F}^{2}([0,T];\mathbb{R}_+)$ which solves the following  reflected
		BDSDE with parameters $(\xi,f^{\bar{C}},g)$ and obstacle $S$, such that 
		\begin{equation}\label{4.20}
			\left\{\begin{aligned}
				&	Y_t^{\bar{C}}=\xi+\int_t^Tf^{\bar{C}}(s,Y_s^{\bar{C}},Z_s^{\bar{C}})ds+\int_t^Tg(s,Y_s^{\bar{C}},Z_s^{\bar{C}})dB_s
				+K_T^{\bar{C}}-K_t^{\bar{C}}	-\int_t^TZ_s^{\bar{C}}dW_s, \q\ 0\leq t\leq T,\\
				&Y_t^{\bar{C}}\geq S_t\q\ \hbox{and} \q\ \int_{0}^{T} (Y_s^{\bar{C}}-S_s)dK_s^{\bar{C}}=0.
			\end{aligned}\right.
		\end{equation}
		For even $n\geq 2$, choose $a=n(\varphi_b(0)+\mu)$ and $n-1\geq \frac{4C\bar{C}}{1-\alpha}$. Applying It\^o's formula to 
		$\Phi(e^{at}(Y_t^{\bar{C}})^n-X_t^n)$, where $X_t^n$ is defined in \eqref{4.15} and \(\Phi\) has the same property as in \autoref{t4.3}, we have
		\begin{align*}
			\Phi(e^{at}(Y_t^{\bar{C}})^n-X_t^n)	=&\
			\Phi(0)+\int_t^T\Phi'(e^{as}(Y_s^{\bar{C}})^n-X_s^n)
			\Big(-ae^{as}(Y_s^{\bar{C}})^n+ne^{as}(Y_s^{\bar{C}})^{n-1}f^{\bar{C}}(s,Y_s^{\bar{C}},Z_s^{\bar{C}})\\
			&-ne^{as}\varphi_b(0)
			+\frac{n(n-1)}{2}e^{as}(Y_s^{\bar{C}})^{n-2}\big(\big|g(s,Y_s^{\bar{C}},Z_s^{\bar{C}})\big|^2-|Z_s^{\bar{C}}|^2\big)\Big)ds\\
			&+\int_t^T \Phi'(e^{as}(Y_s^{\bar{C}})^n-X_s^n) ne^{as}(Y_s^{\bar{C}})^{n-1}g(s,Y_s^{\bar{C}},Z_s^{\bar{C}})dB_s\\
			&-\int_t^T \Phi'(e^{as}(Y_s^{\bar{C}})^n-X_s^n) ne^{as}(Y_s^{\bar{C}})^{n-1}Z_s^{\bar{C}}dW_s\\
			&+\int_t^T \Phi'(e^{as}(Y_s^{\bar{C}})^n-X_s^n) ne^{as}(Y_s^{\bar{C}})^{n-1}dK_s^{\bar{C}}\\
			&+\frac{1}{2}\int_t^T \Phi''(e^{as}(Y_s^{\bar{C}})^n-X_s^n) \Big( n^2e^{2as}(Y_s^{\bar{C}})^{2n-2}\big(\big|g(s,Y_s^{\bar{C}},Z_s^{\bar{C}})\big|^2-|Z_s^{\bar{C}}|^2\big)\Big)ds.
		\end{align*}
		Since \(n-1\) is odd,  the measure \(dK^{\bar{C}}\) is supported on the set 
		\(\{Y^{\bar{C}}=S\}\), and 
		  $S$ is a non-positive bounded obstacle process,
		we deduce that
		\begin{align*}
			\int_t^T \Phi'(e^{as}(Y_s^{\bar{C}})^n-X_s^n) ne^{as}(Y_s^{\bar{C}})^{n-1}dK_s^{\bar{C}}
			\leq 0.
		\end{align*}
		Taking expectations on both sides, we get
		\begin{align*}
	\mathbb{E}\left[\Phi(e^{at}(Y_t^{\bar{C}})^n-X_t^n)	\right]\leq 0.	 
		\end{align*}
		Since \(\Phi \geq 0\), we obtain
		\begin{align*}
		 e^{at}(Y_t^{\bar{C}})^n\leq X_t^n,\q\ a.s.
		\end{align*}
		Then
		choosing \(\bar{C}\) such that
		\begin{align*}
			\bar{C}\geq 	\left(e^{\left(\varphi_b(0)+\mu\right)T}\vee 1\right)\left(\|\xi\|_{\infty}+1\right),
		\end{align*}
the solution of the truncated equation is also a solution of the original equation.
		Applying the aforementioned a priori estimate to any bounded solution \((Y' ,Z', K' )\),  we obtain
		\(|Y' | \leq \bar{C}\), which implies that  \((Y' , Z' , K')\)  also satisfies
	   the truncated reflected BDSDE \eqref{4.20} with obstacle \(S\).  
		 By the maximality of 
		$(Y^{\bar{C}},Z^{\bar{C}},K^{\bar{C}})$ for the truncated reflected BDSDE, we have \(Y' \leq Y^{\bar{C}}\), 
	and then conclude 
		that $(Y^{\bar{C}},Z^{\bar{C}},K^{\bar{C}})\in S_\mathbb{F}^{\infty}([0,T];\mathbb{R})\times L_\mathbb{F}^2([0,T];\mathbb{R}^d)\times A_\mathbb{F}^{2}([0,T];\mathbb{R}_+)$ is a maximal solution of reflected BDSDE \eqref{r1}.
	\end{proof}

	As a consequence of the method used in the proof of \autoref{t4.12}, we can derive the following comparison result.

	\begin{corollary}[Comparison]\label{c4.17}\sl
		Assume that
		the conditions \(\textbf{(A2)}\), \(\textbf{(G1)}\), \(\textbf{(S2)}\) and \(\textbf{(G2)}\) hold with \(\lambda(t)=0\) for all \(t\in [0,T]\).   For any \((t,y,z)\in[0,T]\times\mathbb{R}\times\mathbb{R}^d\), we have
		\begin{align*}
			S^1_t\leq S^2_t,\q\ 	\xi^1\leq \xi^2,\q\ 
			f^1(t,y,z)\leq f^2(t,y,z), \q\  \hbox{a.s.}
		\end{align*}
		Moreover, assume that  $f^2$ satisfies \(\textbf{(F3')}\).
		Then for any \(t\in [0,T]\), $Y_t^1\leq Y_t^2,  \dbP $-a.s., where $(Y^1,Z^1,K^1)\in S_\mathbb{F}^{\infty}([0,T];\mathbb{R})\times L_\mathbb{F}^2([0,T];\mathbb{R}^d)\times A_\mathbb{F}^{2}([0,T];\mathbb{R}_+)$ is a bounded solution of reflected BDSDE \eqref{r1} with parameters $(\xi^1,f^1,g)$ and obstacle $S^1$,
		$(Y^2,Z^2,K^2)\in S_\mathbb{F}^{\infty}([0,T];\mathbb{R})\times L_\mathbb{F}^2([0,T];\mathbb{R}^d)\times A_\mathbb{F}^{2}([0,T];\mathbb{R}_+)$ is a maximal solution of reflected BDSDE \eqref{r1} with parameters $(\xi^2,f^2,g)$ and obstacle $S^2$.
	\end{corollary}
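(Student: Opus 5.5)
The plan is to reproduce the truncation argument from the proof of \autoref{t4.12}, this time carrying the comparison through the maximal solution of the truncated equation. The comparison is applied to $f^2$ only; $f^1$ enters only through $f^1\leq f^2$.

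\emph{Reduction and common bound.} First shift by $b\triangleq\|S^1\|_\infty\vee\|S^2\|_\infty$, replacing $(Y^i,\xi^i,f^i,g,S^i)$ by $(Y^i-b,\xi^i-b,f^i(\cdot,\cdot+b,\cdot),g(\cdot,\cdot+b,\cdot),S^i-b)$. After this shift both obstacles are bounded and non-positive. The hypotheses $\textbf{(A2)}$, $\textbf{(G1)}$, $\textbf{(G2)}$ with $\lambda=0$, the inequalities $S^1\leq S^2$, $\xi^1\leq\xi^2$, $f^1\leq f^2$, and $\textbf{(F3')}$ for $f^2$ (with $\varphi_b(x)=\varphi(b+x)$) are all preserved.

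Next choose a truncation level $\bar C$ with two properties:
\begin{itemize}
\item[(a)] $\bar C\geq \|Y^1\|_\infty$, which is finite because $Y^1$ is assumed bounded;
\item[(b)] $\bar C$ dominates the a priori bound obtained in the proof of \autoref{t4.12}, namely $\bar C\geq (e^{(\varphi_b(0)+\mu)T}\vee1)(\|\xi^2\|_\infty+1)$.
\end{itemize}
Then set $f^{2,\bar C}=\rho^{\bar C}(y)f^2$, as in \eqref{4.13}. By (b) and the proof of \autoref{t4.12}, the maximal solution $(Y^{\bar C},Z^{\bar C},K^{\bar C})$ of the truncated reflected equation with data $(\xi^2,f^{2,\bar C},g,S^2)$ is bounded by $\bar C$. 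Hence it solves the original equation and is its maximal bounded solution. It therefore suffices to show $Y^1\leq Y^{\bar C}$.

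\emph{Comparison through the approximation.} Recall how $Y^{\bar C}$ is constructed in \autoref{t4.11}. Take the truncation $\rho^A$ and cut-offs $\kappa_n$, and form generators $f_n$ that are linear growth and continuous, with $f_n\downarrow\widetilde f$, where $\widetilde f(t,y,z)=f^{2,\bar C}(t,\rho^A(y),z)$. Let $Y^n$ be the maximal solutions of the reflected equations with data $(\xi^2,f_n,g,S^2)$; then $Y^n\downarrow Y^{\bar C}$, and $A$ is chosen so that $|Y^{\bar C}|\leq A$. We may also enlarge $A$ so that $A\geq\bar C\geq\|Y^1\|_\infty$.

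Now view $Y^1$ as a solution with generator $f^1$ and compare it with $Y^n$:
\begin{itemize}
\item since $|Y^1|\leq\bar C$, we have $\rho^{\bar C}(Y^1)=1$ and $\rho^A(Y^1)=Y^1$;
\item on the set $\{|z|\leq n\}$ we have $f_n=\widetilde f=f^2\geq f^1$;
\item on the set $\{|z|>n\}$ the generator $f_n$ is a convex combination of $\widetilde f$ and the upper bound $C(1+|\rho^A(y)|)\geq f^2$, so again $f_n\geq f^2\geq f^1$. In the $\textbf{(F3')}$ setting this upper bound is $\varphi(|y|)$, which dominates $f^2\rho^{\bar C}$.
\end{itemize}
Thus $f^1(t,Y^1_t,Z^1_t)\leq f_n(t,Y^1_t,Z^1_t)$ along the solution $Y^1$. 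The pathwise version of the comparison \autoref{l4.13}, which only needs the inequality along $Y^1$ since the Lipschitz-approximation comparison of Bahlali et al. uses it there, then gives $Y^1\leq Y^n$ for every $n$. Letting $n\to\infty$ yields $Y^1\leq Y^{\bar C}=Y^2$ up to the shift by $b$.

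\emph{Main obstacle.} The main difficulty is to justify that the generator inequality is needed only along $Y^1$ rather than globally. Here $f^1\leq f^2$ holds globally, but after truncation $f_n\geq f^1$ is guaranteed only when $|y|\leq\bar C$. The fix is to make the truncation level exceed $\|Y^1\|_\infty$ from the start, as in (a), and to use the "alternatively" form of the Lipschitz comparison. In the Lipschitz step the $(f^{2,n})$ approximate from above, so the required bound $f^1(\cdot,Y^1,Z^1)\leq f^{2,n}(\cdot,Y^1,Z^1)$ follows from the inequality at $(Y^1,Z^1)$. Finally, the obstacle sign condition $S^1\leq S^2$ enters through the term $\int(Y^1-Y^n)^+d(K^1-K^n)\leq0$, exactly as in \autoref{c3.11}.
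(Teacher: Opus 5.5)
Your proposal is correct and follows essentially the route the paper intends when it says this corollary follows from the method of \autoref{t4.12}. You shift to a non-positive obstacle, truncate at a level $\bar C$ dominating both $\|Y^1\|_\infty$ and the a priori bound (so the truncated maximal solution solves the original equation and hence lies below $Y^2$), realize it as the decreasing limit of the maximal solutions for the generators $f_n$ of \autoref{t4.11}, and compare $Y^1$ with each of these through \autoref{l4.13}. Your treatment of the fact that $f_n\geq f^1$ is only guaranteed for $|y|\leq\bar C$ is sound: you run the Lipschitz comparison using the inequality along $(Y^1,Z^1)$ only. Alternatively, note that $Y^1$ also solves the equation with generator $\rho^{\bar C}f^1$; with $\rho^A$ the clipping map and $A\geq 2\bar C$, one gets $f_n\geq\rho^{\bar C}f^1$ globally, so \autoref{l4.13} applies verbatim.
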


	\section{Conclusion}\label{conclusion}
	
	In this paper, we have systematically studied the well-posedness of RBDSDEs under various weak conditions. Our work extends the existing theory of RBDSDEs beyond the classical Lipschitz framework and partially addresses one of the open problems in this field.  
	In particular, under the precise general growth and quadratic growth assumptions imposed in this paper,
	several of the existence and maximality results appear
	to be new even for the corresponding non-reflected BDSDEs.
	Meanwhile, several new techniques have been introduced, and the main novelty lies in  overcoming the lack of filtration structure induced by the backward It\^o integral.
	These results not only fill theoretical gaps in the literature but also pave the way for further applications of RBDSDEs in stochastic optimal control, SPDEs, and mathematical finance.
	Several related questions remain open and are left for future work. In particular, it would be interesting to relax the
	boundedness assumptions on the
	terminal value and obstacle in the quadratic growth setting to unbounded setting, such as suitable exponential integrability. The extension to multi-dimensional quadratic BDSDEs with reflection also constitutes a natural direction for subsequent work.

	\section*{Acknowledgments}\label{section7}
	The authors would like to thank Professor Shige Peng from Shandong University for his insightful comments during his  visit to SUSTech.
	%	associate editor and the anonymous referees for their insightful comments that improve the
	%	quality of this paper.
	%	

	%------------------------------------------------------------------------------------------------


\begin{thebibliography}{99}
		\addtolength{\itemsep}{-1.0ex}
		
		
		
		%	\bibitem{ao12}
		%	\rm	Aman, A. and Owo, J.M.,
		%	\it Reflected backward doubly stochastic differential equations with discontinuous generator, 
		%	\sl  Random Oper. Stoch. Equ. ,
		%	\rm 20  (2012), 119--134.
		%	
		
		\bibitem{be20}
		\rm Berrhazi, B.,   El Fatini, M.,  Hilbert, A.,  Mrhardy, N., and  Pettersson, R.,
		\it Reflected backward doubly stochastic differential equations with discontinuous barrier,
		\sl Stochastics,
		\rm 92 (2020), 1100--1124.
		
		\bibitem{06h}
		\rm Briand, P. and Hu, Y., 
		\it  BSDE with quadratic growth and unbounded terminal value, 
		\sl Probab. Theory Related Fields,
		\rm 136  (2006), 604--618.
		
		\bibitem{08h}
		\rm Briand, P. and Hu, Y.,
		\it Quadratic BSDEs with convex generators and unbounded terminal conditions,
		\sl Probab. Theory Related Fields,
		\rm 141 (2008), 543--567.
		
		\bibitem{09}
		\rm Bahlali, K., Hassani, M., Mansouri, B. and Mrhardy, N.,
		\it One barrier reflected backward doubly stochastic differential equations with continuous generator,
		\sl C. R. Math. Acad. Sci. Paris,
		\rm 347  (2009), 1201--1206.
		
		
		
		\bibitem{07o}
		\rm Briand, P., Lepeltier, J. P., and San Mart\'in, J.,
		\it One-dimensional backward stochastic differential equations whose coefficient is monotonic in y and non-Lipschitz in z,
		\sl Electron. J. Probab.,
		\rm 12   (2007), 1504--1531.
		
		
		
		%	\bibitem{01}
		%	\rm Bally, V. and Matoussi, A.,
		%	\it Weak solutions for SPDEs and backward doubly stochastic differential equations,
		%	\sl J. Theoret. Probab.,
		%	\rm 14   (2001), 125--164.
		
		\bibitem{95B}
		\rm	Barles, G. and Murat, F., 
		\it Uniqueness and the maximum principle for quasilinear elliptic equations with quadratic growth conditions, 
		\sl  Arch. Ration. Mech. Anal.,
		\rm 133 (1995), 77--101.
		%	\bibitem{07}
		%	\rm Boufoussi, B., Van Casteren, J. and Mrhardy, N.,
		%	\it Generalized backward doubly stochastic differential equations and SPDEs with nonlinear Neumann boundary conditions,
		%	\sl Bernoulli,
		%	\rm 13 (2) (2007), 423-–446.
		%	
		
		\bibitem{delbaen_backward_2011}
		\rm Delbaen, F., Hu, Y., and Bao, X.,
		\it Backward SDEs with superquadratic growth,
		\sl Probab. Theory Related Fields,
		\rm 150 (2011), 145--192.
		
		
		
		\bibitem{1975}
		\rm Dellacherie, C. and Meyer, P.-A.,
		\it Probabilities and potential, A,
		\sl North--Holland Mathematics Studies, Vol. 29,
		North--Holland Publishing Co.,
		\rm Amsterdam, 1978.
		
		
		\bibitem{92d}
		\rm Da Prato, G. and Zabczyk, J.,
		\it Stochastic equations in infinite dimensions,
		\sl Cambridge University Press,
		\rm Vol.152 (1992), 1--428.
		
		
		%	
		%	\bibitem{08}
		%	\rm Essaky, E. H.,
		%	\it Reflected backward stochastic differential equation with jumps and RCLL obstacle,
		%	\sl Bull. Sci. Math.,
		%	\rm 132  (2008), 690--710.
		
		\bibitem{97r}
		\rm El Karoui, N., Kapoudjian, C., Pardoux, E., Peng, S., and Quenez, M. C.,
		\it Reflected solutions of backward SDE's, and related obstacle problems for PDE's,
		\sl Ann. Probab.,
		\rm 25   (1997), 702--737.
		
		\bibitem{el_karoui_backward_1997}
		\rm El Karoui, N., Peng, S., and Quenez, M.C.,
		\it Backward stochastic differential equations in finance,
		\sl Math. Finance,
		\rm 7 (1997), 1--71.
		
		\bibitem{f23}
		\rm Fan, S.,  Hu, Y., and Tang, S.,
		\it Multi-dimensional backward stochastic differential equations of diagonally quadratic generators: The general result,
		\sl J. Differential Equations,
		\rm 368 (2023), 105--140.
		
		%	\bibitem{f20}
		%	\rm  Fan S., Hu  Y., and  Tang S.,
		%	\it On the uniqueness of solutions to quadratic BSDEs with non-convex generators and unbounded terminal conditions,
		%	\sl C. R. Math. Acad. Sci. Paris,
		%	\rm 358 (2020), 227--235.
		
		\bibitem{hao_mean-field_2025}
		\rm Hao, T., Hu, Y., Tang, S., and Wen, J.,
		\it Mean-field backward stochastic differential equations and nonlocal PDEs with quadratic growth,
		\sl Ann. Appl. Probab.,
		\rm 35 (2025), 1680--1715.
		
		\bibitem{hmz}
		\rm 	 Hu, Y.,  Matoussi, A., and Zhang, T.,
		\it 	Wong--Zakai approximations of backward doubly stochastic differential equations,
		\sl 	Stochastic Process. Appl.,
		\rm 	  125    (2015), 4375--4404.
		
		\bibitem{00h}
		\rm Hu, Y.,
		\it On the solution of forward-backward SDEs with monotone and continuous coefficients,
		\sl Nonlinear Anal.,
		\rm 42   (2000), 1--12.
		
		\bibitem{ho16}
		\rm 	 Hamad\`{e}ne, S. and Ouknine, Y.,
		\it 	Reflected Backward SDEs with General Jumps,
		\sl 	Theory Probab. Appl.,
		\rm 	  60     (2016), 263--280.
		
		\bibitem{10m}
		\rm Han, Y., Peng, S., and Wu, Z.,
		\it Maximum principle for backward doubly stochastic control systems with applications,
		\sl SIAM J. Control Optim.,
		\rm 48   (2010), 4224--4241.
		
		
		
		\bibitem{HuLiangTang2020}
		\rm Hu, Y.,   Liang, G., and  Tang, S.,
		\it Systems of {Ergodic} {BSDEs} {Arising} in {Regime} {Switching} {Forward} {Performance} {Processes},
		\sl SIAM J. Control Optim.,
		\rm 58 (2020), 2503--2534.
		
		
		\bibitem{HuLiangTang2024}
		\rm Hu, Y.,   Liang, G., and  Tang, S.,
		\it Utility maximization in constrained and unbounded financial markets: {Applications} to indifference valuation, regime switching, consumption and {Epstein}-{Zin} recursive utility,
		\rm arXiv:1707.00199v5, 2024.
		
		
		
		\bibitem{ht16}
		\rm  Hu, Y. and  Tang, S.,
		\it Multi-dimensional backward stochastic differential equations of diagonally quadratic generators,
		\sl Stochastic Process. Appl.,
		\rm 126 (2016), 1066--1086.
		
		
		
		\bibitem{wen1}
		\rm Hu, Y., Wen, J., and Xiong, J.,
		\it Backward doubly stochastic differential equations and SPDEs with quadratic growth,
		\sl Stochastic Process. Appl.,
		\rm 175 (2024), 104405.
		
		\bibitem{00}
		\rm Kobylanski, M.,
		\it Backward stochastic differential equations and partial differential equations with quadratic growth,
		\sl Ann. Probab.,
		\rm 28   (2000), 558--602.
		
		%	\bibitem{02}
		%	\rm Kobylanski, M., Lepeltier, J. P., Quenez, M. C., and Torres, S.,
		%	\it Reflected BSDE with superlinear quadratic coefficient,
		%	\sl Probab. Math. Statist.,
		%	\rm 22  (2002), 259--288.
		%	
		
		
		\bibitem{liluo13}
		\rm Li, Z. and Luo, J.,
		\it  Reflected backward doubly stochastic differential equations with discontinuous coefficients,
		\sl Acta Math. Sin. Engl. Ser.,
		\rm 29 (2013),  639--650.
		
		
		\bibitem{98}
		\rm Lepeltier, J. P. and San Mart\'in, J.,
		\it Existence for BSDE with superlinear-quadratic coefficient,
		\sl Stochastics,
		\rm 63   (1998), 227--240.
		
		%	\bibitem{05l}
		%	\rm Lepeltier, J.P., Matoussi, A., and Xu, M.,
		%	\it Reflected backward stochastic differential equations under monotonicity and general increasing growth conditions,
		%	\sl Adv. in Appl. Probab.,
		%	\rm 37   (2005), 134–-159.
		
		\bibitem{97}
		\rm Lepeltier, J. P. and San Mart\'in, J.,
		\it Backward stochastic differential equations with continuous coefficient,
		\sl Statist. Probab. Lett.,
		\rm 32   (1997), 425--430.
		
		
		
		\bibitem{21}
		\rm Li, Y. and Wei, L.,
		\it Reflected Backward Doubly Stochastic Differential Equations with Monotone Coefficients,
		\sl J. Phys.: Conf. Ser.,
		\rm 1865   (2021), 022038.
		
		\bibitem{95}
		\rm Mao, X., 
		\it Adapted solutions of backward stochastic differential equations with non-Lipschitz coefficients, 
		\sl Stochastic Process. Appl., 
		\rm 58   (1995), 281--292.
		
		%	\bibitem{97m}
		%	\rm Matoussi, A.,
		%	\it Reflected solutions of backward stochastic differential equations with continuous coefficient,
		%	\sl Statist. Probab. Lett.,
		%	\rm 34   (1997), 347--354.
		
		%	
		%	\bibitem{matoussi michael}
		%	\rm Matoussi A. and Scheutzow M.,
		%	\it Stochastic PDEs driven by nonlinear noise and backward doubly SDEs,
		%	\sl Journal of Theoretical Probability,
		%	\rm  15  (1)   (2002), 1--39.
		
		%	\bibitem{morlais}
		%	\rm Morlais, M. A.,
		%	\it Quadratic BSDEs driven by a continuous martingale and applications to the utility maximization problem,
		%	\sl Finance Stoch.,
		%	\rm 13   (2009), 121--150.
		
		\bibitem{man16}
		\rm 	Mansouri, B., Ouerdiane, H., and Salhi, I.,
		\it  Backward doubly stochastic differential equations with 	monotone and discontinuous coefficients,
		\sl 	 J. Numer. Math. Stoch.,
		\rm  8  	(2016), 76--87.
		
		%	\bibitem{m16}
		%	\rm Matoussi, A. and Sabbagh, W.,
		%	\it Numerical Computation for Backward Doubly SDEs with random terminal time,
		%	\sl Monte Carlo Methods and Applications,
		%	\rm 22 (3) (2016), 229--258.
		
		%	\bibitem{owo}
		%	\rm 	Owo J M.,
		%	\it  Backward doubly SDEs with continuous and stochastic linear growth coefficients,
		%	\sl  Random Operators and Stochastic Equations,
		%	\rm   26  (3)  (2018), 175--184.
		
		\bibitem{99}
		\rm Pardoux, É.,
		\it BSDEs, weak convergence and homogenization of semilinear PDEs,	
		\sl 	Nonlinear Analysis, Differential Equations and Control (Montreal, QC, 1998), NATO Sci. Ser. C Math. Phys. Sci., 528, Kluwer Acad. Publ., Dordrecht, 
		\rm (1999), 503--549.
		
		
		\bibitem{90}
		\rm Pardoux, É. and Peng, S.,
		\it Adapted solution of a backward stochastic differential equation,
		\sl Syst. Control Lett.,
		\rm 14   (1990), 55--61.
		
		\bibitem{94}
		\rm Pardoux, É. and Peng, S.,
		\it Backward doubly stochastic differential equations and systems of quasilinear SPDEs,
		\sl Probab. Theory Related Fields,
		\rm 98   (1994), 209--227.
		
		\bibitem{peng_open_1999}
		\rm Peng, S.,
		\it Open problems on backward stochastic differential equations,
		\sl Control of Distributed Parameter and Stochastic Systems,
		\rm Kluwer Acad. Publ., Boston, MA, (1999),  265--273.
		
		
		\bibitem{q18}
		\rm Qian, Z. and Xu, M.,
		\it Reflected backward stochastic differential equations with resistance,
		\sl Ann. Appl. Probab.,
		\rm 28   (2018), 888--911.
		
		%	\bibitem{ren}
		%	\rm Ren, Y., Lin A. and Hu L.,
		%	\it Stochastic PDIEs and backward doubly stochastic differential equations driven by Lévy processes,
		%	\sl Journal of Computational and Applied Mathematics,
		%	\rm 223  (2)  (2009),  901--907.
		%	
		
		
		\bibitem{ren10}
		\rm Ren, Y. and Xu, X.,
		\it Reflected backward doubly stochastic differential equations driven by a L\'evy process,
		\sl C. R. Math. Acad. Sci. Paris,
		\rm  348    (2010),  439--444.
		
		\bibitem{05}
		\rm Shi, Y., Gu, Y., and Liu, K.,
		\it Comparison theorems of backward doubly stochastic differential equations and applications,
		\sl Stoch. Anal. Appl.,
		\rm 23   (2005), 97--110.
		
		%	\bibitem{tl}
		%	\rm Tang, S. and Li, X.,
		%	\it Necessary Conditions for Optimal Control of Stochastic Systems with Random Jumps,
		%	\sl SIAM J. Control Optim.,
		%	\rm 32   (1994), 1447--1475.
		
		\bibitem{wy}
		\rm  Wen, J. and Shi, Y.,
		\it Backward doubly stochastic differential equations with random coefficients and quasilinear stochastic PDEs,
		\sl 	J. Math. Anal. Appl.,
		\rm    476  	(2019), 86--100.
		
		
		
		\bibitem{11}
		\rm Wu, Z. and Zhang, F.,
		\it BDSDEs with locally monotone coefficients and Sobolev solutions for SPDEs,
		\sl J. Differential Equations,
		\rm 251   (2011), 759--784.
		
		\bibitem{08x}
		\rm Xu, M.,
		\it Backward stochastic differential equations with reflection and weak assumptions on the coefficients,
		\sl Stochastic Process. Appl.,
		\rm 118   (2008), 968--980.
		
		
		\bibitem{yong_stochastic_1999}
		\rm Yong, J. and Zhou, X.,
		\it Stochastic Controls,
		\sl Springer, New York,
		\rm (1999).
		
		\bibitem{zhang_backward_2017}
		\rm Zhang, J.,
		\it Backward Stochastic Differential Equations,
		\sl Springer, New York,
		\rm (2017).
		
		\bibitem{z10}
		\rm 	Zhang, Q. and Zhao, H.,
		\it 	 Stationary solutions of SPDEs and infinite horizon BDSDEs under non-Lipschitz coefficients,
		\sl 	  J. Differential Equations,
		\rm   248  (2010), 953--991.
	\end{thebibliography}
\end{document}